\def\a{\alpha}
\def\b{\beta}
\def\g{\gamma}
\def\d{\delta}
\def\ep{\varepsilon}
\def\k{\kappa}
\def\l{\lambda}
\def\s{\sigma}
\def\vp{\varphi}
\def\w{\omega}
\def\t{\tau}
\def\R{\mathbb R}
\def\Z{\mathbb Z}
\def\F{\mathbb F}
\def\E{\mathbb E}
\def\f{\mathcal F}
\def\cl{\mathcal L}
\def\ts{\mathcal T}
\renewcommand{\hom}{\text{Hom}}
\def\overbigdot#1{\overset{\hbox{\tiny$\bullet$}}{#1}}
\def\kapb{\overbigdot{\kappa}}
\def\kapc{\mathring{\kappa}}
\def\id{\text{id}}
\def\im{\text{Im}}
\def\G{\overline{G}}
\def\T{\overline{T}}
\def\A{\overline{A}}
\DeclareMathOperator{\syl}{Syl}
\DeclareMathOperator{\out}{Out}
\DeclareMathOperator{\iso}{Iso}
\DeclareMathOperator{\innd}{Inndiag}
\DeclareMathOperator{\od}{Outdiag}
\DeclareMathOperator{\mor}{Mor}
\DeclareMathOperator{\aut}{Aut}
\DeclareMathOperator{\inn}{Inn}
\DeclareMathOperator{\inj}{Inj}
\DeclareMathOperator{\en}{End}
\DeclareMathOperator{\sym}{Sym}
\DeclareMathOperator{\alter}{Alt}
\DeclareMathOperator{\supp}{Supp}
\DeclareMathOperator{\height}{ht}
\DeclareMathOperator{\pro}{pr}
\DeclareMathOperator{\ad}{ad}
\DeclareMathOperator{\un}{un}
\DeclareMathOperator{\PSL}{PSL}
\DeclareMathOperator{\POmega}{P\Omega}
\DeclareMathOperator{\PSU}{PSU}
\DeclareMathOperator{\SL}{SL}
\DeclareMathOperator{\scal}{sc}
\def\ob{\text{Ob}}
\renewcommand{\leq}{\leqslant}
\renewcommand{\hat}{\widehat}
\renewcommand{\geq}{\geqslant}
\renewcommand{\subset}{\subseteq}
\newcommand{\norm}{\trianglelefteq}
\newtheorem{theo}{Theorem}
\newtheorem{lem}[theo]{Lemma}
\newtheorem{prop}[theo]{Proposition}
\newtheorem{con}[theo]{Conjecture}
\newtheorem{cor}[theo]{Corollary}
\theoremstyle{definition}
\newtheorem{defn}[theo]{Definition}
\newtheorem{no}[theo]{Notation}
\newtheorem{hypth}[theo]{Hypotheses}
\newtheorem{ques}[theo]{Question}
\numberwithin{theo}{section}
\newtheorem{repeattheo@}{Theorem}
\newtheorem{repeatlem@}{Lemma}
\newtheorem{repeatcor@}{Corollary}
\newenvironment{repeatcor}[1]{%
	\def\therepeatcor@{\ref{#1}}
	\repeatcor@
}
{\endrepeatcor@}
\renewcommand{\arraystretch}{1.5}
\newcolumntype{C}[1]{>{\centering\let\newline\\\arraybackslash\hspace{0pt}}m{#1}}
\begin{document}	
	\title[Rigid automorphisms associated to finite groups of Lie type]{Rigid automorphisms of linking systems of finite groups of Lie type}
	\author{Jonathon Villareal}
	\begin{abstract}
		Let $\cl$ be a centric linking system associated to a saturated fusion system on a finite $p$-group $S$. An automorphism of $\cl$ is said to be rigid if it restricts to the identity on the fusion system. An inner rigid automorphism is conjugation by some element of the center of $S$. If $\cl$ is the centric linking system of a finite group $G$, then rigid automorphisms of $\cl$ are closely related to automorphisms of $G$ that centralize $S$. For odd primes, all rigid automorphisms are known to be inner, but this fails for the prime 2. We determine which known quasisimple linking systems at the prime 2 have a noninner rigid automorphism. Based on previous results, this reduces to handling the case of the linking systems at the prime 2 of finite simple groups of Lie type in odd characteristic. These have no noninner rigid automorphisms with two families of exceptions: the 2-dimensional projective special linear groups and even-dimensional orthogonal groups for quadratic forms of nonsquare discriminant.
	\end{abstract}
	\maketitle

	
	\section{Introduction} 
    Fix a prime $p$ and a finite $p$-group $S$. A saturated fusion system $\f$ over the finite $p$-group $S$ is a category whose objects are subgroups of $S$ and whose morphism sets are given by injective group homomorphisms subject to axioms first given by Puig and Broto, Levi, and Oliver \cite{puig2006frobenius, broto2003homotopy} and later refined by Roberts and Sphectorov \cite{satfussys}. When $G$ is a finite group and $S$ is a Sylow $p$-subgroup of $G$, the category $\f_S(G)$ whose objects are subgroups of $S$ and whose morphisms are injective group homomorphisms between subgroups of $S$ induced by conjugation by elements of $G$ is a saturated fusion system. The importance of $\f_S(G)$ is that it encodes the information required to recover the homotopy type of the Bousfield-Kan $p$-completion $BG^\wedge_p$ of the classifying space of $G$. This was made more precise by the Martino-Priddy conjecture which was proven in \cite{martino1996unstable} and \cite{oliver2004equivalences,oliver2006equivalences}. To recover $BG^\wedge_p$, or more generally $B\f$ when no ambient group is given, one requires the centric linking system $\cl$ associated to $\f$.  \par 

    In \cite{chermak2013fusion} and \cite{oliver2013existence}, Chermak and Oliver show that centric linking systems exist and are unique up to rigid automorphism. Therefore, rigid automorphisms measure the failure of strict uniqueness of centric linking systems. In particular, noninner rigid automorphisms induce nontrivial symmetries of the centric linking system that become trivial upon passing to the associated saturated fusion system. In \cite{oliver2013existence}, Oliver made this precise via an obstruction-theoretic exact sequence that relates $\out(\f)$ and $\out(\cl)$:
    \begin{align*}
        1\to \varprojlim_{\mathcal{O}(\mathcal{F})^c}\nolimits^{1}(\mathcal{Z}_{\f})\to\out(\cl)\xrightarrow{\mu_{\cl}}\out(\f)\to 1
    \end{align*}
    Moreover, the higher limits vanish for $k\geq2$. The group $\ker(\mu_{\cl})$ precisely consists of noninner rigid automorphisms of the centric linking system. \par 
    
    By results of Oliver \cite{oliver2013existence} and Glauberman and Lynd in \cite{glauberman2016control}, it is known that when $p$ is odd, $\ker(\mu_{\cl})$ vanishes, meaning that there are no noninner rigid automorphisms for odd primes. However at the prime 2, there are known examples where this limit is nontrivial (for example, when $G=\alter(6)$). In \cite{andersen2012reduced}, Andersen, Oliver, and Ventura show that when $p=2$, $\ker(\mu_{\cl})$ is a 2-group, and in \cite{glauberman2021rigid}, Glauberman and Lynd refine this result to show that $\ker(\mu_{\cl})$ is an elementary abelian 2-group. 
    
    In this paper, we study the relationship between the automorphisms of a finite simple group of Lie type in odd characteristic and those of the associated fusion and centric linking system at the prime 2. In particular, we extend Broto, M{\o}ller, and Oliver's analysis of the universal groups of Lie type in \cite{broto2019automorphisms}. This gives a full classification of the groups of Lie type whose associated centric linking system have noninner rigid automorphisms. Combining this result with several known results allows us to give a full classification of known quasisimple saturated fusion systems at the prime two which have noninner rigid automorphisms. \par

    For a finite simple group of Lie type $G$ over a field of odd characteristic, we show that the linking system associated to $G$ has a noninner rigid automorphism if and only if the fusion system of $G$ is isomorphic to either the fusion system of a 2-dimensional projective special linear group with nonabelian Sylow 2-subgroups or the fusion system of an orthogonal group of even dimension at least 6 for a quadratic form of nonsquare discriminant. It is known that the universal versions of the finite groups of Lie type over a field in odd characteristic do not have a noninner rigid automorphism at the prime 2 as a result of Oliver \cite[Appendix]{broto2019automorphisms}. We extend the results of \cite{broto2019automorphisms} concerning universal group of Lie type and combine them with a result of \cite{glauberman2021rigid} to show our main result: 
	\begin{theo}[Main Result]\label{theo: main result}
		Let $G$ be a finite quasisimple group of Lie type over a field of odd characteristic. Let $S\in\syl_2(G)$ and set $\f=\f_S(G)$. The associated centric linking system of $\f$ has a noninner rigid automorphism if and only if there is a finite group $G^*$ and $S^*\in\syl_2(G^*)$ such that $\f\cong\f_{S^*}(G^*)$ such that one of the following holds: 
		\begin{enumerate}
			\item $G^*\cong \PSL_2(q)$ with $q\equiv 1\pmod{8}$; or 
			\item $G^*\cong \POmega^{-}_{2n}(q)$ with $n\geq3$ and $q\equiv1\pmod{4}$. 
		\end{enumerate}
		Moreover, in each of these cases, $\ker(\mu_\cl)$ is cyclic of order 2.
	\end{theo}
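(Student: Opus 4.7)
The proof strategy combines Oliver's theorem on universal versions with the reduction machinery of \cite{glauberman2021rigid} and an extension of the techniques in Chapter 5 of \cite{broto2019automorphisms}. The starting point is Oliver's theorem in the appendix of \cite{broto2019automorphisms}, which says that for the universal (simply connected) version $G^{\un}$ of a finite group of Lie type in odd characteristic, the map $\mu_{\cl^{\un}}$ is injective, so its linking system has no noninner rigid automorphism. An arbitrary quasisimple $G$ is of the form $G^{\un}/Z$ for some central $Z \leq Z(G^{\un})$, so the question becomes: which noninner rigid automorphisms are introduced upon passing to this quotient? The Glauberman--Lynd criterion, together with the naturality of $\kappa$ and $\mu$, reduces this to checking which diagonal outer automorphisms of $G$ of $2$-power order lift to elements of $\innd(G)$ that normalize $S$ and induce the identity on $\f = \f_S(G)$.

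With this reduction in hand, I would run a case analysis over the families of finite simple groups of Lie type in odd characteristic. For each family, the Sylow $2$-subgroup $S$ and the action of $\innd(G)/\inn(G)$ on $S$ are known explicitly. For the families $\PSL_n(q)$ with $n \geq 3$, $\PSU_n(q)$, $\PSp_{2n}(q)$, $\POmega^{+}_{2n}(q)$, and the exceptional types, one shows that every nontrivial element of $2$-power order in $\od(G)$ moves some $\f$-conjugacy class of subgroups of $S$, so by the criterion above it does not descend to a rigid automorphism of $\cl$. This leaves exactly two positive cases: $\PSL_2(q)$ with $q \equiv 1 \pmod 8$ (where $S$ is dihedral of order at least $8$, all involutions of $S$ are $\f$-conjugate, and the order-two outer diagonal automorphism acts trivially on $\f$), and $\POmega^{-}_{2n}(q)$ with $n \geq 3$ and $q \equiv 1 \pmod 4$.

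The main obstacle is the orthogonal case $\POmega^{-}_{2n}(q)$: one must pin down the structure of the Sylow $2$-subgroup (an iterated wreath-type product reflecting the decomposition of an involution centralizer in $\Omega^{-}_{2n}$), identify an explicit lift of the nontrivial element of $\od(G)$ to an element of $\innd(G)$ normalizing $S$, and verify that this lift centralizes every fusion generator of $\f$. Once both positive cases have been treated, the assertion $\ker(\mu_\cl) \cong \Z/2$ follows because the $2$-part of $\od(G)$ has order $2$ in each positive case, so at most one nontrivial class can arise, and the explicit constructions exhibit exactly one.
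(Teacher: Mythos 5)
Your global reduction is sound and is essentially the one the paper uses: start from the universal-type result of \cite{broto2019automorphisms}, use tameness (surjectivity of $\kappa_{G/Z(G)}$), the Glauberman--Lynd theorem that $\ker(\kappa)$ is a $2'$-group, the fact that $\ker(\mu_\cl)$ is elementary abelian $2$, and $\out(G)\cong\out(G/Z(G))$ to reduce to deciding which order-$2$ classes in $\out(G)$ act trivially on $\f/Z(\f)$. The gap is in the next step: you claim this amounts to checking which \emph{diagonal} outer automorphisms of $2$-power order induce the identity on $\f$, and your two positive cases are attributed to the nontrivial element of $\od(G)$. This is backwards. The paper proves (Theorem \ref{theo: generalized 59 from BMO19}) that $O_2(\od(G))$ always maps injectively into $\out(\f/Z(\f))$, so diagonal classes \emph{never} contribute to $\ker(\mu_\cl)$. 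Concretely, for $\PSL_2(q)$ the diagonal automorphism acts on the dihedral Sylow $2$-subgroup $S$ (which is self-normalizing, so $\aut_\f(S)=\inn(S)$) by an outer automorphism of $S$, hence is nontrivial on $\f$; the actual noninner rigid automorphism is induced by a \emph{field} automorphism of $\SL_2(q)$ whose commutator with $S$ lands in $Z(\SL_2(q))$ (Theorem \ref{theo: A1 has a rigid auto}). For $\POmega^-_{2n}(q)$ it is conjugation by a reflection in an anisotropic vector, a graph-type automorphism lying outside $\innd(G)$ (Theorem \ref{theo: twisted Dn has a rigid auto}), not a lift of the diagonal class.

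Because of this misidentification, your proposed case analysis also omits the genuinely hard part of the negative direction: for all the other families one must rule out field and graph automorphisms (not just diagonal ones) acting trivially on $\f/Z(\f)$. That is exactly the content of the paper's Section 4, where it is shown case by case, in the coroot lattice modulo $2^k$ using connectedness of root supports, that $W_0\Gamma_G\Phi_G$ acts faithfully on $A/Z(\f)$ except in types $A_1$ and ${}^2D_n$ (Theorem \ref{lem: generalized 27}), combined with the computation of $\ker(\pi_{A/Z(\f)})$ in Lemma \ref{lem: kerpiA/Z(f) is outdiag}. Your criterion ``moves some $\f$-conjugacy class of subgroups'' is also too weak as a test for nontriviality in $\out(\f)$. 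Finally, your count $\ker(\mu_\cl)\cong Z_2$ is deduced from $|O_2(\od(G))|=2$, which is the wrong group (the diagonal class contributes nothing), and you do not treat intermediate quasisimple covers $G^{\un}/Z$ with $1<Z<Z(G^{\un})$; the paper handles those via Lemma \ref{lem: if a simple group has no noninner rigids, then any quasisimple cover has no noninner rigids}, although in the two exceptional families this is vacuous since $Z(G^{\un})\cong Z_2$ there.
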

	
	The classification of known quasisimple saturated fusion systems at the prime 2 whose associated centric linking system have noninner rigid automorphisms is as follows: 
	\begin{cor}\label{cor: quasisimple sat fus with noninner rigid autos}
		Let $\f$ be a known quasisimple saturated fusion system at the prime 2. The associated linking system of $\f$ has a noninner rigid automorphism if and only if there is a finite group $G$ and $S\in\syl_2(G)$ such that $\f\cong\f_S(G)$ and one of the following holds:
		\begin{enumerate}
			\item $G\cong\PSL_2(q)$ with $q\equiv\pm1\pmod{8}$; or
			\item $G\cong\POmega^{-}_{2n}(q)$ with $n\geq3$ and $q\equiv1\pmod{4}$; or
			\item $G\cong\alter(n)$ with $n\equiv2,3\pmod{4}$.
		\end{enumerate}
		Moreover, in each of these cases, $\ker(\mu_\cl)$ is cyclic of order 2.
	\end{cor}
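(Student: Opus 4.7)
The plan is to combine Theorem \ref{theo: main result} with existing results for the remaining families of quasisimple finite groups, using the classification of finite simple groups to organize the case analysis. Every known quasisimple saturated fusion system at the prime $2$ is isomorphic to $\f_S(G)$ for a finite quasisimple group $G$ with $S\in\syl_2(G)$, and the quotient $G/Z(G)$ is either an alternating group $\alter(n)$, a simple group of Lie type in characteristic $2$, a simple group of Lie type in odd characteristic, or a sporadic simple group. It suffices to handle each family in turn.

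The odd characteristic Lie type family is Theorem \ref{theo: main result}, which produces condition (2) directly and condition (1) in the restricted form $q\equiv 1\pmod 8$. The case $q\equiv -1\pmod 8$ of (1) follows because for $q\equiv\pm 1\pmod 8$ the Sylow $2$-subgroup of $\PSL_2(q)$ is dihedral and the fusion system is determined up to isomorphism by $|S|$, so every fusion system produced by $\PSL_2(q)$ with $q\equiv -1\pmod 8$ already appears as $\f_{S^*}(\PSL_2(q^*))$ for some $q^*\equiv 1\pmod 8$. The Lie type characteristic $2$ family is handled by the defining-characteristic analogue of Oliver's argument in \cite[Appendix]{broto2019automorphisms}, where $S$ is the unipotent radical of a Borel and the only rigid automorphisms of $\cl$ come from $Z(S)$. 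The $26$ sporadic simple groups are treated by a case-by-case inspection drawn from the existing literature on the Martino-Priddy conjecture and its refinements, which shows $\ker(\mu_\cl)=1$ in each case. Finally, the alternating family is handled by \cite{glauberman2021rigid}, which computes $\ker(\mu_\cl)$ for $\alter(n)$ and identifies $n\equiv 2,3\pmod 4$ as the condition yielding a noninner rigid automorphism of order $2$.

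The main obstacle is the bookkeeping required by low-rank exceptional isomorphisms, most importantly $\alter(6)\cong\PSL_2(9)$: the fusion systems listed under (1) with $q=9$ and under (3) with $n=6$ then coincide, and one must verify that the two cited computations report the same $\ker(\mu_\cl)$. Similar care is needed at sporadic fusion systems that can be realized by several distinct groups, at small $q$ where groups of Lie type collapse to alternating or sporadic groups, and in checking that the fusion-system isomorphism between dihedral-Sylow $\PSL_2$'s of matched Sylow order is compatible with the isomorphism $\f\cong \f_{S^*}(G^*)$ required by the statement.
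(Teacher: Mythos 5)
Your overall strategy --- combine Theorem \ref{theo: main result} with existing results for the remaining families, one family at a time --- is the same as the paper's, and your handling of the $q\equiv-1\pmod 8$ case of (1) via dihedral Sylow $2$-subgroups matches what the statement of Theorem \ref{theo: main result} already builds in. However, there is a genuine gap at the very first step: your claim that every known quasisimple saturated fusion system at the prime $2$ is of the form $\f_S(G)$ for a finite quasisimple group $G$ is false. By Definition \ref{def: known quasisimple saturated fusion systems at the prime 2}, the class also contains the Benson--Solomon systems, which are exotic (not realizable by any finite group), so your case division by $G/Z(G)$ simply never sees them. For the ``only if'' direction of the corollary you must still show that the Benson--Solomon linking systems admit no noninner rigid automorphism; this is exactly what the paper imports from Levi--Oliver (\cite[Lemma~3.2]{levi2002construction}, restated as Lemma \ref{lem: bs systems taken care of}), where the relevant higher limits $\varprojlim^i(\mathcal{Z})$ are shown to vanish. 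Without this case your argument does not prove the stated equivalence.

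Two further points are attribution errors rather than conceptual ones, but as written they leave holes in the proof. For the alternating family you cite \cite{glauberman2021rigid} as computing $\ker(\mu_\cl)$ for $\alter(n)$; that paper does no such computation. The actual sources are Andersen--Oliver--Ventura (Lemma \ref{lem: outf is Z2} and Proposition \ref{prop: outg cong outcl}, i.e.\ \cite{andersen2012reduced}), which handle $n\geq8$ --- for $n\equiv2,3\pmod4$ one still has to combine $\out(\f)=1$, tameness, and the fact that $\ker(\kappa_G)$ is a $2'$-group (Theorem \ref{theo: ker kappaG is a p prime group}) to produce the order-$2$ element of $\ker(\mu_\cl)$ --- together with the explicit construction in Proposition~10.1 of \cite{oliver2006equivalences} for $\alter(6)$ and $\alter(7)$, which your sketch does not cover. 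Similarly, for Lie type in characteristic $2$ the result you want is Theorem~A of \cite{broto2019automorphisms} (Theorem \ref{theo: mu is an iso for equichar}), not the Appendix of that paper (which treats $G_2(q)$ in odd characteristic and the sporadic groups, the latter being the correct reference for your sporadic case, Theorem \ref{theo: sporadic groups have kermu=1}). With the Benson--Solomon case added and the citations repaired, your plan coincides with the paper's proof.
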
 
	
	To prove Theorem \ref{theo: main result}, we first make the same technical assumptions and have similar hypotheses to those of \cite{broto2019automorphisms} (see Hypotheses \ref{hyp: gp of lie type}). We fix a group $G$ that, due to the hypotheses, is a group of Lie type of universal type and is an appropriate representative of its 2-fusion equivalence class. Set $\Gamma_G$ to be the group of all graph automorphisms of $G$ and $\Phi_G$ to be the group of all field automorphisms of $G$. Denote the root system of $G$ by $\Sigma$ and the Weyl group of $G$ by $W_0$.  \par 
	
	Towards determining which finite simple groups of Lie type have noninner rigid automorphisms, we analyze how automorphisms of $G$ act on the quotient of a Sylow 2-subgroup $A$ of a fixed maximal torus $T$ by the center of $\f=\f_S(G)$. We first examine how the automorphisms induced by isometries of $\Sigma$ act on $A/Z(\f)$, and determine that the isometries of $\Sigma$ that are induced by $W_0$ act faithfully on $A/Z(\f)$ under the standing hypotheses. We prove that the field and graph automorphisms for the finite simple groups of Lie type, apart from the two families of exceptions above, act faithfully on $A/Z(\f)$ and then use this to prove an extension of a key lemma of \cite{broto2019automorphisms} (see Lemma \ref{lem: 27}). With this result, we prove Theorem \ref{theo: main result}. A combination of this result with \cite{glauberman2021rigid} proves that there are no rigid automorphisms for the simple groups of Lie type save for the two families of exceptions. Furthermore, constructions of the nonidentity rigid automorphisms for $\PSL_2(q)$ with $q\equiv\pm1\pmod{8}$ and $\POmega^{-}_{2n}(q)$ with $n\geq3$ and $q\equiv1\pmod{4}$are given in Section 5 (see Theorem \ref{theo: A1 has a rigid auto} and Theorem \ref{theo: twisted Dn has a rigid auto}). \par 
	
	The collection of known quasisimple saturated fusion systems at the prime 2 (see Definition \ref{def: known quasisimple saturated fusion systems at the prime 2}) is defined to consist of the fusion systems $\f$ such that $\f$ is a Benson-Solomon fusion system (see \cite{levi2002construction},\cite{aschbacher2010bensonsolomon}) or $\f$ is the fusion system of a known quasisimple finite group $G$ such that $G$ is not 2-Goldschmidt. By a result of Goldschmidt (see \cite{goldschmidt19742} for a classification), a nonabelian finite simple group is $2$-Goldschmidt if and only if it has abelian Sylow 2-groups or is of Lie type in characteristic 2 of Lie rank 1. The $2$-Goldschmidt simple groups have been classified and are 
	\begin{align*}
		&\text{1. }\PSL_2(q)\text{ with }q\equiv\pm3\pmod{8} &  &\text{2. }{}^2G_2(3^{2n+1})\text{ with }n\geq0 \\ 
		&\text{3. }J_1                                      &  &\text{4. }\PSL_2(2^n)\text{ with }n\geq2 \\  
		&\text{5. }\PSU_3(2^n)\text{ with }n\geq2            &  &\text{6. }{}^2B_2(2^{n+1})\text{ with }n\geq1
	\end{align*}
	The known quasisimple groups that are 2-Goldschmidt are the quasisimple coverings of the simple groups listed above. Corollary~\ref{cor: quasisimple sat fus with noninner rigid autos} follows by combining Theorem \ref{theo: main result} with previously known results regarding rigid automorphisms of the Benson-Solomon systems (\cite[Lemma~3.2]{levi2002construction}), sporadic groups (\cite{broto2019automorphisms}), and the alternating groups (\cite{andersen2012reduced}). \par 
	
	Theorem \ref{theo: main result} and Corollary~\ref{cor: quasisimple sat fus with noninner rigid autos} are related to an open question of Oliver (see Question 7.9 in \cite{aschbacher2016fusion}). Let $G$ be a finite 2-perfect group whose Schur multiplier has odd order. Set $\f=\f_S(G)$ and $\cl=\cl_S^c(G)$ for some $S\in\syl_2(G)$. Oliver asked whether the morphism $\mu_\cl$ is injective. Oliver's question can be extended as follows: 
	\begin{con}\label{con: oli con extended}
		If $\f$ is a saturated fusion system and $\cl$ is its associated centric linking system, then $\ker(\mu_\cl)$ is isomorphic to a subgroup of the Schur multiplier of $\f$.
	\end{con}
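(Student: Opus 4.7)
The plan is to attack Conjecture~\ref{con: oli con extended} by first reformulating both sides cohomologically and then constructing a natural map between them. I would adopt the convention that the Schur multiplier of a saturated fusion system $\f$ with centric linking system $\cl$ is $M(\f) := H_2(|\cl|^\wedge_p; \Z)$, which, when $\f = \f_S(G)$, recovers the $p$-part of the Schur multiplier of $G$. On the other side, I would use the identification of $\ker(\mu_\cl)$ with the first higher limit $\lim^1_{\f^c}(\ts Z_\f)$, where $\ts Z_\f \colon P \mapsto Z(P)$ is the center functor on the centric fusion system $\f^c$; a rigid outer automorphism class is represented by a compatible family $(z_P)_{P \in \f^c}$ of central elements modulo 1-coboundaries, and the inner rigid automorphisms are precisely the coboundary classes.

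The core of the proof would be the construction of a natural injection $\lim^1_{\f^c}(\ts Z_\f) \hookrightarrow M(\f)$. Two parallel strategies suggest themselves. One is to use the Bousfield--Kan/BLO-type spectral sequence $E_2^{s,t} = \lim^s_{\f^c} H^t(-; A) \Rightarrow H^{s+t}(|\cl|; A)$ with $A = \Q/\Z$, together with the identification $Z(P) \cong \hom(P, \Q/\Z)^\vee$ at each centric $P$, to obtain an edge map $\lim^1_{\f^c}(\ts Z_\f) \to H^2(|\cl|; \Q/\Z)$, and then to use universal coefficients to relate the target to the Pontryagin dual of $M(\f)$. The other is more geometric: given a rigid automorphism $\alpha$, form its mapping torus over $|\cl|$; this is a fibration classified by an element of $H^2(|\cl|; S^1)$ whose Euler class vanishes exactly when $\alpha$ is homotopic to the identity over $B\f$, i.e., when $\alpha$ is inner rigid.

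Verifying injectivity is the main obstacle. In the spectral sequence approach, one must control the differential $d_2 \colon E_2^{0,1} \to E_2^{2,0}$ and show that any class in $\lim^1_{\f^c}(\ts Z_\f)$ which maps to zero in $H^2(|\cl|; \Q/\Z)$ must already come from $E_2^{2,0}$ and therefore correspond to an inner rigid automorphism; ruling out such cancellation in full generality appears delicate. An alternative route would be to verify the conjecture for the known quasisimple fusion systems at the prime $2$ and then reduce the general case by a component decomposition in the spirit of the generalized Fitting subsystem; the data from Theorem~\ref{theo: main result} and Corollary~\ref{cor: quasisimple sat fus with noninner rigid autos} already confirms the conjecture in these cases, where $\ker(\mu_\cl)$ is either trivial or cyclic of order~$2$ and embeds into the $2$-part of the known Schur multiplier of the underlying simple group. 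A fully general proof will likely require either a new categorical interpretation of $\lim^1_{\f^c}(\ts Z_\f)$ as an obstruction class for a central extension of $|\cl|$, or new computational tools for higher limits over fusion systems beyond those presently available.
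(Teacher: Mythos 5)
The statement you were asked to prove is labeled as a \emph{Conjecture} in the paper, and the paper does not prove it: it is proposed as an extension of Oliver's open Question 7.9 in \cite{aschbacher2016fusion}, and the paper's actual contribution is to verify its conclusion for the known quasisimple saturated fusion systems at the prime~$2$ via Theorem~\ref{theo: main result} and Corollary~\ref{cor: quasisimple sat fus with noninner rigid autos}. There is therefore no proof in the paper against which your argument can be compared, and your own write-up concedes as much: you present two alternative strategies and explicitly acknowledge that the decisive injectivity step ``will likely require \dots new computational tools \dots beyond those presently available.'' That is an honest assessment, but it means you have not proved the conjecture.

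Beyond the missing final step, the first strategy has a concrete technical defect. You identify $Z(P)$ with $\hom(P,\Q/\Z)^{\vee}$ at each $\f$-centric $P$, but $\hom(P,\Q/\Z)\cong\hom(P^{\mathrm{ab}},\Q/\Z)$ is the Pontryagin dual of the \emph{abelianization} of $P$, not of its center, so its double dual is $P^{\mathrm{ab}}$ rather than $Z(P)$; the $E_2^{0,1}$ term of the spectral sequence you have in mind involves the functor $H^1(-;\Q/\Z)$, which does not naturally compare to the center functor $\mathcal{Z}_\f$, and so the proposed edge map $\varprojlim^1_{\mathcal{O}(\f^c)}(\mathcal{Z}_\f)\to H^2(|\cl|;\Q/\Z)$ does not arise in the way you describe. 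The mapping-torus variant is likewise under-specified (the mapping torus of $\alpha$ fibers over $S^1$ with fiber $|\cl|$, not over $|\cl|$, and it is unclear what Euler class in $H^2(|\cl|;S^1)$ is meant or why its vanishing should detect inner rigidity). Your third suggestion---verify the conjecture for known quasisimple systems and then reduce via a component/generalized Fitting decomposition---is the one closest in spirit to what the paper actually does, but the reduction step (controlling how both $\ker(\mu_\cl)$ and the Schur multiplier of $\f$ behave under extensions, central quotients, and passage to components) is not carried out and would itself be a substantial theorem.
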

	
	The Schur multiplier of a fusion system on a $p$-group was defined by Linckelmann in  \cite{linckelmann2006note}; it coincides with the $p$-primary part of the Schur multiplier of $G$ when $\f=\f_S(G)$. As special case of Conjecture \ref{con: oli con extended}, if the Schur multiplier of the fusion system has odd order (as in the case of Oliver's original question), then $\ker(\mu_\cl)=1$ since $\ker(\mu_\cl)$ is a 2-group.
	
	\subsection*{Notation and terminology}
	We fix the following notation and terminology. When $ G $ is a group, $ P\leq G $, and $ g\in G $, we write $ {}^gP=gPg^{-1} $. When $ g\in G $, we write $ c_g $ for the left-handed conjugation homomorphism $ x\mapsto gxg^{-1} $ and its restrictions, and write ${}^gx$ for the image of $c_g(x)$. For any $x,y\in G$, we write $[x,y]=xyx^{-1}y^{-1}={}^xyy^{-1}$. For any subgroups $P,Q\leq G$, we write
	\begin{align*}
		\hom_G(P,Q)&=\{c_g:P\to Q\mid g\in G,\text{ }{}^gP\leq Q\} \\
		\aut_G(P)&=\{c_g:P\to P\mid g\in N_G(P)\} \\
		\inj(P,Q)&=\{\vp\in\hom(P,Q)\mid\vp\text{ is injective}\}
	\end{align*}
	Given a prime $ p $, we denote by $\syl_p(G)$ the set of all Sylow $p$-subgroups of $G$. We also denote the symmetric group and the alternating group on $n$ letters as $\sym(n)$ and $\alter(n)$, respectively. We denote the cyclic group of order $n$ as $Z_n$. Lastly, for any $\a\in\aut(G)$, we denote its restriction to a subgroup $H$ by $\a|_H$. If $N$ is a normal subgroup of $G$ such that $\a(N)=N$, then denote by $\a|_{G/N}$ the induced automorphism of $G/N$.

	
	\section{Background on fusion systems and centric linking systems}
	We primarily follow the notation and definitions set forth in \cite{aschbacher2011fusion}, \cite{aschbacher2016fusion}, and \cite{craven2011theory}. Throughout this section, $p$ denotes a fixed prime. 
	
	\subsection{Basic properties and results about fusion systems}
	\begin{defn}[\cite{aschbacher2011fusion}, \cite{aschbacher2016fusion}]\label{def: fus sys}
        Let $S$ be a finite $p$-group. 
        \begin{enumerate}
            \item[(a)] When $S\in\syl_p(G)$ for a finite group $G$, then the \emph{fusion system of $G$ over $S$} is the category whose objects are the subgroups of $S$ and whose morphism sets are $\hom_G(P,Q)$. We denote the fusion system of $G$ over $S$ as $\f_S(G)$.
            \item[(b)] A \emph{saturated fusion system over $S$} is a category $\f$ where the objects of $\f$ are the subgroups of $S$ and that satisfies the following two properties for all $P,Q\leq S$: 
		\begin{itemize}
			\item $\hom_S(P,Q)\subset\hom_{\f}(P,Q)\subset\inj(P,Q)$; and 
			\item each $\vp\in\hom_{\f}(P,Q)$ is the composite of an $\f$-isomorphism followed by inclusion.
		\end{itemize} 
        and it satisfies an additional two axioms (see Definition 2.2 in \cite{aschbacher2011fusion}).
        \item[(c)] A subgroup $P\leq S$ is \emph{$\f$-centric} if $C_S(Q)=Z(Q)$ for all $Q\in P^\f$. Equivalently, when $\f=\f_S(G)$, if $P$ is $p$-centric in $G$. 
        \end{enumerate}
	\end{defn}
	\noindent In particular, a fusion system of the form $\f_S(G)$ where $G$ is a finite group and $S\in\syl_p(G)$ is a saturated fusion system \cite[Theorem 3.7]{puig2009frobeniusversus}. \par 
	
	\begin{defn}[Definition 5.1 and 5.2 in \cite{craven2011theory}, Definition 1.3 in \cite{aschbacher2016fusion}]
		Let $\f$ and $\mathcal{E}$ be fusion systems on the finite $p$-groups $S$ and $P$, respectively. 
		\begin{enumerate}
			\item[(a)] A \emph{morphism of fusion systems} $\f\to\mathcal{E}$  consists of a group homomorphism $\varphi:S\to P$ and a functor $\Phi:\f\to\mathcal{E}$ where $\Phi(T)=\vp(T)$ and which satisfies $\Phi_{T,U}(\psi)\circ\varphi|_T=\vp|_U\circ\psi$ for each $\psi\in\hom_{\f}(T,U)$. We denote a morphism of fusion systems by the pair $(\vp,\Phi)$. 
			\item[(b)] We say that a $\Phi$ is \emph{surjective} if $\vp$ is surjective, and $\Phi$ is full. 
			\item[(c)] We say that $\Phi$ is an \emph{isomorphism} if $\Phi$ is full and $\vp$ is injective.
            \item[(d)] We say that a homomorphism $\vp:S\to P$ is \emph{fusion preserving} if $\vp$ extends to an isomorphism $\Phi:\f\to\mathcal{E}$.
            \item When two groups $G$ and $H$ have isomorphic fusion systems at $p$, we say that they have equivalent $p$-fusion, denoted by $G\sim_p H$
		\end{enumerate}
	\end{defn}
	
	\begin{defn}[Definition 4.1 in Part I of \cite{aschbacher2011fusion}]\label{def: sbgps of fus sys}
		Fix a prime $p$, a finite group $G$, and $S\in\syl_p(G)$. Let $\f=\f_S(G)$ and let $Q\leq S$. 
		\begin{itemize}
			\item We say that $Q$ is \emph{central in $\f$} if $Q\leq Z(S)$ and for every $P,R\leq S$ and all $\vp\in\hom_\f(P,R)$, there is an extension $\bar\vp\in\hom_\f(PQ,RQ)$ such that $\bar\vp|_Q=\id_Q$. 
			\item Let $Z(\f)\leq Z(S)$ denote the largest subgroup of $S$ which is central in $\f$.
		\end{itemize}
	\end{defn}
	One can interpret Glauberman's $Z^*$-theorem to say that for any $S\in\syl_2(G)$, we have $Z(\f_S(G))=Z^*(G)\cap S$ where $Z^*(G)$ is the subgroup of $G$ that contains $O_{2'}(G)$ and is such that $Z^*(G)/O_{2'}(G)=Z(G/O_{2'}(G))$  If $O_{2'}(G)\leq Z(G)$, we can strengthen this statement. 
	\begin{lem}\label{lem: zf is 2 part of zg}
		Let $ G $ be a finite group, $ S\in\syl_2(G) $, and $ \f=\f_S(G) $. If $ O_{2'}(G)\leq Z(G) $, then $Z(\f)=O_2(Z(G))$ and $ Z(G)=O_{2'}(G)Z(\f) $. 
	\end{lem}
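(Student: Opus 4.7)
The plan is to prove both equalities by leveraging the interpretation of Glauberman's $Z^*$-theorem stated just above, namely $Z(\f) = Z^*(G) \cap S$, together with the defining property $Z^*(G)/O_{2'}(G) = Z(G/O_{2'}(G))$. The first claim then reduces to showing $Z^*(G) \cap S = O_2(Z(G))$.

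For the nontrivial containment $Z^*(G) \cap S \leq O_2(Z(G))$, I would take $x \in Z^*(G) \cap S$, so $x$ is a $2$-element whose class in $G/O_{2'}(G)$ is central. For any $g \in G$, the commutator $[g,x]$ then lies in $O_{2'}(G)$, and by the hypothesis $O_{2'}(G) \leq Z(G)$ it is in particular a central element of odd order. The key point is that $gxg^{-1} = [g,x]\cdot x$ expresses a $2$-element as the product of two commuting elements of coprime order, so by uniqueness of the primary decomposition the $2'$-part $[g,x]$ must be trivial. Hence $x$ commutes with every $g \in G$, so $x \in Z(G)$, and since $x$ is a $2$-element this puts $x$ in $O_2(Z(G))$. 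The reverse containment is immediate because $O_2(Z(G)) \leq Z(G) \leq Z^*(G)$ and $O_2(Z(G)) \leq S$.

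For the second equality, the observation is that $Z(G)$ is abelian and so decomposes as an internal direct product $O_2(Z(G)) \times O_{2'}(Z(G))$. It then suffices to identify $O_{2'}(Z(G)) = O_{2'}(G)$: the inclusion $O_{2'}(Z(G)) \leq O_{2'}(G)$ follows from $O_{2'}(Z(G))$ being characteristic in the normal subgroup $Z(G)$, hence a normal $2'$-subgroup of $G$, while the reverse inclusion is the hypothesis $O_{2'}(G) \leq Z(G)$. Combining this with the first equality yields $Z(G) = O_{2'}(G)\cdot Z(\f)$.

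I do not anticipate a real obstacle: the heart of the argument is the uniqueness of the primary decomposition of commuting elements of coprime order together with elementary normality considerations for $Z(G)$. The only subtle point is making sure the characterisation $Z(\f) = Z^*(G)\cap S$ is being applied correctly at the prime $2$, as discussed in the paragraph preceding the lemma.
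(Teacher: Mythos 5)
Your proof is correct, and it rests on the same foundation as the paper's argument --- the identification $Z(\f)=Z^*(G)\cap S$ coming from Glauberman's $Z^*$-theorem --- but the mechanism at the key step is different. The paper first shows $Z^*(G)/O_{2'}(G)$ is a $2$-group, uses the Sylow position of $Z(\f)$ in $Z^*(G)$ to get $Z^*(G)=Z(\f)O_{2'}(G)$, then deduces $[Z^*(G),Z(\f)]=1$ from $O_{2'}(G)\leq Z(G)$, places $Z(\f)=O_2(Z^*(G))$ inside $O_2(G)$, and finally invokes the $\f$-centrality of $Z(\f)$ (all $G$-conjugates of $z\in Z(\f)$ land in $S$ and hence equal $z$) to conclude $Z(\f)\leq Z(G)$. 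You instead argue elementwise: for $x\in Z^*(G)\cap S$ the commutator $[g,x]$ lies in $O_{2'}(G)\leq Z(G)$, so $gxg^{-1}=[g,x]x$ is a $2$-element written as a product of commuting elements of coprime order, and uniqueness of the primary decomposition forces $[g,x]=1$. This sidesteps both the Sylow argument inside $Z^*(G)$ and the fusion-theoretic appeal to $\f$-centrality, making the containment $Z(\f)\leq O_2(Z(G))$ purely elementary; your treatment of the second equality via $Z(G)=O_2(Z(G))\times O_{2'}(Z(G))$ and $O_{2'}(Z(G))=O_{2'}(G)$ also makes explicit a step the paper leaves implicit. Both routes use the hypothesis $O_{2'}(G)\leq Z(G)$ at exactly the point where odd-order pieces must be forced to commute with (or vanish against) the $2$-part, so the proofs are close in spirit even though the details diverge.
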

	\begin{proof}
		Since $O_{2'}(G)=O_{2'}(Z^*(G))$, the quotient $Z^*(G)/O_{2'}(G)$ has no nonidentity normal subgroups of odd order. But, $Z^*(G)/O_{2'}(G)$ is abelian, so it is a 2-group. Since $Z(\f)\in\syl_2(Z^*(G))$, $Z(\f)$ maps isomorphically to $Z^*(G)/O_{2'}(G)$. Hence, since $|Z^*(G)|=|O_{2'}(G)||Z^*(G)/O_{2'}(G)|$, it follows that $Z^*(G)=Z(\f)O_{2'}(G)$. \par 
		In particular, since $O_{2'}(G)\leq Z(G)$
		\begin{align*}
			[Z^*(G),Z(\f)] = [O_{2'}(G)Z(\f),Z(\f)]=[O_{2'}(G),Z(\f)][Z(\f),Z(\f)]=1
		\end{align*}
		and $Z(\f)=O_2(Z^*(G))\leq O_2(G)$. Therefore, for any $z\in Z(\f)\leq O_2(G)$ and any $g\in G$, $gzg^{-1}\in O_2(G)\leq S$. Since $Z(\f)$ is $\f_S(G)$-central, $gzg^{-1}=z$. Hence, $Z(\f)\leq Z(G)$. 
	\end{proof}
	\noindent Lemma \ref{lem: zf is 2 part of zg} can be generalized to odd primes; however, this requires the generalization of the  Glauberman $Z^*$-theorem to odd primes which itself relies the classification of finite simple groups.

	Recall the following definitions regarding quotient fusion systems.
	\begin{defn}[Definition 4.1 in \cite{aschbacher2011fusion}]
	Let $\f$ be a fusion system on a finite $p$-group $S$.  A subgroup $T\leq S$ is said to be \emph{strongly closed} if no element of $T$ is $\f$-conjugate to an element of $S-T$. 
	\end{defn}
	\begin{defn}[Definition 5.5 in \cite{craven2011theory}, Definition 5.9 in \cite{aschbacher2011fusion}]\label{defn: quotient fusion system}
		Let $\f$ be a fusion system on a finite $p$-group $S$. Let $T\leq S$ be strongly closed in $\f$. Let $\vp:S\to S/T$ denote the canonical projection. The fusion system $\f/T$ is defined to be the category whose objects are the subgroups of $S/T$ and, for $P/T, Q/T\leq S/T$, 
		\begin{align*}
			\hom_{\f/T}(P/T,Q/T)=\{(xT\mapsto\psi(x)T)\in\hom(P/T,Q/T)\mid\psi\in\hom_\f(P,Q)\}.
		\end{align*}
		Moreover, there is a surjective fusion system morphism $(\vp,\Phi):\f\to\f/T$ induced by the group homomorphism $\vp:S\to S/T$. We denote the morphism $xT\mapsto\psi(x)T$ by $\overline{\psi}$.     
	\end{defn}
	\noindent If $\f$ is a saturated fusion system, then the quotient $\f/T$ is a saturated fusion system for any strongly closed subgroup $T\leq S$ (see Proposition 5.11 in \cite{craven2011theory} or Lemma 5.4 in \cite{aschbacher2011fusion}). \par 
	
	We are particularly interested in the quotient of a fusion system of the form $\f_S(G)$ by $Z(\f)$ when $O_{2'}(G)\leq Z(G)$. Towards describing this quotient, we have the following lemma:
	\begin{lem}[Lemma 8.8 in \cite{aschbacher_normal_2008}]\label{lem: description of f mod zf}
		Let $G$ be a finite group with $S\in\syl_p(G)$. If $N\norm G$, then $T=S\cap N$ is strongly closed in $\f_S(G)$. Furthermore, $\f_S(G)/T\cong\f_{S/T}(G/N)$ where $SN/N$ is identified with $S/T$. 
	\end{lem}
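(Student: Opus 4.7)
The plan is to prove the two assertions in turn. Strong closure of $T = S \cap N$ is immediate: every morphism in $\f = \f_S(G)$ is the restriction of $c_g$ for some $g \in G$, and $N \norm G$ forces $c_g$ to preserve $N$; hence any $\f$-conjugate of $t \in T$ remains in $N$, and if it also lies in $S$ it lies in $S \cap N = T$.

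For the isomorphism $\f/T \cong \f_{S/T}(G/N)$, I would first identify the ambient $p$-groups via the standard isomorphism $S/T \xrightarrow{\sim} SN/N$ given by $sT \mapsto sN$, and verify that $SN/N \in \syl_p(G/N)$ using $|G/N|_p = |G|_p/|N|_p$ together with the fact that $T \in \syl_p(N)$. Subgroups $T \leq P \leq S$ then correspond bijectively to subgroups $PN/N \leq SN/N$, matching the object sets. For morphism sets, fix $T \leq P, Q \leq S$. The forward map $\overline{c_h} \mapsto c_{hN}$ is well-defined, since $hph^{-1}T$ corresponds to $hph^{-1}N$ under the identification, and it is manifestly injective.

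The substantive content is surjectivity of this map. Given $g \in G$ with $gPg^{-1}N \leq QN$, so that $c_{gN}$ defines a morphism in $\f_{SN/N}(G/N)$, I must produce $h \in gN$ with $hPh^{-1} \leq Q$; the resulting $c_h \in \hom_\f(P, Q)$ will then satisfy $\overline{c_h} \leftrightarrow c_{gN}$. The key technical input, and the step I expect to require the most care, is that $Q \in \syl_p(QN)$. This holds because $T \leq Q$ forces $Q \cap N = Q \cap S \cap N = T$, and then $|QN|_p = |Q| \cdot |N|_p / |Q \cap N| = |Q|$ since $|T| = |N|_p$. With $Q \in \syl_p(QN)$ established, Sylow's theorem inside $QN$ applied to the $p$-subgroup $gPg^{-1}$ supplies $x \in QN$ with $x(gPg^{-1})x^{-1} \leq Q$. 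Writing $x = qn$ with $q \in Q$ and $n \in N$ gives $(ng) P (ng)^{-1} \leq q^{-1}Qq = Q$, so $h = ng \in gN$ is the required element, completing the proof.
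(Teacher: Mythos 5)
The paper cites this as Lemma 8.8 of Aschbacher's \emph{Normal subsystems of fusion systems} and does not reprove it, so there is no in-paper proof to compare against; I will therefore evaluate your argument on its own merits. Your proof is correct. The strong-closure claim is handled exactly right: every $\f$-morphism is a restriction of some $c_g$, and normality of $N$ gives $c_g(N)=N$, so $S\cap N$ is preserved. For the isomorphism, you correctly identify $S/T$ with $SN/N$, verify $SN/N\in\syl_p(G/N)$ using $T\in\syl_p(N)$, and set up the well-defined injective map $\overline{c_h}\mapsto c_{hN}$ on morphism sets. The heart of the lemma is indeed surjectivity, and your argument there is the standard one: for $T\leq Q\leq S$ one has $Q\cap N=T$, hence $Q\in\syl_p(QN)$; given $g$ with $gPg^{-1}N\leq QN$, Sylow's theorem in $QN$ conjugates $gPg^{-1}$ into $Q$ by some $x\in QN$, and writing $x=qn$ lets you absorb $q$ into $Q$ and replace $g$ by $h=ng\in gN$ with $hPh^{-1}\leq Q$. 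One small point you leave implicit but which is immediate: the bijection on morphism sets respects composition (since on both sides composition is multiplication of group elements modulo the relevant kernels) and satisfies the compatibility $\Phi(\overline{c_h})\circ\vp|_{P/T}=\vp|_{Q/T}\circ\overline{c_h}$, so that $(\vp,\Phi)$ is genuinely an isomorphism of fusion systems in the sense of the paper's Definition. This is the same route Aschbacher's proof takes; there is nothing to flag.
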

	\noindent If $O_{2'}(G)\leq Z(G)$, then by Lemma \ref{lem: zf is 2 part of zg}, $Z(\f)=Z(G)\cap S$. Using Lemma \ref{lem: description of f mod zf}, $\f_S(G)/Z(\f)\cong \f_{S/Z(\f)}(G/Z(G))$. As a result, when $O_{2'}(G)\leq Z(G)$, we will identify $\f_S(G)/Z(\f)$ with $\f_{S/Z(\f)}(G/Z(G))$.

	\subsection{Automorphisms of fusion systems}   
	The automorphism group of a fusion system $\f$ over a finite $p$-group $S$ is the subgroup of $\aut(S)$ that consists of the fusion preserving automorphisms of $S$. It is defined that 
	\begin{align*}
		\out(\f)=\aut(\f)/\aut_\f(S).
	\end{align*} 
	\par 
	
	\begin{lem}\label{lem: pif is well defined}
		Let $\f$ be a saturated fusion system over a finite $p$-group $S$. The center of $\f$ is $\aut(\f)$-invariant. The map $\a\mapsto\a|_{S/Z(\f)}$ determines a well-defined group homomorphism $\tilde{\pi}_\f:\aut(\f)\to\aut(\f/Z(\f))$ that sends $\aut_{\f}(S)$ to $\aut_{\f/Z(\f)}(S/Z(\f))$, and hence determines a well-defined homomorphism $\pi_\f:\out(\f)\to\out(\f/Z(\f))$. 
	\end{lem}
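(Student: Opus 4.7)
The plan is to proceed in three stages: first show $Z(\f)$ is $\aut(\f)$-invariant; then show $Z(\f)$ is strongly closed in $\f$, so that the saturated quotient $\f/Z(\f)$ exists and the induced map $\bar{\a}$ on $S/Z(\f)$ is defined; and finally verify that $\bar{\a}$ is fusion preserving, that $\a \mapsto \bar{\a}$ is a group homomorphism, and that inner automorphisms of $\f$ descend to inner automorphisms of $\f/Z(\f)$.

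For the first stage, fix $\a \in \aut(\f)$. Since $\a \in \aut(S)$, it restricts to an automorphism of $Z(S)$, giving $\a(Z(\f)) \leq Z(S)$ automatically. The key step is to check that $\a(Z(\f))$ is itself central in $\f$. Given any $\ps \in \hom_\f(P,R)$, the conjugate $\a^{-1}\ps\a|_{\a^{-1}(P)}$ is a morphism in $\f$, so centrality of $Z(\f)$ extends it to a morphism on $\a^{-1}(P)Z(\f)$ that is trivial on $Z(\f)$; pushing back by $\a$ (which carries morphisms of $\f$ to morphisms of $\f$) yields an extension of $\ps$ to $P\cdot\a(Z(\f))$ that is trivial on $\a(Z(\f))$. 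Maximality of $Z(\f)$ then forces $\a(Z(\f)) \leq Z(\f)$, and the same argument applied to $\a^{-1}$ gives equality. Strong closure of $Z(\f)$ follows from one further application of centrality: for any $z \in Z(\f)$ and $\ps \in \hom_\f(\langle z\rangle, S)$, the centrality axiom extends $\ps$ across $\langle z\rangle Z(\f) = Z(\f)$ as the identity, forcing $\ps(z) = z$. By the remark following Definition \ref{defn: quotient fusion system}, the quotient $\f/Z(\f)$ is then saturated.

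Because $\a$ preserves $Z(\f)$, it descends to a group automorphism $\bar\a$ of $S/Z(\f)$. To see that $\bar\a$ is fusion preserving with respect to $\f/Z(\f)$, I would take an arbitrary morphism $\bar\ps \in \hom_{\f/Z(\f)}(P/Z(\f), Q/Z(\f))$, lift it to $\ps \in \hom_\f(P,Q)$, and observe that $\a\ps\a^{-1}|_{\a(P)} \in \hom_\f(\a(P),\a(Q))$ since $\a \in \aut(\f)$; a direct coset computation then identifies its image in $\f/Z(\f)$ with $\bar\a\bar\ps\bar\a^{-1}$. Functoriality of passage to cosets makes $\a \mapsto \bar\a$ a homomorphism $\tilde\pi_\f: \aut(\f) \to \aut(\f/Z(\f))$, and the very definition of morphisms in $\f/Z(\f)$ sends $\aut_\f(S)$ into $\aut_{\f/Z(\f)}(S/Z(\f))$, yielding the outer descent $\pi_\f$. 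The only step that requires real content, and the one I expect to be the main technical obstacle, is establishing that $\a(Z(\f))$ is central in $\f$, since this is where the interplay between the automorphism structure of $\f$ and the defining extension property of $Z(\f)$ is used; the rest of the lemma is a formal consequence of the construction of the quotient fusion system.
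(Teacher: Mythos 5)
Your proposal is correct, and the second half (descending $\a$ to $S/Z(\f)$, checking fusion-preservation by lifting a morphism of $\f/Z(\f)$ to $\f$, conjugating by $\a$, and projecting back, then noting that $\aut_\f(S)$ lands in $\aut_{\f/Z(\f)}(S/Z(\f))$ by the definition of the quotient morphism sets) is essentially the paper's argument. Where you genuinely diverge is the first step, the $\aut(\f)$-invariance of $Z(\f)$. The paper argues element-wise: given $x\in Z(\f)$ and $\vp\in\hom_\f(\langle\a(x)\rangle,S)$, it pulls $\vp$ back through $\a$, uses $x^\f=\{x\}$, pushes forward to get $\vp(\a(x))=\a(x)$, and then concludes $\a(x)\in Z(\f)$; this last inference silently invokes the characterization of $Z(\f)$ as the set of elements fixed by every morphism of $\f$, which is a standard fact for saturated fusion systems but is not the definition used in the paper. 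You instead transport the defining extension property itself: conjugating an arbitrary $\ps\in\hom_\f(P,R)$ by $\a^{-1}$, extending over $Z(\f)$ by centrality, and conjugating back shows that $\a(Z(\f))$ is literally central in $\f$, so maximality of $Z(\f)$ forces $\a(Z(\f))\leq Z(\f)$, with equality via $\a^{-1}$. This variant is slightly more self-contained (it uses only the definition of centrality and needs no saturation for the invariance step), at the cost of quantifying over all morphisms of $\f$ rather than just those out of a cyclic subgroup. You also verify explicitly that $Z(\f)$ is strongly closed, so that $\f/Z(\f)$ is a well-defined saturated quotient; the paper takes this for granted, and your one-line argument (extend any $\ps$ on $\langle z\rangle$ across $Z(\f)$ as the identity, forcing $\ps(z)=z$) is exactly the right justification.
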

	\begin{proof}
		Let $(\pi,\Pi):\f\to\f/Z(\f)$ denote the fusion system morphism induced by the canonical projection $\pi:S\to S/Z(\f)$ given in Definition \ref{defn: quotient fusion system}. \par 
		
		Let $\a\in\aut(\f)$ and let $x\in Z(\f)$. Let $\vp\in\hom_{\f}(\langle\a(x)\rangle, S)$. Since $(\a,A)$ is a fusion system automorphism, there is a morphism $\overline{\vp}\in\hom_{\f}(\langle x\rangle,S)$ such that $\a\circ \overline{\vp} = \vp\circ \a|_x$. Since $x$ is $\f$-central, $x^\f=\{x\}$ and in particular, $\overline{\vp}(x)=x$. Therefore, 
		\begin{align*}
			\vp(\a(x)) = \a(\overline{\vp}(x)) = \a(x).
		\end{align*}
		Hence, $\a(x)\in Z(\f)$ and $\a(Z(\f))\leq Z(\f)$. Since $\a\in\aut(S)$ and $S$ is finite, equality holds. \par 
		
		We will show that $\a|_{S/Z(\f)}$ extends to an automorphism of $\f/Z(\f)$ and we will denote this automorphism by $(\a|_{S/Z(\f)},A|_{S/Z(\f)})$. Let $\vp\in\hom_{\f/Z(\f)}(T/Z(\f),U/Z(\f))$. By definition, $\vp$ is the restriction of some $\psi\in\hom_\f(T,U)$. Since $\a$ is a fusion preserving automorphism of $S$, there is some morphism $A(\psi)\in\hom_{\f}(\a(T),\a(U))$ such that $A(\psi)\circ\a|_T=\a|_U\circ\psi$. Defining $A|_{S/Z(\f)}(\vp)=\Pi(A(\psi))$, it follows that $(\a|_{S/Z(\f)},A|_{S/Z(\f)})$ is a fusion system morphism of $\f/Z(\f)$. \par  
		
		It is clear that $\a|_{S/Z(\f)}$ is bijective. Let $\vp\in\hom_{\f/Z(\f)}(T/Z(\f),U/Z(\f))$. Note that $\vp$ is the restriction of some $\psi\in\hom_\f(T,U)$. Since $\a$ is a fusion preserving automorphism, there are subgroups $T',U'\leq S$ and $\psi'\in\hom_{\f}(T',U')$ such that $\psi=A(\psi')$. Therefore, there are subgroups, namely $T'/Z(\f),U'/Z(\f)\leq S/Z(\f)$ and the restriction of $\psi'$, which we denote by $\vp'\in\hom_{\f/Z(\f)}(T'/Z(\f),U'/Z(\f))$, such that $\vp=A|_{S/Z(\f)}(\vp')$. \par
		
		Lastly, if $\a\in\aut_{\f}(S)$, then since $(\pi,\Pi)$ is a fusion system morphism,
	\begin{align*}
			\Pi(\a)\in\hom_{\f/Z(\f)}(S/Z(\f),S/Z(\f))=\aut_{\f/Z(\f)}(S/Z(\f)).
	\end{align*}
This completes the proof. 
	\end{proof}

	\subsection{Centric linking systems}
	If $G$ is a group and $H$ and $K$ are subgroups of $G$, then the transporter set $T_G(H,K)$ is
	\begin{align*}
		T_G(H,K)=\{g\in G\mid gHg^{-1}\leq K\}.
	\end{align*}
	\begin{defn}[Definition 3.1 in \cite{aschbacher2011fusion}]\label{def: trans and cent link}
		Let $G$ be a finite group and $S\in\syl_p(G)$.
		\begin{itemize}
			\item The \emph{transporter category} of $G$ over $S$ is the category $\ts_S(G)$ whose objects are subgroups of $S$ and, given $P,Q\leq S$, $\mor_{\ts_S(G)}(P,Q)=T_G(P,Q)$. Composition of morphisms is multiplication in $G$. 
			\item The \emph{centric linking system of $G$ over $S$} is the category $\cl_S^c(G)$ whose objects are the $p$-centric subgroups of $G$ contained in $S$, and whose morphism sets are given by 
			\begin{align*}
				\mor_{\cl_S^c(G)}(P,Q)=T_G(P,Q)/O_{p'}(C_G(P))
			\end{align*}
			for all $P,Q\leq S$ which are $p$-centric in $G$. Composition of morphisms is multiplication of cosets. 
		\end{itemize}
	\end{defn}
	\noindent For an arbitrary saturated fusion system $\f$ over a finite $p$-group $S$, the category $\ts_{\Delta}(S)$ is the category whose objects are $\Delta\subset\ob(\f)$ and whose morphism sets are give by $T_S(P,Q)$ for any $P,Q\in\Delta$.

	We now state the definition of a centric linking system associated to an arbitrary saturated fusion system. 
	\begin{defn}[Definition 3.1 in \cite{oliver2007extensions}]\label{def: linking system}
		Let $\f$ be a saturated fusion system over a $p$-group $S$. A \emph{transporter system associated to $\f$} is a finite category $\ts$ with object set $\Delta\subset\f$, together with a pair of functors
		\begin{align*}
			\ts_{\ob(\cl)}(S)\xrightarrow{\delta}\ts\xrightarrow{\pi}\f
		\end{align*}
		satisfying the following conditions:
		\begin{enumerate}
			\item[(A1)] $\Delta$ is a nonempty set that is closed under $\f$-conjugacy and passage to overgroups, $\d$ is the identity on objects, and $\pi$ is the inclusion on objects. 
			\item[(A2)] For each $P,Q\in\Delta$, the kernel
            \begin{align*}
                E(P):=\ker(\pi_{P,P}:\aut_{\ts}(P)\to\aut_{\f}(P))
            \end{align*}
            acts freely on $\mor_{\ts}(P,Q)$ right composition, and  $\pi_{P,Q}$ is the orbit map for this action. Here, $\aut_{\ts}(P)$ denotes $\mor_{\ts}(P,P)$. 
			\item[(B)] For each $P,Q\in\ob(\ts)$ and each $g\in T_S(P,Q)$, $\pi_{P,Q}$ sends $\d_{P,Q}(g)\in\mor_\cl(P,Q)$ to $c_g\in\hom_\f(P,Q)$, and $\d_{P,Q}:T_S(P,Q)\to\mor_{\ts}(P,Q)$ is injective. 
			\item[(C)] For all $\psi\in\mor_\ts(P,Q)$ and all $g\in P$, the diagram
			\[\begin{tikzcd}
				P && Q \\
				\\
				P && Q
				\arrow["\psi", from=1-1, to=1-3]
				\arrow["{\delta_{P,P}(g)}"', from=1-1, to=3-1]
				\arrow["{\delta_{Q,Q}(\pi(\psi)(g))}", from=1-3, to=3-3]
				\arrow["\psi"', from=3-1, to=3-3]
			\end{tikzcd}\]
			commutes in $\ts$.
            \item[(I)] $\d_{S,S}(S)$ is a Sylow $p$-subgroup of $\aut_{\ts}(S)$. 
            \item[(II)] Let $\vp\in\iso_{\ts}(P,Q)$, $P\norm\overline{P}\leq S$, and $Q\norm\overline{Q}\leq S$ be such that $\vp\circ\d_{P,P}(\overline{P})\circ\vp^{-1}\leq\d_{Q,Q}(\overline{Q})$. Then there is $\overline{\vp}\in\mor_{\ts}(\overline{P},\overline{Q})$ such that $\overline{\vp}\circ\d_{P,\overline{P}}(1)=\d_{Q,\overline{Q}}(1)\circ\vp$. 
		\end{enumerate}
	\end{defn}
	\noindent We say that a transporter system $\cl$ is a centric linking system associated to $\f$ if $\ob(\ts)=\f^c$. From here on, we adhere to the standard practice of abbreviating $\d_{P,P}$ to $\d_P$ and $\pi_{P,P}$ to $\pi_P$. \par 
	
	For a finite group $G$ with $S\in\syl_p(G)$, the centric linking system $\cl_S^c(G)$ satisfies the above definition for the saturated fusion system $\f_S(G)$. The functors $\d:\ts_{\f_S(G)^c}(S)\to\cl_S^c(G)$ and $\pi:\cl_S^c(G)\to\f_S(G)$ are the identity on objects and the inclusion on objects, respectively. For morphisms, the functor $\d$ maps $T_S(P,Q)\to T_G(P,Q)/O_{p'}(C_G(P))$ via projection for any $\f_S(G)$-centric subgroups $P,Q\leq S$. On morphisms, the functor maps $gO_{p'}(C_G(P))\in T_G(P,Q)/O_{p'}(C_G(P))$ to the conjugation map $c_g\in\hom_{\f_S(G)}(P,Q)$ for $P,Q\in\ob(\cl_S^c(G))$. \par

	\subsection{Automorphisms of centric linking systems}
	\begin{defn}[Definition 2.3 in \cite{glauberman2021rigid}]
		Let $\f$ and $\f'$ be saturated fusion systems, $\cl$ and $\cl'$ be associated centric linking systems, and $\ts=\ts_{\f^c}(S)$ and $\ts'=\ts_{(\f')^c}(S')$.
		\begin{enumerate}
			\item Let $\a:\cl\to\cl'$ be an equivalence of categories. It is said that 
			\begin{itemize}
				\item $\a$ is \emph{isotypical} if $\a(\d_P(P))=\d'_{\a(P)}(\a(P))$ for each subgroup $P\in\f^c$. 	
				\item $\a$ \emph{sends inclusions to inclusions} if $\a(\d_{P,Q}(1))=\d'_{\a(P),\a(Q)}(1)$ for each $P,Q\in\f^c$.
			\end{itemize}
			\item An \emph{isomorphism} is an equivalence $\cl\to\cl'$ that is isotypical and sends inclusions to inclusions. An \emph{automorphism} of a centric linking system $\cl$ is an isomorphism of $\cl$ to itself.
			\item An automorphism $\a:\cl\to\cl$ is said to be \emph{rigid} if $\a_S\circ\d_S=\d_S$ as homomorphisms $S\to\aut_{\cl}(S)$. 
			\item An automorphism $\a$ of $\cl$ is said to be \emph{inner} if there is some element $\g\in\aut_{\cl}(S)$ such that $\a$ is given on objects by $P\mapsto c_{\g}(P)$ where $c_{\g}(P)=\pi(\g)(P)$, and on morphisms by mapping $\psi:P\to Q$ to 
			\begin{align*}
				c_{\g}(\psi) = \g|_{Q,c_{\g}(Q)}\circ\psi\circ(\g|_{P,c_{\g}(P)})^{-1}
			\end{align*}
			where $\g|_{Q,c_{\g}(Q)}$ is the unique morphism from $Q$ to $c_{\g}(Q)$ in $\cl$ such that $\g\circ\d_{Q,S}(1)=\d_{c_{\g}(Q),S}(1)\circ\g|_{Q,c_{\g}(Q)}$. We refer to $c_\g$ as conjugation by $\g$. 
		\end{enumerate}
	\end{defn}
	Denote the group of automorphisms of a centric linking system by $\aut(\cl)$. Also, denote by $\out(\cl)$ the group
	\begin{align*}
		\out(\cl)=\aut(\cl)/\{c_{\g}\mid \g\in\aut_{\cl}(S)\}
	\end{align*}

	\subsection{Relating the outer automorphism groups \texorpdfstring{$\out(G)$}{Out(G)}, \texorpdfstring{$\out(\cl)$}{Out(L)}, and \texorpdfstring{$\out(\f)$}{Out(F)}}

    \begin{defn}[\cite{andersen2012reduced}]
        Let $S$ be a finite $p$-group. Let $\f$ be a saturated fusion system over $S$ with associated centric linking system $\cl$. 
        \begin{enumerate}
            \item[(a)] The map $\mu_{\cl}:\out(\cl)\to\out(\f)$ is the group homomorphism defined by $\mu_{\cl}([\a])=[\d^{-1}_S\a_S\d_S]$.
            \item[(b)] Let $G$ be a finite group such that $S\in\syl_p(G)$, and let $\f=\f_S(G)$ and $\cl=\cl_S^c(G)$. The map $\kappa_G:\out(G)\to\out(\cl)$ is the group homomorphism that sends the class $[\a]\in\out(G)$, for $\a\in N_{\aut(G)}(S)$, to the class $[\b]\in\out(\cl)$ where $\b(P)=\a(P)$ for an object $P$, and sends $gO_{p'}(C_G(P))\in\mor_{\cl}(P,Q)$ to $\b(g)O_{p'}(C_G(\b(P))))$.
            \item[(c)] Let $G$ be a finite group such that $S\in\syl_p(G)$, and let $\f=\f_S(G)$ and $\cl=\cl_S^c(G)$. Set $\overline{\kappa}_G=\mu_{\cl}\circ\kappa_G$ be the homomorphism induced by restriction to $S$.
        \end{enumerate}
    \end{defn}
    The following result of \cite{andersen2012reduced}, that was later refined by \cite{glauberman2021rigid}, characterizes the kernel of $\mu_{\cl}$:
	\begin{theo}[Lemma 1.16 in \cite{andersen2012reduced}, Theorem 1.2 in \cite{glauberman2021rigid}]\label{theo: kermu is a 2 group}
		For any centric linking system associated to a saturated fusion system $\f$ over a finite $p$-group, $\ker(\mu_{\cl})$ is trivial if $p$ is odd and an elementary abelian 2-group if $p=2$. 
	\end{theo}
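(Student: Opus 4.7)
The plan begins with showing every class in $\ker(\mu_\cl)$ admits a rigid representative. If $[\alpha] \in \ker(\mu_\cl)$, then $\d_S^{-1}\alpha_S\d_S = c_s$ for some $s \in S$ by definition of $\mu_\cl$. Since $\d_S(s^{-1}) \in \aut_\cl(S)$, conjugation by it is an inner automorphism of $\cl$ whose restriction to $\d_S(S)$ intertwines with $\d_S$ via $c_{s^{-1}}$. Composing $\alpha$ with this inner automorphism therefore produces a rigid representative of $[\alpha]$. Hence $\ker(\mu_\cl)$ is the quotient of the group of rigid automorphisms of $\cl$ by the subgroup of rigid inner automorphisms, and the theorem reduces to describing this quotient.

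\textbf{Cohomological encoding.} For a rigid $\alpha$ and each $P \in \f^c$, rigidity together with axiom (C) of Definition \ref{def: linking system} forces $\pi_P \circ \alpha_P = \pi_P$, and by axiom (A2) the fibers of $\pi_P$ are freely and transitively acted on by $E(P)$, which for a centric linking system equals $\d_P(Z(P))$. Thus $\alpha$ determines a family of elements $(z_P \in Z(P))_{P \in \f^c}$ whose coherence under the inclusions and restrictions, mediated by axioms (B), (C), and (II), makes it a $1$-cocycle valued in the center functor $Z_\f\colon P \mapsto Z(P)$ on the orbit category $\mathcal{O}(\f^c)$. Coboundaries correspond precisely to rigid inner automorphisms (those arising from elements of $\d_S(Z(S))$), yielding an injection $\ker(\mu_\cl) \hookrightarrow \lim^1_{\mathcal{O}(\f^c)} Z_\f$.

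\textbf{Vanishing and elementary abelian structure.} For $p$ odd, I would invoke Oliver's computation (a key input in the proof of existence and uniqueness of centric linking systems) that $\lim^1_{\mathcal{O}(\f^c)} Z_\f = 0$; this forces $\ker(\mu_\cl) = 1$. For $p=2$, the functor $Z_\f$ takes values in abelian $2$-groups, so $\lim^1$ is automatically a $2$-group and hence so is $\ker(\mu_\cl)$. To upgrade to elementary abelian, following \cite{glauberman2021rigid}, one shows that for any rigid $\alpha$ the square $\alpha^2$ is inner: the strategy is to produce, via a Glauberman $Z^*$-style argument combined with axiom (II) of the linking system, an element $\g \in \aut_\cl(S)$ with $c_\g = \alpha^2$, relying on a careful analysis of $\alpha$ acting on $\f$-centric subgroups containing well-chosen involutions. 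This last step is the principal obstacle: while $2$-torsion is automatic from the cohomological encoding, explicit construction of $\g$ requires fusion-theoretic arguments peculiar to the prime $2$ and delicate use of the linking-system axioms to lift the coherent data from $\f$ back to $\cl$.
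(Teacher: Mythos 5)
The paper does not actually prove this statement; it is quoted from Andersen--Oliver--Ventura (Lemma 1.16) and Glauberman--Lynd (Theorem 1.2), so your sketch has to be measured against the arguments in those references. Your architecture is the standard one: pass to a rigid representative, identify rigid automorphisms modulo rigid inner ones with $\varprojlim^1_{\mathcal{O}(\f^c)}(\mathcal{Z}_\f)$ via the elements $z_\psi\in Z(P)$ with $\alpha(\psi)=\psi\circ\d_P(z_\psi)$, quote the vanishing of this group at odd primes, and observe that at $p=2$ it is automatically a finite abelian $2$-group because the center functor takes values in finite abelian $2$-groups. All of that is consistent with how the cited papers (and this one, in Proposition \ref{prop: lim1 is there but lim2 gone}) organize the material.

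There are, however, two genuine problems. First, your reduction asserts that $[\alpha]\in\ker(\mu_\cl)$ forces $\d_S^{-1}\alpha_S\d_S=c_s$ for some $s\in S$. Triviality in $\out(\f)=\aut(\f)/\aut_\f(S)$ only gives $\d_S^{-1}\alpha_S\d_S\in\aut_\f(S)$, which is in general strictly larger than $\inn(S)$; the repair is routine (by axiom (A2) the map $\pi_S:\aut_\cl(S)\to\aut_\f(S)$ is surjective, so lift the restriction to some $\gamma\in\aut_\cl(S)$ and compose $\alpha$ with $c_\gamma^{-1}$), but as written the step is false. Second, the elementary abelian claim at $p=2$ is precisely the content of Glauberman--Lynd's theorem, and your proposal does not establish it: announcing that one "would show $\alpha^2$ is inner via a Glauberman $Z^*$-style argument" is a plan rather than a proof, as you acknowledge. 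Similarly, the odd-prime case rests entirely on the quoted vanishing of $\varprojlim^1(\mathcal{Z}_\f)$ (which, as a small correction, is not the uniqueness obstruction --- that lives in $\varprojlim^2$ --- but a separate computation of Oliver/Chermak/Glauberman--Lynd). Since these two inputs carry all of the substance of the theorem, your write-up amounts to the same citations the paper itself makes, preceded by a reduction that needs the fix above; if citing those results is allowed, the argument is fine once corrected, and if not, the essential steps remain unproved.
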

	The kernel of $\mu_{\cl}$ is connected to higher derived functors of the center functor. In particular, from results of Chermak in \cite{chermak2013fusion}, Oliver in \cite{oliver2013existence}, and Glauberman and Lynd in \cite{glauberman2016control}, the following result holds, as refined by \cite{glauberman2021rigid}:
	\begin{prop}[\cite{chermak2013fusion},\cite{oliver2013existence},\cite{glauberman2016control},\cite{glauberman2021rigid}] \label{prop: lim1 is there but lim2 gone}
		Let $\f$ be a saturated fusion system over a finite $p$-group $S$. Then $\varprojlim_{\mathcal{O}(\mathcal{F}^c)}^k(\mathcal{Z}_\f)=1$ for all $k\geq2$. If $\cl$ is a centric linking system for $\f$, then there is an exact sequence
		\[\begin{tikzcd}
			1 && \varprojlim_{\mathcal{O}(\mathcal{F}^c)}^1(\mathcal{Z}_{\mathcal{F}}) && {\out(\mathcal{\cl})} && {\out(\f)} && 1
			\arrow[from=1-1, to=1-3]
			\arrow["{\lambda_\cl}", from=1-3, to=1-5]
			\arrow["{\mu_\cl}", from=1-5, to=1-7]
			\arrow[from=1-7, to=1-9]
		\end{tikzcd}\]
		Furthermore, $\varprojlim_{\mathcal{O}(\mathcal{F}^c)}^1(\mathcal{Z}_{\mathcal{F}})=1$ if $p$ is odd, and is an elementary abelian 2-group if $p=2$. 
	\end{prop}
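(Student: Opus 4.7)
My plan is to treat this proposition as an assembly of four previously established results rather than as a fresh derivation, since each of the three claims is essentially the output of one of the cited papers. The three pieces to be combined are: (i) the vanishing $\varprojlim^k_{\mathcal{O}(\f^c)}(\mathcal{Z}_\f) = 1$ for $k \geq 2$; (ii) the exact sequence identifying $\ker(\mu_\cl)$ with $\varprojlim^1_{\mathcal{O}(\f^c)}(\mathcal{Z}_\f)$; and (iii) the qualitative description of $\varprojlim^1_{\mathcal{O}(\f^c)}(\mathcal{Z}_\f)$ at odd and even primes.

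For (i), I would cite directly the main technical theorems of \cite{chermak2013fusion}, \cite{oliver2013existence}, and \cite{glauberman2016control}, each of which establishes the vanishing of higher limits of the center functor in all degrees $k \geq 2$. Chermak's original argument proceeded via partial groups; Oliver reproved the same statement more directly; Glauberman--Lynd refined the input hypotheses. Any one of these gives (i) as stated.

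For (ii), I would invoke the standard obstruction theory for lifting automorphisms along the projection $\pi : \cl \to \f$, which is developed in parallel with the existence/uniqueness theory in the same papers. A fusion-preserving class $[\bar\alpha] \in \out(\f)$ lifts through $\mu_\cl$ if and only if a certain obstruction class in $\varprojlim^2_{\mathcal{O}(\f^c)}(\mathcal{Z}_\f)$ vanishes; since that group is trivial by (i), $\mu_\cl$ is surjective. The indeterminacy in the choice of lift, modulo the action of $\aut_\cl(S)$ by conjugation, is naturally a torsor for $\varprojlim^1_{\mathcal{O}(\f^c)}(\mathcal{Z}_\f)$, and the induced map $\lambda_\cl$ is injective with image equal to $\ker(\mu_\cl)$. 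This gives the exact sequence.

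For (iii), once $\ker(\mu_\cl)$ has been identified with $\varprojlim^1_{\mathcal{O}(\f^c)}(\mathcal{Z}_\f)$, the statement follows immediately from Theorem~\ref{theo: kermu is a 2 group} (due to \cite{andersen2012reduced} and refined in \cite{glauberman2021rigid}), which asserts exactly that $\ker(\mu_\cl)$ is trivial for odd $p$ and an elementary abelian $2$-group for $p = 2$. The only real obstacle is bibliographic: because the statement is assembled from four papers whose obstruction-theoretic setups differ slightly in formalism, the care lies in citing precisely which paper supplies which of the three pieces, rather than in performing any new calculation.
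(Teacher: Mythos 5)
Your proposal is correct and matches the paper's treatment: the paper offers no independent proof of this proposition, presenting it exactly as an assembly of the cited results of Chermak, Oliver, Glauberman--Lynd, and the standard obstruction-theoretic exact sequence, with the $p$-odd/$p=2$ description of $\varprojlim^1$ coming from the same sources as Theorem~\ref{theo: kermu is a 2 group}. Your only deviations are bibliographic shadings (e.g.\ Chermak phrases his result via localities rather than higher limits), which do not affect correctness.
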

	\noindent This proposition tells us that for any saturated fusion system, there exists a unique centric linking system up to rigid isomorphism as obstructions to the existence of centric linking systems would be detected by $\varprojlim_{\mathcal{O}(\mathcal{F}^c)}^2(\mathcal{Z}_\f)$. Due to the exact sequence, it is know that, regardless of the choice of prime, the map $\mu_\cl$ is surjective. As a direct result of Proposition \ref{prop: lim1 is there but lim2 gone}, it holds that $\varprojlim_{\mathcal{O}(\mathcal{F}^c)}^1(\mathcal{Z}_\f)\cong\ker(\mu_\cl)$. \par 

The following result of Glauberman and Lynd gives a description of $\ker(\k_G)$ when $O_{p'}(G)=1$. 
	\begin{theo}[Theorem 5.1 from \cite{glauberman2021rigid}]\label{theo: ker kappaG is a p prime group}
		Fix a prime $p$, a finite group $G$, and a Sylow $p$-subgroup $S$ of $G$. Let $\cl$ be the centric linking system for $G$. If $O_{p'}(G)=1$, then $\ker(\kappa_G)$ is a $p'$-group.
	\end{theo}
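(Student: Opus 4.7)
The plan is to show that every element of $\ker(\kappa_G)$ of $p$-power order in $\out(G)$ is trivial, which is equivalent to the conclusion. Given such an $[\alpha]\in\ker(\kappa_G)$, first lift it to a $p$-element $\alpha\in N_{\aut(G)}(S)$ via a Sylow argument in the extension $1\to\inn_G(N_G(S))\to\langle\alpha,\inn_G(N_G(S))\rangle\to\langle[\alpha]\rangle\to 1$, whose quotient is a $p$-group. The condition $\kappa_G([\alpha])=1$ then supplies $g\in N_G(S)$ such that the induced functor $\beta_\alpha$ on $\cl$ equals $c_\gamma$ for $\gamma=gO_{p'}(C_G(S))\in\aut_\cl(S)=N_G(S)/O_{p'}(C_G(S))$. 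Replace $\alpha$ by $\alpha':=c_{g^{-1}}\circ\alpha$, still in the class $[\alpha]\in\out(G)$, so that $\beta_{\alpha'}$ becomes the identity functor on $\cl$. Reading this equality on morphism sets gives the congruence
\[
\alpha'(h)\equiv h\pmod{O_{p'}(C_G(P))}\quad\text{for every }h\in T_G(P,Q)\text{ with }P,Q\in\f^c.
\]
Specializing to $P=Q=S$ (which is always $\f$-centric) and $h\in S$, and using $\alpha'(S)=S$ together with $S\cap O_{p'}(C_G(S))=1$, we conclude $\alpha'|_S=\id_S$.

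The remaining task, and the main obstacle, is to upgrade ``$\alpha'$ centralises $S$ and $[\alpha']$ has $p$-power order in $\out(G)$'' to ``$\alpha'$ is inner in $G$'', using the hypothesis $O_{p'}(G)=1$. To do this I would first relift $[\alpha']$ to a $p$-element $\alpha''\in C_{\aut(G)}(S)$. This is possible because the natural map $C_{\aut(G)}(S)/\inn_G(C_G(S))\to\out(G)$ is injective: its kernel lies in $\inn_G(C_G(S)Z(G))/\inn_G(C_G(S))=1$ since $Z(G)\leq C_G(S)$, so $[\alpha']$ retains $p$-power order in the domain, and a Sylow argument in $\langle\alpha',\inn_G(C_G(S))\rangle$ supplies $\alpha''$. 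Form the semidirect product $H:=G\rtimes\langle\alpha''\rangle$. Since $\alpha''$ centralises $S$ and has $p$-power order, $\tilde S:=S\times\langle\alpha''\rangle$ is a Sylow $p$-subgroup of $H$ with $\alpha''\in Z(\tilde S)$, and $O_{p'}(G)=1$ together with $[H:G]$ being a $p$-power forces $O_{p'}(H)=1$.

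Finally, Glauberman's $Z^*$-theorem for $p=2$ (or its analogue for odd $p$, which relies on the classification of finite simple groups) applied to $\alpha''$ should give $\alpha''\in Z^*(H)=Z(H)$, where the second equality is from $O_{p'}(H)=1$, so that $\alpha''$ commutes with every element of $G$ inside $H$ and hence acts trivially on $G$, yielding $[\alpha]=[\alpha'']=1$. The hard part is verifying the hypothesis of $Z^*$, namely that $\alpha''$ is not $H$-conjugate to any other element of $C_H(\alpha'')\cap\tilde S$. A direct computation identifies these $H$-conjugates inside $\tilde S$ with the elements $t\alpha''$ where $t=g\alpha''(g)^{-1}\in S$ for some $g\in G$, so the verification reduces to showing that $g\alpha''(g)^{-1}\in S$ already forces $\alpha''(g)=g$. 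Ruling out the nontrivial case is expected to require the full strength of the functorial condition $\beta_{\alpha'}=\id_\cl$, not merely the restriction $\alpha'|_S=\id_S$.
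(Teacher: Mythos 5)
First, a framing remark: the paper does not prove this statement itself, it imports it from Glauberman--Lynd \cite{glauberman2021rigid}, so there is no in-paper proof to compare with; your proposal has to stand on its own. Its reductions are fine as far as they go: the Sylow liftings, the computation that $\alpha'|_S=\id_S$, the description of $\alpha''^H\cap\tilde S$ as $\{\,g\alpha''(g)^{-1}\alpha''\mid g\alpha''(g)^{-1}\in S\,\}$, and the deduction $O_{p'}(H)=1$ all check out. But what you have is a plan, not a proof: the step you yourself flag as ``the hard part'' --- that $g\alpha''(g)^{-1}\in S$ forces $\alpha''(g)=g$ --- is exactly where all the content of the theorem sits, and it is left open. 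The cheap part of that step is this: if $g$ conjugates some $\f$-centric $P\leq S$ into $S$, then the hypothesis that the induced functor is the identity gives $g^{-1}\alpha(g)\in O_{p'}(C_G(P))$, hence $t=g\alpha(g)^{-1}\in O_{p'}(C_G({}^gP))\cap S=1$ since ${}^gP$ is centric. The genuine difficulty is that an arbitrary $g$ with $g\alpha(g)^{-1}\in S$ need not conjugate any centric subgroup of $S$ into $S$, so one must control the fusion of $\alpha''$ inside $\tilde S$ by local conjugations (e.g.\ Alperin-type arguments in $H$) and show each step can be realized along transporter maps between $\f$-centric subgroups. Nothing in your outline does this, and it cannot follow from ``$\alpha'|_S=\id_S$ and $[\alpha]$ a $p$-element'' alone: the outer involution of $\alter(6)$ centralizes a Sylow $2$-subgroup, so the full linking-system hypothesis must enter in an essential way precisely here.

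There are two further obstructions in the write-up. (a) After you relift inside $C_{\aut(G)}(S)$ modulo $\inn_G(C_G(S))$, the new representative $\alpha''=\alpha' c_z$ with $z\in C_G(S)$ induces on $\cl$ conjugation by the image of $z$ in $\aut_\cl(S)$, which is an inner automorphism of $\cl$ but generally not the identity functor; so the ``full strength of $\beta_{\alpha'}=\id_\cl$'' that your final step is supposed to exploit is no longer available for $\alpha''$. This is repairable (carry out the Sylow lifting inside the group of automorphisms that centralize $S$ \emph{and} induce the identity on $\cl$, whose intersection with $\inn(G)$ is $\{c_x\mid x\in Z(\f)\,O_{p'}(C_G(S))\}$), but as written the two properties ``$p$-element'' and ``identity on $\cl$'' are not secured simultaneously. (b) Glauberman's $Z^*$-theorem is a statement about involutions, and its odd-prime analogue (which requires the classification) about elements of order $p$; your $\alpha''$ is a $p$-element of a priori larger order, and even after reducing to $[\alpha]$ of order $p$ in $\out(G)$ a representative in $\aut(G)$ need not have order $p$. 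So you need either a version of the $Z^*$/$Z_p^*$-theorem for isolated $p$-elements of arbitrary order or an extra inductive reduction; this is not addressed. In sum: the skeleton is plausible, but the isolation step and the order issue are genuine gaps, and closing the first would amount to doing the real work of the theorem.
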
 
	When we have a fusion system of the form $\f_S(G)$ (with centric linking system $\cl_S^c(G)$), it is said that $\f_S(G)$ is \emph{tamely realized} by $G$ if $\k_G$ is split surjective.
	\begin{defn}[Definition 1.3 in \cite{broto2019automorphisms}]\label{def: tameness}
		For a finite group $G$ and $S\in\syl_p(G)$, the fusion system $\f_S(G)$ is \emph{tame} if there is a finite group $G^*$ which satisfies the following two conditions:
		\begin{itemize}
			\item there is a fusion preserving isomorphism $S\cong S^*$ for some $S^*\in\syl_p(G^*)$; and
			\item the homomorphism $\k_{G^*}:\out(G^*)\to\out(\cl_S^c(G^*))$ is split surjective (i.e., $G^*$ tamely realizes $\f_{S^*}(G^*)$).
		\end{itemize}
	\end{defn}

	\subsection{Constrained fusion systems}
	For an arbitrary saturated fusion system $\f$ over a finite $p$-group $S$, it is said that $Q\norm S$ is normal in $\f$ if each $\a\in\hom_{\f}(T,U)$ extends to a morphism $\bar{\a}\in\hom_{\f}(TQ,UQ)$ which, when restricted to $Q$, maps $Q$ to itself. The saturated fusion system $\f$ is said to be \emph{constrained} if there is some $Q\norm S$ such that $Q$ is $\f$-centric and normal in $\f$. Due to the following result of \cite{broto2005subgroup}, the centric linking systems of constrained fusion systems have no noninner rigid automorphisms.
	\begin{prop}[Proposition 4.2 in \cite{broto2005subgroup}]\label{prop: constrained fs have no noninner rigid autos}
		Let $\f$ be any constrained saturated fusion system over a finite $p$-group $S$. Then 
		\begin{align*}
			{\varprojlim_{\mathcal{O}(\mathcal{F}^c)}}^k(\mathcal{Z}_{\mathcal{F}})=1 \text{ for all }k\geq1.
		\end{align*}
		In particular, there is a centric linking system associated to $\f$ which is unique up to rigid isomorphism. Furthermore, the centric linking system $\cl$ has no noninner rigid automorphisms.
	\end{prop}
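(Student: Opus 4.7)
The plan is to reduce to a finite-group model for the constrained fusion system and then exhibit a contracting homotopy for the center functor. By the model theorem for constrained fusion systems, there is a finite group $G$ with $S\in\syl_p(G)$, $O_{p'}(G)=1$, and $\f=\f_S(G)$, in which $Q:=O_p(G)=O_p(\f)$ is a normal $\f$-centric subgroup. Since $O_{p'}(G)=1$, a standard argument gives $C_G(Q)\leq Q$, and a further standard argument for constrained fusion systems shows that every $P\in\f^c$ contains $Q$. Combining these forces $C_G(P)\leq C_G(Q)\leq Q\leq P$, so $C_G(P)=Z(P)$ and $O_{p'}(C_G(P))=1$ for every $P\in\f^c$. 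Thus the centric linking system $\cl:=\cl_S^c(G)$ coincides with the transporter category restricted to $\f^c$, i.e.\ $\mor_\cl(P,P')=T_G(P,P')$ without any $p'$-quotient.

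To prove $\varprojlim^k_{\mathcal{O}(\f^c)}(\mathcal{Z}_\f)=1$ for all $k\geq1$, I would use $Q$ as a cofinal basepoint in $\mathcal{O}(\f^c)$: since every object contains $Q$, the inclusions $Q\hookrightarrow P$ assemble into a natural system of morphisms in the orbit category. For each $P$, the containment $Q\leq P$ together with $\f$-centricity of $Q$ yields $Z(P)\leq C_S(Q)=Z(Q)$, so the center functor admits compatible restriction maps into $\mathcal{Z}_\f(Q)=Z(Q)$. Following the Jackowski--McClure--Oliver approach to higher limits over orbit categories, I would construct a contracting chain homotopy on the bar-resolution cochain complex computing $\varprojlim^*\mathcal{Z}_\f$ by inserting these inclusion morphisms at each cochain level; the crucial compatibility to verify is that the induced action of $\f$-morphisms between centric subgroups respects this retraction, after which the cochain complex collapses and only degree zero survives.

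With the vanishing in hand, the two stated consequences follow from the machinery summarized earlier in the excerpt. Vanishing of $\varprojlim^2\mathcal{Z}_\f$ and $\varprojlim^3\mathcal{Z}_\f$ gives existence and uniqueness of $\cl$ up to rigid isomorphism via Chermak--Oliver--Glauberman--Lynd obstruction theory (\cref{prop: lim1 is there but lim2 gone}). For the absence of noninner rigid automorphisms, any rigid automorphism $\alpha$ of $\cl$ satisfies $\alpha_S\circ\delta_S=\delta_S$, so $\mu_\cl([\alpha])=1$, placing $[\alpha]\in\ker(\mu_\cl)\cong\varprojlim^1\mathcal{Z}_\f=1$. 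Hence $\alpha$ is conjugation by some $\gamma\in\aut_\cl(S)$, and axiom (C) applied to $\gamma$ and $s\in S$ yields $\gamma\circ\delta_S(s)=\delta_S(\pi_S(\gamma)(s))\circ\gamma$; the rigidity condition then forces $\pi_S(\gamma)=\id_S$, so $\gamma\in E(S)=\delta_S(Z(S))$ and $\alpha$ is an inner rigid automorphism. The main obstacle is the explicit construction of the contracting chain homotopy for $\mathcal{Z}_\f$: while the cofinality of $Q$ provides a clean geometric picture, turning it into chain-level formulas that respect the functoriality of $\mathcal{Z}_\f$ under arbitrary $\f$-morphisms (not merely subgroup inclusions) requires careful bookkeeping with the morphism sets of the orbit category.
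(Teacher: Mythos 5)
The paper does not prove this statement; it is quoted from \cite{broto2005subgroup} (Proposition 4.2), so your proposal must stand on its own, and it does not: the linchpin claim that \emph{every} $\f$-centric subgroup contains $Q=O_p(\f)$ is false. Take $G=\sym(4)$, $p=2$, $S\cong D_8$: here $Q=O_2(\f)=O_2(G)\cong Z_2\times Z_2$ is normal and $\f$-centric, but the cyclic subgroup $C\leq S$ of order $4$ is $\f$-centric and does not contain $Q$ (even more simply, for $\f=\f_S(S)$ one has $Q=S$, while proper self-centralizing subgroups are $\f$-centric). What is true is only that $\f$-centric \emph{$\f$-radical} subgroups contain $O_p(\f)$. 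This breaks your argument in two places. First, the chain $C_G(P)\leq C_G(Q)\leq Q\leq P$ is unjustified; the conclusion $C_G(P)=Z(P)$ (hence $\cl_S^c(G)$ equal to the transporter category on $\f^c$) does hold for a model $G$, but it needs a different argument, e.g.\ Thompson's $A\times B$ lemma applied to $\aut_P(Q)\times\aut_X(Q)$ acting on $Q$ with $X=O_{p'}(C_G(P))$, using $C_Q(P)\leq C_S(P)=Z(P)\leq P$. Second, and fatally, the ``cofinal basepoint'' scheme for $\varprojlim^k(\mathcal{Z}_\f)=1$ collapses: the inclusions $Q\hookrightarrow P$ simply do not exist for the objects $P\in\f^c$ not containing $Q$, and the compatibility $Z(P)\leq Z(Q)$ fails for them (in the $\sym(4)$ example $Z(C)=C\not\leq Z(Q)$). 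Moreover, even on the full subcategory of centrics containing $Q$, the existence of a minimal object does not by itself yield a contracting homotopy over an orbit category; you first have to show that this subfamily computes the same higher limits (this reduction is precisely the technical content of \cite{broto2005subgroup}, via vanishing of the relevant $\Lambda$-functors at centrics not containing $Q$, using that such $P$ have $O_p(\out_\f(P))\neq1$), and then prove acyclicity there, e.g.\ by identifying $\mathcal{Z}_\f(P)=C_{Z(Q)}(P/Q)$ with the fixed-point functor of the $G/Q$-module $Z(Q)$ and invoking the standard acyclicity of such functors. You acknowledge that the chain-level contraction is not constructed; as it stands, the vanishing statement — the heart of the proposition — is not proved.

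The final part of your proposal is fine and matches the machinery the paper quotes: granted $\varprojlim^k(\mathcal{Z}_\f)=1$ for $k\geq1$, existence and uniqueness of $\cl$ follow from the obstruction theory summarized in Proposition \ref{prop: lim1 is there but lim2 gone}, a rigid automorphism $\a$ has $\mu_\cl([\a])=1$, hence $[\a]=1$ since $\ker(\mu_\cl)\cong\varprojlim^1(\mathcal{Z}_\f)=1$, and axiom (C) together with injectivity of $\d_S$ forces the conjugating element to lie in $\d_S(Z(S))$, so $\a$ is an inner rigid automorphism. But this correct tail cannot compensate for the missing proof of the vanishing of the higher limits.
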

	\noindent In particular, when $\f=\f_S(G)$ for some Sylow $p$-subgroup $S$, and $S$ is abelian, then the fusion system $\f_S(G)$ is constrained and the centric linking system $\cl_S^c(G)$ has no noninner rigid automorphisms. \par 
	
	\subsection{The known quasisimple fusion systems at the prime 2}
	Let $\f$ be a saturated fusion system over a finite $p$-group $S$. A subsystem $\mathcal{E}$ of $\f$ over $T\leq S$ is said to be normal in $\f$ if $\mathcal{E}$ is saturated, $T$ is strongly closed, $\aut_{\f}(T)\leq\aut(\mathcal{E})$, and other technical conditions hold (see Definition 6.1 in \cite{aschbacher2011fusion}). It is said that $\f$ is simple if it contains no proper nontrivial normal fusion subsystems. Following Corollary 7.5 in \cite{aschbacher2016fusion}, we say that $\f$ is quasisimple if $\mathfrak{foc}(\f)=S$ and $\f/Z(\f)$ is simple. The only known family of nonrealizable quasisimple saturated fusion sytems at the prime 2 are the Benson-Solomon systems. The other known quasisimple saturated fusion sytems at the prime 2 are of the form $\f_S(G)$ for a finite group $G$ and $S\in\syl_2(G)$.
	
	Let $G$ be a nonabelian finite simple group and fix a prime $p$ such that $p$ divides~$|G|$. We say that $G$ is a $p$-Goldschmidt group if for $S\in\syl_p(G)$,  $\f_S(G)=\f_S(N_G(S))$. The following result classifies all $2$-Goldschmidt groups. 
	\begin{theo}[\cite{goldschmidt19742}]\label{theo: what groups are 2gold}
		Let $G$ be a nonabelian finite simple group and $S\in\syl_2(G)$. Then $G$ is $2$-Goldschmidt if and only if one of the following holds:
		\begin{enumerate}
			\item $S$ is abelian.
			\item $G$ is of Lie type in characteristic $2$ of Lie rank $1$. 
		\end{enumerate}
	\end{theo}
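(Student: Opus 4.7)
The plan is to establish both directions of the equivalence, treating the ``if'' direction with classical fusion arguments and the ``only if'' direction by extracting structural restrictions from the hypothesis and invoking a classification-style case analysis.

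For the ``if'' direction I would split into the two listed cases. If $S$ is abelian, then for any subgroups $P, Q \leq S$ and $g \in G$ with $gPg^{-1} = Q$, both $S$ and $g^{-1}Sg$ centralize $P$ and are therefore Sylow $2$-subgroups of $C_G(P)$. A Sylow argument inside $C_G(P)$ produces $c \in C_G(P)$ with $cg^{-1}Sgc^{-1} = S$, so that $n := gc^{-1}$ lies in $N_G(S)$ and induces the same conjugation as $g$ on $P$. This yields $\hom_G(P,Q) = \hom_{N_G(S)}(P,Q)$ for every pair of subgroups. For $G$ of Lie rank $1$ in characteristic $2$, namely $\PSL_2(2^n)$, $\PSU_3(2^n)$, or ${}^2B_2(2^{2n+1})$, I would invoke the Bruhat decomposition $G = B \sqcup BwB$ with $B$ a Borel subgroup, $S$ its unipotent radical, and $N_G(S) = B$; the trivial intersection property $S \cap S^g = 1$ for $g \notin B$ then forces every $G$-conjugation between nontrivial subgroups of $S$ to be realized inside $B = N_G(S)$.

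For the ``only if'' direction I would argue the contrapositive: assuming $S$ is nonabelian and $\f_S(G) = \f_S(N_G(S))$, I would force $G$ to lie in one of the listed families. The first observation is that under this fusion hypothesis, $Z(S)$ is strongly closed in $S$ with respect to $G$, since any $G$-conjugate of an element of $Z(S)$ landing back in $S$ is already $N_G(S)$-conjugate to it and $N_G(S)$ stabilizes $Z(S)$. Because $Z(S)$ is abelian and strongly closed, Goldschmidt's theorem on strongly closed abelian $2$-subgroups yields a short list of possibilities for the normal closure $\langle Z(S)^G \rangle$ modulo its maximal normal subgroup of odd order, and simplicity of $G$ narrows these further. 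A parallel analysis applied to other characteristic subgroups of $S$, combined with Alperin's fusion theorem applied inside $N_G(S)$, should then refine the structure of $S$ and of $N_G(S)/S$ enough to identify $G$.

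The main obstacle is the subsequent case analysis required to eliminate every simple group other than those in the two listed families. In each remaining case one must exhibit a specific fusion in $S$ not controlled by $N_G(S)$: for Lie type in odd characteristic, Weyl-group elements outside $N_G(S)$ fuse involutions in a maximal torus in a way not available in $N_G(S)$; for Lie type in characteristic $2$ of rank at least $2$, the existence of multiple nontrivial double cosets $BwB$ in the Bruhat decomposition contributes fusion not in $B$; for alternating groups $\alter(n)$ with $n$ large, products of disjoint transpositions and $3$-cycles give fusion not realized in $N_G(S)$; and for the sporadic groups one argues directly from their known local structure and involution centralizers. Organizing these subcases coherently, especially without appealing to the full classification of finite simple groups, is what makes Goldschmidt's original analysis delicate, since it rests on the substantial local group-theoretic machinery developed by Bender, Glauberman, Brauer--Suzuki, and others in the years just preceding the classification.
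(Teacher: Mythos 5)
The paper does not prove this statement at all: it is quoted directly from Goldschmidt's 1974 paper, so there is no internal proof to compare against. Measured against that, your proposal is a reasonable reconstruction of the easy parts but does not supply the hard content independently. Your ``if'' direction is correct and standard: the Burnside/Sylow argument for abelian $S$ (both $S$ and $g^{-1}Sg$ lie in $C_G(P)$, adjust by an element of $C_G(P)$ to land in $N_G(S)$) and the TI-subgroup argument for the rank-$1$ characteristic-$2$ groups (where $S\cap{}^gS=1$ for $g\notin B=N_G(S)$) together give $\f_S(G)=\f_S(N_G(S))$ in both listed cases.

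For the ``only if'' direction, your observation that $Z(S)$ is strongly closed under the hypothesis $\f_S(G)=\f_S(N_G(S))$ is correct, and invoking Goldschmidt's classification of groups with a strongly closed abelian $2$-subgroup does finish the argument: by simplicity $G=\langle Z(S)^G\rangle$ with $O(G)=1$, so $G$ appears on Goldschmidt's list, and the only entries with nonabelian Sylow $2$-subgroups are the Lie rank $1$ groups in characteristic $2$. But be aware that this strongly-closed classification \emph{is} the main theorem of the very reference the paper cites, so your proposal reduces the statement to its source rather than proving it. Moreover, your closing paragraph is at odds with this reduction: once Goldschmidt's strongly-closed theorem is invoked there is no residual case analysis over odd-characteristic Lie type, higher-rank characteristic $2$, alternating, and sporadic groups to perform, and carrying out such an elimination directly would amount to an appeal to the classification of finite simple groups, which is neither needed nor how Goldschmidt's argument proceeds. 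The intermediate suggestion of analyzing further characteristic subgroups of $S$ via Alperin's fusion theorem is likewise superfluous. So: correct in outline for the elementary direction, but the deep direction remains a citation in your write-up just as it is in the paper, and the proposed case-by-case elimination should be dropped as redundant.
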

	\noindent The finite simple groups that satisfy either 1 or 2 were classified by Goldschmidt in \cite{goldschmidt19742}. There are three classes of simple groups that have abelian Sylow 2-subgroups: $\PSL_2(q)$ with $q\equiv\pm3\pmod{8}$, ${}^2G_2(3^{2n+1})$ with $n\geq0$, and $J_1$. There are also three classes of simple groups that are of (twisted) Lie rank 1 in characteristic 2: $A_1(2^n)$ with $n\geq2$, ${}^2A_2(2^n)$ with $n\geq2$, and ${}^2B_2(2^{n+1})$ with $n\geq1$. \par 
	
	The class of known simple fusion systems at the prime 2 contains the fusion systems that are of the form $\f_S(G)$ where $G$ is a simple group that is \textit{not} 2-Goldschmidt. This is a result of Aschbacher in \cite{aschbacher2021quaternion}.
	\begin{theo}[Theorem 5.6.18 in \cite{aschbacher2021quaternion}]\label{theo: 2gold not simple}
		Let $G$ be a known simple group that is not 2-Goldschmidt. Then the 2-fusion system of $G$ is simple.
	\end{theo}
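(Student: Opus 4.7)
The plan is to combine Aschbacher's theory of normal subsystems with the CFSG-based classification of simple groups admitting a proper nontrivial strongly closed $2$-subgroup. Let $\mathcal{E}$ be a normal subsystem of $\f:=\f_S(G)$ over a strongly closed subgroup $T\leq S$; the goal is to show $\mathcal{E}$ is either trivial (i.e.\ $T=1$) or equal to $\f$.

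First I would argue that $T=1$ or $T=S$. Foote's classification of finite simple groups admitting a proper nontrivial strongly closed $2$-subgroup of a Sylow $2$-subgroup asserts that any such $G$ either has abelian Sylow $2$-subgroups or is of Lie type in characteristic $2$ of Lie rank $1$. By Theorem \ref{theo: what groups are 2gold}, these are precisely the $2$-Goldschmidt simple groups, contradicting the hypothesis. Hence either $T=1$, in which case $\mathcal{E}$ is trivial, or $T=S$.

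Next, assuming $T=S$, I would show $\mathcal{E}=\f$. Normality forces $\aut_{\f}(S)\leq\aut(\mathcal{E})$ together with the Frattini-type extension condition on $\mathcal{E}$-morphisms of $\f$-centric subgroups. Since $G$ is nonabelian simple we have $G=O^2(G)$; the focal subgroup theorem therefore yields $\mathfrak{foc}(\f)=S$, and by standard translation of perfection to the fusion system, $O^2(\f)=\f$. Applying Aschbacher's correspondence between normal subsystems of $\f_S(G)$ supported on $S$ and normal $2$-local structure in $G$, any proper such $\mathcal{E}$ would produce a proper nontrivial normal subgroup of $G$, contradicting simplicity.

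The main obstacle is the first step: Foote's classification is a deep CFSG-dependent result, and matching its conclusion precisely to Theorem \ref{theo: what groups are 2gold} is what makes the $2$-Goldschmidt hypothesis both necessary and sufficient for reducing to the two extremal cases. A secondary difficulty is making the step~2 correspondence between wide normal subsystems and honest normal subgroups airtight; for small-rank groups of Lie type and small alternating or sporadic groups one may need direct inspection of $\f_S(G)$ via its explicit Sylow $2$-structure to rule out any exotic wide normal subsystem that does not descend from a normal subgroup of $G$.
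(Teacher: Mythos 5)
The paper does not prove this statement; it is imported wholesale as Theorem 5.6.18 of Aschbacher's book, so there is no internal proof to compare against. Judged on its own terms, your first step is reasonable: a nontrivial strongly closed $2$-subgroup $T<S$ of a simple group does, by the CFSG-based work of Foote (and Flores--Foote), force $G$ to be one of the groups in Goldschmidt's list, i.e.\ $2$-Goldschmidt, so under the hypothesis any normal subsystem of $\f=\f_S(G)$ lives over $T=1$ or $T=S$. (Minor quibble: the abelian-Sylow simple groups in fact have no proper nontrivial strongly closed $2$-subgroup; the disjunction you quote is harmless but not quite Foote's statement.)

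The genuine gap is your second step. There is no ``correspondence between normal subsystems of $\f_S(G)$ supported on $S$ and normal subgroups of $G$,'' and perfection of $G$ is not enough: $\mathfrak{foc}(\f)=S$ (equivalently $O^2(\f)=\f$) only excludes proper normal subsystems of $2$-power index, whereas a proper normal subsystem over the full group $S$ has index prime to $2$. The $2$-Goldschmidt groups themselves refute your reasoning: for $G=\PSL_2(q)$ with $q\equiv\pm3\pmod 8$, $G$ is simple and perfect, $\mathfrak{foc}(\f)=S$, yet $\f_S(S)$ is a proper nontrivial normal subsystem of $\f_S(G)$ over $S$, and it corresponds to no normal subgroup of $G$. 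So your step 2, which nowhere uses the non-$2$-Goldschmidt hypothesis, cannot be correct as stated. The right tool is: any normal subsystem $\mathcal{E}\trianglelefteq\f$ over $S$ contains $O^{2'}(\f)$ (since $\aut_{\mathcal{E}}(P)$ is a normal subgroup of $\aut_\f(P)$ containing the Sylow $2$-subgroup $\aut_S(P)$ for $P$ fully normalized), so one must prove $O^{2'}(\f)=\f$ for every known simple non-$2$-Goldschmidt group. That is not a formal consequence of $O^{2'}(G)=G$ (the displayed counterexample shows $O^{2'}(\f_S(G))\neq\f_S(O^{2'}(G))$ in general); it is a substantive verification, essentially part of what Aschbacher's case analysis over the known simple groups accomplishes, and it is missing from your argument.
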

	\noindent Therefore, the class of known quasisimple fusion systems at the prime 2 consists of the fusion systems over the known simple groups that are not 2-Goldschmidt, the fusion systems that are of the form $\f_S(G)$ where $G$ is a quasisimple covering of a finite simple group that is not 2-Goldschmidt, and the Benson-Solomon fusion systems. \par

	In section 3, we will define $\mathfrak{Lie}$ to be the class of all quasisimple groups of Lie type (see Definition 3.1). Per Theorem \ref{theo: what groups are 2gold} and Theorem \ref{theo: 2gold not simple}, we need to remove the groups that are 2-Goldschmidt. We define
	\begin{align*}
		\mathfrak{Lie}^* = \mathfrak{Lie} - &\{\PSL_2(q)\mid q\equiv\pm3\pmod{8}\}- \{SL_2(q)\mid q\equiv\pm3\pmod{8}\} - \\ 
		&\{{}^2G_2(q)\mid q = 3^{2a+1},\text{ }a\geq1\}- \{{}^2G_2(3)'\} - \\
		&\{L_2(2^n)\mid n\geq2\} - \{U_3(2^n)\mid n\geq2\} - \{{ }^2B_2(2^{2n+1})\mid n\geq1\} 
	\end{align*}
	
	\indent Now, let $\mathfrak{Alt}=\{\alter(n)\mid n\geq6\}$, and $\mathfrak{Spor}$ be the collection of sporadic simple groups. We then set $\mathfrak{Spor}^*=\mathfrak{Spor}-\{J_1\}$. We make the following definition:
	\begin{defn}\label{def: known quasisimple saturated fusion systems at the prime 2}
		We say that $\f$ is a \emph{known quasisimple saturated fusion system at the prime 2} if $\f$ is a Benson-Solomon saturated fusion system or if $\f\cong\f_S(G)$ where 
		\begin{align*}
			G \in \mathfrak{Alt}\cup\mathfrak{Spor}^*\cup\mathfrak{Lie}^*
		\end{align*}
		and $S\in\syl_2(G)$, or if $G$ is a quasisimple cover of some group $H\in\mathfrak{Alt}\cup\mathfrak{Spor}^*\cup\mathfrak{Lie}^* $
	\end{defn}

	
	\section{Background on finite groups of Lie type}
	
	We now fix the notation and terminology that will be used when discussing the finite groups of Lie type. We will also list some results which will be used later in Sections 4 and 5. For a more in depth discussion about the groups of Lie type, we refer to \cite{gorenstein2005classification}. \par 
	
	\begin{defn}[Definition 1.7.1, 1.15.1, 2.2.1 in \cite{gorenstein2005classification}]
		Fix a prime $q_0$ and let $\G$ be a connected algebraic group over $\overline{\F}_{q_0}$. 
		\begin{enumerate}
			\item[(A)] We say that $\G$ is \emph{simple} if $[\G,\G]\neq1$, and all proper closed normal subgroups of $\G$ are finite and central. If $\G$ is simple, then it is of \emph{universal type} if it is simply connected, and of \emph{adjoint type} if $Z(\G)=1$. 
			\item[(B)] A \emph{Steinberg endomorphism} of a connected simple algebraic group $\G$ is a surjective algebraic endomorphism $\s\in\en(\G)$ whose fixed subgroup is finite.
			\item[(C)] A $\s$\emph{-setup} for a finite group $G$ is a pair $(\G,\s)$, where $\G$ is a simple algebraic group over $\overline{\F}_{q_0}$, and where $\s$ is a Steinberg endomorphism of $\G$ such that $G=O^{q_0'}(C_{\G}(\s))$. 
			\item[(D)] Let $\mathfrak{Lie}(q_0)$ denote the class of finite groups with $\sigma$-setup $(\G,\sigma)$ where $\G$ is simple and is defined in characteristic $q_0$, and let $\mathfrak{Lie}$ be the union of the classes $\mathfrak{Lie}(q_0)$ for all primes $q_0$. We say that $G$ is of universal type (similarly adjoint type) if $\G$ is of universal type (adjoint type). 
		\end{enumerate}
	\end{defn}
	\noindent Furthermore, when $G$ is of universal type, we denote it by $G^{\un}$ and when $G$ is of adjoint type, we denote it by $G^{\ad}$. In general, $C_{\G}(\s)=G\cdot C_{\T}(\s)$ where $\T$ is a $\s$-stable maximal torus of $\G$ (see Theorem 2.2.6 in \cite{gorenstein2005classification}). When $G$ is of universal type, $C_{\T}(\s)\leq G$, so $C_{\G}(\s)=G$. 
	
	\subsection{Root systems and the Weyl group}
	We now review root systems and the Weyl group associated to a group of Lie type. Let $ \E^m $ be an $ m $-dimensional Euclidean space with inner product $ (-,-) $. For any two elements $ a,b\in\E^m $ where $ b\neq0 $, we set $ \langle a,b\rangle=2(a,b)/(b,b) $ and denote by $r_a:\E^m\to\E^m$ the orthogonal reflection $b\mapsto b-\langle b,a\rangle a$ in the hyperplane $ a^\perp $. 
	\begin{defn}[\cite{humphreys2012introduction}]
		A \emph{root system} is a subset $\Sigma\subset\E^m$ such that 
		\begin{enumerate}
			\item[(R1)] $\Sigma$ is finite, spans $\E^m$, does not contain 0, and is $r_\a$-invariant for all $\a\in\Sigma$.
			\item[(R2)] (Reduced) For all $\a\in\Sigma$, $\R\a\cap\Sigma=\{\pm\a\}$.  
			\item[(R3)] (Crystallographic) For every $\a,\b\in\Sigma$, $\langle\b,\a\rangle\in\Z$.
		\end{enumerate}
	\end{defn}
	The Weyl group of the root system $ \Sigma $, denoted $ W $, is the subgroup of isometries of $ \E $ generated by the orthogonal reflections $ r_\a $ where $ \a\in\Sigma $. A base is a linearly independent subset $\Pi\subset\Sigma$ such that every $ \a\in\Sigma $ is either a nonnegative or nonpositive linear combination of the vectors in $ \Pi $. The Dynkin diagram associated to a root system $\Sigma$ is dependent on a base $\Pi$. In \cite{gorenstein2005classification} (and other sources), they classify the root systems and Dynkin diagrams for the simple algebraic groups. See tables 1 and 2 in the Appendix (or Section 1.8 in \cite{gorenstein2005classification}) for the complete list of the root systems and their associated Dynkin diagrams for connected semisimple algebraic groups. \par 
	
	Fix a connected simple algebraic group over $\overline{\F}_{q_0}$ and a maximal torus $\T\leq\G$. A root subgroup of $\G$ is a $\T$-normalized one parameter closed subgroup; that is, a root subgroup of $\G$ is a closed subgroup of $\G$ that is isomorphic to $(\overline{\F}_{q_0},+)$ and is normalized by $\T$. The roots of $\G$ are then defined to be the characters associated to some root subgroup of $\G$ and the set of roots in $\G$ is denoted $\Sigma_{\G}(\T)$, or just $\Sigma$ if the context is clear (see section 1.9 in \cite{gorenstein2005classification}). Due to Theorem 1.9.5 of \cite{gorenstein2005classification}, since any two maximal tori are $\G$-conjugate, the set $\Sigma_{\G}(\T)$ is independent of the choice of the maximal torus. Furthermore, $\Sigma_{\G}(\T)$ corresponds to some irreducible crystallographic root system $\Sigma$. \par

	\subsection{Chevalley relations and centers of algebraic groups}
	In this section, we fix the following notation: let $\G$ be a semisimple algebraic group and fix a maximal torus $\T$. Let $\Sigma=\Sigma_{\G}(\T)$ be the associated root system. We list some Chevalley relations below (for a more detailed list of Chevalley relations, see section 1.12 in \cite{gorenstein2005classification}): 
	\begin{theo}[Theorem 1.12.1 in \cite{gorenstein2005classification}] \label{thm: chev rel}
		The $\T$-root subgroups $\overline{X}_\a$ have parameterizations $x_\a(t)$ such that when we set 
		\begin{align*}
			n_\a(t)&=x_\a(t)x_{-\a}(-t^{-1})x_\a(t) \\
			h_\a(t)&=n_\a(1)^{-1}n_\a(t)            
		\end{align*}
		for any $\a\in\Sigma$, $t\in\overline{\F}_{q_0}^\times$, then $\T=\langle h_\a(t)\mid\a\in\Sigma, t\in\overline{\F}_{q_0}^\times\rangle$, and the following properties hold for any $\a,\b\in\Sigma$ and all $t,u\in\overline{\F}_{q_0}^\times$:
		\begin{enumerate}
			\item $[h_\a(t),h_\b(u)]=1$;
			\item $h_a(t)h_\a(u)=h_\a(tu)$; and
			\item Let $\Pi=\{\a_1,\a_2,\cdots,\a_n\} $ be a base in $\Sigma$. Then If $\G$ is universal, $\Pi_{i=1}^nh_{\a_i}(t_i)=1$ if and only if $t_i=1$ for all $1\leq i\leq n$. 
		\end{enumerate}
	\end{theo}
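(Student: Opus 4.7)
The plan is to derive the theorem from the standard structure theory of semisimple algebraic groups together with a rank-one calculation inside $\SL_2$. For each root $\a\in\Sigma$ there is a unique closed connected one-dimensional unipotent subgroup $\overline{X}_\a\leq\G$ normalized by $\T$ on which $\T$ acts through the character $\a$; this is a consequence of the classification of one-parameter $\T$-stable unipotent subgroups of a reductive group. Since $\overline{X}_\a\cong(\overline{\F}_{q_0},+)$, fix a parameterization $x_\a:\overline{\F}_{q_0}\to\overline{X}_\a$, which is unique up to rescaling of the argument. The subgroup $\G_\a:=\langle\overline{X}_\a,\overline{X}_{-\a}\rangle$ is isogenous to $\SL_2$, and an explicit calculation inside $\SL_2$, where $x_\a(t)$ and $x_{-\a}(t)$ correspond to the standard upper- and lower-triangular unipotent one-parameter subgroups, shows that $n_\a(t)=x_\a(t)x_{-\a}(-t^{-1})x_\a(t)$ lies in $N_\G(\T)$ and projects to the reflection $r_\a\in W$.

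It follows that $h_\a(t):=n_\a(1)^{-1}n_\a(t)$ lies in the kernel $\T$ of the projection $N_\G(\T)\to W$. Tracing through the same $\SL_2$ computation identifies the map $t\mapsto h_\a(t)$ with the coroot cocharacter $\a^\vee:\mathbb{G}_m\to\T$; multiplicativity of $\a^\vee$ immediately yields property (2), namely $h_\a(tu)=h_\a(t)h_\a(u)$, while property (1) is immediate because $\T$ is abelian. For the generation statement $\T=\langle h_\a(t)\mid\a\in\Sigma,t\in\overline{\F}_{q_0}^\times\rangle$, it suffices to note that whenever $\G$ is semisimple the coroots span $X_*(\T)\otimes\Q$, so the closed connected subgroup they generate has full rank in $\T$ and therefore equals $\T$.

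The substantive assertion is property (3), which is a lattice-theoretic characterization of universal type. I would invoke the fact that $\G$ is of universal (simply connected) type if and only if the cocharacter lattice $X_*(\T)$ coincides with the coroot lattice $\Z\Sigma^\vee$; equivalently, the fundamental coroots $\a_1^\vee,\ldots,\a_n^\vee$ associated to the base $\Pi=\{\a_1,\ldots,\a_n\}$ form a $\Z$-basis of $X_*(\T)$. Consequently the morphism of tori $(\mathbb{G}_m)^n\to\T$ given by $(t_1,\ldots,t_n)\mapsto\prod_i h_{\a_i}(t_i)=\prod_i\a_i^\vee(t_i)$ is an isomorphism, and the product is trivial if and only if every $t_i=1$. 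The main obstacle I anticipate is one of bookkeeping rather than mathematical depth: carefully aligning the paper's notion of ``universal type'' with the simply connected condition (so that $X_*(\T)$ is the coroot lattice on the nose, rather than some intermediate lattice reflecting a nontrivial fundamental group), and pinning down the sign and scaling conventions in the $\SL_2$ computation to confirm that the explicit definition of $h_\a(t)$ really does realize the coroot $\a^\vee$ and not merely a power or a twist of it.
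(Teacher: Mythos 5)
The paper does not prove this statement at all: it is quoted (in abridged form) as Theorem 1.12.1 of \cite{gorenstein2005classification} and used as background, so there is no in-paper argument to compare against. Judged on its own, your outline is essentially the standard structure-theoretic proof underlying that reference: identify $h_\a(t)$ with the coroot cocharacter $\a^\vee(t)$ via the rank-one computation in $\langle\overline{X}_\a,\overline{X}_{-\a}\rangle$, deduce (1) from commutativity of $\T$ and (2) from multiplicativity of cocharacters, get generation because the coroots span $X_*(\T)\otimes\Q$ for semisimple $\G$ (so the subtorus they generate has full dimension and hence equals $\T$), and get (3) from the simply connected condition $X_*(\T)=\Z\Sigma^\vee$, under which $(t_1,\dots,t_n)\mapsto\prod_i\a_i^\vee(t_i)$ is an isomorphism onto $\T$. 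Two points of care, both minor: in characteristic $q_0$ the additive group has automorphisms beyond scalar rescalings, so ``unique up to rescaling'' should be read as uniqueness among $\T$-equivariant parameterizations; and with the stated order of factors the $\SL_2$ computation gives $h_\a(t)=\mathrm{diag}(t^{-1},t)$, so $h_\a$ may realize $\a^\vee$ composed with inversion depending on conventions --- this does not affect (1)--(3) or the generation claim, and it is exactly the bookkeeping issue you flagged. Be aware that the full Theorem 1.12.1 in \cite{gorenstein2005classification} also contains relations the paper omits (the action of $n_\a(t)$ on root subgroups, commutator formulas); your argument covers only the quoted portion, which is all that is needed here.
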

	\noindent In particular, part (2) of the theorem implies that for a given $\a\in\Sigma$, $h_\a(-)$ is a homomorphism $\overline{\F}_{q_0}^\times\to\T$. Also, parts (1) and (3) of the above theorem tells us that $ \T\cong h_{\a_1}(\overline{\F}_{q_0}^\times)\times h_{\a_2}(\overline{\F}_{q_0}^\times)\times\cdots\times h_{\a_n}(\overline{\F}_{q_0}^\times) $ if $\G$ is of universal type. \par 
	
	From Lemma 2.4 in \cite{broto2019automorphisms}, the maximal torus $\T$ is centric in $\G$. Because of this, the generators of the centers of simple algebraic groups can be determined using the Chevalley relations in terms of elements of the chosen maximal torus. These generators are shown in Theorem 1.12.6 and Table 1.12.6 in \cite{gorenstein2005classification} (see Table 3 and Table 4 in the appendix).

	\subsection{Automorphisms of groups of Lie type}
	Our main goal is to relate the automorphism group of a finite group of Lie type $ G $ to the automorphisms group of the fusion system of $ G $. In order to do this, we need to fix notation for certain subgroups of $ \aut(G) $ when $G$ is a finite group of Lie type. The following notation is standard and follows \cite{gorenstein2005classification} and \cite{broto2019automorphisms}. 
	\begin{defn}[Definition 3.1 in \cite{broto2019automorphisms}] \label{def: auts of lie gps}
		Let $ \G $ be a simple algebraic group over $\overline{\F}_{q_0}$. Let $\Sigma$ be the crystallographic root system associated to $\G$ with respect to some choice of maximal torus and fix a base $\Pi\subset\Sigma$. 
		\begin{itemize}
			\item Let $ r $ be any power of $ q_0 $ and let $ \psi_r\in\en(\G) $ be the field endomorphism defined by $ \psi_r(x_\a(u))=x_\a(u^r) $ for each $ \a\in\Sigma $ and $ u\in\overline{\F}_{q_0} $. We then set $ \Phi_{\G}=\langle\psi_{q_0}\rangle $ to be the monoid of all field endomorphisms of $ \G $. 
			\item Let $ \Gamma_{\G} $ be the group of all graph automorphisms of $ \G $ as defined in Definition 1.15.5 in \cite{gorenstein2005classification}. 
			\item A Steinberg endomorphism $\sigma$ of $\G$ is \emph{standard} if $\sigma = \psi_q\circ\gamma=\gamma\circ\psi_q$, where $q$ is a power of $q_0$ and $\g\in\Gamma_{\G}$. A $\sigma$-setup $(\G,\sigma)$ for a finite subgroup $G<\G$ is standard if $\sigma$ is standard.
		\end{itemize}
	\end{defn}
	\noindent Let $\G$ be a simple algebraic group with root system $\Sigma$. Fix a base $\Pi\subset\Sigma$. When $(\G,q_0)\neq(B_2(\overline{\F}_{q_0}),2),(G_2(\overline{\F}_{q_0}),3)$, or $(F_4(\overline{\F}_{q_0}),2)$, then $\Gamma_{\G}$ is the group of all $\gamma\in\aut(\G)$ of the form $\gamma(x_{\a}(u))=x_{\rho(\a)}(u)$ for all $\a\in\Pi$ and $u\in\overline{\F}_{q_0}$, where $\rho$ is an isometry of $\Sigma$ that leaves $\Pi$ invariant. For more information regarding graph automorphisms, see Theorem 1.15.2 in \cite{gorenstein2005classification}. \par 
	
	Fix a prime $q_0$ and let $ G\in\mathfrak{Lie}(q_0) $ with standard $ \s $-setup $ (\G,\s) $ such that $\s=\psi_{q}\g$ with $\g\in\Gamma_{\G}$.  If $ \g=\id $, we say that $ G $ is a Chevalley group, and if $ \g\neq\id $, we say that $ G $ is a twisted group. Further, if $ G $ is a twisted group, but is not a Suzuki or a Ree group, then we say that $ G $ is a Steinberg group. The structure of the root systems and the centers of the twisted groups is explored in depth in section 2 of \cite{gorenstein2005classification} (or see Table 5). 
	
	We now fix the following notation for particular subgroups of the automorphism group of a finite group of Lie type. 
	\begin{defn}[Definition 3.3 in \cite{broto2019automorphisms}]\label{def: outdiag and inndiag def}
		Let $G\in\mathfrak{Lie}(q_0)$ for some prime $q_0$ with standard $\s$-setup $(\G,\s)$ and let $\T$ be a maximal torus of $\G$. Let $\aut_{\T}(G)$ be the subgroup of automorphisms of $G$ induced by conjugation by elements of $N_{\T}(G)$. Fix the following notation:
		\begin{align*}
			\innd(G)&=\aut_{\T}(G)\inn(G), \\
			\od(G)&=\innd(G)/\inn(G), \\
			\Phi_G&=\{\psi_{q}|_G  \mid q=q_0^a,\text{ } a\geq1\}.  
		\end{align*}
		Further, if $G$ is a Chevalley group, set $\Gamma_G=\{\g|_G\mid\g\in\Gamma_{\G}\}$ to be the group of graph automorphisms of $G$. If $G$ is a twisted group, we set $\Gamma_G$ to be the trivial group. 
	\end{defn}
	\noindent The following theorem of Steinberg explains how $\aut(G)$ decomposes in terms of $\innd(G)$, $\Phi_G$, and $\Gamma_G$ for a finite group of Lie type $G$. 
	\begin{theo}[\cite{steinberg1960automorphisms}, Section 3]\label{thm: autG=ind(G)P(G)G(G)}
		Let $G$ be a finite group of Lie type with $\s$-setup $(\G,\s)$ where $\G$ is of universal or adjoint form. Then $\aut(G)=\innd(G)\Phi_G\Gamma_G$ where $\innd(G)\norm\aut(G)$ and $\innd(G)\cap\Phi_G\Gamma_G=1$. 
	\end{theo}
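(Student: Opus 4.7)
The plan is to follow Steinberg's classical strategy of lifting automorphisms of $G$ to abstract automorphisms of the ambient algebraic group $\G$, applying the known structure of $\aut(\G)$, and then descending back to $G$. First, one shows that every $\alpha \in \aut(G)$ extends uniquely to an abstract group automorphism $\tilde\alpha$ of $\G$ that commutes with $\s$. The existence of this extension rests on the fact that $\alpha$ permutes the root subgroups of $G$ (readable off from the semisimple subgroup structure, via Borel--Tits) together with the fact that $G$ generates a Zariski dense subgroup of $\G$ since $G = O^{q_0'}(C_{\G}(\s))$. The assumption that $\G$ is of universal or adjoint type is what removes central ambiguity in defining $\tilde\alpha$ on $\T$.

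Next, one invokes the standard description of $\aut(\G)$: every abstract automorphism factors as a product $c_g \circ \psi_r \circ \gamma$, where $c_g$ is inner, $\psi_r$ is a field automorphism (a power of $\psi_{q_0}$ after restriction to the finite subgroup), and $\gamma \in \Gamma_{\G}$ is a graph automorphism. Since $\tilde\alpha$ commutes with $\s = \psi_q \circ \gamma_0$, the field factor must be a power of $\psi_{q_0}$ whose restriction to $G$ lies in $\Phi_G$, and the graph factor restricts to an element of $\Gamma_G$ (trivial in the twisted case, where the only graph symmetries compatible with $\s$ are those already absorbed into $\gamma_0$). The inner factor $c_g$ on $\G$ descends to an automorphism of $G$ that, after absorbing a suitable element of $\T^{\s}$, lies in $\aut_{\T}(G)\cdot\inn(G) = \innd(G)$. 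Combining these contributions yields $\aut(G) = \innd(G)\Phi_G\Gamma_G$.

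For normality of $\innd(G)$ in $\aut(G)$: the subgroup $\inn(G)$ is automatically normal, and the generators of $\Phi_G$ and $\Gamma_G$ are chosen to preserve a fixed pinning and hence to stabilize $\T$ and $N_{\T}(G)$; thus they normalize $\aut_{\T}(G)$, and $\innd(G) = \inn(G)\aut_{\T}(G)$ is normalized by $\Phi_G\Gamma_G$.

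The main obstacle, and the most delicate part of the proof, is verifying that $\innd(G) \cap \Phi_G\Gamma_G = 1$. The approach is to track the action of a putative element of the intersection on a fixed pinning $\{x_\alpha(u)\}$. Elements of $\innd(G)$ preserve each root subgroup setwise and act on its parameter $u$ by multiplication by a scalar in $\F_q^\times$ (coming from the evaluation of some product $\prod h_{\alpha_i}(t_i)$ via Theorem \ref{thm: chev rel}), whereas a nontrivial element of $\Phi_G$ acts on the parameter by a nontrivial Galois automorphism of $\F_q$, and a nontrivial element of $\Gamma_G$ nontrivially permutes the root subgroups. A common element must therefore fix every root subgroup and act trivially on its parameter, forcing it to be the identity. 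Theorem \ref{thm: chev rel}(3), which gives the faithful product decomposition of $\T$ under the universal hypothesis, is what prevents hidden diagonal relations from spoiling this triviality; the adjoint case is handled by a dual argument on the character lattice.
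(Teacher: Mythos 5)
The paper offers no proof of this statement: it is quoted verbatim from Steinberg's 1960 paper, so there is no internal argument for your sketch to match, and the relevant comparison is with Steinberg's own proof (or the treatment in Gorenstein--Lyons--Solomon, \S 2.5), which your outline resembles in overall shape but does not actually carry out.

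The genuine gap is your first step. The assertion that every $\a\in\aut(G)$ extends (uniquely, and commuting with $\s$) to an automorphism of $\G$ is essentially the whole content of the theorem, and the justifications you offer do not work: $G$ is a finite subgroup of the positive-dimensional connected group $\G$, hence Zariski closed and never Zariski dense, so no density argument is available; and the Borel--Tits theorem on abstract homomorphisms applies to groups of rational points over infinite fields, so invoking it here is circular --- its finite-field analogue is precisely Steinberg's theorem. The uniqueness claim is also false as stated: any automorphism of $\overline{\F}_{q_0}$ fixing $\F_q$ pointwise induces an abstract automorphism of $\G$ commuting with $\s$ and restricting to $\id_G$. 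Steinberg's actual 1960 argument never lifts to $\G$: it works inside the finite group, using Sylow conjugacy to arrange that the automorphism normalizes $U$ and $B=N_G(U)$, then analyzing the induced permutation of root subgroups via the Chevalley commutator relations and the Bruhat decomposition, and constructing the graph, field and diagonal factors directly; the later algebraic-group versions obtain the lifting only after substantial work with Lang's theorem and the isogeny theorem. Two further points in your sketch also need more than you give them: descending the inner factor $c_g$ to an element of $\innd(G)$ requires Lang's theorem (not merely ``absorbing an element of $\T^{\s}$''), and the disjointness $\innd(G)\cap\Phi_G\Gamma_G=1$ requires ruling out, for small $q$, that a nontrivial power of the Frobenius agrees on all root-group parameters of $G$ with multiplication by scalars, which is not automatic from the qualitative description you give.
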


	\subsection{Notation and hypotheses}
	We introduce the following notation that will be used throughout most of this work. We are following the notation and hypotheses set forth in \cite{broto2019automorphisms}. 
	\begin{no}\label{not: gp of lie type}
		Let $(\G,\s)$ be a $\s$-setup for the finite group $G$, where $\G$ is a connected, semisimple algebraic group of universal type over $\overline{\F}_{q_0}$ for an odd prime $q_0$. 
		\begin{enumerate}
			\item[(A)] Fix a maximal torus $\T\leq\G$ such that $\T^\s=\T$. Let $W=N_{\G}(\T)/\T$ be the Weyl group of $\G$.
			
			\item[(B)] Let $\Sigma$ be the set of all roots of $\G$ with respect to $\T$. Let $\Pi\subset\Sigma$ be a fixed base. For each $\a\in\Sigma$, let $r_{\a}\in W$ be the reflection in the hyperplane $\a^{\perp}$. Let $\Sigma^+\subset\Sigma$ be the set of positive roots with respect to $\Pi$. For each $\a\in\Sigma^+$, let $\height(\a)$ denote the height of $\a$: the number of summands in the decomposition of $\a$ as a sum of simple roots. 
			
			\item[(C)] For each $\a\in\Sigma$, let $\overline{X}_\a<\G$ denote the root group for $\a$. That is, $\overline{X}_\a=\{x_\a(\l)\mid\l\in\overline{\F}_{q_0}\}$ with respect to some fixed Chevalley parameterization of~$\G$.
			
			\item[(D)] Set $T=\T\cap G=C_{\T}(\s)$. 
			
			\item[(E)] Let $\t\in\aut(V)$ and $\rho\in\aut(\Sigma)$ be the orthogonal automorphism and permutation, respectively, such that for each $\a\in\Sigma$, $\s(\bar{X}_\a)=\bar{X}_{\rho(\a)}$ and $\rho(\a)$ is a positive multiple of $\t(\a)$. Set $W_0=C_W(\tau)$. If $\rho(\Pi)=\Pi$, then set $V_0=C_V(\t)$, and let $\pro_{V_0}^\perp$ be the orthogonal projection of $V$ onto $V_0$. Let $\hat\Sigma$ be the set of equivalence classes in $\Sigma$ determined by $\t$, where $\a,\b\in\Sigma$ are equivalent if $\pro_{V_0}^\perp(\a)$ is a positive scalar multiple of $\pro_{V_0}^\perp(\b)$. Let $\hat\Pi\subset\hat\Sigma^+$ denote the images in $\hat\Sigma$ of $\Pi\subset\Sigma^+$. 
			
			\item[(F)] For $\a\in\Pi$ and $\lambda\in\overline{\F}_{q_0}^\times$, let $\hat{h}_\a(\lambda)\in T$ be an element in $G\cap\langle h_\b(\overline{\F}_{q_0}^\times)\mid\b\in\hat\a\rangle$ whose component in $h_\a(\overline{\F}_{q_0}^\times)$ is $h_\a(\lambda)$ (if there is such an element). 
		\end{enumerate}
	\end{no}
	We now fix the following hypotheses. 
	\begin{hypth}\label{hyp: gp of lie type}
		Following Notation \ref{not: gp of lie type}, 
		\begin{enumerate}
			\item[(I)]Suppose $\s=\vp_q\circ\g=\g\circ\vp_q$ where 
			\begin{itemize}
				\item $q$ is a power of the odd prime $q_0$ with $q_0\equiv1\pmod{4}$, that is, $q=q_0^b$ with $b=2^l$ and $l\geq0$. Set $k=l+2$. 
				\item $\vp_q\in\Phi_{\G}$ is a field automorphism and $\g\in\Gamma_{\G}$ is a graph automorphism such that $|\g|\leq2$.
			\end{itemize}
			\item[(II)] The algebraic group $\G$ is of universal type and $N_G(T)$ contains a nonabelian Sylow $2$-subgroup of $G$. Set $A=O_2(T)$, and fix $S\in\syl_2(N_G(T))\subset\syl_2(G)$ so that $A\leq S$.
		\end{enumerate}
	\end{hypth}	
	
	The 2-part of a positive integer $n$, denoted $n_2$, is the largest power of 2 that divides $n$. 
	\begin{lem}\label{lem: 2-part of 5}
		Assume Hypotheses \ref{hyp: gp of lie type}. Then $(q-1)_2=2^{l+2}=2^k$.
	\end{lem}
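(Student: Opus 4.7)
The plan is a direct $2$-adic valuation computation. I would start from the factorization obtained by iterating $X^2 - 1 = (X-1)(X+1)$, which gives
\begin{equation*}
q - 1 \;=\; q_0^{2^l} - 1 \;=\; (q_0 - 1)\prod_{i=0}^{l-1}\bigl(q_0^{2^i}+1\bigr),
\end{equation*}
with the convention that the product is empty when $l = 0$.

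Next I would read off the $2$-part of each factor. Because $q_0 \equiv 1 \pmod{4}$ by Hypotheses~\ref{hyp: gp of lie type}(I), every power $q_0^{2^i}$ with $i \geq 0$ is again congruent to $1$ modulo $4$, so $q_0^{2^i} + 1 \equiv 2 \pmod{4}$ and therefore has $2$-part exactly equal to $2$. The $l$-fold product thus contributes $2^l$ to the $2$-part of $q-1$. Combined with $(q_0-1)_2 = 4$ — a consequence of how $q_0$ is pinned down as the representative of the equivalence class of $2$-fusion systems in the given characteristic, as described in the paragraph introducing Notation~\ref{not: gp of lie type} and Hypotheses~\ref{hyp: gp of lie type} — one obtains
\begin{equation*}
(q-1)_2 \;=\; (q_0-1)_2\cdot\prod_{i=0}^{l-1}\bigl(q_0^{2^i}+1\bigr)_2 \;=\; 4\cdot 2^l \;=\; 2^{l+2} \;=\; 2^k,
\end{equation*}
as required.

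An equivalent and slightly slicker path is to invoke the $p=2$ form of the Lifting-the-Exponent lemma: since $4 \mid q_0 - 1$, we have $v_2(q_0^n - 1) = v_2(q_0 - 1) + v_2(n)$ for every $n \geq 1$, and taking $n = 2^l$ reproduces the same conclusion in one line. There is no substantive obstacle in the argument; the lemma is a bookkeeping step that fixes the constant $k$ referenced throughout the subsequent analysis of $A = O_2(T)$ and $S$.
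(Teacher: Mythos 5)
Your argument is correct and is essentially the paper's: the paper likewise reduces to the identity $(q-1)_2=(q_0-1)_2\cdot 2^l$ (quoting Lemma~1.13 of \cite{broto2019automorphisms} rather than deriving it from the telescoping factorization or the lifting-the-exponent lemma, as you do) and then substitutes $(q_0-1)_2=2^2$. The one caveat is shared with the paper itself: from $q_0\equiv 1\pmod 4$ alone one only gets $(q_0-1)_2\geq 2^2$ (which is all the paper's proof records before silently using equality), so your appeal to the way $q_0$ is normalized via Proposition~\ref{prop: justification} and Hypotheses~\ref{hyp: gp of lie type} is carrying exactly the same weight there as it does in the paper's own argument.
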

	\begin{proof}
		Recall that $q=q_0^{2^l}$, $q\equiv1\pmod{4}$ by Hypotheses \ref{hyp: gp of lie type}. When $ l=0 $, then $ q_0-1\equiv0\pmod{4} $. Hence, $(q_0-1)_2\geq 2^2$ . By Lemma 1.13 in \cite{broto2019automorphisms}, we know that  
		\begin{align*}
			(q-1)_2&=(q_0^{2^l}-1)_2 \\
			&=(q_0-1)_2 2^l \\
			&=2^2 2^l \\
			&=2^{l+2}
		\end{align*}
		This finishes the proof. 
	\end{proof}
	\noindent If $\G$ is of type $A_1$, then in order to satisfy (II) of Hypotheses \ref{hyp: gp of lie type}, $q_0\equiv1\pmod{8}$ and, as a result, $k\geq3$ in this case. \par
	
	The notation and hypotheses follow \cite{broto2019automorphisms} and the following proposition justifies the hypotheses. 
	\begin{prop}[Proposition 6.2 in \cite{broto2019automorphisms}]\label{prop: justification}
		Assume that $G$ is a group of Lie type that is of universal type over a field of odd prime power order $q$. Fix $S\in\syl_2(G)$, and assume that $S$ is nonabelian. Then there is an odd prime $q^*$, a group $G^*$ of universal type over a field of odd prime power $q^*$ order, and $S^*\in\syl_2(G^*)$, such that $\f_S(G)\cong\f_{S^*}(G^*)$, and $G^*$ has a $\s$-setup which satisfies Hypotheses \ref{hyp: gp of lie type}. Moreover, if $G^*\cong G_2(q^*)$ is a power of a $q_0^*$, then we can arrange that either $q^*=5$ or $q_0^*=3$. 
    \end{prop}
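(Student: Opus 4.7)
The plan is to invoke the classification of equivalence classes of 2-fusion systems of finite groups of Lie type in odd characteristic due to Broto, M{\o}ller, and Oliver \cite{broto2012equivalences}. That theorem shows that $\f_S(G)$, for $G$ of Lie type over $\F_q$ with $q$ odd, is determined up to isomorphism by the generic Lie type, the twisting (i.e., the graph automorphism $\g$ appearing in the $\s$-setup), and the 2-adic valuation of $q - \delta$ for an appropriate sign $\delta \in \{\pm1\}$. The strategy is to pass within each 2-fusion equivalence class to a representative $G^*$ satisfying Hypotheses \ref{hyp: gp of lie type}.

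I would first read off from $G$ its Lie type, its twisting data, and the integer $k$ with $2^k = (q - \delta)_2$ for the sign dictated by the Lie type and twisting. Since $S$ is assumed nonabelian, $k \geq 2$, with the stronger inequality $k \geq 3$ forced in type $A_1$ (hence the remark following Lemma \ref{lem: 2-part of 5}). I would then use Dirichlet's theorem on primes in arithmetic progressions to choose a prime $q_0^*$ with $q_0^* \equiv 5 \pmod 8$ (or $q_0^* \equiv 1 + 2^k \pmod{2^{k+1}}$ in the $A_1$ case), so that $q_0^* \equiv 1 \pmod 4$ and $(q_0^* - 1)_2$ has the prescribed value. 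Setting $l := k - 2 \geq 0$, $b := 2^l$, and $q^* := (q_0^*)^b$, the computation in the proof of Lemma \ref{lem: 2-part of 5} yields $(q^* - 1)_2 = 2^k$.

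Letting $G^*$ be the finite group of universal type with the same generic Lie type and twisting as $G$, equipped with standard $\s$-setup $(\G^*, \s^*)$ where $\s^* = \vp_{q^*} \circ \g = \g \circ \vp_{q^*}$ and $|\g| \leq 2$, the equivalence theorem of \cite{broto2012equivalences} furnishes $\f_S(G) \cong \f_{S^*}(G^*)$ for any $S^* \in \syl_2(G^*)$; this verifies part (I) of Hypotheses \ref{hyp: gp of lie type}. For part (II), since $(q^* - 1)_2 \geq 4$ and $\G^*$ is of universal type, the torus $T^* := C_{\T^*}(\s^*)$ has a nonabelian Sylow 2-subgroup, and the standard analysis of normalizers of maximally split tori in groups of Lie type in odd characteristic (cf.\ \cite{gorenstein2005classification}) shows that $N_{G^*}(T^*)$ contains a Sylow 2-subgroup of $G^*$.

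The principal obstacle concerns the Lie types that a priori fall outside the range of Hypotheses \ref{hyp: gp of lie type}(I). Triality in ${}^3D_4(q)$ would violate $|\g| \leq 2$; here one must show, via another application of the equivalence theorem of \cite{broto2012equivalences}, that the 2-fusion system of ${}^3D_4(q)$ is isomorphic to that of some non-triality group of Lie type in odd characteristic. The special proviso for $G_2(q^*)$, allowing $q^* = 5$ or $q_0^* = 3$, reflects that the relevant 2-fusion class in type $G_2$ is realized at particularly small prime powers, where the generic Dirichlet-based construction of $q_0^*$ must be replaced by direct verification at these specific small values.
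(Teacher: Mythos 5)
First, on the comparison itself: the paper does not prove Proposition \ref{prop: justification} at all; it is quoted, with citation, from \cite{broto2019automorphisms} (Proposition 6.2 there), and the proof given there does follow your general strategy of normalizing the field by means of the 2-fusion equivalences of \cite{broto2012equivalences} and then verifying Hypotheses \ref{hyp: gp of lie type} for the new $\s$-setup. So the outline is the right one, but as written your reduction contains a genuine error and a genuine omission.

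The error is the claim that one may keep ``the same generic Lie type and twisting as $G$'' and only adjust $q$ so that $(q^*-1)_2$ equals $(q-\delta)_2$. This fails whenever $q\equiv3\pmod4$ and the type admits a nontrivial graph automorphism. Concretely, for $G=\mathrm{SU}_3(7)$ one has $(q+1)_2=8$, and your recipe proposes $\mathrm{SU}_3(q^*)$ with $(q^*-1)_2=8$, e.g.\ $q^*=25$; but $|\mathrm{SU}_3(7)|_2=2^7$ while $|\mathrm{SU}_3(25)|_2=2^5$, so the fusion systems cannot be isomorphic. The correct replacement is the \emph{untwisted} group $\SL_3(q^*)$: when $q\equiv3\pmod4$ the reduction must in general exchange twisted and untwisted forms (the $\langle-q\rangle=\langle q^*\rangle$ part of the equivalence theorem), and where the form is kept one needs the strengthened $p=2$ statement whose invariant is $(q^2-1)_2$ (equivalently the closed subgroup $\langle-1,q\rangle\leq\Z_2^\times$), not $\langle q\rangle=\langle q^*\rangle$; a single sign $\delta$ ``dictated by the Lie type and twisting'' does not encode this, and indeed for untwisted $C_n$ with $q\equiv3\pmod4$ your $\delta=+1$ would give $k=1$, contradicting the nonabelian Sylow hypothesis. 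The omission is ${}^3D_4(q)$: you flag it but do not resolve it, and it cannot be resolved by ``another application'' of the same-type equivalence, since \emph{every} $\s$-setup of ${}^3D_4(q)$ has $|\g|=3$; what is required is the exceptional identification of its 2-fusion system with that of $G_2$ over a suitable field, and that is precisely the nontrivial ingredient. Two smaller points: tori are abelian, so ``the torus $T^*$ has a nonabelian Sylow 2-subgroup'' should be a statement about $N_{G^*}(T^*)$ (with $q^*\equiv1\pmod8$ needed in type $A_1$); and the $G_2$ proviso is not explained by the class being ``realized at small prime powers'' --- it is arranged so that the exceptional graph automorphism of $G_2$, which exists only in characteristic $3$, is available for the subsequent tameness/automorphism analysis, with $q^*=5$ covering the minimal case $(q^2-1)_2=8$.
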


	
	\section{Isometries of $\Sigma$ acting on $A/Z(\f)$}
	
	Throughout this section, let $\G$ be a simple and universal algebraic group over $\overline{\F}_{q_0}$ where $q_0$ is an odd prime. Let $\sigma$ be a Steinberg Endomorphism that satisfies Hypotheses \ref{hyp: gp of lie type}. Let $\T\leq\G$ be a $\sigma$-invariant maximal torus of $\G$ so that Hypotheses \ref{hyp: gp of lie type}(II) holds. Set $G=C_{\G}(\s)$ and $T=C_{\T}(\s)$ with $\s=\vp\circ\gamma=\gamma\circ\vp$ where $\vp\in\Phi_{\G}$ and $\g\in\Gamma_{\G}$ and let $A=O_2(T)$. Let $\Sigma$ be the reduced crystallographic root system associated to $G$ with base $\Pi$. Recall that by Lemma \ref{lem: zf is 2 part of zg}, $Z(\f)=O_2(Z(\f))$ so we can identify the quotient $AZ(G)/Z(G)$ with $A/Z(\f)$. \par 
		
	The aim of this section is to prove that isometries of a given root system act faithfully on $A/Z(\f)$ when Hypotheses \ref{hyp: gp of lie type} are satisfied. This is accomplished by proving an extension of a result of \cite{broto2019automorphisms} regarding how isometries of a given root system act on $A$. We extend their results to adjoint groups of Lie type in two ways. First, we show that the action of the Weyl group $W_0$ is faithful on $A/Z(\f)$ (assuming $q\equiv1\pmod{4}$ and $(q-1)_2\geq3$ when $G=A^{\un}_1(q)$). We then show that when $G$ is not $A^{\un}_1(q)$ or ${}^2D_n^{\un}(q)$ with $n\geq3$ (under the same assumptions), the group $W_0\Gamma_G\Phi_G$ acts faithfully on $A/Z(\f)$. \par

Recall that if $\g$ is an isometry of the root system and a simple root $\a\in\Pi$, then $\g(\a)$ is again a root. To show that the action of the isometry is faithful, we show that when the image of the root is written as a linear combination of simple roots, the corresponding subdiagram of the Dynkin diagram associated to the simple roots is a connected subdiagram of the Dynkin diagram associated to $\Pi$. Using this result regarding connected subdiagrams, we can show that in most cases $\g(\a)$ is not a root unless $\g(\a)=\a$ which will give us that the action of the group of isometries is faithful on the quotient $A/Z(\f)$. The proof of this lemma is carried out case by case because of the difficulty describing the generators of the center of the groups of Lie type in a uniform way in terms of the co-root lattice. 
	\subsection{Scalar automorphisms of \texorpdfstring{$A$}{A} and \texorpdfstring{$A/Z(\f)$}{A/Z(F)}}
	For any finite abelian group $B$, we follow \cite{broto2019automorphisms} and denote its ``scalar'' automorphisms by 
	\begin{align*}
		\aut_{\scal}(B) = \{\psi_k\mid(k,|B|)=1\}\leq Z(\aut(B))
	\end{align*} 
	where $\psi_k(b)=b^k$ for every $b\in B$. In particular, we have the two groups $\aut_{\scal}(A)$ and $\aut_{\scal}(A/Z(\f))$ when following Notation \ref{not: gp of lie type} and Hypotheses \ref{hyp: gp of lie type}. Set $\hat{\Phi}_G=\langle\vp_{q_0}|_G\rangle$ and note that by the following proposition and Hypotheses \ref{hyp: gp of lie type}, $\innd(G)\hat{\Phi}_G=\innd(G)\Phi_G$. 
	\begin{prop}[Proposition 3.6(d) in \cite{broto2019automorphisms}]
		Adapt Notation \ref{not: gp of lie type}, and let $\s$ be any Steinberg endomorphism of $\G$ and set $G=O^{q_o'}(C_{\G}(\s))$. If $\psi_{q_0}$ normalizes $G$, then $\innd(G)\langle\psi_{q_0}|_G\rangle=\innd(G)\Phi_G$.
	\end{prop}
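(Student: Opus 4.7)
The plan is to establish the stronger equality $\Phi_G = \langle\psi_{q_0}|_G\rangle$ as subgroups of $\aut(G)$, from which the stated equality $\innd(G)\langle\psi_{q_0}|_G\rangle = \innd(G)\Phi_G$ follows by multiplying both sides by $\innd(G)$. The hypothesis that $\psi_{q_0}$ normalizes $G$ is exactly what makes this identification well defined.

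First I would note that since $\psi_{q_0}$ normalizes $G$, the restriction $\psi_{q_0}|_G$ is a bona fide automorphism of $G$, and because $G$ is finite, $\langle\psi_{q_0}|_G\rangle$ is a finite cyclic subgroup of $\aut(G)$. Moreover, every iterate $\psi_{q_0}^a = \psi_{q_0^a}$ also normalizes $G$, so each $\psi_{q_0^a}|_G$ appearing in the definition of $\Phi_G$ is genuinely an element of $\aut(G)$, and thus $\Phi_G$ is a well-defined subset of $\aut(G)$.

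Next I would verify both containments. Taking $a = 1$ in the definition of $\Phi_G$ gives $\psi_{q_0}|_G \in \Phi_G$; since $\Phi_G$ is closed under composition (because $\psi_{q_0^a} \circ \psi_{q_0^b} = \psi_{q_0^{a+b}}$ as endomorphisms of $\G$, and each factor restricts to an automorphism of $G$), it contains the cyclic subgroup $\langle\psi_{q_0}|_G\rangle$. Conversely, any element $\psi_{q_0^a}|_G$ of $\Phi_G$ satisfies $\psi_{q_0^a}|_G = (\psi_{q_0}|_G)^a$, since $\psi_{q_0^a} = \psi_{q_0}^a$ already holds as endomorphisms of $\G$. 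Hence $\Phi_G = \langle\psi_{q_0}|_G\rangle$, from which the proposition is immediate.

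There is no substantial obstacle here; the result is essentially an unwinding of the definition of $\Phi_G$ given in Definition \ref{def: outdiag and inndiag def}. The entire content of the proposition lies in the hypothesis that $\psi_{q_0}$ normalizes $G$, which encodes the commutation of the Frobenius $\psi_{q_0}$ with the Steinberg endomorphism $\sigma$. This is automatic for the standard Steinberg endomorphisms of Chevalley and Steinberg type under Hypotheses \ref{hyp: gp of lie type}, but fails for the non-standard Suzuki and Ree endomorphisms, where $\psi_{q_0}|_G$ is not even defined as an automorphism of $G$ and the two groups genuinely differ.
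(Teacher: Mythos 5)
The paper does not prove this statement at all: it is imported verbatim as Proposition~3.6(d) of \cite{broto2019automorphisms}, so there is no internal proof to compare yours against. Judged against the definition of $\Phi_G$ as recorded in Definition~\ref{def: outdiag and inndiag def} of this paper, your unwinding does establish the stated implication: under the hypothesis every $\psi_{q_0^a}=\psi_{q_0}^a$ normalizes $G$, each restriction lies in $\aut(G)$, $\psi_{q_0^a}|_G=(\psi_{q_0}|_G)^a$, and finiteness of $\aut(G)$ upgrades the set of positive powers to the full cyclic group, whence $\innd(G)\langle\psi_{q_0}|_G\rangle=\innd(G)\Phi_G$. Be aware, however, that your stronger set-level claim $\Phi_G=\langle\psi_{q_0}|_G\rangle$ trivializes the proposition and reflects the simplified transcription of $\Phi_G$ used here rather than the setting of the cited result: in \cite{broto2019automorphisms} the statement is made for an arbitrary $\s$-setup, where one only has the automorphisms of $G$ induced by field endomorphisms of $\G$ that happen to normalize $G$, the hypothesis is what guarantees that the generator $\psi_{q_0}|_G$ exists at all, and the comparison is phrased modulo $\innd(G)$ because that is the ambiguity inherent in the general setup. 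So your argument is acceptable for the statement as reproduced here, but it bypasses rather than engages the point the citation is recording.

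Your closing remark about the Suzuki and Ree groups is incorrect and misidentifies what the hypothesis excludes. For those groups $\s$ is a power of the exceptional isogeny $\tau$ with $\tau^2=\psi_{q_0}$, so $\psi_{q_0}$ commutes with $\s$, normalizes $G=O^{q_0'}(C_{\G}(\s))$, and $\psi_{q_0}|_G$ is a perfectly good automorphism generating the cyclic group of field automorphisms of $\sz(2^{2a+1})$, ${}^2G_2(3^{2a+1})$, and ${}^2F_4(2^{2a+1})$; the two sides of the displayed equality do not ``genuinely differ'' there. What the hypothesis actually rules out are non-standard $\s$-setups, for instance one obtained by conjugating a standard $\s$ by an element $g\in\G$ with $g^{-1}\psi_{q_0}(g)\notin N_{\G}(G)$, in which case $\psi_{q_0}$ need not normalize $C_{\G}(\s)$. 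This error does not invalidate the implication you proved, but it should be corrected, since it suggests a misreading of why the hypothesis is present.
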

	\noindent We can then use the following lemma of \cite{broto2019automorphisms} to see that $\hat{\Phi}_G$ maps to scalar automorphisms of $A$ under restriction. The following lemma is a combination of parts (b) and (c) of \cite[Lemma 5.12(b) and (c)]{broto2019automorphisms}.
	\begin{lem}
		Assume Hypotheses \ref{hyp: gp of lie type}. Set 
		\begin{align*}
			\aut(A,\f)=\{\b\in\aut(A)\mid\b=\overline{\b}|_A,\text{ some }\overline{\b}\in\aut(\f)\}.
		\end{align*}
		Let $\chi:\hat{\Phi}_G\to\aut(A,\f)$ be the homomorphism induced by restriction from $G$ to $A$. Then $\chi$ is injective and $\chi(\hat{\Phi}_G)$ has index 2 in $\aut_{\scal}(A)$. 
	\end{lem}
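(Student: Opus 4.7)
The plan is to use the Chevalley relations to identify the action of $\vp_{q_0}$ on $A$ as a scalar automorphism, then compare orders to get both the index-$2$ statement and the injectivity. For the first step, by Theorem \ref{thm: chev rel}, $\T$ is generated over $\overline{\F}_{q_0}$ by the elements $h_\a(\l)$ with $\a\in\Pi$ and $\l\in\overline{\F}_{q_0}^\times$, and each $h_\a$ is a group homomorphism. Since $\vp_{q_0}$ acts on root-subgroup parameters by $u\mapsto u^{q_0}$, it sends $h_\a(\l)\mapsto h_\a(\l^{q_0})=h_\a(\l)^{q_0}$, so its restriction to $\T$ is the $q_0$-th power map. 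Restricting to $T\leq\T$ and then to $A=O_2(T)$, we get the scalar automorphism $\psi_{q_0}$, so $\chi(\hat\Phi_G)=\langle\psi_{q_0}|_A\rangle\leq\aut_{\scal}(A)$.

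Second, I would analyze the image and its index. Write $\exp(A)=2^e$, so $\aut_{\scal}(A)\cong(\Z/2^e\Z)^\times$ has order $2^{e-1}$. The proof of Lemma \ref{lem: 2-part of 5} uses $(q_0-1)_2=4$, i.e., $q_0\equiv 5\pmod 8$. By a standard lifting-the-exponent computation, $v_2(q_0^n-1)=2+v_2(n)$, so the order of $q_0$ modulo $2^e$ is $2^{e-2}$, and $\langle q_0\rangle$ is the unique cyclic subgroup of that order in $(\Z/2^e\Z)^\times$, namely the residues $\{x\equiv 1\pmod 4\}$. This is an index-$2$ subgroup, so $[\aut_{\scal}(A):\chi(\hat\Phi_G)]=2$.

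For injectivity I would compute $|\hat\Phi_G|$ and verify it equals $2^{e-2}$. Since $\s|_G=\id$ and $\s=\vp_q\g=\vp_{q_0}^b\g$ with $b=2^l$, we get $\vp_{q_0}^b|_G=\g^{-1}|_G$. In the untwisted case ($\g=\id$), $\vp_{q_0}^b|_G=\id$ while no smaller power is trivial on $G$ (otherwise $G$ would embed into the proper Chevalley subgroup $C_\G(\vp_{q_0}^m)$ over $\F_{q_0^m}$), so $|\hat\Phi_G|=2^l$; the split-torus structure forces $\exp(A)=(q-1)_2=2^k$, hence $e=k$ and $2^{e-2}=2^l$. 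In the twisted case ($|\g|=2$), $\vp_{q_0}^b|_G=\g|_G$ is nontrivial because $\g$ interchanges distinct root subgroups $\overline{X}_\a$ and $\overline{X}_{\rho(\a)}$, so the order of $\vp_{q_0}|_G$ does not divide $b$; on the other hand $\vp_{q_0}^{2b}|_G=\g^2|_G=\id$, and a subfield-containment argument (that $\vp_{q_0}^m|_G=\id$ forces $\F_{q^2}\subseteq\F_{q_0^m}$, hence $2b\mid m$) shows $|\hat\Phi_G|=2^{l+1}$; the twisted torus structure under Hypothesis (II) then gives $\exp(A)=2^{k+1}$, so $e=k+1$ and $2^{e-2}=2^{l+1}$. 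In either case $|\hat\Phi_G|=|\chi(\hat\Phi_G)|$, proving injectivity.

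The main obstacle I expect is pinning down $\exp(A)$ case-by-case under Hypothesis (II). The torus $T$ need not be fully split but is chosen so that $N_G(T)$ contains a nonabelian Sylow $2$-subgroup of $G$, and in the twisted types ${}^2A_n,{}^2D_n,{}^2E_6$ producing the required $(q+1)_2$ factor in $\exp(A)$ demands analyzing the action of $\s$ on the cocharacter lattice via the decomposition $\hat\Sigma$ of Notation \ref{not: gp of lie type}(E), presumably leveraging computations from preceding sections or from \cite{broto2019automorphisms}.
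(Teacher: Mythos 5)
The paper gives no proof of this lemma to compare against: it is quoted verbatim from Lemma 5.12 of \cite{broto2019automorphisms}. Your reconstruction follows what is surely the intended line: by the Chevalley relations $\vp_{q_0}$ acts on $\T$, hence on $A$, as the $q_0$-power map, so everything reduces to comparing $|\hat{\Phi}_G|$ with the order of $q_0$ in $\aut_{\scal}(A)\cong(\Z/\exp(A)\Z)^{\times}$. Your counts are right: $|\hat{\Phi}_G|=2^l$ in the untwisted case (no smaller power of $\vp_{q_0}$ can be trivial on $G$ by comparing $|G|$ with the universal group over the subfield) and $2^{l+1}$ in the twisted case, and $v_2(q_0^n-1)=2+v_2(n)$ gives the matching order of $q_0$, granting $(q_0-1)_2=4$, which you correctly observe is what the proof of Lemma \ref{lem: 2-part of 5} silently uses.

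The genuine gap is the one you flag yourself: the values $\exp(A)=2^{k}$ (untwisted) and $\exp(A)=2^{k+1}$ (twisted) are asserted, not proved, and your whole order count hinges on them. Closing it is short, not a case-by-case chase: since $\G$ is universal, $\T=\prod_{\a\in\Pi}h_\a(\overline{\F}_{q_0}^\times)$ and $\g(h_\a(t))=h_{\rho(\a)}(t)$, so $T=C_{\T}(\s)$ splits over the $\rho$-orbits on $\Pi$, a fixed simple root contributing $h_\a(\F_q^\times)$ and a $2$-orbit contributing $\{h_\a(s)h_{\rho(\a)}(s^q)\mid s\in\F_{q^2}^\times\}\cong\F_{q^2}^\times$; as $\rho\neq1$ moves some simple root and $(q^2-1)_2=2^{k+1}$, the claim follows (this is exactly the shape of $A$ recorded in the proofs of Lemmas \ref{lem: gen27 for 2dn} and \ref{lem: gen27 for twisted gps}). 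Two justifications also need repair. First, $\langle q_0\rangle$ is not ``the unique cyclic subgroup of order $2^{e-2}$'' of $(\Z/2^e\Z)^\times$: for $e\geq4$ both $\langle 5\rangle$ and $\langle -5\rangle$ have that order; what you need is only that $q_0\equiv1\pmod{4}$ places $\langle q_0\rangle$ inside the index-$2$ subgroup $\{x\equiv1\pmod{4}\}$, and the order count forces equality. Second, nontriviality of $\g|_G$ does not follow from $\g$ permuting root subgroups of $\G$, since those need not meet $G$; argue instead that $\g|_G=\id$ together with $\s|_G=\id$ would force $G\leq C_{\G}(\g)\cap C_{\G}(\psi_q)$, which is too small by order comparison. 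Finally, for the statement as phrased one should also check that $\psi_{q_0}|_A$ lies in $\aut(A,\f)$: $\vp_{q_0}$ induces the identity on $N_G(T)/T$, so $\vp_{q_0}(S)={}^{t}S$ for some $t\in T=C_G(A)$, and $(c_{t^{-1}}\vp_{q_0})|_S$ is a fusion-preserving automorphism of $S$ restricting to $\psi_{q_0}$ on $A$.
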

	
	Note that $\psi_k\in\aut_{\scal}(A)$ is the automorphisms of $A$ which raises elements of $A$ to the $k^{\text{th}}$ power for an odd integer $k$. Therefore, when restricted to the quotient $A/Z(\f)$, the automorphism $\psi_k|_{A/Z(\f)}$ is the automorphism that raises each element of $A/Z(\f)$ to the $k^{\text{th}}$ power. Since $A/Z(\f)$ is a 2-group and $k$ is odd, it follows that $\aut_{\scal}(A)$ maps injectively $\aut_{\scal}(A/Z(\f))$. Therefore, elements of $\Phi_G$, when restricted to $A/Z(\f)$, are elements of $\aut_{\scal}(A/Z(\f))$.

	\subsection{The co-root lattice}
	Let $G$ be a finite group of Lie type with standard $\s$-setup $(\G,\s)$ as in Notation \ref{not: gp of lie type} and Hypotheses \ref{hyp: gp of lie type}. Given any root $\a\in\Sigma$, we denote the corresponding co-root, or dual root, as $\a^\vee$ where $\a^\vee=2\a/(\a,\a)$, and we let $\Sigma^\vee$ be the collection of all co-roots. Note that for every reduced crystallographic root system, the co-root lattice is again a reduced crystallographic root system. \par 
	
	The root lattice or the co-root lattice is the collection of all $\Z$-linear combinations of elements of $\Sigma$ or $\Sigma^\vee$. The root lattice is denoted $\Z\Sigma$ and the co-root lattice is denoted $\Z\Sigma^\vee$. We state two results from \cite{broto2019automorphisms} relating the co-root lattice to a maximal torus of an universal algebraic group.
	\begin{lem}[Lemma 2.6 in \cite{broto2019automorphisms}] \label{lem: phi_lambda exists}   
		Adapt Notation \ref{not: gp of lie type} and assume that $\G$ is of universal type. 
		\begin{enumerate}
			\item[(a)] There is an isomorphism
			\begin{align*}
				\Phi:\Z\Sigma^\vee\otimes_{\Z}\overline{\F}_{q_0}^\times\to\T
			\end{align*}
			with the property that $\Phi(\a^\vee\otimes\lambda)=h_\a(\lambda)$ for each $\a\in\Sigma$ and each $\l\in\overline{\F}_{q_0}^\times$. 
		\end{enumerate}
		Fix some $\l\in\overline{\F}_{q_0}^\times$ and set $m=|\l|$. Set $\Phi_{\l}=\Phi(-,\l):\Z\Sigma^\vee\to\T$. 
		\begin{enumerate}
			\item[(b)] The map $\Phi_{\l}$ is $\Z[W]$-linear, $\ker(\Phi_{\l})=m\Z\Sigma^\vee$, and $\im(\Phi_{\l})=\{t\in\T\mid t^m=1\}$.  
		\end{enumerate}
	\end{lem}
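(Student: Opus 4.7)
The plan is to build the isomorphism $\Phi$ by working in the basis of simple co-roots, then verify the universality statement on all roots via a standard Chevalley identity. For part (a), I first observe that since $\G$ is of universal type, the simple co-roots $\{\a_1^\vee,\ldots,\a_n^\vee\}$ form a $\Z$-basis for the co-root lattice $\Z\Sigma^\vee$ (this is the standard fact that the dual of a simple system is a simple system of the dual root system). Consequently, the universal property of the tensor product identifies $\Z\Sigma^\vee\otimes_\Z\overline{\F}_{q_0}^\times$ with $(\overline{\F}_{q_0}^\times)^n$ via $\sum c_i\a_i^\vee\otimes\l \mapsto (\l^{c_1},\ldots,\l^{c_n})$, extended bilinearly. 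On the other side, parts (2) and (3) of Theorem \ref{thm: chev rel} say precisely that the map $(\l_1,\ldots,\l_n)\mapsto\prod_i h_{\a_i}(\l_i)$ is a bijective group homomorphism onto $\T$. Composing these two identifications defines $\Phi$ and gives that it is an isomorphism of abelian groups satisfying $\Phi(\a_i^\vee\otimes\l)=h_{\a_i}(\l)$ on simple co-roots.

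To extend this to all $\a\in\Sigma$, I would invoke the standard Chevalley identity
\[
h_\a(\l)=\prod_i h_{\a_i}(\l)^{c_i}=\prod_i h_{\a_i}(\l^{c_i})
\qquad\text{where}\qquad \a^\vee=\sum_i c_i\a_i^\vee,
\]
which is derivable from the defining formula $h_\a(t)=n_\a(1)^{-1}n_\a(t)$ together with the braid-type relations among the $n_\b(1)$'s (cf.\ the Chevalley relations referenced in Section~1.12 of \cite{gorenstein2005classification}). This identity is exactly what is needed: under the identifications above, $\Phi(\a^\vee\otimes\l)$ equals $\prod_i h_{\a_i}(\l^{c_i})=h_\a(\l)$, giving the required consistency.

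For part (b), fix $\l$ with $|\l|=m$. For $\Z[W]$-linearity, it suffices to check on generators $r_\b\in W$ and simple co-roots $\a^\vee$. The Weyl group acts on $\T$ via conjugation by coset representatives $n_\b(1)\in N_\G(\T)$, and the Chevalley relation $n_\b(1)h_\a(t)n_\b(1)^{-1}=h_{r_\b(\a)}(t)$ matches $\Phi_\l(r_\b(\a^\vee))=\Phi_\l(r_\b(\a)^\vee)=h_{r_\b(\a)}(\l)=r_\b\cdot h_\a(\l)=r_\b\cdot\Phi_\l(\a^\vee)$. For the kernel, using the basis $\{\a_i^\vee\}$,
\[
\Phi_\l\Bigl(\sum_i c_i\a_i^\vee\Bigr)=\prod_i h_{\a_i}(\l^{c_i})=1
\]
holds iff each $\l^{c_i}=1$ by Theorem \ref{thm: chev rel}(3), i.e., iff $m\mid c_i$ for every $i$, i.e., iff $\sum c_i\a_i^\vee\in m\Z\Sigma^\vee$. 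For the image, every element of the image has the form $\prod h_{\a_i}(\mu_i)$ with $\mu_i\in\langle\l\rangle$, hence satisfies $t^m=1$; conversely any $t\in\T$ with $t^m=1$ is by the direct-product decomposition $\T\cong\prod_i h_{\a_i}(\overline{\F}_{q_0}^\times)$ of the form $\prod h_{\a_i}(\mu_i)$ with $\mu_i^m=1$, and since $\overline{\F}_{q_0}^\times$ is the union of its cyclic subgroups and $\l$ already generates the unique subgroup of order $m$, each $\mu_i=\l^{c_i}$ for some integer $c_i$, so $t=\Phi_\l(\sum c_i\a_i^\vee)$.

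The main obstacle I anticipate is the verification of the global formula $\Phi(\a^\vee\otimes\l)=h_\a(\l)$ for non-simple roots. The universal property only guarantees well-definedness once the map is prescribed on a basis, but the claim asserts that the \emph{already-defined} elements $h_\a(\l)$ match the values of $\Phi$ on all co-roots. This reduces to the Chevalley identity above, whose proof typically requires either a direct rank-two computation in the subgroup generated by $\overline{X}_\a,\overline{X}_{-\a},\overline{X}_\b,\overline{X}_{-\b}$, or else an appeal to the known presentation of the universal Chevalley group; I would cite Theorem 1.12.1 of \cite{gorenstein2005classification} (and the accompanying relations) rather than reprove it.
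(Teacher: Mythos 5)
The paper states this lemma as a citation of Lemma~2.6 in \cite{broto2019automorphisms} and does not reproduce a proof, so there is no in-paper argument to compare against. Judged on its own, your argument is correct and proceeds along the standard route: identify $\Z\Sigma^\vee\otimes_\Z\overline{\F}_{q_0}^\times\cong(\overline{\F}_{q_0}^\times)^n$ via the basis of simple co-roots, match this with $\T\cong\prod_i h_{\a_i}(\overline{\F}_{q_0}^\times)$ using Theorem~\ref{thm: chev rel}, and then verify the global formula $\Phi(\a^\vee\otimes\l)=h_\a(\l)$ for all $\a\in\Sigma$ via the Chevalley identity $h_\a(\l)=\prod_i h_{\a_i}(\l^{c_i})$ where $\a^\vee=\sum_i c_i\a_i^\vee$. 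You correctly single that identity out as the genuine content (it is the statement that $h_\a$ is the cocharacter attached to $\a^\vee$, and in the universal group the cocharacter lattice of $\T$ is exactly $\Z\Sigma^\vee$), and citing it from \cite{gorenstein2005classification} rather than re-deriving it is appropriate.

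Two small remarks rather than gaps. First, the fact that the simple co-roots form a $\Z$-basis of $\Z\Sigma^\vee$ has nothing to do with $\G$ being universal; it holds for any reduced root system. Universality enters only through Theorem~\ref{thm: chev rel}(3), which gives injectivity of $(\l_i)\mapsto\prod_i h_{\a_i}(\l_i)$ (for adjoint groups this map fails to be injective and the lemma is false as stated). You do use (3) correctly, so this is just an attribution slip. Second, in the $\Z[W]$-linearity check you implicitly use that $r_\b(\a_i^\vee)$ is itself a co-root, which is why the already-established global formula $\Phi_\l(\b'^\vee)=h_{\b'}(\l)$ can be applied on both sides; it would be cleaner to say this explicitly, but the logic is sound. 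Otherwise the verification of the kernel via Theorem~\ref{thm: chev rel}(3) and of the image via the uniqueness of the cyclic subgroup of order $m$ in $\overline{\F}_{q_0}^\times$ (valid since $q_0\nmid m$) are exactly right.
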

	Lemma \ref{lem: phi_lambda exists} is going to be applied to $A\leq T$. For example, if $G$ is a Chevalley group, then since $\T\cong h_{\a_1}(\overline{\F}_{q_0}^\times)\times h_{\a_2}(\overline{\F}_{q_0}^\times)\times\cdots\times h_{\a_n}(\overline{\F}_{q_0}^\times)$ and $T=C_{\T}(\s)$, 
	\begin{align*}
		T\cong \langle h_{\a_1}(\l)\rangle\times \langle h_{\a_2}(\l)\rangle \times\cdots\times\langle h_{\a_n}(\l)\rangle
	\end{align*}
	where $\F_q^\times=\langle\l\rangle$. If we let $\mu\in\F_q^\times$ be such that $|\mu|=|\l|_2$, then we have an isomorphism 
	\begin{align*}
		\Phi_\mu:\frac{\Z\Sigma^\vee}{2^k\Z\Sigma^\vee}\to A.
	\end{align*} 
	Therefore, whenever we have an automorphism of $G$ that is induced from an isometry of the root system, the restriction to $A$ can be analyzed by passing to the co-root lattice. 
	\begin{lem}[Lemma 2.7 in \cite{broto2019automorphisms}]\label{lem: 27}
		Adapt Notation \ref{not: gp of lie type}, and assume also that $\G$ is of universal type. Let $\Gamma<\aut(V)$ be any finite group of isometries of $(V,\Sigma)$. Then there is an action of $\Gamma$ on $\T$, where $g(h_\a(u))=h_{g(\a)}(u)$ for each $\g\in\Gamma$, $\a\in\Sigma$, and $u\in\overline{\F}_{q_0}^\times$. Fix $m\geq3$ such that $q_0\nmid m$, and set $T_m=\{t\in\T\mid t^m=1\}$. Then $\Gamma$ acts faithfully on $T_m$. If $1\neq g$ and $l\in\Z$ are such that $g(t)=t^l$ for each $t\in T_m$, then $l\equiv-1\pmod{m}$.
	\end{lem}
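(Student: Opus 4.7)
The plan is to transport the action from $\T$ to the integral coroot lattice via Lemma \ref{lem: phi_lambda exists}, and then argue entirely with determinants and Minkowski-type bounds.

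First, construct the action. Any isometry $g \in \Gamma$ preserves the coroots $\Sigma^\vee$, since $\a^\vee = 2\a/(\a,\a)$ depends only on $\a$ and the inner product. Thus $\Gamma$ acts $\Z$-linearly on $\Z\Sigma^\vee$, and via the isomorphism $\Phi : \Z\Sigma^\vee \otimes_\Z \overline{\F}_{q_0}^\times \cong \T$ of Lemma \ref{lem: phi_lambda exists}(a), the transformations $g \otimes \id$ define the action on $\T$ with $g(h_\a(u)) = h_{g(\a)}(u)$. Choosing $\l \in \overline{\F}_{q_0}^\times$ of order $m$ (available since $q_0 \nmid m$), Lemma \ref{lem: phi_lambda exists}(b) provides a $\Z[\Gamma]$-equivariant isomorphism $\Z\Sigma^\vee/m\Z\Sigma^\vee \xrightarrow{\sim} T_m$. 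It therefore suffices to analyze the action of $\Gamma$ on $\Z\Sigma^\vee/m\Z\Sigma^\vee \cong (\Z/m\Z)^n$, where $n = |\Pi|$.

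Suppose $g \in \Gamma$ acts on $T_m$ as the scalar $l \in \Z$, so that $g \equiv l \cdot I \pmod{m}$ on $\Z\Sigma^\vee$. Replace $l$ by its minimal-absolute-value representative $l' \in (-m/2,\,m/2]$ and write $g - l' I = m B$ with $B \in M_n(\Z)$. Taking determinants,
\begin{align*}
    m^n |\det B| \;=\; |\det(g - l' I)| \;=\; \prod_{i=1}^{n} |l' - \mu_i|,
\end{align*}
where $\mu_1, \dots, \mu_n$ are the complex eigenvalues of $g$, each of modulus $1$ since $g$ is a finite-order orthogonal transformation of $V$. The product is bounded above by $(|l'|+1)^n \leq (m/2 + 1)^n$, which is strictly less than $m^n$ for $m \geq 3$. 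Thus $\det(g-l'I) = 0$, so the integer $l'$ is an eigenvalue of $g$; being a real root of unity, $l' \in \{\pm 1\}$.

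The case $l' = -1$ gives $l \equiv -1 \pmod{m}$, as required. In the case $l' = 1$ we have $g \equiv I \pmod{m}$ in $GL_n(\Z)$, and Minkowski's lemma (the principal congruence subgroup of $GL_n(\Z)$ of level $m \geq 3$ is torsion-free) combined with the finiteness of $g$ forces $g = 1$. Applied to arbitrary $g$ acting trivially on $T_m$, this yields faithfulness of $\Gamma \to \aut(T_m)$; applied to the scalar hypothesis under $g \neq 1$, it rules out $l \equiv 1 \pmod{m}$ and leaves only $l \equiv -1 \pmod{m}$.

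\textbf{Main obstacle.} The crux is the quantitative determinant inequality $(|l'|+1)^n < m^n$ under $|l'| \leq m/2$, which is exactly where the hypothesis $m \geq 3$ enters and which fails at $m = 2$. The Minkowski-type upgrade from $g \equiv I \pmod m$ to $g = I$ is classical and could alternatively be extracted from the same characteristic-polynomial idea applied to powers of $g$, but packaging it by citing Minkowski's lemma keeps the proof short.
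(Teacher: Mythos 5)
Your argument is correct, and it is worth noting that the paper you are working from does not actually prove this statement: it is imported verbatim as Lemma 2.7 of \cite{broto2019automorphisms}, so the only comparison available is with that source, whose proof stays inside the root system itself rather than passing to a congruence argument in $GL_n(\Z)$. Your route is genuinely different and self-contained: transporting the action to $\Z\Sigma^\vee/m\Z\Sigma^\vee$ via $\Phi_\l$ (note that the $\Z[\Gamma]$-equivariance is really by construction of the action through $\Phi$, since Lemma \ref{lem: phi_lambda exists}(b) literally asserts only $\Z[W]$-linearity — this deserves one sentence, but is not a gap), then the determinant estimate $m^n\lvert\det B\rvert=\prod_i\lvert l'-\mu_i\rvert\leq(\lvert l'\rvert+1)^n<m^n$ for $\lvert l'\rvert\leq m/2$ and $m\geq3$, forcing $l'$ to be an integer eigenvalue of the finite-order isometry $g$ and hence $l'=\pm1$, and finally Minkowski's lemma (torsion-freeness of the level-$m\geq3$ congruence kernel) to kill the case $l'=1$ and to get faithfulness. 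All steps check out, including the edge cases $l'=0$ (excluded automatically since $0$ cannot be an eigenvalue of $g$) and $l'=m/2$ for even $m$. What your approach buys is uniformity: it uses nothing about the combinatorics of the particular root system beyond the fact that $\Gamma$ preserves the coroot lattice, and it isolates exactly where $m\geq3$ is needed (both in the strict inequality and in Minkowski's lemma, which fails at level $2$, consistent with $-1\in W$ acting as inversion on $T_2$-type phenomena). The cost is the appeal to an external classical fact (Minkowski), whereas the cited source's treatment is internal to the Lie-theoretic setup; as you observe, one could also eliminate that citation by rerunning the characteristic-polynomial bound on a suitable power of $g$ of prime order.
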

	In particular, Lemma \ref{lem: 27} tells us that the group $\Gamma_G W_0$ acts faithfully on $A$. Furthermore, if there were some $\g\in \Gamma_G W_0$ such that $\g|_A\in\aut_{\scal}(A)$, then $\g$ acts as inversion or the identity.

\subsection{When \texorpdfstring{$G$}{G} is a Chevalley group}
	To prove the generalization of Lemma \ref{lem: 27}, we need the following result regarding the subdiagrams generated by subsets of simple roots. 
	\begin{defn}
		For any subset $X$ of $\Pi$, define $\Delta(X)$ to be the subdiagram of the Dynkin diagram induced by elements of $X$.
	\end{defn}	
	\begin{defn}
		Let $\b\in\Sigma$. The \emph{support of $\b$}, denoted $\supp(\b)$, is the subset $\supp(\b)\subset\Pi$ that contains each simple root that appears with nonzero coefficients when $\b$ is written as a sum of simple roots. 
	\end{defn}
	\begin{lem}[Corollary 3(a) in Chapter VI in \cite{bourbaki4to6}]\label{lem: sums of fund roots}
        Let $\a\in\Sigma$. Then $\Delta(\supp(\a))$ is connected.  
	\end{lem}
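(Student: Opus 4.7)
The plan is to proceed by induction on the height of $\a$. Without loss of generality, we may assume $\a$ is a positive root, since $\supp(\a)=\supp(-\a)$. The base case $\height(\a)=1$ is immediate: $\a$ is simple, so $\Delta(\supp(\a))$ consists of a single vertex and is trivially connected.

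For the inductive step, suppose $\height(\a)\geq 2$. Because $(\a,\a)>0$ and $\a$ is a nonnegative integer combination of simple roots, there must exist $\b\in\Pi$ with $(\a,\b)>0$, and from $\a\neq\b$ (heights disagree) and the standard fact that $\a-\b$ is then a root, it follows that $\a-\b$ is a positive root of strictly smaller height. By the inductive hypothesis, $\Delta(\supp(\a-\b))$ is connected. If $\b\in\supp(\a-\b)$, then $\supp(\a)=\supp(\a-\b)$ and the conclusion is immediate. Otherwise $\b\notin\supp(\a-\b)$, which forces the coefficient of $\b$ in $\a$ to equal $1$ and gives $\supp(\a)=\supp(\a-\b)\cup\{\b\}$ as a disjoint union; it then suffices to show that $\b$ is joined in the Dynkin diagram to at least one element of $\supp(\a-\b)$.

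I expect this last step to be the main (and only) obstacle, and I would handle it by contradiction. If $\b$ were orthogonal to every element of $\supp(\a-\b)$, then $(\a-\b,\b)=0$, so $(\a,\b)=(\b,\b)$ and therefore $\langle\a,\b\rangle=2$. Consequently $r_\b(\a)=\a-2\b$ is a root. But expanding $\a-2\b$ in the base $\Pi$ yields coefficient $-1$ at $\b$ together with strictly positive coefficients at every element of the nonempty set $\supp(\a-\b)$, contradicting the basic fact that every root has coefficients in $\Pi$ all of the same sign. This contradiction forces $\b$ to be adjacent in the Dynkin diagram to some element of $\supp(\a-\b)$, so gluing the connected subdiagram $\Delta(\supp(\a-\b))$ to the new vertex $\b$ along this edge yields a connected subdiagram $\Delta(\supp(\a))$, completing the induction.
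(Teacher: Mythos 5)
Your proof is correct. The paper does not prove this lemma at all---it is quoted from Bourbaki (Ch.~VI, Corollary 3(a))---and your induction on height, subtracting a simple root $\b$ with $(\a,\b)>0$ and exploiting that every root has coefficients of a single sign (to force $\langle\a,\b\rangle=2$ and the impossible root $\a-2\b$ in the orthogonal case), is essentially the standard argument underlying that citation, so there is nothing to compare beyond noting that your write-up is a valid self-contained substitute for the reference.
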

	In order to prove the generalization of Lemma \ref{lem: 27}, it will be helpful to consider the cases of Chevalley groups and Steinberg groups separately. We first show that, under the hypotheses, the Weyl group $W_0$ acts faithfully on $A/Z(\f)$ when $G=A^{\un}_1(q)=\SL_2(q)$. Although this is very easy to see directly, we provide an argument that hints at the general strategy in the other types. We cannot generalize Lemma \ref{lem: 27} in the case that $G=A^{\un}_1(q)$ further as a field automorphism of $G$ of order 2 acts trivially on $A/Z(\f)$.   
\begin{lem}\label{lem: gen 27 for a1}
		Assume Hypotheses \ref{hyp: gp of lie type}. If $G=A^{\un}_1(q)$, then $W_0$ acts faithfully on $A/Z(\f)$.
	\end{lem}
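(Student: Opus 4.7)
The plan is to use the co-root lattice description of $A$ from Lemma~\ref{lem: phi_lambda exists} together with the fact that for type $A_1$ the Weyl group is generated by a single reflection; although the result can be read off directly, organizing the argument through the co-root lattice is what generalizes to later cases. Write $\Pi = \{\alpha\}$, so $W_0 = \{1, r_\alpha\} \cong \Z/2\Z$. Fix $\mu \in \overline{\F}_{q_0}^\times$ of order $2^k$. Applied to $m = 2^k$, Lemma~\ref{lem: phi_lambda exists} gives a $\Z[W_0]$-linear isomorphism
\[
\Phi_\mu \colon \Z\alpha^\vee / 2^k \Z\alpha^\vee \xrightarrow{\;\cong\;} A,
\qquad \alpha^\vee + 2^k\Z\alpha^\vee \longmapsto h_\alpha(\mu).
\]
Under this identification $A$ is cyclic of order $2^k$, and the nontrivial element $r_\alpha$ acts by $x \mapsto -x$, since $r_\alpha(h_\alpha(\mu)) = h_{-\alpha}(\mu) = h_\alpha(\mu)^{-1}$.

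Next I would locate $Z(\f)$ inside $A$. Because $O_{2'}(\SL_2(q)) = 1$, Lemma~\ref{lem: zf is 2 part of zg} gives $Z(\f) = O_2(Z(G))$. For $G = \SL_2(q)$ the center is $\{\pm I\} = \langle h_\alpha(-1)\rangle$ (see Table~3 in the appendix), and since $-1 = \mu^{2^{k-1}}$ in $\F_q^\times$, the preimage of $Z(\f)$ under $\Phi_\mu$ is exactly $2^{k-1}\Z\alpha^\vee / 2^k \Z\alpha^\vee$. Consequently $A/Z(\f)$ is cyclic of order $2^{k-1}$, and the induced action of $r_\alpha$ is still inversion.

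By the remark following Hypotheses~\ref{hyp: gp of lie type}, the $A_1$ case forces $q \equiv 1 \pmod 8$, so $k \geq 3$ and $|A/Z(\f)| \geq 4$. On a cyclic group of order at least $4$, inversion is a nontrivial automorphism, so $r_\alpha$ acts nontrivially on $A/Z(\f)$ and the action of $W_0$ is faithful, as required. The only real piece of bookkeeping is confirming that $\langle -I\rangle$ corresponds to the unique index-$2^{k-1}$ subgroup of $A$ under $\Phi_\mu$; I expect this small calculation with $-1 = \mu^{2^{k-1}}$ to be the main obstacle, and it previews how centers will be tracked through Chevalley generators in the more elaborate types to follow.
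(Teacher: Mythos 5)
Your proof is correct, and it is essentially the direct computation that the paper's author explicitly says is available (``Although this is very easy to see directly, we provide an argument that hints at the general strategy in the other types''). You observe that $A$ is cyclic of order $2^k$, that $Z(\f)=\langle h_{\a_1}(-1)\rangle$ is the unique subgroup of order $2$, so $A/Z(\f)$ is cyclic of order $2^{k-1}\geq 4$, and that the nontrivial element of $W_0$ acts by inversion, which is nontrivial on a cyclic group of order at least $3$. This is clean and correct. The paper's argument deliberately takes the longer route used in the subsequent lemmas: it supposes $w\in W_0$ acts trivially on $A/Z(\f)$, translates this via $\Phi_\mu$ to the statement that $w(\a_1^\vee)\equiv\a_1^\vee + 2^{k-1}i\,\a_1^\vee \pmod{2^k\Z\Sigma^\vee}$, uses the $\Z[W_0]$-linearity of $\Phi_\mu$ together with $w(\Sigma^\vee)=\Sigma^\vee$ to force $i=0$, hence $w$ centralizes all of $A$, and then invokes Lemma~\ref{lem: 27} (faithfulness of $W_0$ on $A$) to conclude $w=1$. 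The two arguments reach the same endpoint; yours is shorter, while the paper's is designed to preview the contradiction-via-root-system machinery that is actually needed in types $B_n$, $C_n$, $D_n$, $E_7$, and the twisted cases, where one cannot simply read off the action as inversion on a cyclic quotient.
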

	\begin{proof}
		If $G=A^{\un}_1(q)$ with $q\equiv1\pmod{8}$, then $G$ does not have abelian Sylow 2-subgroups. By \cite[Theorem 1.12.6]{gorenstein2005classification}, $Z(G)=\langle h_{\a_1}(-1)\rangle$. Since $G$ is of universal type, 
		\begin{align*}
			T&=\langle h_{\a_1}(\lambda)\rangle & &\text{and} & A&=\langle h_{\a_1}(\mu)\rangle
		\end{align*}
		where $\F_q^\times=\langle\l\rangle$ and $|\mu|=|\l|_2$. Suppose that there is some $w\in W_0$ such that $c_w|_{A/Z(\f)}=\id_{A/Z(\f)}$. We then have that
		\begin{align*}
			c_w(h_{\a_1}(\mu))=h_{\a_1}(\mu)h_{\a_1}(-1)^i
		\end{align*}
		where $i=0$ or $1$. Suppose that $i\neq 0$. In the co-root lattice (via $\Phi_\mu$ from Lemma \ref{lem: phi_lambda exists}), this corresponds to
		\begin{align*}
			w(\a_1^\vee)&=\a_1^\vee+2^{k-1}\a_1^\vee
		\end{align*}
		Since $\Phi_\mu$ is $\Z[W_0]$-linear, for any $\b\in\Sigma^\vee$ and $v\in W_0$, we have that $v(\b)\in\Sigma^\vee$. For $w(\a_1^\vee)$ to be a root in $\Sigma_{A_1}^\vee=\Sigma_{A_1}$, we must have that $1+2^{k-1}\equiv\pm1\pmod{2^k}$ which implies that $2^{k-1}\equiv0\pmod{2^k}$ or that $k=2$ and neither are possible by Hypotheses \ref{hyp: gp of lie type}. Hence, $i=0$ and $c_w(h_{\a_1}(\mu))=h_{\a_1}(\mu)$. This shows that $W_0$ centralizes $A$. Applying Lemma \ref{lem: 27}, we know that the action of $W_0$ is faithful on $A$. Hence, $w=1$. 
	\end{proof}
	
	We now fully generalize Lemma \ref{lem: 27} to the rest of the Chevalley groups. 
	\begin{lem}\label{lem: gen27 for chevalley gps}
		Assume Hypotheses \ref{hyp: gp of lie type}. Let $G$ be a Chevalley group and suppose that $G\neq A^{\un}_1(q)$. Let $\Gamma=\Gamma_G W_0$. Then $\Gamma$ acts faithfully on $A/Z(\f)$. Furthermore, if there is some $1\neq g\in\Gamma$ such that $g|_{A/Z(\f)}\in\aut_{\scal}(A/Z(\f))$ so that $g(t)=t^l$ for each $t\in A/Z(\f)$, then $l\equiv-1\pmod{\exp(A)}$.
	\end{lem}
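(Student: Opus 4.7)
The plan is to translate the group-theoretic statement about $A/Z(\f)$ into a statement about cosets in the co-root lattice $\Z\Sigma^\vee$ modulo $2^k\Z\Sigma^\vee$, via the isomorphism $\Phi_\mu$ from Lemma~\ref{lem: phi_lambda exists}. First I would fix $\mu\in\overline{\F}_{q_0}^\times$ with $|\mu|=2^k$, so that $\Phi_\mu:\Z\Sigma^\vee/2^k\Z\Sigma^\vee\xrightarrow{\sim}A$. Let $L\leq\Z\Sigma^\vee/2^k\Z\Sigma^\vee$ denote the preimage of $Z(\f)=O_2(Z(G))$ under $\Phi_\mu$; the explicit generators of $Z(G)$ listed in Table~3 of the appendix translate into explicit generators of $L$ in the simple co-root basis $\{\a_1^\vee,\ldots,\a_n^\vee\}$.

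Now assume $1\neq g\in\Gamma=\Gamma_GW_0$ acts on $A/Z(\f)$ as $\psi_l$ for some odd integer $l$ (the case $l=1$ being the faithfulness claim). Since $\Phi_\mu$ is $\Z[\Gamma]$-equivariant, the hypothesis becomes
\begin{align*}
    g(\a^\vee)\equiv l\cdot\a^\vee\pmod{2^k\Z\Sigma^\vee+L}\quad\text{for every }\a\in\Pi.
\end{align*}
Since $g$ is an isometry of $\Sigma$, $g(\a)=\b\in\Sigma$, and $g(\a^\vee)=\b^\vee$ is again a co-root. Writing $\b^\vee=\sum_i c_i\a_i^\vee$, Lemma~\ref{lem: sums of fund roots} tells us that $\Delta(\supp(\b))\subset\Delta(\Pi)$ is a connected subdiagram of the Dynkin diagram, which will be the main combinatorial constraint used throughout.

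The argument then proceeds by a case analysis on the Dynkin type of $\Sigma$. For types where $Z(G)=1$ (namely $E_8^{\un}$, $F_4^{\un}$, $G_2^{\un}$) we have $L=0$, and the congruence reads $\b^\vee\equiv l\a^\vee\pmod{2^k\Z\Sigma^\vee}$; matching coefficients at simple co-roots $\a_j^\vee$ for $j\neq i$ forces $\supp(\b)=\{\a\}$, so $\b^\vee=\pm\a^\vee$ and thus $l\equiv\pm1\pmod{2^k}$. For types with nontrivial $Z(G)$ (namely $A_n$ with $n\geq 2$, $B_n$, $C_n$, $D_n$, $E_6$, $E_7$), I would write out the central generator(s) from the appendix and, for each simple root $\a\in\Pi$, show directly that $\b^\vee-l\a^\vee$ lying in $L$ together with the connectedness of $\Delta(\supp(\b))$ forces either $\b=\a$ and $l\equiv 1$, or $\b=-\a$ and $l\equiv-1$, modulo $2^k$. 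Once $l\equiv\pm1$ is established uniformly, the case $l\equiv 1\pmod{2^k}$ would mean $g$ acts trivially on $A/Z(\f)$; applying Lemma~\ref{lem: 27} to $\Gamma$ acting on $A$ (and using that $g$ would centralize $A$ up to $Z(\f)$, combined with the fact that on $Z(\f)$ itself $g$ already acts through $W_0\Gamma_G$ on a small central subgroup) lets us conclude $g=1$, giving faithfulness. When $l\equiv-1\pmod{2^k}$, the statement $l\equiv-1\pmod{\exp(A)}$ follows because $\exp(A)=2^k$ under the Hypotheses.

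The main obstacle is the type-by-type verification, especially for $D_n$ (and consequently the relevant restrictions to Steinberg groups later), where $Z(G^{\un})$ can be cyclic of order $4$ or a Klein four-group and its generators involve the last two simple co-roots with coefficients that are themselves large powers of $2$ in $\Z/2^k$. There the connected-subdiagram constraint from Lemma~\ref{lem: sums of fund roots} must be leveraged carefully: any nontrivial contribution from $L$ to $\b^\vee-l\a^\vee$ would extend the support of $\b$ across the branching node of $D_n$ in ways that are incompatible with $\b$ being a root unless $l\equiv-1$. The exclusion of type $A_1$ is essential precisely because in that case $L$ exhausts the nontrivial cosets and the connectedness lemma carries no content, so no such forcing is available; this is consistent with Lemma~\ref{lem: gen 27 for a1} and explains why we only recover $W_0$-faithfulness (not $\Gamma_GW_0$-faithfulness) there.
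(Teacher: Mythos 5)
Your outline follows the paper's proof in its broad strokes: translate via $\Phi_\mu$ to the co-root lattice mod $2^k\Z\Sigma^\vee$, use that $g\in\Gamma_GW_0$ is an isometry so $g(\a^\vee)=\b^\vee$ is again a co-root, invoke Lemma~\ref{lem: sums of fund roots} on connected support, run a case analysis by Dynkin type using the explicit central generators, and close with Lemma~\ref{lem: 27}. However, the key forcing step is stated too optimistically, and this leaves a genuine gap.

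The claim that ``$\b^\vee-l\a^\vee\in L$ together with connectedness forces $\b=\a$, $l\equiv 1$ or $\b=-\a$, $l\equiv-1$'' does not follow from Lemma~\ref{lem: sums of fund roots} and coefficient bounds alone. The problem arises exactly when the central generator is supported on a single simple co-root $\a_n^\vee$ and you are analyzing $g(\a_n^\vee)$ itself: then $\supp(\b)=\{\a_n\}$ is automatically connected, and the congruence reads $l+j\cdot 2^{k-1}\equiv\pm1\pmod{2^k}$. For $k\geq 3$ this forces $j=0$, but for $k=2$ (which is permitted under Hypotheses~\ref{hyp: gp of lie type}: $q_0\equiv5\pmod 8$, $l=0$) the value $j=1$ is perfectly compatible with the coefficient being $\pm1\pmod 4$. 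In type $B_n$ this is precisely the case the paper has to treat separately: one must further observe that the resulting assignment ($g$ fixing $\a_i^\vee$ for $i<n$ and negating $\a_n^\vee$, or vice versa) does not preserve $\Sigma^\vee$, hence cannot come from an isometry of $\Sigma$. That global-consistency argument is absent from your plan. Relatedly, you single out $D_n$ as the main obstacle, but $D_4$ actually resolves cleanly by the branching-node coefficient bounds; it is $B_n$ at the last node with $k=2$ that needs the extra step. Two smaller points: your partition into ``$Z(G)=1$'' vs.\ ``$Z(G)\neq1$'' should really be ``$Z(\f)=1$'' vs.\ ``$Z(\f)\neq1$''; the former class also contains $A_n^{\un}$ with $n$ even and $E_6^{\un}$, where $Z(G)$ has odd order. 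And the paper's handling of $A_n^{\un}$ with $n$ odd quietly enlarges $Z(\f)$ to the full image $Z$ of $Z(\G)$ in $A$ to simplify the bookkeeping; this is worth mentioning, since $Z(\f)$ itself depends on $\gcd(n+1,q-1)$ and the direct calculation you propose would be messier.

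Finally, your closing paragraph is more convoluted than it needs to be. Once the case analysis shows the central corrections vanish (equivalently, $\b^\vee\equiv l\a^\vee\pmod{2^k\Z\Sigma^\vee}$ for every $\a\in\Pi$), $g$ acts on $A$ itself as $\psi_l$, so Lemma~\ref{lem: 27} applies directly to $A$ and gives $g=1$ if $l\equiv1$ and $g$ acting as inversion if $l\equiv-1$. The parenthetical about ``centralizing $A$ up to $Z(\f)$'' and how $g$ acts on $Z(\f)$ is unnecessary and doesn't by itself supply the missing step; the real work is showing the central corrections are zero, which is where the $B_n$, $k=2$ argument is needed.
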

	\begin{proof}
		By Lemma \ref{lem: zf is 2 part of zg}, $O_2(Z(G)) = Z(\f)$. In particular, when 
    \begin{align*}
        G\in\{A_n^{\un}(q)\text{ with }n\text{ even}, E_6^{\un}(q),E_7^{\un}(q),F_4^{\un}(q),G_2^{\un}(q)\}
    \end{align*}
    $A/Z(\f) = A$, and the result follows immediately from Lemma \ref{lem: 27}. Thus, we may assume that $Z(\f)\neq 1$. \par 

    Since $G$ is of universal type of Lie rank $n$, we may write  
		\begin{align*}
			T&=\langle h_{\a_1}(\lambda)\rangle \times\cdots\times\langle  h_{\a_n}(\lambda)\rangle \\
			A&=\langle h_{\a_1}(\mu)\rangle \times\cdots\times\langle h_{\a_n}(\mu)\rangle
		\end{align*}
	where $\F_q^\times=\langle\l\rangle$ and $|\mu|=|\l|_2$. Write 
	\begin{align*}
		Z=\begin{cases}
			\langle h_{\a_1}(\mu)h_{\a_2}(\mu^2)\cdots h_{\a_n}(\mu^n)\rangle &\text{if }G= A^{\un}_n(q) \\
			Z(\f) &\text{else}
		\end{cases}
	\end{align*}
	so that $Z(\f)\leq Z$. Note that any automorphisms of $A$ that restrict to the identity on $A/Z(\f)$ must also restrict to the identity on $A/Z$. \par
		
	Suppose that $g\in\Gamma$, and $\psi\in\aut_{\scal}(A/Z(\f))$ are such that for every $a\in A$, $g(a)Z=\psi(a)Z$. In particular, for every $\a_i$, 
	\begin{align*}
		g(h_{\a_i}(\mu))=h_{\a_i}(\mu^{r})z_{\a_i}
	\end{align*}
	for some odd integer $r$ and $z_{\a_i}\in Z$. Passing to the co-root lattice via $\Phi_{\mu}$, this corresponds to 
	\begin{align*}
		g(\a_i^\vee)&=r\a_i^\vee+\d_i 
    \end{align*}
	where $\d_i\in\Z\Sigma^\vee$ is the appropriate linear combination relating to $z_{\a_i}$. We claim that $\d_i=0$ for all $i$. \\
    
    \noindent\textbf{Case $A_n^{\un}(q)$ with $n$ odd, $n\neq1$}: In this case, the elements of $Z$ correspond to vectors $(z_1,\ldots,z_n)$ with $z_j = mj$ for some $1\leq m\leq |\mu|$. For $g(\a_i^\vee)$ to be a root, all nonzero coefficients must be congruent to each other and, simultaneously, congruent to $\pm1$. When comparing the coefficients of $\a_1^\vee$, $\a_2^\vee$, and $\a_3^\vee$, this forces $m=0$, and hence $\d_i=0$. \\
    
    \noindent\textbf{Case $B^{\un}_n(q)$ with $n\geq2$}: Here, $Z$ contributes only to the terminal node of the Dynkin diagram. If $i$ is not adjacent to $n$ ($i<n-1$), then $\d_i$ introduces support disjoint from $\a_i^\vee$, so $g(\a_i^\vee)$ has disconnected support and is not a root by Lemma \ref{lem: sums of fund roots}. For $i=n-1$, checking the coefficient of $\a_n^\vee$ forces $\d_i=0$. \par 
    For $i=n$, we have
    \begin{align*}
        g(\alpha_n^\vee)=r\alpha_n^\vee + 2^{k-1}j\alpha_n^\vee,
    \end{align*}
    for $j=0$ or 1. From the previous cases, $\delta_k=0$ for $k<n$. Hence, $g$ acts by a scalar $r\equiv\pm1\pmod{2^k}$ on the $A_{n-1}$ subsystem. The simple co-root $\a_n^\vee$ is uniquely characterized by its bond to $\a_{n-1}^\vee$ on the $B_n$ Dynkin diagram. Any automorhism in $\Gamma$ must preserve this adjacency structure together with the induced scalar action on the $A_{n-1}$-subsystem. If $j=1$, then the image of $\a_n^\vee$ is shifted by a nontrivial power of $2$. This shift is not compatible with preserving the relationship between $\a_n^\vee$ and $\a_{n-1}^\vee$ under the scalar action determined on the $A_{n-1}$-chain. Therefore, $j=0$, and hence, $\delta_i=0$ for all $i$.\\
    
    \noindent\textbf{Case $C^{\un}_n(q)$ with $n\geq3$}: In this case, $Z$ contributes a sum of pairwise nonadjacent simple co-roots. Unless $n=3$ and $i=2$, any nonzero $\d_i$ yields disconnected support, and $g(\a_i^\vee)$ is not a root by \ref{lem: sums of fund roots}. For $n=3$ and $i=2$, analyzing the coefficients of $\a_1^\vee$ force $\d_2=0$. In any case, it follows that $\d_i=0$ for all $i$.\\ 
    
    \noindent\textbf{Case $D^{\un}_n$ with $n\geq4$}: Similar to the above case, $Z$ contributes a sum of pairwise nonadjacent simple co-roots. Unless, $n=4$ and $i=2$, applying Lemma \ref{lem: sums of fund roots} implies that $g(\a_i^\vee)$ is not a root unless $\d_i=0$. If $n=4$ and $i=2$, then analyzing the coefficients of $\a_1^\vee$ and $\a_3^\vee$ force $\d_2=0$. Hence, in all cases $\d_i=0$. \\
    
    \noindent\textbf{Case $E_7^{\un}(q)$}: The generator of $Z$ has support on non-adjacent nodes of the Dynkin diagram. Thus any nonzero $\delta_i$ yields disconnected support, so $g(\alpha_i^\vee)$ is not a root. Hence $\delta_i=0$. \\

    In all cases, we conclude that $z_{\a_i}=1$ for every $\a_i\in\Pi$, so $g$ acts as a scalar automorphism on $A$. Applying Lemma \ref{lem: 27}, if $g\neq1$, then the induced scalar automorphism must be inversion.
	\end{proof}

	\subsection{When \texorpdfstring{$G$}{G} is a Steinberg group}
	
	We first show that the Weyl group $W_0$ acts faithfully on $A/Z(\f)$ for  $G={}^2D^{\un}_n(q)$. We cannot generalize Lemma \ref{lem: 27} for ${}^2D^{\un}_n(q)$ further as there is an automorphism of~$G$ that is not inner that acts trivially on $A/Z(\f)$. We will discuss this automorphism further in Section 5. 
	
	\begin{lem}\label{lem: gen27 for 2dn}
		Assume Hypotheses \ref{hyp: gp of lie type}. If $G={}^2D^{\un}_n(q)$ for $n\geq3$, then $W_0$ acts faithfully on $A/Z(\f)$. 
	\end{lem}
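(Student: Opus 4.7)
The plan is to mirror the case-by-case strategy used for Chevalley groups in Lemma~\ref{lem: gen27 for chevalley gps}, adapting it to the twisted setting of ${}^2D_n^{\un}(q)$. The graph automorphism $\g$ of order~$2$ swaps $\a_{n-1}$ and $\a_n$ while fixing all other simple roots, so $\hat\Pi=\{\a_1,\ldots,\a_{n-2},\hat{\a}_{n-1}\}$ where $\hat{\a}_{n-1}$ denotes the $\g$-orbit $\{\a_{n-1},\a_n\}$. First I would describe $A=O_2(T)$ using Notation~\ref{not: gp of lie type}(F): it decomposes as a direct product of cyclic subgroups generated by $h_{\a_i}(\mu)$ for $1\leq i\leq n-2$ together with $\hat{h}_{\a_{n-1}}(\mu)$, where $\mu\in\overline{\F}_{q_0}^\times$ has order $2^k=(q-1)_2$. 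I would then record the description of $Z(\f)=O_2(Z(G))$ by intersecting the generators of $Z(D_n^{\un})$ listed in the proof of Lemma~\ref{lem: gen27 for chevalley gps} with $C_{\G}(\s)$; the resulting central generators correspond via $\Phi_\mu$ from Lemma~\ref{lem: phi_lambda exists} to specific elements of $\Z\Sigma^\vee/2^k\Z\Sigma^\vee$ whose supports on $\Pi$ are easily enumerated.

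Next, suppose $w\in W_0$ satisfies $w|_{A/Z(\f)}=\id$. Then for each class $\hat{\a}\in\hat{\Pi}$ we obtain $w(\hat{h}_\a(\mu))=\hat{h}_\a(\mu)\cdot z$ for some $z\in Z(\f)$. Transporting this equation through the $\Z[W]$-linear isomorphism $\Phi_\mu$ yields a relation in $\Z\Sigma^\vee/2^k\Z\Sigma^\vee$ of the form $w(\a^\vee)=r\a^\vee+2^{k-1}\xi$, with $r$ odd and $\xi$ the support vector of the chosen central generator. Because $w(\a^\vee)$ must still lie in $\Sigma_{D_n}^\vee$ modulo $2^k\Z\Sigma^\vee$, I would apply the connected-support criterion of Lemma~\ref{lem: sums of fund roots} together with a coefficient comparison against $\pm 1\pmod{2^k}$ to rule out $z\neq 1$. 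This argument runs in parallel with the $D_n^{\un}(q)$ portion of Lemma~\ref{lem: gen27 for chevalley gps} for all singleton classes $\hat{\a}_i=\{\a_i\}$ with $i\leq n-2$, with the only genuinely new computation occurring at the twisted class $\hat{\a}_{n-1}$.

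Once $w$ has been shown to act trivially on $A$, Lemma~\ref{lem: 27} applied to the faithful action of any finite group of isometries on $T_{2^k}\supseteq A$ immediately forces $w=1$. The main obstacle is the analysis at the twisted class $\hat{\a}_{n-1}$ and in the low-rank case $n=3$, where the Dynkin diagram of $D_3$ degenerates and the connected-support obstruction becomes weaker; these cases require a direct finite verification using the explicit form of $\hat{h}_{\a_{n-1}}(\mu)$ as a nontrivial product in $\langle h_{\a_{n-1}}(\overline{\F}_{q_0}^\times),h_{\a_n}(\overline{\F}_{q_0}^\times)\rangle$ and the explicit generators of $Z(G)\cap C_{\G}(\s)$, which depend on the parity of $n$. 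Hypotheses~\ref{hyp: gp of lie type} guarantee $k\geq 2$, which is the only numerical input preventing the central contribution $2^{k-1}\xi$ from conspiring with the coefficient $r$ to produce a valid image root.
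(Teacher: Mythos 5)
Your overall strategy is the right one and mirrors the paper's: describe $A$, describe $Z(\f)$, transport the identity-mod-center condition through $\Phi_\mu$ into the co-root lattice, rule out nontrivial central contributions via Lemma~\ref{lem: sums of fund roots} and coefficient comparison, and then invoke Lemma~\ref{lem: 27} once $W_0$ is shown to centralize $A$. However, there is a genuine structural error in your description of $A$ that invalidates the decisive computation.

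The twisted factor of $T=C_{\T}(\sigma)$ supported on $\langle h_{\a_{n-1}},h_{\a_n}\rangle$ consists of elements $h_{\a_{n-1}}(a)h_{\a_n}(a^q)$ with $a\in\F_{q^2}^\times$, not $\F_q^\times$. Since $q\equiv1\pmod 4$, one has $(q+1)_2=2$ and hence $(q^2-1)_2=2^{k+1}$. So the element $\mu$ appearing in the paper's decomposition
\[
A=\langle h_{\a_1}(\mu^2)\rangle\times\cdots\times\langle h_{\a_{n-2}}(\mu^2)\rangle\times\langle h_{\a_{n-1}}(\mu)h_{\a_n}(\mu^q)\rangle
\]
has order $2^{k+1}$, and the twisted factor of $A$ likewise has order $2^{k+1}$. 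Your stated choice $|\mu|=2^k=(q-1)_2$, with $\hat{h}_{\a_{n-1}}(\mu)$ as the corresponding generator, produces a group of order $2^{k(n-1)}$, which is index $2$ in the true $A$ of order $2^{k(n-1)+1}$. Consequently you would be working in $\Z\Sigma^\vee/2^k\Z\Sigma^\vee$ rather than $\Z\Sigma^\vee/2^{k+1}\Z\Sigma^\vee$, and the central generator $h_{\a_{n-1}}(-1)h_{\a_n}(-1)$ would be represented by $2^{k-1}\hat{\a}_{n-1}^\vee$ rather than $2^k\hat{\a}_{n-1}^\vee$.

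This is precisely the ingredient that makes the twisted class $\hat{\a}_{n-1}$ fail in your setup. Working modulo $2^k$, the putative image at the twisted class becomes $(1+2^{k-1})\hat{\a}_{n-1}^\vee$, and for $k=2$ (which Hypotheses~\ref{hyp: gp of lie type} allow, since $q_0\equiv1\pmod 4$ forces only $k\geq2$) one gets $3\hat{\a}_{n-1}^\vee\equiv-\hat{\a}_{n-1}^\vee\pmod{4}$, a perfectly valid coefficient for a root. Your closing claim that $k\geq2$ is ``the only numerical input preventing the central contribution $2^{k-1}\xi$ from conspiring with the coefficient $r$'' is therefore false: that bound does not exclude $k=2$, and the connected-support argument cannot recover the loss since the offending vector has support a single node. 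The paper's argument avoids this by working modulo $2^{k+1}$, where the relevant coefficient is $1+2^k\equiv q\pmod{2^{k+1}}$, and $q\not\equiv\pm1\pmod{2^{k+1}}$ for $k\geq2$. Finally, a small correction: the $\sigma$-fixed part of $Z(\G)$ is $Z_2$, generated by $h_{\a_{n-1}}(-1)h_{\a_n}(-1)$ irrespective of the parity of $n$; only $Z(D_n(\overline{\F}_{q_0}))$ itself depends on parity, not the intersection $Z(G)=Z(\G)\cap C_{\G}(\sigma)$.
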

	\begin{proof}	
		Let $\overline{G}=D_n(\overline{\F}_{q_0})$ and $\sigma=\gamma\vp_q$. From Table 5 (or Table 2.2 in  \cite{gorenstein2005classification}), $Z(G)=Z_{(q^n+1,4)}=Z_2$. Therefore, $Z(\f)=Z(G)=Z_2$. In particular, 
		\begin{align*}
			Z(G)=\langle h_{\a_{n-1}}(-1)h_{\a_n}(-1)\rangle
		\end{align*}
		since whether $n$ is even or odd, this is the only element of $Z(\G)$ that is fixed by $\sigma$.
		
		Note that
		\begin{align*}
			A=\langle h_{\a_1}(\mu^2)\rangle\times\langle h_{\a_2}(\mu^2)\rangle\times\cdots\times\langle h_{\a_{n-2}}(\mu^2)\rangle\times\langle h_{\a_{n-1}}(\mu)h_{\a_n}(\mu^q)\rangle
		\end{align*}
		where $\F^\times_{q^2}=\langle\lambda\rangle$ and $|\mu|=|\lambda|_2$. \par
		
		The full centralizer of $\g$ is $V_0=\text{Span}(\a_1,\a_2,\ldots,\a_{n-2},\a_{n-1}+\a_n)$. For every $1\leq i\leq n-1$, the projection of $\a_i$ onto $V_0$ is $\a_i$ since $\a_i$ is fixed by $\g$. The projection of $\a_{n-1}$ and $\a_n$ onto $V_0$ is $\frac{1}{2}(\a_{n-1}+\a_n)$ since $\a_{n-1}$ and $\a_n$ are orthogonal to each other. This gives us 
		\begin{align*}
			\hat{\Pi}=\left\{\a_1,\a_2,\ldots,\a_{n-2},\frac{1}{2}(\a_{n-1}+\a_n)\right\}=\{\hat{\a}_1,\hat{\a}_2,\ldots,\hat{\a}_{n-1}\}
		\end{align*}
		Hence,   
		\begin{align*}
			\hat{\a}_i^\vee=\begin{cases}
				\a_i^\vee, & \text{for }1\leq i<n-1; \\
				(\frac{1}{2}(\a_{n-1}+\a_{n}))^\vee, & \text{for }i=n-1
			\end{cases}
			=\begin{cases}
				\a_i^\vee, & \text{for }1\leq i<n-1; \\
				\a_{n-1}+\a_n, & \text{for } i=n-1.
			\end{cases}
		\end{align*}
		Also, we have $Z(\f)=\langle h_{\a_{n-1}}(-1)h_{\a_n}(-1)\rangle=\left\langle \hat{h}_{\a_{n-1}}(-1) \right\rangle$.
		
		\indent For all $i< n-2$, if $z_{\a_i}\neq1$, then we have in $\Z\Sigma_{{}^2D_n}^\vee$
		\begin{align*}
			w(2\hat{\a}_i^\vee)&=2\hat{\a}_i^\vee + 2^k\hat{\a}_{n-1}^\vee 
		\end{align*}
		which is not a root in $\Sigma_{{}^2D_n}^\vee=\Sigma_{B_{n-1}}^\vee=\Sigma_{C_{n-1}}$ by Lemma \ref{lem: sums of fund roots}. If $i=n-2$ and $z_{\a_i}\neq1$, then 
		\begin{align*}
			w(2\hat{\a}_{n-2}^\vee)&=2\hat{\a}_{n-2}^\vee + 2^k\hat{\a}_{n-1}^\vee 
		\end{align*}
		which is not a root in $\Sigma_{C_{n-1}}$ since $2^k\not\equiv\pm1\pmod{2^{k+1}}$. Finally, if $i=n-1$ and $z_{\a_i}\neq1$, then 
		\begin{align*}
			w(\hat{\a}_{n-1}^\vee)&=\hat{\a}_{n-1}^\vee + 2^k\hat{\a}_{n-1}^\vee = q\hat{\a}_{n-1} 
		\end{align*}
		which is only a root if $q\equiv\pm1\pmod{2^{k+1}}$, but $q\equiv2^k+1\pmod{2^{k+1}}$. Hence, $z_{\a_i}=1$ for all $i$. Therefore, we have that $c_w$ acts as the identity on $A$. Applying Lemma \ref{lem: 27}, $w=1$ which gives us that $W_0$ acts faithfully on $A/Z(\f)$. 
	\end{proof}
	
	We now fully generalize Lemma \ref{lem: 27} to the rest of the Steinberg groups.
	
	\begin{lem}\label{lem: gen27 for twisted gps}
		Assume Hypotheses \ref{hyp: gp of lie type}. Let $G$ be a Steinberg group and suppose that $G\neq {}^2D^{\un}_n(q)$ for $n\geq3$. Let $\Gamma=\Gamma_G W_0$. Then $\Gamma$ acts faithfully on $A/Z(\f)$. Furthermore, if there is some $1\neq g\in\Gamma$ such that $g|_{A/Z(\f)}\in\aut_{\scal}(A/Z(\f))$ so that $g(t)=t^l$ for each $t\in A/Z(\f)$, then $l\equiv-1\pmod{\exp(A)}$.
	\end{lem}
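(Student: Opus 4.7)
Since $G$ is a Steinberg group, $\Gamma_G = 1$ by Definition \ref{def: outdiag and inndiag def}, so $\Gamma = W_0$. The plan is to proceed case by case among the remaining Steinberg types --- $G = {}^2A_n^{\un}(q)$ with $n \geq 2$, $G = {}^2E_6^{\un}(q)$, and $G = {}^3D_4^{\un}(q)$ --- following the template of Lemma \ref{lem: gen27 for chevalley gps} and Lemma \ref{lem: gen27 for 2dn}: reduce the problem to a question about the twisted co-root lattice via $\Phi_\mu$, then apply the support-connectedness Lemma \ref{lem: sums of fund roots} and conclude with Lemma \ref{lem: 27}.

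The first step is to dispatch the three easy cases by verifying that $Z(\f) = 1$. For $G = {}^3D_4^{\un}(q)$, $|Z(G)| = 1$; for $G = {}^2E_6^{\un}(q)$, $|Z(G)|$ divides $(3, q+1)$, which is odd; and for $G = {}^2A_n^{\un}(q)$ with $n$ even, $|Z(G)|$ divides $(n+1, q+1)$ with $n+1$ odd. In each case, Lemma \ref{lem: zf is 2 part of zg} gives $Z(\f) = O_2(Z(G)) = 1$, so $A/Z(\f) = A$ and Lemma \ref{lem: 27} applies directly to the action of $W_0$ on $A$, yielding both conclusions.

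The only interesting case is $G = {}^2A_{2m-1}^{\un}(q)$. Here the twisted root system $\hat\Sigma$ is of type $C_m$, with simple roots $\hat\a_1, \ldots, \hat\a_{m-1}$ (short) coming from the $\s$-orbits $\{\a_i, \a_{2m-i}\}$ and $\hat\a_m$ (long) coming from the $\s$-fixed middle root $\a_m$. Since $q \equiv 1 \pmod 4$ forces $(q+1)_2 = 2$, we have $Z(\f) = Z_2$; an explicit generator $z$ is obtained by writing $-I \in Z(\SL_{2m})$ as $h_{\a_1}(-1)h_{\a_3}(-1)\cdots h_{\a_{2m-1}}(-1)$ and folding into a product of $\hat h_{\a_i}(-1)$ for the appropriate odd indices. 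Using Notation \ref{not: gp of lie type}(F), the torus $A = O_2(T)$ decomposes as a direct product of cyclic groups $\langle \hat h_{\a_i}(\mu)\rangle$ of orders $2^{k+1}$ for $i < m$ and $2^k$ for $i = m$.

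Given $w \in W_0$ and $\psi_r \in \aut_{\scal}(A/Z(\f))$ that agree on $A/Z(\f)$, one has for each $i$ that $w(\hat h_{\a_i}(\mu)) = \hat h_{\a_i}(\mu)^r z^{j_i}$ for some $j_i \in \{0,1\}$. The $\Z[W_0]$-equivariant map $\Phi_\mu$ of Lemma \ref{lem: phi_lambda exists} translates this into an identity in $\Z\hat\Sigma^\vee$ modulo the appropriate power of $2$, expressing $w(\hat\a_i^\vee)$ as $r\hat\a_i^\vee$ plus a correction supported on the simple co-roots appearing in $z$. Since $w \in W_0$ preserves $\hat\Sigma^\vee$, Lemma \ref{lem: sums of fund roots} forces the support of $w(\hat\a_i^\vee)$ to be connected in the $C_m$ Dynkin diagram. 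A case split on $i$ will then eliminate each $j_i$: for $i$ away from the support of $z$, connectedness alone suffices, but near $\hat\a_{m-1}$ and $\hat\a_m$, where connectedness may hold, a comparison of coefficients in $\Sigma_{C_m}^\vee = \Sigma_{B_m}$ modulo $2^{k+1}$ or $2^k$ is needed, using that $r$ is odd and $k \geq 2$, to rule out $j_i = 1$. This boundary analysis is the main expected obstacle. Once all $j_i = 0$, $w$ agrees with $\psi_r$ on $A$ itself, and Lemma \ref{lem: 27} concludes that either $w = 1$ or $\psi_r$ is inversion, giving $l \equiv -1 \pmod{\exp(A)}$.
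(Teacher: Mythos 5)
Your overall route is the same as the paper's: dispatch ${}^2E_6^{\un}$ and ${}^2A_n^{\un}$ ($n$ even) by showing $Z(\f)=1$ and quoting Lemma \ref{lem: 27} directly, then treat ${}^2A_{2m-1}^{\un}(q)$ by folding to type $C_m$, translating the hypothesis $w(\hat{h}_{\a_i}(\mu))=\hat{h}_{\a_i}(\mu)^r z^{j_i}$ into the coroot lattice via $\Phi_\mu$, and eliminating the central corrections with Lemma \ref{lem: sums of fund roots} plus coefficient congruences before invoking Lemma \ref{lem: 27}. (Your inclusion of ${}^3D_4$ is harmless but vacuous: Hypotheses \ref{hyp: gp of lie type}(I) require $|\g|\leq2$, so it never arises; the paper instead notes the Suzuki--Ree types.) Your description of $A$, of $Z(\f)$, and of the folded simple coroots all agree with the paper.

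There is, however, a concrete gap exactly at the step you defer as ``the main expected obstacle.'' The paper's elimination of the $j_i$ hinges on the observation that $n\geq5$ forces the central generator to fold onto at least two odd-indexed folded coroots ($t\geq3$ in the paper's notation); only then is the support of $w(\hat\a_i^\vee)$ disconnected for every $i$ except one even index sitting inside the support of $z$, where a coefficient comparison ($2^k\not\equiv0,\pm1,\pm2$ modulo the relevant order) finishes the job. Your sketch never states or uses $n\geq5$, and you locate the delicate case ``near $\hat\a_{m-1}$ and $\hat\a_m$'' rather than at the even-index gaps of $\supp(z)$. This matters because for $m=2$ the correction is just $2^k j\hat\a_1^\vee$, connectedness gives nothing, and $r+2^k\equiv-1\pmod{2^{k+1}}$ is perfectly possible for odd $r$, so the argument as you outline it does not close; that case is excluded only because ${}^2A_3^{\un}(q)\cong{}^2D_3^{\un}(q)$ falls under the lemma's exclusion of ${}^2D_n^{\un}$, a point you should make explicit (the paper does, restricting at the outset to $n\geq2$, $n\neq3$, and then to $n\geq5$ odd). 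With the restriction $m\geq3$ stated and the case analysis actually carried out as in the $C_n$ case of Lemma \ref{lem: gen27 for chevalley gps}, your proof matches the paper's.
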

	\begin{proof}
		We only need to check ${}^2A^{\un}_n(q)$ for $n\geq2$ and $n\neq3$ since $O_2(Z({}^2E^{\un}_6(q)))=O_2(Z_{(3,q+1)})=1$ and $Z({}^2B^{\un}_2(q))=Z({}^2F^{\un}_4(q))=Z({}^2G^{\un}_2(q))=1$.  \par 
		
		Let $\overline{G}=A_n(\overline{\F}_{q_0})$ with $n\geq4$ and $\sigma=\gamma\vp_q$. From 5 (or Table 2.2 in  \cite{gorenstein2005classification}), $Z(G)=Z_{(q+1,n+1)}$. Therefore, if $n$ is even, since $ Z(\f)=O_2(Z(G)) $, it follows that $Z(\f)=1$. Hence, we can apply Lemma \ref{lem: 27}. \par 
		
		Now, suppose that $n\geq5$ and odd. Note that
		\begin{align*}
			A=\langle h_{\a_1}(\mu)h_{\a_n}(\mu^q)\rangle\times\langle h_{\a_2}(\mu)h_{\a_{n-1}}(\mu^q)\rangle\times\cdots\times\langle h_{\a_{m-1}}(\mu)h_{\a_{m+1}}(\mu^q)\rangle\times\langle h_{\a_m}(\mu^2)\rangle
		\end{align*}
		where $\F^\times_{q^2}=\langle\lambda\rangle$, $|\mu|=|\lambda|_2$, and $m=\frac{n+1}{2}$. From Table 4, it follows that $Z(\G)=\langle h_{\a_1}(\w)h_{\a_2}(\w^2)\cdots h_{\a_n}(\w^n)\rangle$ where $\w$ is a primitive $(n+1)^{\text{st}}$ root of unity. Therefore, 
		\begin{align*}
			Z(G)=Z(\f)=\langle h_{\a_1}(-1)h_{\a_3}(-1)\cdots h_{\a_n}(-1)\rangle
		\end{align*}
		
		Note that the full centralizer of $\g$ is $V_0=\text{Span}(\a_1+\a_n,\ldots,\a_{m-1}+\a_{m+1},\a_m)$. For every $i\neq m$, the projection of $\a_i$ onto $V_0$ is $\frac{1}{2}(\a_i+\a_{n-i+1})$ since $\a_i$ and $\a_{n-i+1}$ are orthogonal to each other. The projection of $\a_m$ onto $V_0$ is $\a_m$ since $\a_m$ is fixed by $\g$. This yields 
		\begin{align*}
			\hat{\Pi}=\left\{\frac{1}{2}(\a_1+\a_n),\frac{1}{2}(\a_1+\a_n),\ldots,\frac{1}{2}(\a_1+\a_n),\a_m\right\}=\{\hat{\a}_1,\hat{\a}_2,\ldots,\hat{\a}_m\}
		\end{align*}
		Hence, we have the following:
		\begin{align*}
			Z(\f)&=\langle h_{\a_1}(-1)h_{\a_3}(-1)\cdots h_{\a_n}(-1)\rangle \\
			&=\left\langle \hat{h}_{\a_1}(-1)\cdots \hat{h}_{\a_t}(-1) \mid k=2\left\lfloor\frac{m-1}{2}\right\rfloor+1 \right\rangle
		\end{align*}
		Also, 
		\begin{align*}
			\hat{\a}_i^\vee=\begin{cases}
				(\frac{1}{2}(\a_i+\a_{n-i+1}))^\vee, & \text{for }1\leq i<m; \\
				\a_m^\vee, & \text{for }i=m
			\end{cases}
			=\begin{cases}
				\a_i+\a_{n-i+1}, & \text{for }1\leq i<m; \\
				\a_m, & \text{for }i=m.
			\end{cases}
		\end{align*}
		Finally, note that $q\equiv2^k+1\pmod{2^{k+1}}$ and that $q+2^k\equiv1\pmod{2^{k+1}}$. \par 
		
		Suppose that $g\in\Gamma$, and $\psi\in\aut_{\scal}(A/Z(\f))$ are such that for every $a\in A$, $g(a)Z(\f)=\psi(a)Z(\f)$. In particular, for every $\hat{\a}_i$, 
		\begin{align*}
			g(\hat{h}_{\a_i}(\mu))=\hat{h}_{\a_i}(\mu^{r})z_{\a_i}
		\end{align*}
		for some odd $r$ and $z_{\a_i}\in Z(\f)$. In the co-root lattice, via $\Phi_{\mu}$ this equality corresponds to 
		\begin{align*}
            g(\hat{\a}_i^\vee)&=r\hat{\a}_i^\vee+2^kj\hat{\a}_1^\vee+\cdots+2^kj\hat{\a}_t^\vee 
		\end{align*}
		where $j=0$ or 1. \par 
		
		For all $i$, if $z_{\a_i}\neq1$, then we have in $\Z\Sigma_{{}^2A_n}^\vee$
		\begin{align*}
			g(\hat{\a}_i^\vee)&=r\hat{\a}_i^\vee+2^k\hat{\a}_1^\vee+2^k\hat{\a}_3^\vee+\cdots+2^k\hat{\a}_t^\vee \\
			&=\begin{cases}
				r\hat{\a}_i^\vee+2^k\hat{\a}_1^\vee+2^k\hat{\a}_3^\vee+\cdots+2^k\hat{\a}_t^\vee, & \text{for even }i; \\
				2^k\hat{\a}_1^\vee+2^k\hat{\a}_3^\vee+\cdots+(2^k+r)\hat{\a}_i+\cdots+2^k\hat{\a}_t^\vee, & \text{for odd }i.
			\end{cases} 
		\end{align*}
		Note that $n\geq5$ which forces $t\geq3$. Hence, we can use a similar argument to that of the $C_n$ case in the proof of Lemma \ref{lem: gen27 for chevalley gps} to see that $z_{\a_i}$ must be 1 for all $\a_i\in\Pi$. Applying Lemma \ref{lem: 27}, we get that if $g\neq1$, then $\psi$ is inversion.
	\end{proof}
	
	With all of the cases taken care of, the proof of Theorem \ref{lem: generalized 27} is an application of Lemma \ref{lem: gen 27 for a1}, Lemma \ref{lem: gen27 for chevalley gps}, Lemma \ref{lem: gen27 for 2dn}, and Lemma \ref{lem: gen27 for twisted gps}.

	\begin{theo}\label{lem: generalized 27}
		Assume Hypotheses \ref{hyp: gp of lie type}. The action of $W_0$ is faithful on $A/Z(\f)$. Further, let $\Gamma=\Gamma_G W_0$ and suppose that $G$ is not $A^{\un}_1(q)$ or ${}^2D^{\un}_n(q)$ with $n\geq3$. Then $\Gamma$ acts faithfully on $A/Z(\f)$. Furthermore, if there is some $1\neq g\in\Gamma$ such that $g|_{A/Z(\f)}\in\aut_{\scal}(A/Z(\f))$ so that $g(t)=t^l$ for each $t\in A/Z(\f)$, then $l\equiv-1\pmod{\exp(A)}$.
	\end{theo}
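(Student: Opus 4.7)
The plan is to deduce the statement directly from the four preceding lemmas by a case analysis on the Lie type of $G$. Every $G$ satisfying Hypotheses~\ref{hyp: gp of lie type} falls into exactly one of four mutually exclusive classes: $A_1^{\un}(q)$; a Chevalley group distinct from $A_1^{\un}(q)$; ${}^2D_n^{\un}(q)$ with $n\geq 3$; or a Steinberg group distinct from ${}^2D_n^{\un}(q)$.

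For the first assertion, that $W_0$ acts faithfully on $A/Z(\f)$, I would apply Lemma~\ref{lem: gen 27 for a1} in the first class, Lemma~\ref{lem: gen27 for chevalley gps} in the second, Lemma~\ref{lem: gen27 for 2dn} in the third, and Lemma~\ref{lem: gen27 for twisted gps} in the fourth. In the second and fourth classes the cited lemma actually delivers the stronger faithfulness of the larger group $\Gamma=\Gamma_G W_0$; since $W_0\leq\Gamma$, the restriction of a faithful action to a subgroup is still faithful, and the conclusion for $W_0$ follows.

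For the second assertion, the hypothesis excludes the first and third classes, so $G$ lies in the second or fourth and the relevant lemma simultaneously supplies both the faithfulness of $\Gamma$ on $A/Z(\f)$ and the scalar conclusion: any nontrivial $g\in\Gamma$ whose restriction to $A/Z(\f)$ is the scalar $\psi_l$ must satisfy $l\equiv-1\pmod{\exp(A)}$. These are verbatim the conclusions of Lemmas~\ref{lem: gen27 for chevalley gps} and~\ref{lem: gen27 for twisted gps}, so no further verification is required.

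There is no substantive obstacle at this stage: the real work was already done inside the four lemmas, where one unpacks the Chevalley relations, transfers the question through the isomorphism $\Phi_\mu:\Z\Sigma^\vee/2^k\Z\Sigma^\vee\xrightarrow{\ \cong\ } A$ of Lemma~\ref{lem: phi_lambda exists} (or its twisted analogue), identifies $Z(\f)$ explicitly from the tables of \cite{gorenstein2005classification}, and then uses the connectedness of $\Delta(\supp(\cdot))$ from Lemma~\ref{lem: sums of fund roots} together with Lemma~\ref{lem: 27} to rule out any nontrivial central contribution. With those lemmas in hand, the present theorem is an essentially two-line compilation.
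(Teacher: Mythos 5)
Your proposal is correct and matches the paper's proof, which likewise disposes of the theorem by combining Lemmas \ref{lem: gen 27 for a1}, \ref{lem: gen27 for chevalley gps}, \ref{lem: gen27 for 2dn}, and \ref{lem: gen27 for twisted gps} according to the four mutually exclusive cases. Your added remark that faithfulness of $\Gamma=\Gamma_G W_0$ restricts to faithfulness of $W_0$ is exactly the (implicit) glue the paper relies on.
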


	
	\section{Injectivity of $\mu_{\mathcal{L}}$ for most finite groups of Lie type}

	In this section, we complete the classification of the finite groups of Lie type in odd characteristic whose centric linking systems have no noninner rigid automorphisms. In particular, we show that, up to equivalent 2-fusion, there are no noninner rigid automorphisms for the centric linking systems for the finite simple groups of Lie type with two families of exceptions: $A_1^{\ad}(q)$ with $q\equiv1\pmod{8}$ and ${}^2D^{\ad}_n(q)$ with $n\geq3$ and $q\equiv1\pmod{4}$. We do so by extending certain results of Broto, M{\o}ller, and Oliver in \cite{broto2019automorphisms}. \par 
	
	In \cite{broto2019automorphisms}, they show that the centric linking systems of the finite groups of Lie type in odd characteristic that are of universal type have no noninner rigid automorphisms (see Theorem \ref{theo: tameness and universals have no rigid autos} below). They analyze the map 
	\begin{align*}
		\kapc_G:\out(G)/O_{2'}(\od(G))\to\out(\f)
	\end{align*}
	and show that this map is an isomorphism at the prime 2 except for when $G = G_2(q)$ with $q_0=3$ and Oliver later shows in \cite[Appendix]{broto2019automorphisms} that the centric linking system for $G_2(q)$ does not have a noninner rigid automorphism where $q_0=3$. \par 
	
	Recall from section 2.7 and 2.5 that there are homomorphisms $\k_G:\out(G)\to\out(\cl)$ and $\mu_{\cl}:\out(\cl)\to\out(\f)$ whose composition is denoted by $\overline{\k}_G=\mu_{\cl}\circ\k_G$. The map $\kapc_G$ is $\overline{\k}_G$ restricted to the quotient $\out(G)/O_{2'}(\od(G))$. We compose the map $\kapc_G$ with the map $\pi_{\f}:\out(\f)\to\out(\f/Z(\f))$ (see Lemma \ref{lem: pif is well defined}) to obtain the homomorphism
	\begin{align*}
		\kapb_G:\out(G)/O_{2'}(\od(G))\to\out(\f/Z(\f)).
	\end{align*}
	We argue that under Hypotheses \ref{hyp: gp of lie type}, this map is injective for all cases except when $G/Z(G)=A_1^{\ad}(q)$ or $G/Z(G)={}^2D_n^{\ad}(q)$ with $n\geq3$. This tells us that the kernel of the map $\pi_\f\circ\bar{\kappa}_G$ is $O_{2'}(\od(G))$. Using the result of Glauberman and Lynd that states that the kernel of $\kappa_G$ is a $2'$-group (Theorem \ref{theo: ker kappaG is a p prime group}) along with a result regarding tameness (Theorem \ref{theo: tameness and universals have no rigid autos}), we can show that the map $\mu_\cl$ is injective in the nonexceptional cases. \par 
	
	For $G/Z(G)=A_1^{\ad}(q)$ or $G/Z(G)={}^2D_n^{\ad}(q)$ with $n\geq3$, we give explicit constructions of the noninner rigid automorphisms of the centric linking systems. When $G/Z(G)=A_1^{\ad}(q)$, the noninner rigid automorphism of the centric linking system which is induced by a field automorphism of $G=A_1^{\un}(q)$. When $G/Z(G)={}^2D^{\ad}_n(q)$ with $n\geq3$, the noninner rigid automorphism of the centric linking system is induced by an automorphism that is equivalent to a graph automorphism of the algebraic group. \par 
	
	To prove that the map $\kapb_G$ is injective, we first show that $A/Z(\f)$ is a 2-centric subgroup of $G/Z(G)$. We then use this to prove that $O_2(\od(G))$ maps injectively into a subgroup of $\out(\f/Z(\f))$ that consists of the classes of automorphisms which can be represented by automorphisms of $\f/Z(\f)$ that centralize $A/Z(\f)$. We then show that the kernel of the map $\pi_\f\bar{\kappa}_G$ is contained in $\od(G)$, and using the fact that $O_2(\od(G))$ maps injectively into $\out(\f/Z(\f))$, $\ker(\pi_\f \bar{\kappa}_G)$ is contained in $O_{2'}(\od(G))$, giving us that $\kapb_G$ is injective.

	\subsection{Preliminary results}
	
	We first show that $A/Z(\f)$ is a 2-centric subgroup of $G/Z(G)$. This follows from the following result of \cite{broto2019automorphisms} that states that $A$ is a 2-centric subgroup of $G$ and Theorem \ref{lem: generalized 27}. 
	\begin{lem}[Lemma 5.3 in \cite{broto2019automorphisms}]\label{lem: 53 in BMO19}
		Assuming Notation \ref{not: gp of lie type} and Hypotheses \ref{hyp: gp of lie type}, the following holds:
		\begin{enumerate}
			\item[(a)] $C_{W}(A)=1$, $C_{\G}(A)=C_{\G}(T)=\T$, $C_G(A)=T$, and $C_S(A)=A$. 
			\item[(b)] $\aut_G(A)=\aut_{W_0}(A)$. 
		\end{enumerate}
	\end{lem}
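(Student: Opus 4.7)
My plan is to establish part~(a) by a centralizer analysis grounded in the Chevalley parameterization of $\T$ and $A$, and then obtain part~(b) as a short formal consequence. The central computation is $C_{\G}(A) = \T$; once this is proved, the identities $C_G(A) = T$ and $C_S(A) = A$ follow by intersecting with $G$ or $S$, while $C_W(A) = 1$ drops out of the same argument.

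I would start from the classical equality $C_{\G}(\T) = \T$ (a maximal torus in a connected reductive algebraic group is self-centralizing), which reduces the central computation to proving $C_{\G}(A) \leq \T$. I would split $C_{\G}(A)$ into its identity component and its component group. For the identity component, the structure theory of connected reductive groups tells us that $C_{\G}(A)^{\circ}$ is generated by $\T$ together with the root subgroups $\overline{X}_{\alpha}$ for those roots $\alpha$ on which $A$ acts trivially. Using Lemma~\ref{lem: phi_lambda exists} to identify $A$ with a suitable quotient of the coroot lattice and the pairing $\alpha(h_{\beta}(\mu)) = \mu^{\langle\alpha,\beta^{\vee}\rangle}$, a root $\alpha$ vanishes on $A$ only if $2^k$ divides $\langle\alpha,\beta^{\vee}\rangle$ for every $\beta\in\Pi$, with a $\rho$-equivariant variant in the Steinberg case using the $\hat{h}_{\alpha}$ of Notation~\ref{not: gp of lie type}(F). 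Since Cartan integers are bounded in absolute value by $3$ while $2^k \geq 4$ by Lemma~\ref{lem: 2-part of 5}, this forces $\alpha = 0$, so $C_{\G}(A)^{\circ} = \T$. The component group $C_{\G}(A)/\T$ then embeds into $W = N_{\G}(\T)/\T$, landing inside $C_W(A)$, and I would apply Lemma~\ref{lem: 27} with $m = 2^k$ in the Chevalley case (where $A = T_{2^k}$) or a type-by-type variant distilled from the calculations of Lemmas~\ref{lem: gen27 for chevalley gps}--\ref{lem: gen27 for twisted gps} in the twisted case to conclude $C_W(A) = 1$, which completes $C_{\G}(A) = \T$.

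The remaining claims of part~(a) then follow by direct arguments: intersecting with $G$ gives $C_G(A) = G \cap \T = T$, and $C_S(A) = A$ because $A$ is abelian (so $A \leq C_S(A)$) while $C_S(A) \leq S \cap C_G(A) = S \cap T$ is a $2$-subgroup of $T$ and hence contained in $O_2(T) = A$. For part~(b), using $C_G(A) = T$ from part~(a), every element of $N_G(A)$ normalizes $C_G(A) = T$, so $N_G(A) \leq N_G(T)$; the reverse inclusion is automatic because $A = O_2(T)$ is characteristic in $T$. Identifying $W_0$ with $N_G(T)/T$ yields $\aut_G(A) = N_G(A)/C_G(A) = N_G(T)/T = \aut_{W_0}(A)$.

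The main obstacle will be the root-vanishing step in the twisted setting. For Steinberg groups the generators $\hat{h}_{\alpha}(\mu)$ of $A$ mix several $h_{\beta}(\mu)$ with Frobenius-twisted exponents, so the condition ``no root of $\G$ vanishes on $A$'' becomes a $\rho$-equivariant divisibility statement involving sums like $\beta^{\vee} + q\,\rho(\beta)^{\vee}$ rather than single coroots. Ruling out such divisibilities requires a case-by-case inspection mirroring the computations in the proofs of Lemmas~\ref{lem: gen27 for chevalley gps}, \ref{lem: gen27 for 2dn}, and~\ref{lem: gen27 for twisted gps}, and this is where the bulk of the technical work lies.
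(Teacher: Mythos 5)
The paper does not actually prove this statement; it imports it verbatim as Lemma~5.3 of \cite{broto2019automorphisms}, so there is no internal proof to measure you against. Judged on its own terms, your outline is the natural argument and almost certainly parallels the cited source: the reductions are correct ($C_{\G}(A)=\T$ gives $C_{\G}(T)=\T$ by squeezing, $C_G(A)=G\cap\T=T$, $C_S(A)\leq S\cap T\leq O_2(T)=A$ since $T$ is abelian, and $N_G(A)=N_G(T)$ yields part~(b)); the untwisted case is complete, since $A$ is the full $2^k$-torsion of $\T$, the Cartan integers are at most $3<2^k$ so no root vanishes on $A$, and Lemma~\ref{lem: 27} with $m=2^k$ gives $C_W(A)=1$, hence the component group of $C_{\G}(A)$ is trivial.

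Two cautions about the part you defer. First, in the twisted case you need both halves done by hand: that no root of $\G$ vanishes on $A$ (divisibility of $c_1+qc_2$ by $2^{k+1}$ with $q\equiv 2^k+1\pmod{2^{k+1}}$, including the borderline subcase $k=2$ where a generator of the form $h_{\a}(\mu^2)$ only has order $2^k$ and $\a=\pm\a_m$ must be excluded using the neighboring twisted generators), and that $C_W(A)=1$, since $A$ is no longer $T_m$ for any $m$ and Lemma~\ref{lem: 27} does not apply directly. Second, your suggestion to ``distill'' these verifications from Lemmas~\ref{lem: gen27 for chevalley gps}, \ref{lem: gen27 for 2dn}, and \ref{lem: gen27 for twisted gps} is circular in this paper's logical structure: those lemmas work modulo $Z(\f)$ and themselves invoke the faithfulness of $W_0$ on $A$ (i.e.\ precisely the content of the present lemma, via the citation of \cite{broto2019automorphisms}) to pass from ``trivial on $A$'' to ``trivial''. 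So the coroot-lattice computations are of the same flavor but must be carried out directly for $A$ inside $\Z\Sigma^\vee/2^{k+1}\Z\Sigma^\vee$, not borrowed from Section~4. With that done type by type (${}^2A_n$, ${}^2D_n$, ${}^2E_6$ — note $|\g|\leq 2$ excludes ${}^3D_4$), your argument closes.
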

	\noindent We now prove that $A/Z(\f)$ is a 2-centric subgroup of $G/Z(G)$ by showing that it is a Sylow 2-subgroup of its centralizer. 
	\begin{lem}\label{lem: AZF is 2centric}
		Assume Hypotheses \ref{hyp: gp of lie type}, $A/Z(\f)$ is 2-centric.
	\end{lem}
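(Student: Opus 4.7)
The plan is to compute the centralizer $C_{\overline{G}}(\overline{A})$ in $\overline{G} := G/Z(G)$, where $\overline{A} := A/Z(\f)$ is identified with $AZ(G)/Z(G)$ via the discussion following Lemma \ref{lem: description of f mod zf}, and then to verify that $\overline{A}$ is a Sylow $2$-subgroup of this centralizer. Since $A$ is abelian, $Z(\overline{A}) = \overline{A}$, so this will establish that $\overline{A}$ is $2$-centric in $\overline{G}$, which is what is required.

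First I would analyze the preimage $H = \{g \in G : [g,A] \leq Z(G)\}$ of $C_{\overline{G}}(\overline{A})$ in $G$. Because $G$ is of universal type, the generators of $Z(\G)$ listed after Theorem \ref{thm: chev rel} show $Z(G) \leq \T \cap G = T$, and since $A = O_2(T)$ this gives $Z(G) \cap A = O_2(Z(G)) = Z(\f)$ by Lemma \ref{lem: zf is 2 part of zg}. It follows that $|AZ(G)|_2 = |A|$, and because $Z(G)$ is central, $AZ(G)$ is abelian and therefore has a unique Sylow $2$-subgroup, namely $A$. For any $g \in H$, the conjugate $gAg^{-1}$ is a $2$-subgroup of $AZ(G)$ of order $|A|$, forcing $gAg^{-1} = A$. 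Thus $H \subseteq N_G(A)$.

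Next, by Lemma \ref{lem: 53 in BMO19}(b) the restriction $c_g|_A$ lies in $\aut_{W_0}(A)$, so there is $w \in N_G(T)$ representing an element of $W_0$ with $c_g|_A = c_w|_A$. The defining condition $[g,A] \leq Z(G) \cap A = Z(\f)$ forces this induced Weyl element to act trivially on $A/Z(\f)$. By Theorem \ref{lem: generalized 27}, the action of $W_0$ on $A/Z(\f)$ is faithful, so the Weyl element is trivial and $g$ centralizes $A$. Combined with Lemma \ref{lem: 53 in BMO19}(a), this yields $g \in C_G(A) = T$, hence $H = T$ and $C_{\overline{G}}(\overline{A}) = T/Z(G)$.

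It remains to observe that $\overline{A}$ is Sylow $2$ in $T/Z(G)$: since $A = O_2(T)$ and $Z(G)/Z(\f)$ has odd order (as $Z(\f) = O_2(Z(G))$), the $2$-part of $|T/Z(G)|$ equals $|A|/|Z(\f)| = |\overline{A}|$, completing the argument. The main obstacle is the faithfulness of the Weyl group action on $A/Z(\f)$, already handled by Theorem \ref{lem: generalized 27}; with that in hand the proof reduces to the centralizer computation in $T$ and a Sylow-order bookkeeping step, with the only subtlety being the passage from $gAg^{-1} \subseteq AZ(G)$ to $gAg^{-1} = A$ via the abelianness of $AZ(G)$.
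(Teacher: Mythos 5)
Your proposal is correct and follows essentially the same route as the paper: pull back an element of $C_{G/Z(G)}(A/Z(\f))$ to $g\in G$, note $c_g|_A\in\aut_G(A)=\aut_{W_0}(A)$ by Lemma \ref{lem: 53 in BMO19}, invoke the faithfulness of $W_0$ on $A/Z(\f)$ from Theorem \ref{lem: generalized 27} to get $g\in C_G(A)=T$, and finish with $A/Z(\f)\in\syl_2(T/Z(G))$. Your extra steps (showing $g$ normalizes $A$ via the unique Sylow $2$-subgroup of the abelian group $AZ(G)$, and the order bookkeeping for the final Sylow claim) merely make explicit details the paper leaves implicit.
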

	\begin{proof}
		Let $g\in G$ be such that $gZ(G)\in C_{G/Z(G)}(A/Z(\f))$. That is, $c_g|_{A/Z(\f)}=\id_{A/Z(\f)}$. Note that $c_g\in\aut_G(A)=\aut_{W_0}(A)$ by Lemma \ref{lem: 53 in BMO19} and is restricted to $A/Z(\f)$. By Theorem \ref{lem: generalized 27}, $W_0$ acts faithfully on $A/Z(\f)$ which yields that $c_g$ is trivial in $\aut_G(A)$. Therefore, $g\in C_G(A)=T$ by Lemma \ref{lem: 53 in BMO19}. Hence, $gZ(G)\in T/Z(G)$ and since $A/Z(\f)\in\syl_2(T/Z(G))$, $A/Z(\f)$ is 2-centric in $G/Z(G)$. 
	\end{proof}

	To discuss the image of $O_2(\od(G))$ in $\out(\f)$ where $O_2(\od(G))$ under $\kapb_G$, we make the following definition: 
	\begin{defn}[Follows Notation 5.2(H) of \cite{broto2019automorphisms}]
		We define 
		\begin{align*}
			\aut_{\text{diag}}(\f)=\{\b\in\aut(\f)\mid\b|_A=\id_A\}  
		\end{align*}
		and
		\begin{align*}
			\aut_{\text{diag}}(\f/Z(\f))=\{\b\in\aut(\f/Z(\f))\mid\b|_{A/Z(\f)}=\id_{A/Z(\f)}\}
		\end{align*}
		We then let $\out_{\text{diag}}(\f)$ (and subsequently $\out_{\text{diag}}(\f/Z(\f))$) to be the image of $\aut_{\text{diag}}(\f)$ in $\out(\f)$ (respectively, $\aut_{\text{diag}}(\f/Z(\f))$ in $\out(\f/Z(\f))$). 
	\end{defn}
	
	The fact that $\bar{\kappa}_G$ restricts to a well-defined homomorphism from $O_{2}(\od(G))$ to $\out_{\text{diag}}(\f)$ is shown in Lemma 5.9 of \cite{broto2019automorphisms}. We give an alternative proof of this fact. We first define $\A=O_2(\T)$ to be the 2-power torsion of the maximal torus $\T$ of the algebraic group $\G$. We now prove the following lemma:
	\begin{lem}\label{lem: there is a welldefined map from O2ODG to ODF}
		Assume Hypotheses \ref{hyp: gp of lie type}. The Sylow 2-subgroup $S$ of $G$ is normalized by $N_{\A}(G)$. In particular, each element of $O_2(\od(G))$ is represented by $c_a$ for some $a\in N_{\A}(G)$ such that $a$ is of 2-power order, normalizes $S$, and centralizes~$A$. 
	\end{lem}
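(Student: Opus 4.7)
The plan is to establish two facts in sequence: first, that $S$ is normalized by $N_{\A}(G)$; second, that every class in $O_2(\od(G))$ admits a representative $c_a$ with $a\in N_{\A}(G)$.

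For the first fact, my starting point would be that $\A\leq\T$ is abelian, hence $N_{\A}(G)$ centralizes $T\leq\T$ and, together with normalizing $G$, normalizes $N_G(T)=G\cap N_{\G}(T)$. Thus $N_{\A}(G)$ acts by conjugation on the set $\syl_2(N_G(T))$, which has odd cardinality. Since $N_{\A}(G)$ is itself a $2$-group, orbit counting produces at least one fixed point $S_0\in\syl_2(N_G(T))$. I would then promote this to every $S\in\syl_2(N_G(T))$: choose $n\in N_G(T)$ with $nS_0n^{-1}=S$; by Lemma \ref{lem: 53 in BMO19} we have $C_{\G}(T)=\T$, so $N_G(T)\leq N_{\G}(\T)$, and in particular $n$ normalizes the characteristic subgroup $\A\leq\T$. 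For any $a\in N_{\A}(G)$, the conjugate $n^{-1}an$ then lies in $\A$ and normalizes $G$ (because $n\in G$), so $n^{-1}an\in N_{\A}(G)$ fixes $S_0$. Conjugating this identity by $n$ gives $aSa^{-1}=S$.

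For the second fact, I would exploit that $\T\cong(\overline{\F}_{q_0}^\times)^r$ is a torsion abelian group, so $N_{\T}(G)$ is torsion abelian with $2$-primary part exactly $N_{\T}(G)\cap\A = N_{\A}(G)$. The assignment $t\mapsto[c_t]$ defines a homomorphism $N_{\T}(G)\to\od(G)$ whose image exhausts $\od(G)$ since $\innd(G)=\aut_{\T}(G)\inn(G)$. A surjection between torsion abelian groups is surjective on $2$-primary components, so $O_2(\od(G))$ is the image of $N_{\A}(G)$. Therefore any class in $O_2(\od(G))$ is represented by some $c_a$ with $a\in N_{\A}(G)$, and such $a$ automatically has $2$-power order (being in $\A$), centralizes $A\leq\T$ (by abelianness), and normalizes $S$ by the first fact.

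The delicate step is the promotion in the first part from a single fixed Sylow (produced by orbit counting) to every element of $\syl_2(N_G(T))$. This rests on $N_G(T)$-conjugation preserving $N_{\A}(G)$ as a set, which would fail without the identification $C_{\G}(T)=\T$ of Lemma \ref{lem: 53 in BMO19}; everything else in the argument is essentially formal.
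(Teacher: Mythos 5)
Your proposal is correct, and it departs from the paper's argument in both halves, in ways worth noting. For the first claim (that $N_{\A}(G)$ normalizes $S$), the paper sets $\hat{N}=N_{\A}(G)N_G(T)$, observes that $N_{\A}(G)$ is a normal $2$-subgroup of $\hat{N}$ and hence lies in every Sylow $2$-subgroup $P$ of $\hat{N}$, and then notes $P\cap N_G(T)=S$, so $N_{\A}(G)$ normalizes $S$. You instead apply orbit counting to the action of the $2$-group $N_{\A}(G)$ on the odd-cardinality set $\syl_2(N_G(T))$ to produce a fixed Sylow $S_0$, and then use a conjugation argument (valid because $N_G(T)\leq N_{\G}(\T)$ normalizes $N_{\A}(G)$, which does indeed hinge on $C_{\G}(T)=\T$ from Lemma~\ref{lem: 53 in BMO19}, exactly as you flag) to transport fixedness from $S_0$ to the chosen $S$. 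Both arguments are valid; the paper's route is slightly more direct since the normal $2$-subgroup containment immediately gives normalization of every Sylow of $N_G(T)$ at once, whereas your route requires the extra conjugation step. For the second claim, the difference is more substantial: the paper picks $t\in\T$ with $[\a]=[c_t|_G]$ and modifies it by hand (dividing by an explicitly constructed odd-order element $t_0$) to land in $\A$, while you observe that $N_{\T}(G)$ is a torsion abelian group whose $2$-primary component is exactly $N_{\A}(G)$, and that the natural surjection $N_{\T}(G)\twoheadrightarrow\od(G)$ restricts to a surjection on $2$-primary components. Your version is cleaner and sidesteps the ad hoc construction of $t_0$; it also makes it transparent why the representative automatically has $2$-power order. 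One small expository caveat: your opening sentence attributes the centralization $[\A,T]=1$ to $\A$ being abelian, but what you actually need (and clearly intend) is that both $\A$ and $T$ lie inside the abelian group $\T$.
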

	\begin{proof}
		Since $N_{\A}(G)$ embeds into $\od(G)$ which is a finite group, it follows that $N_{\A}(G)$ is a finite 2-group. Also, note that $ N_{\A}(G) $ normalizes both $ G $ and $ T $, so it normalizes $ N_G(T) $. Set $ \hat{N} = N_{\A}(G) N_G(T) $. \par 
		
		  It also follows that $ N_G(T) $ normalizes $ \A $ as 
		\begin{align*}
			N_G(T) \leq N_G(C_{\G}(T)) = N_G(\T) \leq N_{G}(\A)
		\end{align*}
		and, as a result, $N_{\A}(G)$ is normal in $\hat{N}$. We now show that $N_{\A}(G)$ normalizes $S$. In particular, $N_{\A}(S)\leq N_{\G}(S)$. \par		
		
		Since $ S $ is a 2-subgroup of $ \hat{N} $ ($S\leq N_G(T)$ by Hypotheses \ref{hyp: gp of lie type}), there is some Sylow 2-subgroup $ P $ of $ \hat{N} $ such that $ S\leq P $. Since $ N_{\A}(G) $ is a normal 2-subgroup of $ \hat{N} $, $ N_{\A}(G) \leq P $. Since $ N_{\A}(G) $ normalizes $ P $ and $ N_G(T) $, $ N_{\A}(G) $ normalizes $ P\cap N_G(T)=S $. \par 
		
		Let $[\a]\in O_2(\od(G))$ be such that $\a\in N_{\aut(G)}(S)$. Since the map $\innd(G)\to\od(G)$ is surjective, there is some $t\in\T$ such that $[\a]=[c_t|_G]$. In particular, there is some power of 2, say $2^n$, such that $(c_t)^{2^n}=c_{t^{2^n}}\in\inn(G)$; that is, $t^{2^n}\in G$. Take $|t^{2^n}|=2^m k$ where $k$ is odd and set $t_0=(t^{2^n})^{2^m}$. Set $a = tt_0^{-1}$ so that $a$ is of 2-power order, $[\a]=[c_t|_G]=[c_{a}|_G]$, and $a\in N_{\A}(G)$.  In particular, $a$ is an element of $N_{\A}(G)$ that normalizes $S$ (by above), and as $A\leq\A$ and $\A$ is commutative, $a$ centralizes $A$. 
	\end{proof}
	
	Towards the goal of proving that $O_2(\od(G))$ maps injectively to $\out_{\text{diag}}(\f/Z(\f))$ under $\pi_\f\bar{\kappa}_G$, we describe the structure of the subgroup $\out_{\text{diag}}(\f/Z(\f))$ in the following lemma:
	\begin{lem}\label{lem: outdiagF = autdiagF mod aut AZ SZ}
		Assuming Notation \ref{not: gp of lie type} and Hypotheses \ref{hyp: gp of lie type}, 
		\begin{align*}
			\out_{\text{diag}}(\f/Z(\f))\cong\aut_{\text{diag}}(\f/Z(\f))/\aut_{A/Z(\f)}(S/Z(\f)).
		\end{align*}
	\end{lem}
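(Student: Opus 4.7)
The plan is to apply the second isomorphism theorem to identify $\out_{\text{diag}}(\f/Z(\f))$ with an explicit quotient of $\aut_{\text{diag}}(\f/Z(\f))$, and then to analyze the resulting denominator via coprime action. By definition $\out(\f/Z(\f)) = \aut(\f/Z(\f))/\aut_{\f/Z(\f)}(S/Z(\f))$ and $\out_{\text{diag}}(\f/Z(\f))$ is the image of $\aut_{\text{diag}}(\f/Z(\f))$ in this quotient, so the second isomorphism theorem will give
\begin{align*}
    \out_{\text{diag}}(\f/Z(\f)) \cong \frac{\aut_{\text{diag}}(\f/Z(\f))}{\aut_{\text{diag}}(\f/Z(\f)) \cap \aut_{\f/Z(\f)}(S/Z(\f))}.
\end{align*}
The task then reduces to proving that the intersection on the bottom equals $\aut_{A/Z(\f)}(S/Z(\f))$.

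One inclusion is routine: every $c_{\bar a}|_{S/Z(\f)}$ with $\bar a \in A/Z(\f)$ is conjugation by an element of $G/Z(G)$ normalizing $S/Z(\f)$, and it restricts to the identity on $A/Z(\f)$ because $A \leq T$ is abelian. For the reverse inclusion, I start with $c_{\bar g}|_{S/Z(\f)}$ for $\bar g \in N_{G/Z(G)}(S/Z(\f))$ satisfying $c_{\bar g}|_{A/Z(\f)} = \id$. Then $\bar g$ centralizes $A/Z(\f)$, and the proof of Lemma~\ref{lem: AZF is 2centric} (via Theorem~\ref{lem: generalized 27} and Lemma~\ref{lem: 53 in BMO19}) identifies $C_{G/Z(G)}(A/Z(\f))$ with $T/Z(G)$, which is abelian since $T$ is a torus. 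I can therefore decompose $T/Z(G) = (A/Z(\f)) \times Y$ with $Y = O_{2'}(T/Z(G))$ and write $\bar g = \bar a \bar h$ accordingly; since $\bar a \in S/Z(\f)$ trivially normalizes $S/Z(\f)$, the factor $\bar h$ does as well.

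It then remains to show $\bar h$ centralizes $S/Z(\f)$, so that $c_{\bar g}|_{S/Z(\f)} = c_{\bar a}|_{S/Z(\f)}$. Observe first that $A \norm S$ because $S \leq N_G(T)$ normalizes the characteristic subgroup $A = O_2(T)$; hence $A/Z(\f) \norm S/Z(\f)$, and the commuting subgroups $A/Z(\f)$ and $\langle \bar h \rangle$ both act on the $2$-group $S/Z(\f)$ by conjugation inside $G/Z(G)$. The main substantive step is now an application of Thompson's $A \times B$ lemma: with the $2$-group $A/Z(\f)$ and the coprime $2'$-group $\langle \bar h \rangle$ acting on $S/Z(\f)$, the $2$-centric conclusion $C_{S/Z(\f)}(A/Z(\f)) = A/Z(\f)$ from Lemma~\ref{lem: AZF is 2centric} together with the fact that $\bar h$ centralizes $A/Z(\f)$ by construction yields that $\bar h$ centralizes $C_{S/Z(\f)}(A/Z(\f))$. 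Thompson's lemma then forces $\bar h$ to centralize all of $S/Z(\f)$, and the inclusion $\subseteq$ follows. This last coprime-action step is the only nontrivial ingredient; the remainder of the argument is formal manipulation and unpacking the definitions.
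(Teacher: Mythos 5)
Your proof is correct, and its skeleton matches the paper's: both realize $\out_{\text{diag}}(\f/Z(\f))$ as $\aut_{\text{diag}}(\f/Z(\f))$ modulo the intersection $\aut_{\text{diag}}(\f/Z(\f))\cap\aut_{\f/Z(\f)}(S/Z(\f))$, and both reduce to showing that this intersection is exactly $\aut_{A/Z(\f)}(S/Z(\f))$. The difference is in how that intersection is computed. The paper first proves that $\aut_{\text{diag}}(\f/Z(\f))$ is a $2$-group: since $A/Z(\f)$ is self-centralizing in $S/Z(\f)$, any automorphism trivial on $A/Z(\f)$ satisfies $[\a,S/Z(\f)]\leq A/Z(\f)$, so its class lies in a group identified with $H^1\bigl((S/Z(\f))/(A/Z(\f));A/Z(\f)\bigr)$, which is a $2$-group; consequently a conjugating element realizing such an automorphism, which lies in $C_{G/Z(G)}(A/Z(\f))=T/Z(G)$, may be taken of $2$-power order and hence lies in $A/Z(\f)$. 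You instead argue element-wise: you split the conjugating element inside the abelian group $T/Z(G)=(A/Z(\f))\times O_{2'}(T/Z(G))$ and kill the odd part with Thompson's $A\times B$ lemma, using $C_{S/Z(\f)}(A/Z(\f))=A/Z(\f)$ as the coprime-action hypothesis (your application is valid: both factors normalize $S/Z(\f)$ and commute, being contained in the abelian group $T/Z(G)$, which is abelian because $T\leq\T$ is). Both routes rest on the same structural inputs, namely Lemma \ref{lem: AZF is 2centric} and hence Theorem \ref{lem: generalized 27} and Lemma \ref{lem: 53 in BMO19}; yours trades the cohomological citation for a standard coprime-action lemma and is arguably more self-contained, while the paper's route records the additional fact that $\aut_{\text{diag}}(\f/Z(\f))$ is a $2$-group, which your argument does not establish (though the statement of the lemma does not require it).
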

	\begin{proof}
		Denote the quotient by $Z(\f)$ or $Z(G)$ as follows: $\f^+=\f/Z(\f)$, $S^+=S/Z(\f)$, $A^+=A/Z(\f)$, and $G^+=G/Z(G)$. \par 
		
		First, set 
		\begin{align*}
			\aut_{\text{diag}}(S^+,A^+)=\{\a\in\aut(S^+)\mid\a=\id_{A^+},\text{ }[\a,S^+]\leq A^+\}
		\end{align*}
		and let $\out_{\text{diag}}(S^+,A^+)$ be its image in $\out(S^+)$. Since $A^+$ is centric in $S^+$ by Lemma \ref{lem: AZF is 2centric}, it follows that $\aut_{A^+}(S^+)=\inn(S^+)\cap\aut_{\text{diag}}(S^+, A^+)$. Therefore, $\out_{\text{diag}}(S^+,A^+)\cong\aut_{\text{diag}}(S^+,A^+)/\aut_{A^+}(S^+)$ which is naturally isomorphic to $H^1((S^+)/(A^+);A^+)$ (see 2.8.7 in \cite{suzuki1982group}) which is a $2$-group. Hence, $\aut_{\text{diag}}(S^+,A^+)$ and $\aut_{\text{diag}}(\f^+)$ are 2-groups. \par 
		
		Now, let $\beta$ be in $\aut_{\text{diag}}(\f^+)\cap \aut_{G^+}(S^+)$. That is, $\beta = c_{gZ(G)}$, $gZ(G)\in N_{G^+}(S^+)$, and $c_{gZ(G)}|_{A^+}=\id_{A^+}$. In particular, $gZ(G)$ centralizes $A^+$ and is of 2-power order as $\aut_{\text{diag}}(\f^+)$ is a 2-group. Therefore, $gZ(G)$ is an element of $S^+$, which gives us equality. In particular
		\begin{align*}
			\aut_{A^+}(S^+)& =\aut_{\text{diag}}(\f^+)\cap \inn(S^+) \\
			&=\aut_{\text{diag}}(\f^+)\cap \aut_{G^+}(S^+)
		\end{align*}
		Therefore, $\out_{\text{diag}}(\f^+)\cong \aut_{\text{diag}}(\f^+)/\aut_{A^+}(S^+)$.
	\end{proof}
	
	To show that $O_2(\od(G))$ injects into $\out_{\text{diag}}(\f/Z(\f))$, we need the following result from \cite{broto2019automorphisms}:

	\begin{lem}[Lemma 5.4 in \cite{broto2019automorphisms}]\label{lem: 54 in BMO}
		Assume Hypotheses \ref{hyp: gp of lie type}. 
		\begin{itemize}
			\item If all classes in $\hat{\Sigma}$ have order 1 or 2, then $C_{\T}(W_0)=C_{\T}(W)=Z(\G)$, and $Z(G)=C_T(W_0)$. 
			\item If $\hat{\Sigma}$ contains classes of order 3, then $\G\cong SL_{2n-1}(\overline{\F}_{q_0})$ and $G\cong SU_{2n-1}(q)$ for some $n\geq 2$. Also, $C_{\T}(W_0)\cong\overline{\F}_{q_0}^\times$, and $\sigma(t)=t^{-q}$ for all $t\in C_{\T}(W_0)$. 
		\end{itemize}
	\end{lem}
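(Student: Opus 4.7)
My approach reduces each identity to a computation in the cocharacter lattice $\Z\Sigma^\vee$ via Lemma \ref{lem: phi_lambda exists}: since $\G$ is of universal type, the simple coroots $\{\alpha_i^\vee\}_{\alpha_i\in\Pi}$ form a $\Z$-basis of $\Z\Sigma^\vee$, and hence $\T$ is the internal direct product of the one-parameter subgroups $h_{\alpha_i}(\overline{\F}_{q_0}^\times)$. The equality $C_\T(W)=Z(\G)$ is classical, coming from $Z(\G)=\bigcap_{\alpha\in\Sigma}\ker\alpha$ together with the explicit reflection formula $r_\beta(t)=t\cdot h_\beta(\beta(t)^{-1})$ for $\beta\in\Sigma$ and $t\in\T$, which I would record first as the common anchor for both cases.

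For Case 1, $Z(\G)\subseteq C_\T(W_0)$ is immediate from $W_0\subseteq W$. For the reverse, I would inspect the Dynkin diagrams of $A_{2n-1}$, $D_n$ (for $n\geq 4$), and $E_6$ and verify that any pair of distinct simple roots $\alpha\neq\tau(\alpha)$ swapped by $\tau$ is non-adjacent and hence orthogonal; accordingly $r_\alpha r_{\tau(\alpha)}=r_{\tau(\alpha)}r_\alpha$ lies in $W_0$ and lifts $r_{\hat\alpha}$. For $t\in C_\T(W_0)$, applying the reflection formula twice (and using $\alpha\perp\tau(\alpha)$ to see that $\alpha(r_{\tau(\alpha)}(t))=\alpha(t)$) turns $r_\alpha r_{\tau(\alpha)}(t)=t$ into
\[
h_\alpha(\alpha(t)^{-1})\cdot h_{\tau(\alpha)}(\tau(\alpha)(t)^{-1})=1.
\]
Since $\alpha^\vee$ and $\tau(\alpha)^\vee$ are distinct elements of a $\Z$-basis of $\Z\Sigma^\vee$, the images of $h_\alpha$ and $h_{\tau(\alpha)}$ intersect trivially and their product in $\T$ is direct, forcing each factor above to equal $1$; hence $\alpha(t)=1$ and $r_\alpha(t)=t$. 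Together with the size-$1$ classes (for which $r_\alpha\in W_0$ directly), $t$ is fixed by every simple reflection and therefore $t\in C_\T(W)=Z(\G)$. Intersecting the resulting identity $C_\T(W_0)=Z(\G)$ with $T=\T^\sigma$ and invoking the standard fact $Z(G)=Z(\G)^\sigma$ for $\G$ of universal type yields $Z(G)=C_T(W_0)$.

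For Case 2, a class of size $3$ must take the form $\{\alpha,\tau(\alpha),\alpha+\tau(\alpha)\}$, which requires $\alpha$ and $\tau(\alpha)$ to be distinct simple roots that are \emph{not} orthogonal. Among the pairs $(\Sigma,\gamma)$ with $|\gamma|=2$, this configuration occurs only for $\Sigma=A_{2n-2}$ with its nontrivial graph automorphism, whose two middle simple roots $\alpha_{n-1},\alpha_n$ are adjacent and swapped by $\tau$. Hence $\G\cong SL_{2n-1}(\overline{\F}_{q_0})$ and $G\cong SU_{2n-1}(q)$ with $n\geq 2$. To compute $C_\T(W_0)$, I would realize $\T$ as the diagonal torus of $SL_{2n-1}$, determine the action on $\T$ of the generators of $W_0$ (including the longest element of the parabolic $\langle r_{\alpha_{n-1}},r_{\alpha_n}\rangle$ arising from the size-$3$ class, which acts as the transposition reversing the middle three positions), and solve the resulting fixed-point equations to obtain the one-parameter subgroup
\[
\{\mathrm{diag}(\lambda,\ldots,\lambda,\lambda^{-(2n-2)},\lambda,\ldots,\lambda):\lambda\in\overline{\F}_{q_0}^\times\}\cong\overline{\F}_{q_0}^\times,
\]
with the entry $\lambda^{-(2n-2)}$ sitting in the middle position $n$. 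Finally, $\sigma(t)=t^{-q}$ would be checked directly: $\vp_q$ raises each diagonal entry to the $q$-th power, while $\gamma$, realized as the reversal-and-inverse-transpose automorphism on the diagonal torus of $SL_{2n-1}$, sends the parametrizing cocharacter to its inverse since the cocharacter is symmetric under reversal.

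The main obstacle in Case 1 is the deduction $r_\alpha r_{\tau(\alpha)}(t)=t\Rightarrow r_\alpha(t)=t$, which relies jointly on orthogonality of the $\tau$-swapped simple-root pairs (a Dynkin-diagram check) and on the independence of simple coroots in the universal cocharacter lattice. In Case 2, the main difficulty is the explicit identification of the parametrizing cocharacter and the verification that $\gamma$ inverts rather than preserves it, which is exactly what produces the sign in $\sigma(t)=t^{-q}$.
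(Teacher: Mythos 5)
The paper does not prove this statement at all: it is imported verbatim as Lemma~5.4 of \cite{broto2019automorphisms}, so there is no in-paper argument to compare against, and your proposal has to stand as an independent verification. In outline it does. The reduction to the torus via the reflection formula $r_\beta(t)=t\,h_\beta(\beta(t)^{-1})$, the directness of $\T=\prod_i h_{\alpha_i}(\overline{\F}_{q_0}^\times)$ in universal type (part~(3) of Theorem~\ref{thm: chev rel}), and the observation that for an orthogonal $\tau$-swapped pair of simple roots the element $r_\alpha r_{\tau(\alpha)}$ lies in $W_0$ and forces $h_\alpha(\alpha(t)^{-1})h_{\tau(\alpha)}(\tau(\alpha)(t)^{-1})=1$, hence $\alpha(t)=\tau(\alpha)(t)=1$, correctly give $C_{\T}(W_0)=C_{\T}(W)=Z(\G)$ in the first case; intersecting with $T$ and using $Z(G)=C_{Z(\G)}(\sigma)$ for universal $\G$ gives $Z(G)=C_T(W_0)$. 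The $SU_{2n-1}$ computation is also right: $W_0=C_W(\tau)$ fixes the middle index, acts as signed permutations on the paired indices, its fixed points on the diagonal torus form the symmetric cocharacter $\lambda\mapsto\mathrm{diag}(\lambda,\dots,\lambda^{-(2n-2)},\dots,\lambda)$, and since $\gamma$ acts on the diagonal torus as reverse-and-invert while $\psi_q$ raises entries to the $q$-th power, $\sigma(t)=t^{-q}$ on $C_{\T}(W_0)\cong\overline{\F}_{q_0}^\times$.

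Two points need tightening. First, in Case~2 you assert that an order-3 class must be $\{\alpha,\tau(\alpha),\alpha+\tau(\alpha)\}$ with $\alpha,\tau(\alpha)$ distinct \emph{simple} roots; that is not accurate (in $A_{2n-2}$ most order-3 classes consist of non-simple roots), and your Dynkin-diagram check only rules out adjacent swapped \emph{simple} pairs in $A_{2n-1}$, $D_n$, $E_6$. What the dichotomy actually requires is that in those types no root $\beta$ with $\tau\beta\neq\beta$ satisfies $(\beta,\tau\beta)\neq0$ (equivalently $\beta+\tau\beta\in\Sigma$), so that \emph{all} classes there have order at most~$2$; this is a short explicit check in types $A_{2n-1}$ and $D_n$ and is standard (tabulated in \cite{gorenstein2005classification}) for ${}^2E_6$. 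Second, you use without comment that the $W_0$-action on $T$ coming from $N_G(T)/T$ is the restriction of the action of $C_W(\tau)\leq W$ on $\T$, and that $Z(G)=C_{Z(\G)}(\sigma)$ in universal type; both are standard facts but should be cited rather than left implicit. With those repairs the argument is complete.
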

	
	Lemma \ref{lem: 54 in BMO} states that in all cases except for $G={}^2A_n^{\un}(q)$ with $n$ even, elements of $\T$ that centralize $W_0$ are elements of $Z(\G)$. We will use this fact to show that $O_2(\od(G))$ injects into $\out_{\text{diag}}(\f/Z(\f)))$. In the case of $G={}^2A_n^{\un}(q)$ with $n$ even, we use the following result:
	\begin{theo}[Theorem 2.5.12(c) in \cite{gorenstein2005classification}]\label{theo: outdiagG is isomorphic to Guniv}
		Let $G$ be an adjoint group of Lie type. Then $\od(G)$ is isomorphic to $Z(G^{\un})$.
	\end{theo}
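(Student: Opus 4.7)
My approach is cohomological, using the central isogeny between the simply connected and adjoint algebraic groups. Let $\G^{\un}$ denote the simply connected semisimple algebraic group of the same type as $\G=\G^{\ad}$, and let $\pi\colon \G^{\un}\to\G^{\ad}$ be the central isogeny with kernel $Z(\G^{\un})$. Lift $\s$ uniquely to an endomorphism of $\G^{\un}$ so that $\pi$ is $\s$-equivariant. The short exact sequence
\[
1 \longrightarrow Z(\G^{\un}) \longrightarrow \G^{\un} \xrightarrow{\pi} \G^{\ad} \longrightarrow 1
\]
induces a long exact sequence in $\s$-cohomology. Since $\G^{\un}$ is connected, the Lang--Steinberg theorem gives $H^1(\s,\G^{\un})=1$, so the long exact sequence collapses to
\[
1 \to Z(\G^{\un})^{\s} \to (\G^{\un})^{\s} \xrightarrow{\pi} (\G^{\ad})^{\s} \to H^1(\s, Z(\G^{\un})) \to 1.
\]

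Next I would identify each term. The leftmost group is $Z(G^{\un})=Z(\G^{\un})^{\s}$ by definition, and $(\G^{\un})^{\s}=G^{\un}$ holds because $\G^{\un}$ is simply connected (so that the $O^{q_0'}$-operator in the definition of $G^{\un}$ is trivial). For the third term one checks the standard fact that the conjugation action of $(\G^{\ad})^{\s}$ on $G^{\ad}$ defines an injection of $(\G^{\ad})^{\s}$ into $\aut(G^{\ad})$ (injective because $Z(\G^{\ad})=1$) whose image is exactly $\innd(G^{\ad})$, while $\pi(G^{\un})$ maps onto $\inn(G^{\ad})$ under this embedding. The cokernel of $\pi$ above is therefore $\innd(G^{\ad})/\inn(G^{\ad})=\od(G^{\ad})$, and we obtain
\[
\od(G^{\ad}) \;\cong\; H^1(\s, Z(\G^{\un})).
\]

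It remains to show that $H^1(\s,Z(\G^{\un}))\cong Z(G^{\un})$. Since $Z:=Z(\G^{\un})$ is a finite abelian group on which $\s$ acts with finite order, the Herbrand quotient for the cyclic group $\langle\s|_Z\rangle$ is $1$, giving $|H^1(\s,Z)|=|H^0(\s,Z)|=|Z^{\s}|=|Z(G^{\un})|$. To upgrade this equality of orders to a group isomorphism, I would appeal to the explicit presentations of $Z(\G^{\un})$ by Dynkin type (Tables 3 and 4 in the appendix, following \cite{gorenstein2005classification}) together with the explicit action of $\s=\psi_q\g$: the field endomorphism $\psi_q$ raises $h_{\a}(\lambda)$ to the $q$-th power via $\lambda\mapsto\lambda^q$, and $\g$ permutes generators according to the diagram symmetry. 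A direct check in each type then shows that $\ker(\s-1)$ and $\mathrm{coker}(\s-1)$ on $Z$ are abstractly isomorphic as abelian groups.

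The main obstacle is this last step. Equality of orders comes essentially for free from the Herbrand quotient, but $H^0$ and $H^1$ of a cyclic group acting on a finite module are not canonically isomorphic, and in general $\ker(\s-1)$ and $\mathrm{coker}(\s-1)$ need not agree as groups. One must therefore proceed case-by-case; fortunately the centers $Z(\G^{\un})$ that arise here are highly constrained (cyclic, or elementary abelian $2$-groups of small rank, with tightly controlled $\g$-action), and the verification is short in each Dynkin type. This type-by-type check is the substance of Theorem~2.5.12 in \cite{gorenstein2005classification}.
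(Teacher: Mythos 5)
The paper does not actually prove this statement; it is quoted verbatim from Theorem 2.5.12(c) of \cite{gorenstein2005classification}, so there is no internal proof to compare against. Your argument is, in outline, the standard one underlying that citation: lift $\s$ through the central isogeny $\pi\colon\G^{\un}\to\G^{\ad}$, use Lang--Steinberg to get the exact sequence $1\to Z(\G^{\un})^{\s}\to(\G^{\un})^{\s}\to(\G^{\ad})^{\s}\to Z(\G^{\un})/(\s-1)Z(\G^{\un})\to1$, identify the cokernel with $\od(G^{\ad})$, and then compare $\ker(\s-1)$ with $\mathrm{coker}(\s-1)$ on the finite abelian group $Z(\G^{\un})$. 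This is correct, and you are right to flag that the passage from equal orders to an actual isomorphism is the only genuinely delicate point (your unipotent-automorphism example of why kernel and cokernel can differ in general is exactly the right concern); here it is harmless because $Z(\G^{\un})$ is cyclic in every type except $D_{2m}$, where it is elementary abelian of rank $2$, and subgroups and quotients of such groups of equal order are isomorphic, so the ``case check'' is a one-line observation rather than a substantive computation.

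Two smaller points are elided a little too quickly. First, $Z(G^{\un})=Z(\G^{\un})^{\s}$ is not ``by definition'': the inclusion $Z(G^{\un})\leq Z(\G^{\un})$ requires the standard fact $C_{\G^{\un}}(G^{\un})=Z(\G^{\un})$. Second, injectivity of the conjugation map $(\G^{\ad})^{\s}\to\aut(G^{\ad})$ needs the same centralizer statement for the adjoint group, namely $C_{\G^{\ad}}(G^{\ad})=Z(\G^{\ad})=1$, not merely triviality of the center of $\G^{\ad}$; likewise, matching the image with the paper's definition $\innd(G)=\aut_{\T}(G)\inn(G)$ uses $C_{\G^{\ad}}(\s)=G^{\ad}\cdot C_{\T^{\ad}}(\s)$ and $\pi\bigl((\G^{\un})^{\s}\bigr)=G^{\ad}$. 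All of these are standard results recorded in \cite{gorenstein2005classification} (and fail only for a handful of degenerate small groups excluded by the hypotheses in force here), so they are acceptable citations in a sketch, but they should be named as such rather than treated as formalities.
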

	\noindent Note that $\od(G^{\ad})=\od(G^{\un})$. Therefore, in the case of $G={}^2A_n^{\un}(q)$ with $n$ even, $O_2(\od(G))\cong O_2(Z(G)) = Z(\f)$ (by Lemma \ref{lem: zf is 2 part of zg}). The center $Z({}^2A_n^{\un}(q))\cong Z_{(n+1,q+1)}$, and when $n$ is even, $O_2(Z({}^2A_n^{\un}(q)))=1$. Therefore, $O_2(\od({}^2A_n^{\un}(q)))=1$ injects into $\out(\f/Z(\f))$ trivially. 
	
	To show that $O_2(\od(G))$ injects into $\out_{\text{diag}}(\f/Z(\f))$ when $G$ is not ${}^2A_n^{\un}(q)$, we need the following result of Glauberman and Lynd from \cite{glauberman2016control}:
	\begin{lem}[Lemma 8.2 in \cite{glauberman2016control}]\label{lem: 82 in GL16}
		Let $N$ be a finite group with a Sylow $p$-subgroup $S$ and $A$ a normal $p$-subgroup of $N$ such that $C_N(A)\leq A$. Then $C_{\aut(N)}(S)=\aut_{Z(S)}(N)$. 
	\end{lem}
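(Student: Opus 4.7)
The plan is to prove the two inclusions separately. The direction $\aut_{Z(S)}(N) \subseteq C_{\aut(N)}(S)$ is immediate, since $c_z(s) = zsz^{-1} = s$ for every $z \in Z(S)$ and every $s \in S$.

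For the reverse inclusion, the plan is to fix $\alpha \in C_{\aut(N)}(S)$ and construct a $z \in Z(S)$ with $\alpha = c_z$ by a cocycle-to-coboundary argument. Since $A$ is a normal $p$-subgroup of $N$, one has $A \leq S$, so $\alpha$ restricts to the identity on $A$. For any $n \in N$ and $a \in A$, the element $nan^{-1}$ lies in $A$ and is therefore fixed by $\alpha$; this forces $\alpha(n) a \alpha(n)^{-1} = nan^{-1}$, and hence $n^{-1}\alpha(n) \in C_N(A) \leq A$. One can then define $g: N \to A$ by $g(n) = n^{-1}\alpha(n)$, and a direct computation gives the $1$-cocycle identity
\begin{align*}
g(nm) = m^{-1} g(n) m \cdot g(m),
\end{align*}
while $\alpha|_S = \id_S$ yields $g|_S \equiv 1$.

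The heart of the argument is to show that $g$ is a coboundary, i.e., to find $z \in A$ with $g(n) = n^{-1}zn \cdot z^{-1}$ for all $n \in N$. When $A$ is abelian, this follows from the standard Gaschütz-type fact that the restriction $H^1(N;A) \to H^1(S;A)$ is injective for $S \in \syl_p(N)$ and $A$ an abelian $p$-group, since corestriction composed with restriction is multiplication by $[N:S]$, which is coprime to $|A|$. For general $A$ one may proceed by induction along the derived series $A \geq A^{(1)} \geq A^{(2)} \geq \cdots$, each term of which is characteristic in $A$ and hence $\alpha$-invariant, reducing at each stage to the abelian case. Having produced such $z$, one concludes $\alpha(n) = n\cdot g(n) = zn z^{-1}$, so $\alpha = c_z$; and $g|_S \equiv 1$ translates to $s^{-1}zs = z$ for all $s \in S$, so $z \in A \cap C_N(S)$, and since $z \in A \leq S$ this places $z$ in $Z(S)$.

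The main obstacle is expected to be this coboundary step in the nonabelian case, where one must work with pointed sets rather than genuine degree-one cohomology groups. The inductive reduction along the derived series is the cleanest route, but one must verify at each stage that the induced cocycle on the abelian subquotient still vanishes on the image of $S$, so that the standard transfer-injectivity argument can be reapplied. Once this is done, the remainder of the proof is essentially bookkeeping with the explicit element $z$ produced by the cohomological step.
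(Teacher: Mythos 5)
Your cohomological strategy is the right one, but the proposal misses the observation that makes the nonabelian case disappear, and the derived-series workaround you sketch has a real gap. You correctly deduce that $n^{-1}\alpha(n)\in C_N(A)$, but then only invoke $C_N(A)\leq A$ to conclude the cocycle $g$ lands in $A$. The hypothesis actually gives more: $C_N(A)\leq A$ forces $C_N(A)=C_A(A)=Z(A)$, which is abelian. So $g$ is already a crossed homomorphism from $N$ into the abelian $p$-group $Z(A)$, the transfer argument (injectivity of restriction $H^1(N;Z(A))\to H^1(S;Z(A))$) applies directly, and no reduction to abelian coefficients is needed at all. From $g(n)=n^{-1}zn z^{-1}$ with $z\in Z(A)$ and $g|_S\equiv 1$ you then read off $z\in C_{Z(A)}(S)\leq Z(S)$ exactly as you wrote.

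This matters because the inductive step along the derived series does not close as stated. After producing $\overline{z}_1\in A/A^{(1)}$ with $\overline{g}$ its coboundary and lifting to $z_1\in A$, the automorphism $c_{z_1}^{-1}\alpha$ fixes $S$ pointwise only modulo $A^{(1)}$; for $s\in S$ the new cocycle takes the value $s^{-1}z_1^{-1}s z_1$, which lies in $A^{(1)}$ but is not $1$ in general. Thus the induced cocycle into $A^{(1)}$ need not vanish on $S$, which is precisely the hypothesis you would need to reapply the transfer argument at the next stage. In addition, the characteristic-$p$ condition $C_N(A)\leq A$ is not automatically inherited by $N/A^{(1)}$, so you cannot simply cite the lemma inductively for the quotient. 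The caveat you flag at the end is therefore a genuine obstruction rather than bookkeeping. Replacing the target $A$ by $Z(A)$ removes the issue entirely and yields a complete proof; note that the lemma is quoted in the paper from Glauberman--Lynd and not reproved there.
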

	\noindent We also need the following lemma:
	\begin{lem}\label{lem: setup for 82 from control}
		If $N$ is a finite group with a normal $p$-centric subgroup $A$, then $O_{p'}(N)=O_{p'}(C_N(A))$. Furthermore, $N/O_{p'}(N)$ is a group of characteristic $p$. 
	\end{lem}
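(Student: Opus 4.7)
The plan is to prove the two statements in sequence, using only the definition of $p$-centricity (that $Z(A)\in\syl_p(C_N(A))$, equivalently $C_N(A)=Z(A)\times O_{p'}(C_N(A))$) and standard coprime-action manipulations.

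First I would establish $O_{p'}(N)=O_{p'}(C_N(A))$ by two containments. The inclusion $O_{p'}(C_N(A))\leq O_{p'}(N)$ is formal: since $A\trianglelefteq N$, the centralizer $C_N(A)$ is normal in $N$, so the characteristic subgroup $O_{p'}(C_N(A))$ is normal in $N$ and is a $p'$-group, hence sits in $O_{p'}(N)$. For the reverse, note that $A$ is a $p$-group so $O_{p'}(N)\cap A=1$; as $O_{p'}(N)$ and $A$ are both normal in $N$ with trivial intersection, they commute, giving $O_{p'}(N)\leq C_N(A)$, and then $O_{p'}(N)\leq O_{p'}(C_N(A))$.

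Next, for the ``characteristic $p$'' claim, I would work in the quotient $\bar N=N/O_{p'}(N)$ and show $C_{\bar N}(O_p(\bar N))\leq O_p(\bar N)$. The key observation is that $C_{\bar N}(\bar A)=\overline{C_N(A)}$: if $\bar g$ centralizes $\bar A$ then $[g,A]\leq A\cap O_{p'}(N)=1$, so $g\in C_N(A)$. Combined with $p$-centricity, which gives $C_N(A)=Z(A)\times O_{p'}(C_N(A))=Z(A)\times O_{p'}(N)$ (using the first part of the lemma), we get $\overline{C_N(A)}=\overline{Z(A)}\leq\bar A\leq O_p(\bar N)$. Since $O_p(\bar N)\geq\bar A$, every element centralizing $O_p(\bar N)$ in particular centralizes $\bar A$, so
\[
C_{\bar N}(O_p(\bar N))\leq C_{\bar N}(\bar A)\leq O_p(\bar N),
\]
which is the definition of $\bar N$ being of characteristic $p$.

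There is no real obstacle beyond unwinding definitions; the only subtle point is the identification $C_{\bar N}(\bar A)=\overline{C_N(A)}$, which relies on the coprimality between $A$ (a $p$-group) and $O_{p'}(N)$ together with the normality of $A$ in $N$. Once that identification is in hand, the $p$-centricity of $A$ collapses $\overline{C_N(A)}$ into $\bar A$, which forces any element centralizing $O_p(\bar N)$ to lie in $O_p(\bar N)$.
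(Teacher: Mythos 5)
Your proof is correct, and its overall shape matches the paper's: the first half is argued identically (normality of $C_N(A)$ in $N$ for one containment, and $[A,O_{p'}(N)]\leq A\cap O_{p'}(N)=1$ for the other), and the second half rests on the same identification $C_{\bar N}(\bar A)=\overline{C_N(A)}$ followed by $p$-centricity. The one genuine difference is how that identification is established. The paper passes to the preimage $C$ of $C_{\bar N}(\bar A)$, notes $AO_{p'}(N)\norm C$ with $A\in\syl_p(AO_{p'}(N))$, and runs a Frattini argument $C=O_{p'}(N)N_C(A)$ before computing $[A,C]=1$; you instead observe directly that for any $g$ with $\bar g\in C_{\bar N}(\bar A)$ one has $[g,A]\leq A$ (because $A\norm N$) and $[g,A]\leq O_{p'}(N)$, hence $[g,A]=1$. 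Your shortcut is legitimate precisely because $A\norm N$ is a hypothesis of the lemma (the Frattini device is what one needs when the $p$-subgroup is not normal), and it is slightly cleaner: it also sidesteps the paper's asserted equality $A^{\dag}=C_N(A)^{\dag}$, which in general is only the containment $C_N(A)^{\dag}=Z(A)^{\dag}\leq A^{\dag}$ (equality would force $A$ abelian), whereas only the containment is needed; moreover your final chain $C_{\bar N}(O_p(\bar N))\leq C_{\bar N}(\bar A)=\overline{Z(A)}\leq\bar A\leq O_p(\bar N)$ spells out the characteristic-$p$ conclusion, which the paper leaves implicit.
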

	\begin{proof}
		First, since $A\norm N$, it follows that $C_N(A)\norm N$, and as a result $O_{p'}(C_N(A))\norm N$. Therefore, $O_{p'}(C_N(A))\leq O_{p'}(N)$. \par  
		
		Conversely, $A$ normalizes $O_{p'}(N)$ and $O_{p'}(N)$ normalizes $A$. Therefore, $[A,O_{p'}(N)]\leq A\cap O_{p'}(N)=1$. Ergo, $O_{p'}(N)\leq C_N(A)$ which gives us the desired equality. \par 
		
		Denote the image of a set $X$ by $X^{\dag}$ under the projection mod $O_{p'}(N)$. Now, note that we have the following string of equalities:
		\begin{align*}
			A^{\dag} = C_{N}(A)^{\dag} = C_{N^{\dag}}(A^{\dag}) 
		\end{align*}
		The first equality follows from the above argument. We show the second equality now. 
		
		First, note that it is clear that $C_{N}(A)^{\dag}\leq C_{N^{\dag}}(A^{\dag})$. Now, let $C$ be the preimage in $N$ of $C_{N^{\dag}}(A^{\dag})$. That is, $C$ is the largest subgroup of $N$ such that $[A,C]\leq O_{p'}(N)$. Furthermore, the preimage of $A^{\dag}$ in $N$ is $AO_{p'}(N)$ with $A$ being a Sylow $p$-subgroup of $AO_{p'}(N)$. Note that $AO_{p'}(N)\norm C$ as $A^{\dag}\norm C_{H^{\dag}}(A^{\dag})$. By a Frattini argument, $C=O_{p'}(N)A.N_C(A)=O_{p'}(N).N_C(A)$. This gives us that 
		\begin{align*}
			[A,C]\leq [A,O_{p'}(N)][A,N_C(A)]=[A,N_C(A)]\leq A\cap O_{p'}(N)=1,
		\end{align*}
		that is, $C\leq C_H(A)$. This gives the desired equality.
	\end{proof} 
	
	We can now prove that $O_2(\od(G))$ injects into $\out_{\text{diag}}(\f/Z(\f))$. 
	\begin{theo}\label{theo: generalized 59 from BMO19}
		Assume Hypotheses \ref{hyp: gp of lie type}. The map $\pi_\f\bar{\kappa}_G$ restricts to an injective homomorphism from $O_2(\od(G))$ to $\out_{\text{diag}}(\f/Z(\f))$. 
	\end{theo}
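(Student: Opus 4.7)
The plan is to establish well-definedness and injectivity separately. For well-definedness, I would invoke Lemma \ref{lem: there is a welldefined map from O2ODG to ODF} to represent a class $[\alpha]\in O_2(\od(G))$ by $c_a|_G$ for some $a\in N_{\A}(G)$ that is of 2-power order, normalizes $S$, and centralizes $A$. Since $c_a|_A=\id_A$, the induced automorphism $c_a|_{S/Z(\f)}$ of $S/Z(\f)$ restricts to the identity on $A/Z(\f)$, so its class in $\out(\f/Z(\f))$ lies in $\out_{\text{diag}}(\f/Z(\f))$.

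For injectivity, suppose $\pi_\f\bar{\kappa}_G([c_a|_G])=1$. Lemma \ref{lem: outdiagF = autdiagF mod aut AZ SZ} then supplies $b\in A$ with $c_a|_{S/Z(\f)}=c_{bZ(\f)}|_{S/Z(\f)}$. Replacing $a$ by $ab^{-1}$, which still lies in $N_{\A}(G)$ and gives the same class in $\od(G)$ since $c_b\in\inn(G)$, I may assume $c_a|_{S/Z(\f)}=\id$. The remaining task is to conclude $c_a|_G\in\inn(G)$. Set $N=N_{G/Z(G)}(A/Z(\f))$. By Lemma \ref{lem: AZF is 2centric} the subgroup $A/Z(\f)$ is 2-centric in $G/Z(G)$, hence normal 2-centric in $N$, and Lemma \ref{lem: setup for 82 from control} gives $C_{N^+}(A^+)=A^+$ for $N^+:=N/O_{2'}(N)$ and $A^+$ the image of $A/Z(\f)$. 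Since $S\leq N_G(A)$, the image $S^+$ of $S/Z(\f)$ is a Sylow 2-subgroup of $N^+$. The automorphism $c_a$ preserves $A$, $Z(G)$, and $O_{2'}(N)$, so descends to an automorphism of $N^+$ centralizing $S^+$. Lemma \ref{lem: 82 in GL16} then produces $z\in Z(S^+)$ with $c_a|_{N^+}=c_z|_{N^+}$.

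The main obstacle is to promote this local equality to $c_a|_G\in\inn(G)$, and I plan to handle this by splitting into cases via Lemma \ref{lem: 54 in BMO}. In the generic case $C_{\T}(W_0)=Z(\G)$, lifting $z$ to $\tilde z\in N_G(A)\leq G$, I would combine the equality on $N^+$ with the fact that $c_a|_T=\id$ (because $a\in\A\leq\T$ and $\T$ is abelian) to conclude that $a\tilde z^{-1}$ centralizes $N_G(T)$ and hence lies in $C_{\T}(W_0)=Z(\G)$, giving $a\in G\cdot Z(\G)$ and thus $c_a|_G\in\inn(G)$. In the exceptional case $G\cong{}^2A_{2n}^{\un}(q)$, Hypotheses \ref{hyp: gp of lie type} require $q\equiv1\pmod 4$, whence $q+1\equiv 2\pmod 4$ and $|Z(G)|=\gcd(2n+1,q+1)$ is odd; Theorem \ref{theo: outdiagG is isomorphic to Guniv} then yields $O_2(\od(G))=1$, so the injectivity claim is vacuous.
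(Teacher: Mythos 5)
Your proposal follows the paper's own route almost step for step: the representation of classes in $O_2(\od(G))$ via Lemma \ref{lem: there is a welldefined map from O2ODG to ODF}, the reduction via Lemma \ref{lem: outdiagF = autdiagF mod aut AZ SZ} to an element acting trivially on $S/Z(\f)$, and the use of Lemmas \ref{lem: setup for 82 from control} and \ref{lem: 82 in GL16} are exactly the paper's moves. Even your choice of $N=N_{G/Z(G)}(A/Z(\f))$ is only superficially different: since $C_G(A)=T$ (Lemma \ref{lem: 53 in BMO19}) we get $N_G(A)=N_G(T)$, and $O_{2'}(N)=O_{2'}(T/Z(G))$, so your $N^+$ is precisely the paper's quotient $N_G(T)/Z(G)O_{2'}(T)$. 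The disposal of the exceptional unitary case ${}^2A_{2n}^{\un}(q)$ through Theorem \ref{theo: outdiagG is isomorphic to Guniv} also matches the paper (note that $\gcd(2n+1,q+1)$ is odd for free, without appealing to $q\equiv1\pmod 4$). One small point to make explicit: since $Z(S^+)\leq C_{N^+}(A^+)=A^+$, the lift $\tilde z$ can and should be taken in $A\leq\T$, which is needed for the endgame in $\T$; the paper records this as $vZ(\f)\in Z(S/Z(\f))\leq A/Z(\f)$.

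The genuine gap is the inference ``combine the equality on $N^+$ with $c_a|_T=\id$ to conclude that $a\tilde z^{-1}$ centralizes $N_G(T)$.'' An automorphism that is trivial on the abelian normal subgroup $T$ and trivial on the quotient $N^+\cong N_G(T)/Z(G)O_{2'}(T)$ need not be trivial on $N_G(T)$; all you obtain is $[a\tilde z^{-1},N_G(T)]\leq Z(G)O_{2'}(T)$, and since $a\tilde z^{-1}$ is a $2$-element of the abelian group $\T$, these commutators in fact land in $Z(\f)$ --- which is exactly the ambiguity ``trivial on $A/Z(\f)$ versus trivial on $A$'' that this theorem is designed to control, and exactly the phenomenon that does occur in the excluded cases $A_1^{\un}(q)$ and ${}^2D_n^{\un}(q)$. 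So the centralization claim cannot simply be asserted; if it were automatic, the whole $C_\T(W_0)$ detour would be unnecessary. The paper instead deduces from $[uv^{-1},N_G(T)]\leq Z(G)O_{2'}(T)\leq T$ that $uv^{-1}$ acts trivially on $W_0=N_G(T)/T$ and only then invokes $C_\T(W_0)=Z(\G)$ from Lemma \ref{lem: 54 in BMO}. Your target ``hence lies in $C_\T(W_0)=Z(\G)$, so $c_a|_G\in\inn(G)$'' is the right one, but it has to be reached through the action on the Weyl group (or an equivalent argument controlling commutators into $Z(\f)$), not through an unjustified claim that $a\tilde z^{-1}$ centralizes $N_G(T)$ outright.
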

	\begin{proof}
		If $G={}^2A_n^{\un}(q)$, then $O_2(\od(G))=1$ as discussed above. Suppose that $G$ is not $G={}^2A_n^{\un}(q)$. \par 
		
		Let $[\a]\in\ker(\pi_\f\bar{\kappa}_G)\cap O_{2}(\od(G))$. By Lemma \ref{lem: there is a welldefined map from O2ODG to ODF}, there is some $a\in N_{\A}(G)$ such that we can take $\a=c_a|_G$. It is clear that $\pi_\f(\out_{\text{diag}}(\f))\leq\out_{\text{diag}}(\f/Z(\f))$. Since $\out_{\text{diag}}(\f/Z(\f))\cong\aut_{\text{diag}}(\f/Z(\f))/\aut_{A/Z(\f)}(S/Z(\f))$ and $[\a]\in\ker(\pi_\f\bar{\kappa}_G)$, there is some $b\in A$ such that $c_a|_{S/Z(\f)}=c_b|_{S/Z(\f)}$. Therefore, $c_{ab^{-1}}|_{S/Z(\f)}=\id_{S/Z(\f)}$ and $ab^{-1}\in C_{\G}(A)=\T$. In fact, $ab^{-1}\in\A$.  \par 
		
		Set $u=ab^{-1}$. Since $u$ normalizes both $T$ and $G$, $u\in N_{\A}(N_G(T))$. By Lemma \ref{lem: setup for 82 from control}, since $A/Z(\f)$ is a normal $2$-centric group of $N_G(T)/Z(G)$, we have the equality 
        \begin{align*}
            O_{2'}(N_G(T)/Z(G))=O_{2'}(C_{N_G(T)/Z(G)}(A/Z(\f)))=O_{2'}(T/Z(G)),
        \end{align*}
        and $N_G(T)/Z(G)O_{2'}(T)$ is a group of characteristic 2. We then can apply Lemma \ref{lem: 82 in GL16} to see that $c_u|_{N_G(T)/Z(G) O_{2'}(T)}=c_{vZ(\f)}$ for some $vZ(\f)\in Z(S/Z(\f))\leq A/Z(\f)$. \par 
		
		Adjusting, $[uv^{-1}, N_G(T)]\leq Z(G)O_{2'}(T)\leq T$. In particular, $uv^{-1}$ acts trivially on the Weyl group $W_0$. Therefore, $uv^{-1}\in C_{\T}(W_0)=Z(\G)$. This gives us that 
		\begin{align*}
			[\a] = [c_a|_G] = [c_u|_G] = [c_{uv^{-1}}|_G] = [\id_G]
		\end{align*}
		Ergo, $O_2(\od(G))$ maps injectively into $\out_{\text{diag}}(\f/Z(\f))$. 
	\end{proof}

	\subsection{The injectivity of \texorpdfstring{$\dot{\kappa}_G$}{kappa bullet}} 
	To prove that the map $\kapb_G$ is injective in most cases, we relate the kernel of $\pi_\f\bar{\kappa}_G$ to the kernel of another map that shows how the automorphisms of $G$ act on $A/Z(\f)$. Consider the following diagram:
	\[\begin{tikzcd}
		{\out(G)} && {\out(\f)} && {\frac{\aut(A,\f)}{\aut_{N_G(S)}(A)}} \\
		\\
		{\out(G/Z(G))} && {\out(\f/Z(\f))} && {\frac{\aut(A/Z(\f),\f)}{\aut_{N_G(S)}(A/Z(\f))}}
		\arrow["{\pi_G}", from=1-1, to=3-1]
		\arrow["{\bar\kappa_G}", from=1-1, to=1-3]
		\arrow["{\bar\kappa_{G/Z(G)}}", from=3-1, to=3-3]
		\arrow["{\pi_\f}", from=1-3, to=3-3]
		\arrow["\rho_1", from=1-3, to=1-5]
		\arrow["\rho_2", from=1-5, to=3-5]
	\end{tikzcd}\]
	\noindent where $\rho_1$ is given by restriction to $A$ and $\rho_2$ is given by restriction to $A/Z(\f)$. Set $\pi_{A/Z(\f)}=\rho_2\rho_1\bar{\kappa}_G$. The first key result is the following lemma: 
	\begin{lem}\label{lem: kerpiA/Z(f) contains kerpif kappaG}
		Assuming Hypotheses \ref{hyp: gp of lie type}, $\ker(\pi_{A/Z(\f)})\geq\ker(\pi_\f \bar{\kappa}_G)$. 
	\end{lem}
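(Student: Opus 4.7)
The plan is to chase the diagram by picking a good representative of the outer class. Given $[\a]\in\ker(\pi_{\f}\bar{\kappa}_G)$, my goal is to exhibit a representative $\a\in N_{\aut(G)}(S)$ whose restriction to $A/Z(\f)$ agrees with conjugation by an element of $N_G(S)\cap N_G(A)$; this will force $\pi_{A/Z(\f)}([\a])=[\a|_{A/Z(\f)}]=1$.

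First I would replace the given representative by one with $\a(T)=T$, $\a(A)=A$, and $\a(S)=S$ simultaneously. Since all maximal tori of $G$ are $G$-conjugate, composing with an inner automorphism of $G$ achieves $\a(T)=T$, and hence $\a(A)=A$ as $A=O_2(T)$. Then $\a(S)$ is a Sylow 2-subgroup of $N_G(T)$, so Sylow's theorem inside $N_G(T)$ lets me compose with conjugation by an element of $N_G(T)$ to achieve $\a(S)=S$ without disturbing $T$ or $A$.

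Next, the hypothesis $\pi_{\f}\bar{\kappa}_G([\a])=1$ gives $\a|_{S/Z(\f)}\in\aut_{\f/Z(\f)}(S/Z(\f))$. Using $\f/Z(\f)\cong\f_{S/Z(\f)}(G/Z(G))$ from Lemma~\ref{lem: description of f mod zf} together with $Z(G)=Z(\f)\cdot O_{2'}(Z(G))$ from Lemma~\ref{lem: zf is 2 part of zg}, I would verify that $SZ(G)=S\cdot O_{2'}(Z(G))$ and hence $N_{G/Z(G)}(S/Z(\f))=N_G(S)/Z(G)$, and then deduce the existence of $g\in N_G(S)$ with $\a|_{S/Z(\f)}=c_{gZ(G)}|_{S/Z(\f)}$. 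Restricting to $A/Z(\f)$ gives $\a|_{A/Z(\f)}=c_{gZ(G)}|_{A/Z(\f)}$, which is a well-defined automorphism of $A/Z(\f)$ precisely because $\a(A)=A$. This forces $gAg^{-1}\leq AZ(G)$, and since $AZ(G)=A\cdot O_{2'}(Z(G))$ is abelian with $A$ as its unique Sylow 2-subgroup, $gAg^{-1}=A$; so $g\in N_G(S)\cap N_G(A)$ and $c_g|_{A/Z(\f)}\in\aut_{N_G(S)}(A/Z(\f))$. Hence $[\a|_{A/Z(\f)}]=1$ in $\aut(A/Z(\f),\f)/\aut_{N_G(S)}(A/Z(\f))$, which is the desired conclusion.

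The only mildly delicate step is the simultaneous normalization of $T$, $A$, and $S$ by the chosen representative; once that is in hand, the argument is a formal diagram chase, using no structural input about $G$ beyond the 2-part description of $Z(G)$ in Lemma~\ref{lem: zf is 2 part of zg} and the quotient identification in Lemma~\ref{lem: description of f mod zf}.
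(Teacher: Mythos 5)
Your core argument is the same as the paper's: represent the class by some $\a\in N_{\aut(G)}(S)$, use $[\a]\in\ker(\pi_\f\bar\kappa_G)$ to produce $gZ(G)\in N_{G/Z(G)}(S/Z(\f))$ with $\a|_{S/Z(\f)}=c_{gZ(G)}|_{S/Z(\f)}$, pull $g$ back to $N_G(S)$ using $Z(G)=Z(\f)\times O_{2'}(Z(G))$, and conclude that $\a|_{A/Z(\f)}\in\aut_{N_G(S)}(A/Z(\f))$, so $\pi_{A/Z(\f)}([\a])=1$. That middle portion matches the paper's proof step for step, and your verification that $g$ normalizes $A$ (via $AZ(G)=A\cdot O_{2'}(Z(G))$ having $A$ as its unique Sylow $2$-subgroup) is a fine way to make explicit what the paper leaves implicit.

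The problem is your preliminary normalization step. Its justification, ``all maximal tori of $G$ are $G$-conjugate,'' is false for the finite group $G=C_{\G}(\s)$: the maximal tori of $G$ fall into several $G$-classes (already in $\SL_2(q)$ there are split and nonsplit tori of orders $q-1$ and $q+1$), in contrast to maximal tori of the algebraic group $\G$. The conclusion you want (a representative with $\a(T)=T$, hence $\a(A)=A$, and then $\a(S)=S$ after a further $N_G(T)$-conjugation) is in fact true, but the correct reason is Theorem \ref{thm: autG=ind(G)P(G)G(G)} together with the standard $\s$-setup of Hypotheses \ref{hyp: gp of lie type}: diagonal, field, and graph automorphisms all normalize $T$, so every class in $\out(G)$ has a representative normalizing $T$. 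More to the point, the step is unnecessary: $\pi_{A/Z(\f)}$ is defined as a composite of maps induced by restriction from $S$ to $A$ to $A/Z(\f)$, and Proposition \ref{prop: A is char in S} (with Lemma \ref{lem: 53 in BMO19}) guarantees that restriction to $A$ is well defined modulo $\aut_{N_G(S)}(A)$; so, as in the paper, one only needs $\a\in N_{\aut(G)}(S)$, and once $g\in N_G(S)$ is found the conclusion follows. So the proposal is salvageable and essentially the paper's argument, but as written it rests one (optional) step on a false structural claim that should be replaced or simply deleted.
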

	\begin{proof}
		Let $\a\in N_{\aut(G)}(S)$ be such that $[\a]\in\ker(\pi_\f\bar\kappa_G)$. Therefore, there is some $gZ(G)\in N_{G/Z(G)}(S/Z(\f))$ such that $\a|_{S/Z(\f)}=c_{gZ(G)}|_{S/Z(\f)}$. That is, for every $t\in S$, there is some $u_t\in Z(\f)$ such that $g^{-1}tgu_t\in S$, and therefore, $g^{-1}tg\in S$ and $g\in N_G(S)$. Hence, $\a|_{A/Z(\f)}\in\aut_{N_G(S)}(A/Z(\f))$ which gives us that $\pi_{A/Z(\f)}([\a])=1$.
	\end{proof}
	
	Because of Lemma \ref{lem: kerpiA/Z(f) contains kerpif kappaG}, we calculate the kernel of $\pi_{A/Z(\f)}$ and show that when $G/Z(G)$ is not in one of the two families of exceptions, $\pi_{A/Z(\f)}$ has kernel $O_{2'}(\od(G))$. This will give us that the map $\kapb_G$ is injective. To calculate $\ker(\pi_{A/Z(\f)})$, we need the following proposition of Broto, M{\o}ller, and Oliver:
	\begin{prop}[Proposition 5.13 in \cite{broto2019automorphisms}]\label{prop: A is char in S}
		Assume Hypotheses \ref{hyp: gp of lie type}. The subgroup $A$ is characteristic in $S$, and is the unique abelian subgroup of $S$ of order $|A|$, except when $q\equiv5\pmod{8}$ and $G\cong Sp_{2n}(q)$ for some $n\geq 1$. In any case, each normal subgroup of $S$ isomorphic to $A$ is $N_G(S)$-conjugate to $A$.   
	\end{prop}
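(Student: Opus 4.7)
The plan is to analyze abelian subgroups $B \leq S$ of order $|A|$, showing that $B = A$ outside the $Sp_{2n}(q)$ exception; this immediately yields both uniqueness and the characteristic property. Let $B \leq S$ be abelian with $|B| = |A|$. By Lemma \ref{lem: 53 in BMO19}, $C_S(A) = A$, so $B = A$ iff $B \leq A$ iff $B$ centralizes $A$. Assume $B \not\leq A$. Since $C_G(A) = T$, the conjugation action of $S/A$ on $A$ factors through an injection into $W_0 = N_G(T)/T$; let $\overline{B}$ be the image of $B$ in $W_0$. Abelian-ness of $B$ forces $B \cap A \leq C_A(\overline{B})$, while $|B| = |A|$ gives $[A : B \cap A] = |\overline{B}|$. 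Thus the existence of $B \neq A$ is equivalent to finding a nontrivial $2$-subgroup $\overline{B} \leq W_0$ together with an abelian lift of $\overline{B}$ in $S$ whose fixed subspace in $A$ has index exactly $|\overline{B}|$.

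I would then analyze the possible $\overline{B}$ via the coroot-lattice model $\Phi_\mu : \Z\Sigma^\vee / 2^k \Z\Sigma^\vee \xrightarrow{\cong} A$ from Lemma \ref{lem: phi_lambda exists}, through which $W_0$ acts as in Lemma \ref{lem: 27}. For any nontrivial $2$-element $w$ acting on a reduced crystallographic coroot lattice modulo $2^k$, the index $[A : C_A(w)]$ grows at least as $2^{k-1}$ per independent reflection appearing in $w$, by the same kind of case analysis carried out in the proof of Theorem \ref{lem: generalized 27}. Combined with the requirement $[A : B \cap A] = |\overline{B}|$, this forces $|\overline{B}| = 2$ and $k = 2$. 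Writing the generator of $\overline{B}$ as a single reflection $r_\alpha$, the Chevalley relation $n_\alpha(1)^2 = h_\alpha(-1)$ means an abelian lift of $\overline{B}$ into $S$ can exist only when $h_\alpha(-1)$ lies in a suitable subgroup of $A$; a case analysis across Cartan types shows this happens only for the long roots of $C_n$ with $k = 2$, i.e., $G \cong Sp_{2n}(q)$ with $q \equiv 5 \pmod 8$.

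In the exceptional case, I would construct the alternative abelian subgroup explicitly by replacing the factor $\langle h_{\alpha_n}(\mu)\rangle$ of $A = \prod_{i=1}^n \langle h_{\alpha_i}(\mu)\rangle$ with a cyclic group of order $4$ generated by an involution lift of $r_{\alpha_n}$, with the archetype being the three cyclic subgroups of order $4$ in $S \cong Q_8$ for $G = SL_2(5)$. For the final $N_G(S)$-conjugacy claim: in the nonexceptional cases, normal subgroups of $S$ isomorphic to $A$ are abelian of order $|A|$ and hence equal to $A$; in the exceptional case, $N_G(S)$ normalizes the set of normal abelian subgroups of $S$ of order $|A|$ and acts transitively on it, the witness in $SL_2(5)$ being a $3$-element of $N_G(S)/S$ that cycles the three subgroups of order $4$ in $Q_8$, and this lifts to $Sp_{2n}(q)$ via the signed-permutation structure of $W_0$ of type $C_n$. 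The main technical obstacle is the Chevalley-relations bookkeeping needed to isolate when $r_\alpha$ admits an involution lift modulo $A$, which is what singles out the $Sp$ case; a secondary obstacle is the twisted types, where the analysis must be redone using the folded coroot lattice $\Z\hat\Sigma^\vee$ (as in the argument for Lemma \ref{lem: gen27 for twisted gps}) to confirm no new lifts appear outside the exception.
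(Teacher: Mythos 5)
You should know at the outset that this paper does not prove Proposition~\ref{prop: A is char in S} at all: it is quoted verbatim, with citation, from Broto--M{\o}ller--Oliver \cite{broto2019automorphisms} (their Proposition~5.13), so there is no internal proof to compare against, only that external one. Your overall strategy is nonetheless the natural one and is in the spirit of how $S$ is analyzed both there and in Section~4 here: use $C_S(A)=A$ and $\aut_G(A)=\aut_{W_0}(A)$ (Lemma~\ref{lem: 53 in BMO19}) to attach to any abelian $B\leq S$ with $|B|=|A|$, $B\neq A$, a nontrivial image $\overline{B}$ in $W_0$ with $[A:B\cap A]=|\overline{B}|$ and $B\cap A\leq C_A(\overline{B})$, and then compute indices of fixed subgroups through the coroot-lattice model of Lemma~\ref{lem: phi_lambda exists}.

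There are, however, genuine gaps at the points where the work actually lies. First, the bound ``at least $2^{k-1}$ per independent reflection'' only addresses involutions that you can write as products of reflections; elements of order $\geq4$ in $\overline{B}$, and noncyclic $\overline{B}$ when $k\geq3$ (where $|\overline{B}|=4$ is not immediately excluded by $[A:C_A(w)]\geq2^{k-1}$), are not covered by anything you state and require their own fixed-point computations (they do work out --- e.g.\ a signed $2$-cycle of order $4$ in type $B_n/C_n$ has $[A:C_A(w)]=2^{2k-1}$ --- but your sketch does not rule them out). Second, the Chevalley-relation criterion via $n_\alpha(1)^2=h_\alpha(-1)$ is not the right dichotomy: if $w$ is an involution in the image of $S/A$ with $[A:C_A(w)]=2$, then for \emph{any} lift $b\in S$ of $w$ one has $b^2\in S\cap T=A$, $b^2\in C_A(w)$, and $b$ centralizes $C_A(w)$, so $C_A(w)\langle b\rangle$ is automatically an abelian subgroup of order $|A|$ distinct from $A$; no condition on $h_\alpha(-1)$ enters. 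The exception is singled out purely by the index computation: $[A:C_A(r_\alpha)]=2$ forces $k=2$ and $\langle x,\alpha\rangle$ even for all $x\in\Z\Sigma^\vee$, which happens exactly for $A_1$ and the long roots of $C_n$, i.e.\ $Sp_{2n}(q)$ with $q\equiv5\pmod 8$. As written, your narrowing argument rests on a condition that is neither necessary nor, in the form stated, checkable. Third, and most seriously, the final clause of the proposition --- that in the exceptional case every normal subgroup of $S$ isomorphic to $A$ is $N_G(S)$-conjugate to $A$ --- is asserted, not proved: ``the $3$-element in $SL_2(5)$ lifts via the signed-permutation structure of $W_0$'' is a statement of the goal. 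One must actually classify the normal subgroups of $S$ isomorphic to $A$ (inside a Sylow $2$-subgroup built from $Q_8$-factors and a permutation part) and exhibit conjugating elements of $N_G(S)$; that, together with the deferred check of the twisted types via the folded lattice (cf.\ Lemma~\ref{lem: gen27 for 2dn}), is part of the proof rather than an optional refinement.
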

	\noindent This proposition states that every automorphism of $S$ can be restricted to an automorphism of $A$ up to an inner automorphism induced by an element of $N_G(S)$. Using this result, we can calculate the kernel of $\pi_{A/Z(\f)}$. 
	
	\begin{lem}\label{lem: kerpiA/Z(f) is outdiag}
		Assume \ref{hyp: gp of lie type}. The kernel of the map 
        \begin{align*}
            \pi_{A/Z(\f)}:\out(G)\to\aut(A/Z(\f),\f)/\aut_{N_S(G)}(A/Z(\f))
        \end{align*}
        is 
		\begin{align*}
			\od(G)\{[\psi]\mid\psi\in\Phi_G\Gamma_G\text{ and }\psi|_{A/Z(\f)}\in\aut_{W_0}(A/Z(\f))\}.
		\end{align*}
	\end{lem}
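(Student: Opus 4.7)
The plan is to analyze the kernel using Steinberg's decomposition $\aut(G)=\innd(G)\Phi_G\Gamma_G$ (Theorem \ref{thm: autG=ind(G)P(G)G(G)}), so every class in $\out(G)$ can be written as $[\iota][\psi]$ with $[\iota]\in\od(G)$ and $\psi\in\Phi_G\Gamma_G$. Unwinding the definitions, the kernel condition $\pi_{A/Z(\f)}([\alpha])=1$ translates to the existence of a representative $\alpha$ normalizing $S$ whose restriction $\alpha|_{A/Z(\f)}$ lies in $\aut_{N_G(S)}(A/Z(\f))$. I will check each factor of the Steinberg decomposition separately, and then combine.

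For the containment $\supseteq$, I first observe that $\od(G)\subseteq\ker(\pi_{A/Z(\f)})$: any class in $\od(G)=\innd(G)/\inn(G)$ is represented by $c_t|_G$ for some $t\in N_{\T}(G)$, and since $\T$ is abelian and $A\leq T\leq\T$ the restriction $c_t|_A$ is trivial. After a further inner modification (using Sylow in $G$) to ensure the representative normalizes $S$, its restriction to $A/Z(\f)$ still agrees with $\id$ modulo $\aut_{N_G(S)}(A/Z(\f))$. For the second factor, given $\psi\in\Phi_G\Gamma_G$ with $\psi|_{A/Z(\f)}\in\aut_{W_0}(A/Z(\f))$, I would write $\psi|_{A/Z(\f)}=c_w|_{A/Z(\f)}$ for some $w\in W_0$ and lift $w$ to an element $n\in N_G(T)\leq G$. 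Since $c_n$ is an inner automorphism of $G$, we have $[\psi]=[c_n^{-1}\psi]$ in $\out(G)$, and the new representative acts trivially on $A/Z(\f)$, hence on $A/Z(\f)$ modulo $\aut_{N_G(S)}(A/Z(\f))$.

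For the reverse containment $\subseteq$: given $[\alpha]\in\ker(\pi_{A/Z(\f)})$, Steinberg's decomposition yields $\alpha=\iota\psi$ with $\iota\in\innd(G)$ and $\psi\in\Phi_G\Gamma_G$. Since $\iota$, modulo an inner automorphism of $G$, acts trivially on $A$, the restriction $\alpha|_{A/Z(\f)}$ equals $\psi|_{A/Z(\f)}$ up to an element coming from $\aut_G(A/Z(\f))$. The kernel hypothesis then forces $\psi|_{A/Z(\f)}$ to lie in $\aut_{N_G(S)}(A/Z(\f))$. By Lemma \ref{lem: 53 in BMO19} we have $\aut_G(A)=\aut_{W_0}(A)$, and since $Z(\f)$ is $G$-invariant this passes to the quotient to give $\aut_G(A/Z(\f))=\aut_{W_0}(A/Z(\f))$. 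Combined with the obvious inclusion $\aut_{N_G(S)}(A/Z(\f))\leq\aut_G(A/Z(\f))$, this yields $\psi|_{A/Z(\f)}\in\aut_{W_0}(A/Z(\f))$, completing the containment.

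The main obstacle will be the careful bookkeeping required when passing to representatives in $N_{\aut(G)}(S)$, since Steinberg's decomposition is phrased for $\aut(G)$ as a whole rather than for automorphisms normalizing $S$. The inner modifications needed to normalize $S$ can introduce nontrivial action on $A/Z(\f)$, but I expect these contributions to land in $\aut_{N_G(S)}(A/Z(\f))$ and so be absorbed correctly. A secondary subtlety is the exceptional case in Proposition \ref{prop: A is char in S} (when $G\cong Sp_{2n}(q)$ with $q\equiv 5\pmod 8$), where $A$ is not strictly characteristic in $S$; the $N_G(S)$-conjugacy statement in that proposition should allow a further inner adjustment so that representatives can be chosen to preserve $A$, but this must be incorporated carefully into the argument.
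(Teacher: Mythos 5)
Your proof is correct and takes essentially the same approach as the paper: both rest on the Steinberg decomposition $\out(G)=\od(G)\Phi_G\Gamma_G$ together with Lemma~\ref{lem: 53 in BMO19}(b) ($\aut_G(A)=\aut_{W_0}(A)$). The paper discharges the bookkeeping you flagged (that inner adjustments to land a representative in $N_{\aut(G)}(S)$ could perturb the action on $A/Z(\f)$) by an explicit Sylow argument inside $N_G(A)$ --- since $S\in\syl_2(N_G(A))$, one can compose with $c_h$ for suitable $h\in N_G(A)$ so that the restriction to $A/Z(\f)$ becomes $N_G(S)$-conjugation --- which is the concrete version of the absorption you anticipated; otherwise the two arguments coincide.
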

	\begin{proof}
		We first show that 
		\begin{align*}
			\ker(\pi_{A/Z(\f)})=\{[\vp]\mid\exists\tilde{\vp}\in N_{\aut(G)}(S)\text{ s.t. }\tilde{\vp}|_{A/Z(\f)}\in\aut_{W_0}(A/Z(\f))\text{ and }[\vp]=[\tilde{\vp}]\},
		\end{align*}
		where 
		\begin{align*}
			\aut_{W_0}(A/Z(\f))=\{\vp\in\aut(A/Z(\f))\mid\vp=\psi|_{A/Z(\f)}\text{ for some }\psi\in\aut_{W_0}(A)\}.
		\end{align*}
		First, let $[\vp]\in\out(G)$ be such that $[\vp]=[\tilde{\vp}]$ where $\tilde{\vp}\in N_{\aut(G)}(S)$ and $\tilde{\vp}|_{A/Z(\f)}\in\aut_{W_0}(A/Z(\f))$. Therefore, since $\aut_G(A)=\aut_{W_0}(A)$ by Lemma \ref{lem: 53 in BMO19}, there is some $g\in N_G(A)$ such that $\tilde{\vp}|_{A/Z(\f)}=c_g|_{A/Z(\f)}$. Since $S\leq N_G(A)$, there is some $h\in N_G(A)$ such that $gh\in N_G(S)$. Then $\tilde{\vp}c_h|_{A/Z(\f)}=c_{gh}|_{A/Z(\f)}$ and $[\vp]=[\tilde{\vp}]=[\tilde{\vp}c_h]\in\ker(\pi_{A/Z(\f)})$. The other inclusion is clear. \par
		
		We next show that
		\begin{align*}
			\ker(\pi_{A/Z(\f)})=\od(G)\{[\psi]\mid\psi\in\Phi_G\Gamma_G\text{ and }\psi|_{A/Z(\f)}\in\aut_{W_0}(A/Z(\f))\}. 
		\end{align*}
		Note that $ \out(G)=\od(G)\Phi_G\Gamma_G $ where $ \od(G)\cap\Phi_G\Gamma_G=1 $ by Lemma \ref{lem: 53 in BMO19}. Since $\od(G)$ is induced from $N_{\T}(G)$ and $A\leq\T$, $\od(G)\leq\ker(\pi_{A/Z(\f)})$. Let $[\psi]$ be such that $\psi\in\Phi_G\Gamma_G$ and $\psi|_{A/Z(\f)}\in\aut_{W_0}(A/Z(\f))$. Further, suppose that $\tilde{\psi}\in N_{\aut(G)}(S)$ such that $[\psi]=[\tilde{\psi}]$. Therefore, there is some $c_g\in\inn(G)$ such that $\tilde{\psi}=\psi c_g$, and since $\tilde{\psi}$ and $\psi$ act on $A/Z(\f)$, $c_g$ acts on $A/Z(\f)$. In particular, $c_g|_{A/Z(\f)}\in\aut_G(A/Z(\f))=\aut_{W_0}(A/Z(\f))$. Hence, $\tilde{\psi}|_{A/Z(\f)}=\psi c_g|_{A/Z(\f)}\in\aut_{W_0}(A/Z(\f))$. This shows that $\ker(\pi_{A/Z(\f)})\geq\od(G)\{[\psi]\mid\psi\in\Phi_G\Gamma_G\text{ and }\psi|_{A/Z(\f)}\in\aut_{W_0}(A/Z(\f))\}$. The other inclusion is clear. 
	\end{proof}
	
	We now show that $\kapb_G$ is injective for all the finite simple groups of Lie type with two families of exceptions. We will use this theorem to show that the simple groups of Lie type for which $\kapb_G$ is injective have centric linking systems with no noninner rigid automorphisms. 
	\begin{theo}\label{theo: extended 516 for most adjoint groups of lie type}
		Assume Hypotheses \ref{hyp: gp of lie type}. If $G/Z(G)$ is not one of $A_1^{\ad}(q)$ or ${}^2D_n^{\ad}(q)$ with $n\geq 3$, then the map $\kapb_G$ is injective.
	\end{theo}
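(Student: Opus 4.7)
The plan is to show that $\ker(\pi_\f\bar\kappa_G)\leq O_{2'}(\od(G))$, which gives injectivity of the induced map $\kapb_G$ on $\out(G)/O_{2'}(\od(G))$. Fix $[\a]\in\ker(\pi_\f\bar\kappa_G)$. By Lemma \ref{lem: kerpiA/Z(f) contains kerpif kappaG} together with Lemma \ref{lem: kerpiA/Z(f) is outdiag}, I may write $[\a]=[\delta][\psi]$ with $[\delta]\in\od(G)$, $\psi\in\Phi_G\Gamma_G$, and $\psi|_{A/Z(\f)}\in\aut_{W_0}(A/Z(\f))$. Since Theorem \ref{thm: autG=ind(G)P(G)G(G)} gives a semidirect decomposition $\out(G)=\od(G)\rtimes\Phi_G\Gamma_G$, this factorization is unique, so it is enough to verify separately that $\psi=1$ in $\Phi_G\Gamma_G$ and that $[\delta]\in O_{2'}(\od(G))$.

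For the first assertion I would write $\psi=\phi\g$ with $\phi\in\Phi_G$ and $\g\in\Gamma_G$, and pick $w\in W_0$ with $\psi|_{A/Z(\f)}=w|_{A/Z(\f)}$. The discussion after Lemma \ref{lem: phi_lambda exists} shows that $\phi$ acts on $A/Z(\f)$ as a scalar $t\mapsto t^{q_0^j}$, so $w\g^{-1}\in\Gamma_G W_0$ acts on $A/Z(\f)$ by that same scalar. Theorem \ref{lem: generalized 27} is available precisely because $G$ is neither $A_1^{\un}(q)$ nor ${}^2D_n^{\un}(q)$, and it forces $w\g^{-1}$ to be either trivial as an isometry of $V$ or else to act on $A$ as inversion modulo $\exp(A)=2^k$. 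The inversion case is ruled out because $q_0\equiv 1\pmod 4$ yields $q_0^j\equiv 1\pmod 4$, while $-1\equiv 3\pmod 4$, and $\exp(A/Z(\f))\geq 4$ in the non-exceptional cases. Therefore $w\g^{-1}=1$ as an isometry; since $\Gamma_G$ stabilizes the fundamental Weyl chamber and $W_0$ acts simply transitively on chambers, $\Gamma_G\cap W_0=1$ in $\aut(V)$, so $\g=w=1$. Then $\phi|_{A/Z(\f)}=\id$, and because $\exp(A/Z(\f))=2^k$ in every non-exceptional case, the congruence $q_0^j\equiv 1\pmod{2^k}$, combined with $q_0$ having order $2^l=|\Phi_G|$ modulo $2^k$ (via Lemma \ref{lem: 2-part of 5}), forces $\phi=1$.

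For the second assertion, we now have $[\a]=[\delta]\in\od(G)\cap\ker(\pi_\f\bar\kappa_G)$. By Theorem \ref{theo: outdiagG is isomorphic to Guniv}, $\od(G)\cong Z(G^{\un})$ is abelian, so it splits as $O_2(\od(G))\times O_{2'}(\od(G))$ with $[\delta]=[\delta_2][\delta_{2'}]$ uniquely. Since both components are powers of $[\delta]$ in this abelian setting, both lie in the kernel. Theorem \ref{theo: generalized 59 from BMO19} then yields $[\delta_2]=1$, so $[\a]=[\delta_{2'}]\in O_{2'}(\od(G))$ and the proof is complete.

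The principal obstacle lies in the middle step: establishing that $\exp(A/Z(\f))=\exp(A)=2^k$ in the non-exceptional types, which is what upgrades triviality of the scalar $\phi|_{A/Z(\f)}$ to triviality of $\phi$ itself. This is a case-by-case check mirroring the analysis in Lemmas \ref{lem: gen27 for chevalley gps}, \ref{lem: gen27 for 2dn}, and \ref{lem: gen27 for twisted gps}: in each non-exceptional type one exhibits a cyclic factor of $A$ of order $2^k$ whose image in $A/Z(\f)$ is still of order $2^k$, while the two excluded families $A_1^{\un}$ and ${}^2D_n^{\un}$ are precisely those in which $Z(\f)$ absorbs such a factor and the exponent drops to $2^{k-1}$.
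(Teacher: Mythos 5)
Your proposal is correct and follows essentially the same route as the paper's proof: reduce via Lemma \ref{lem: kerpiA/Z(f) contains kerpif kappaG} and Lemma \ref{lem: kerpiA/Z(f) is outdiag} to showing that the $\Phi_G\Gamma_G$-part is trivial and that the $\od(G)$-part has trivial $2$-part, rule out the inversion alternative of Theorem \ref{lem: generalized 27} using $q_0\equiv1\pmod{4}$, and finish with Theorem \ref{theo: generalized 59 from BMO19}. The step you flag as the ``principal obstacle''---that a nontrivial element of $\Phi_G$ cannot restrict to the identity on $A/Z(\f)$, equivalently that $\exp(A/Z(\f))=\exp(A)$ outside the two excluded families---is indeed a necessary ingredient; it is exactly what the paper invokes through the remark at the end of Section 4.1 that $\aut_{\scal}(A)$ restricts injectively to $\aut_{\scal}(A/Z(\f))$, and your sketched verification (exhibiting in each nonexceptional type a cyclic direct factor of $A$ of full order whose image in $A/Z(\f)$ retains that order) is the correct justification, failing precisely for $A_1^{\un}(q)$ and ${}^2D_n^{\un}(q)$. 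The only corrections needed are numerical: for the Steinberg groups one has $\exp(A)=2^{k+1}$ and $|\Phi_G|=2^{l+1}$ (and the drop for ${}^2D_n^{\un}(q)$ is from $2^{k+1}$ to $2^{k}$), not $2^{k}$ and $2^{l}$; this does not affect your argument, since the order of $q_0$ modulo $\exp(A)$ still equals $|\Phi_G|$ in every case.
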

	\begin{proof}
		By Theorem \ref{lem: generalized 27}, we know that $W_0$ acts on $A/Z(\f)$ faithfully; furthermore, since  $G$ is not $A_1^{\un}(q)$ or ${}^2D_n^{\un}(q)$, if there is $\g\in W_0\Gamma_G$ such that $\g|_{A/Z(\f)}=\psi_{q_0^r}|_{A/Z(\f)}$ for some $\psi_{q_0^r}\in\Phi_G$, then $q_0^r\equiv-1\pmod{\exp(A)}$. Since $q_0\equiv1\pmod{4}$ by Hypotheses \ref{hyp: gp of lie type}, $q_0^r\equiv1\pmod{\exp(A)}$. Therefore, $W_0\Phi_G\Gamma_G$ acts faithfully on $A/Z(\f)$. Thus, $\{[\psi]\mid\psi\in\Phi_G\Gamma_G\text{ and }\psi|_{A/Z(\f)}\in\aut_{W_0}(A/Z(\f))\}=1$ and $\ker(\pi_{A/Z(\f)})=\od(G)$. By Theorem \ref{theo: generalized 59 from BMO19}, $\pi_\f \bar{\kappa}_G$ maps $O_{2}(\od(G))$ injectively into $\out(\f)$. Hence, $\ker(\pi_\f \bar{\kappa}_G)\leq O_{2'}(\od(G))$.
	\end{proof}
	
	\subsection{Rigid automorphisms centric linking systems of the groups of Lie type}
	We now use the results from the previous section and results of \cite{broto2019automorphisms} to show that, up to equivalent 2-fusion, the centric linking systems of the groups of Lie type do not have noninner rigid automorphisms with the two families of exceptions: $A_1^{\ad}(q)$ and ${}^2D_n^{\ad}(q)$ with $n\geq 3$. We first give explicit constructions of the noninner rigid automorphisms of $A_1^{\ad}(q)=\PSL_2(q)$ and ${}^2D_n^{\ad}(q)=\POmega_{2n}^-(q)=\Omega_{2n}^-(q)$ with $n\geq 3$. 
	
	In both cases there is an automorphism of $G$ that, when restricted to $A/Z(\f)$, acts as the identity on $A/Z(\f)$. When $G=A_1^{\un}(q)$, the automorphism that induces a noninner rigid automorphism of the centric linking system of $G/Z(G)$ is a field automorphism. In the following theorem, we construct this field automorphism and show that it induces a rigid automorphism of the centric linking system associated to $\PSL_2(q)$. 
	\begin{theo}\label{theo: A1 has a rigid auto}
		Assume Hypotheses \ref{hyp: gp of lie type}. If $G/Z(G)$ is $A_1^{\ad}(q)$, then the centric linking system of $G/Z(G)$ has a noninner rigid automorphism induced by a field automorphism of $G$.
	\end{theo}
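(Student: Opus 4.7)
The plan is to construct the rigid automorphism as the descent to $\bar G := G/Z(G) = \PSL_2(q)$ of a suitable involutive field automorphism of $G = \SL_2(q)$, and to invoke Theorem \ref{theo: ker kappaG is a p prime group} to force the resulting automorphism of $\cl$ to be noninner. Under Hypotheses \ref{hyp: gp of lie type}, Lemma \ref{lem: 2-part of 5} gives $(q-1)_2 = 2^k$ with $k = l + 2$, so the Sylow $2$-subgroup $S/Z(\f)$ of $\bar G$ has order $2^k$. Since no noninner rigid automorphism can exist when $S/Z(\f)$ is abelian (Proposition \ref{prop: constrained fs have no noninner rigid autos}), I may assume $l \geq 1$. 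I set
\[
    \psi := \psi_{q_0^{2^{l-1}}}|_G \in \Phi_G,
\]
which is an involution in $\Phi_G$ since $\psi^2 = \psi_q|_G = \id_G$. The map $\psi$ preserves $Z(G) = \langle h_{\a_1}(-1)\rangle$, hence descends to $\bar\psi \in \aut(\bar G)$; and $\psi$ preserves $T$ and fixes the Weyl element $w := n_{\a_1}(1)$ (whose matrix entries lie in the prime field), so $\psi$ normalizes $S = \langle A, w\rangle$ and $\bar\psi$ normalizes $S/Z(\f)$.

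I next verify that $\bar\psi$ is the identity on $S/Z(\f)$, so $\kappa_{\bar G}(\bar\psi) \in \aut(\cl)$ is rigid. Write $r := q_0^{2^{l-1}}$, so $\psi$ acts on the cyclic group $A = \langle h_{\a_1}(\mu)\rangle$ of order $2^k$ by raising to the $r$-th power. A direct lifting-the-exponent computation, using $v_2(q-1) = k$ and $r^2 = q$, gives $v_2(r - 1) = k - 1$: for $l = 1$ this reads $v_2(q_0 - 1) = k - v_2(q_0 + 1) = k - 1$, and for $l \geq 2$ one obtains $v_2(r-1) = v_2(q_0 - 1) + v_2(q_0 + 1) + (l - 2) = (k - l + 1) + (l - 2) = k - 1$. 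Hence $\mu^{r - 1}$ has order exactly $2$, so $\mu^{r-1} = -1$ and $\psi(h_{\a_1}(\mu)) \in h_{\a_1}(\mu) \cdot Z(\f)$. Together with $\psi(w) = w$ this yields $\bar\psi|_{S/Z(\f)} = \id$, and $\kappa_{\bar G}(\bar\psi)$ is therefore a rigid automorphism of $\cl$.

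It remains to show that $\kappa_{\bar G}(\bar\psi)$ is noninner. By Theorem \ref{thm: autG=ind(G)P(G)G(G)}, $\Phi_G \cap \innd(G) = 1$, so $\psi \notin \innd(G)$, and hence $[\bar\psi]$ has order exactly $2$ in $\out(\bar G)$. Since $\bar G = \PSL_2(q)$ is simple (as $q \geq q_0^2 \geq 25$), one has $O_{2'}(\bar G) = 1$, so Theorem \ref{theo: ker kappaG is a p prime group} implies that $\ker(\kappa_{\bar G})$ is a $2'$-group. A class of order $2$ cannot lie in a $2'$-group, so $\kappa_{\bar G}(\bar\psi)$ represents a nontrivial class in $\out(\cl)$, i.e., is a noninner rigid automorphism of $\cl$. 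The principal technical step is the LTE identity $v_2(r-1) = k-1$, which is exactly what permits a nontrivial field automorphism of $G$ to act trivially on the quotient $A/Z(\f)$ while remaining nontrivial on $A$ itself, and which is the reason the exceptional behavior is confined to the regime $l \geq 1$, i.e., $q \equiv 1 \pmod 8$.
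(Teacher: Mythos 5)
Your proposal is correct and takes essentially the same route as the paper: the same field automorphism $\psi_{q_0^{2^{l-1}}}$, the same verification that it is trivial on $S/Z(\f)$ (via $v_2\bigl(q_0^{2^{l-1}}-1\bigr)=k-1$, which the paper gets from Lemma \ref{lem: 2-part of 5} and you get from a lifting-the-exponent computation), and the same noninnerness argument combining the order-$2$ outer class with Theorem \ref{theo: ker kappaG is a p prime group}. The only superficial differences are that you argue noninnerness directly in $\out(G/Z(G))$ rather than transporting through the isomorphism $\pi_G$ of Lemma \ref{lem: outG cong outZG}, and your explicit reduction to $l\geq1$ plays the role of the paper's standing remark (after Lemma \ref{lem: 2-part of 5}) that type $A_1$ under Hypotheses \ref{hyp: gp of lie type} forces $k\geq3$.
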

	\begin{proof} 
		Note that $G=SL_2(q)$ and 
		\begin{align*}
			T&=\left\langle\left[\renewcommand\arraystretch{.5} \begin{matrix}
				\lambda & 0 \\
				0 & \lambda^{-1}
			\end{matrix}\right]\right\rangle=\langle h_{\a_1}(\lambda)\rangle \hspace{.25in}
			A=\left\langle\left[\renewcommand\arraystretch{.5} \begin{matrix}
				\mu & 0 \\
				0 & \mu^{-1}
			\end{matrix}\right]\right\rangle=\langle h_{\a_1}(\mu)\rangle \\ 
			Z(G)&=\left\langle\left[\renewcommand\arraystretch{.5} \begin{matrix}
				-1 & 0 \\
				0 & -1
			\end{matrix}\right]\right\rangle=\langle h_{\a_1}(-1)\rangle
		\end{align*}
		where $\F_q^\times=\langle\lambda\rangle$ and $|\mu|=|\lambda|_2$. It then follows that the subgroup $S=A\langle w_0\rangle$ is a Sylow 2-subgroup of $G$ with 
		\begin{align*}
			w_0=\left[\renewcommand\arraystretch{.5} \begin{matrix}
				0 & -1 \\
				1 & 0
			\end{matrix}\right]
		\end{align*}
		Now, set $\psi=\vp_{q_0^{2^{l-1}}}$ and note that $\psi\in\Phi_G$. We now show that $[S,\psi|_S]\leq Z(G)$. \par
		
		It is clear that $\psi(w_0)=w_0$. For $h_{\a_1}(\mu)\in A$, we have the following: 
		\begin{align*}
			[\vp,h_{\a_1}(\mu)]&=\vp(h_{\a_1}(\mu))h_{\a_1}(\mu^{-1}) \\
			&=h_{\a_1}(\mu^{q_0^{2^{l-1}}})h_{\a_1}(\mu^{-1}) \\
			&=h_{\a_1}(\mu^{q_0^{2^{l-1}}-1}) \\
			&=h_{\a_1}(\mu^{2^{k-1}}) \\
			&=h_{\a_1}(-1)\in Z(\f)=Z(G)
		\end{align*}
		since $(q_0^{2^{l-1}}-1)_2=2^{l-1+2}=2^{k-1}$ by Lemma \ref{lem: 2-part of 5}. Hence, $[S,\psi]\leq Z(G)$ which gives that $[\psi]\in\ker(\pi_\f\bar{\kappa}_G)$. That $\pi_G$ is an isomorphism, $[\psi]$ is and element of order 2, $\ker(\kappa_{G/Z(G)})$ is a $2'$-group, and the diagram is commutative gives that $\pi_G(\kappa_{G/Z(\G)}([\psi]))$ is a nontrivial element of $\ker(\mu_{\cl/Z(\f)})$.
	\end{proof}
	
	For $G/Z(G)={}^2D_n^{\ad}(q)$, we construct an automorphism of the orthogonal group of the underlying vector space induced by the reflection in the hyperplane orthogonal to an anisotropic vector and show that this automorphism induces the noninner rigid automorphism for the centric linking system for ${}^2D_n^{\ad}(q)$. In order to construct the noninner rigid automorphism for the centric linking system for ${}^2D_n^{\ad}(q)$, we need to review some results related to orthogonal groups. 
	\begin{defn}[\cite{taylor1992geometry}]
		Let $V$ be a vector space over a field $k$ with a quadratic form $Q$ and associated bilinear form $\b$. An \emph{isotropic vector $u$} is such that $\b(u,u)=0$. An \emph{anisotropic vector $v$} is such that $\b(v,v)\neq0$. A subspace $V_0$ of $V$ is totally anisotropic if $V_0$ does not contain any nonzero isotropic vectors. 
	\end{defn} 
	\begin{defn}[\cite{taylor1992geometry}]
		The \emph{Witt index} of a quadratic form is the common dimension of a maximal totally isotropic subspace, i.e., a subspace that consists solely of isotropic vectors.
	\end{defn}
	\begin{defn}[\cite{taylor1992geometry}]
		Let $V$ be a vector space over a field $k$ with a quadratic from $Q$ and associated bilinear form $\b$. A \emph{hyperbolic pair} of vectors  is a pair of vectors $v,u$ such that $\b(v,u)=1$. A \emph{hyperbolic plane} is a space that is generated by a pair of hyperbolic vectors.
	\end{defn}
	\begin{theo}[Witt's Theorem]
		Let $V$ be an $m$-dimensional vector space over a field $k$ equipped with a nondegenerate quadratic, symmetric, alternating, or Hermitian form. Any isometry between two subspaces of $V$ extends to an isometry of $V$. 
	\end{theo}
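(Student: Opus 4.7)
The plan is to prove Witt's theorem by induction on $\dim(U)$, where $\varphi\colon U\to V$ is the given isometry between subspaces of $V$. The base case $\dim(U)=0$ is trivial. For the inductive step, I would split into two cases depending on whether $U$ contains an anisotropic vector.

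In the first case, suppose $U$ contains an anisotropic vector $v$, and set $v'=\varphi(v)$. The key reduction is to first produce a global isometry $\psi\in\mathrm{Isom}(V)$ with $\psi(v)=v'$; then $\psi^{-1}\circ\varphi$ fixes $v$ and maps the codimension-one subspace $U\cap v^\perp$ isometrically into $v^\perp$, so the inductive hypothesis applied inside the nondegenerate space $v^\perp$ extends it to an isometry of $v^\perp$, which we combine with $\mathrm{id}_{kv}$ and compose with $\psi$ to get the desired extension to $V$. To build $\psi$, observe that at least one of $v-v'$ and $v+v'$ is anisotropic (since $Q(v-v')+Q(v+v')=2Q(v)+2Q(v')=4Q(v)\neq 0$ in the symmetric/quadratic case, with parallel identities in the alternating and Hermitian cases); one then uses the reflection $r_{w}(x)=x-\frac{\beta(x,w)}{Q(w)}w$ (or its analogue for the relevant form) in the hyperplane $w^\perp$ to swap $v$ and $v'$ up to sign, and compose with $-\mathrm{id}_{kv}$ extended by $\mathrm{id}$ if a sign correction is needed.

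In the second case, $U$ is totally isotropic. Here I would pick any nonzero $u\in U$ and use nondegeneracy of the form on $V$ to produce a hyperbolic partner $u^*\in V$ with $\beta(u,u^*)=1$ and $Q(u^*)=0$; similarly produce $(u')^*$ for $u'=\varphi(u)$. Extend $\varphi$ to $U\oplus ku^*\to V$ by sending $u^*\mapsto (u')^*$; this is still an isometry, and now the enlarged domain contains the anisotropic vector $u+u^*$ (in the symmetric case, $Q(u+u^*)=1$, with the analogous statement for the other forms). We are thus reduced to the first case, at the cost of increasing $\dim(U)$ by one, which is still bounded by $\dim V$.

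The main obstacle I expect is keeping the argument uniform across the four form types listed (quadratic, symmetric bilinear, alternating, Hermitian), because the reflection construction and the ``at least one of $v\pm v'$ is anisotropic'' dichotomy take slightly different shapes, and characteristic $2$ requires separating the quadratic form from its polarization. I would handle this by stating and using, in each case, the appropriate analogue of a hyperplane reflection (Siegel transvections for the alternating case, where every vector is isotropic so the whole proof collapses to the second case and the classical symplectic transitivity argument), and by citing the standard references \cite{taylor1992geometry} for the precise form of these elementary isometries. Finally, I would note that once the result is established for nondegenerate $V$, it applies in our setting because the orthogonal groups $\Omega_{2n}^{\pm}(q)$ of interest arise from nondegenerate quadratic forms over finite fields of odd characteristic, where no characteristic $2$ pathology arises.
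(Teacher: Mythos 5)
The paper does not actually prove this statement: Witt's theorem is quoted there as a classical result (the surrounding definitions are taken from \cite{taylor1992geometry}), so your argument has to stand on its own. Your Case 1 (an anisotropic $v\in U$) is the standard reflection argument and is fine in odd characteristic, but Case 2 as written has a genuine gap. After choosing hyperbolic partners $u^*$ for $u$ and $(u')^*$ for $u'=\varphi(u)$ \emph{arbitrarily}, the extension $u^*\mapsto (u')^*$ need not be an isometry: one needs $\beta(\varphi(w),(u')^*)=\beta(w,u^*)$ for every $w\in U$, not just for $w=u$, and when $\dim U\geq 2$ a generic choice violates this (take $U=\langle u,w\rangle$ totally isotropic inside two hyperbolic planes, $\varphi=\id_U$, and pick $u^*$ pairing nontrivially with $w$ while $(u')^*$ is orthogonal to $w$). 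The repair is exactly the crux of Witt's extension theorem: fix a complement $W$ of $ku$ in $U$ and choose $u^*\in W^\perp$ with $\beta(u,u^*)=1$ and $Q(u^*)=0$, which exists because $(W^\perp)^\perp=W$ and $u\notin W$, and choose $(u')^*\in\varphi(W)^\perp$ in the same way. The identical compatibility issue is the substance of the ``symplectic transitivity'' you invoke for the alternating case, so it cannot be waved away there either.

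Two smaller points. Your induction is declared on $\dim U$, but Case 2 increases $\dim U$ by one, so that measure does not decrease; induct instead on $\dim V$ (Case 1 passes to the nondegenerate hyperplane $v^\perp$, dropping the ambient dimension, while Case 2 merely reduces to Case 1 in the same ambient space), or use a suitable lexicographic measure. Also, the sign correction in Case 1 should be the reflection in $v'$, i.e.\ $-\id$ on $kv'$ extended by the identity on $(v')^\perp$; as written, ``$-\id_{kv}$ extended by $\id$'' is not quite the right map. With these repairs your outline is the standard textbook proof, and your closing observation is correct that for the paper's application --- nondegenerate quadratic forms over $\F_q$ with $q$ odd, used only to locate Sylow $2$-subgroups of the orthogonal groups --- no characteristic-$2$ subtlety enters.
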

	\begin{theo}[Theorem 11.4 in \cite{taylor1992geometry}]\label{theo: O2q is dihedral}
		For $\varepsilon=\pm1$, $O^{\varepsilon}(2,q)$ is a dihedral group of order $2(q-\varepsilon).$
	\end{theo}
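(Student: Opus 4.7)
The proof splits into two cases by the sign $\varepsilon$, corresponding to whether the $2$-dimensional quadratic form over $\F_q$ is hyperbolic ($\varepsilon=+1$) or anisotropic ($\varepsilon=-1$). In each case the plan is to exhibit a cyclic rotation subgroup of order $q-\varepsilon$, construct a reflection that inverts it, and then verify by a counting argument that the resulting dihedral group is all of $O^\varepsilon(2,q)$.

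When $\varepsilon=+1$ I would fix a hyperbolic basis $\{e_1,e_2\}$ with $Q(xe_1+ye_2)=xy$. The only isotropic lines are $\F_q e_1$ and $\F_q e_2$, so every isometry either preserves each of them or swaps them. The diagonal maps $r_\lambda\colon e_1\mapsto\lambda e_1,\ e_2\mapsto\lambda^{-1}e_2$ for $\lambda\in\F_q^\times$ give a cyclic subgroup $C$ of order $q-1$, while the swap $s\colon e_1\leftrightarrow e_2$ is a reflection with $s r_\lambda s=r_{\lambda^{-1}}$. Since every isometry is one of these $2(q-1)$ explicit maps, $O^+(2,q)=\langle C,s\rangle$ is dihedral of order $2(q-1)$. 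When $\varepsilon=-1$ I would invoke uniqueness of the nondegenerate anisotropic binary form to identify $V$ with $\F_{q^2}$ as an $\F_q$-vector space, with $Q$ equal to the norm $N(x)=x\cdot x^q$. Multiplication $m_\mu$ by a norm-one element $\mu$ is $\F_q$-linear and preserves $N$, producing a cyclic subgroup $C'\leq O^-(2,q)$ of order $q+1$. The Frobenius $\phi(x)=x^q$ is an $\F_q$-linear involutive isometry, and $\phi\circ m_\mu\circ\phi^{-1}=m_{\mu^q}=m_{\mu^{-1}}$, so $\phi$ inverts $C'$ and $\langle C',\phi\rangle$ is dihedral of order $2(q+1)$.

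The only nontrivial remaining step is showing these subgroups exhaust the full orthogonal group. I would finish by an orbit--stabilizer argument based on Witt's theorem: $O^\varepsilon(2,q)$ acts transitively on vectors of any fixed nonzero norm, and the stabilizer of an anisotropic vector $v$ preserves the one-dimensional line $v^\perp$ and acts on it as $\pm 1$, giving a stabilizer of order $2$. For $\varepsilon=-1$ the norm form takes each nonzero value exactly $q+1$ times (since the norm map $\F_{q^2}^\times\to\F_q^\times$ has kernel of order $q+1$), yielding $|O^-(2,q)|=2(q+1)$; the analogous count for $\varepsilon=+1$ yields $2(q-1)$. These orders match those of the dihedral subgroups constructed above, finishing the proof. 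No genuine obstacle arises; the only subtlety is cleanly identifying the abstract anisotropic binary form with the concrete norm model, which is supplied precisely by the uniqueness of the nondegenerate anisotropic binary quadratic form over $\F_q$.
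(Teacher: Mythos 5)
This statement is quoted from Taylor's book (Theorem~11.4 in \cite{taylor1992geometry}); the paper gives no proof of it, so there is nothing internal to compare against. Your argument is correct and is essentially the standard one (and close to Taylor's own): in the hyperbolic case the two isotropic lines force every isometry to be either $r_\lambda$ or $s r_\lambda$, giving the dihedral group of order $2(q-1)$ directly, and in the anisotropic case the identification of $(V,Q)$ with $(\F_{q^2},N)$ — legitimate by uniqueness of the anisotropic binary form up to equivalence — exhibits the norm-one circle of order $q+1$ inverted by Frobenius, with the Witt/orbit--stabilizer count (each nonzero norm value attained $q+1$ times, stabilizer of an anisotropic vector of order $2$) showing the group is no larger. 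Two small points worth making explicit if you write this up: in the hyperbolic case, that an isometry fixing both isotropic lines is forced to be $e_1\mapsto\lambda e_1$, $e_2\mapsto\lambda^{-1}e_2$ comes from preserving $\b(e_1,e_2)$; and your counting argument (reflections, $V=\langle v\rangle\perp v^\perp$, stabilizer of order $2$) tacitly assumes $q$ odd, which suffices for this paper since only odd $q$ is ever used, though Taylor's statement also covers even $q$, where the anisotropic case needs a slightly different treatment.
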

	
	\begin{theo}\label{theo: twisted Dn has a rigid auto}
		Assume Hypotheses \ref{hyp: gp of lie type}. The centric linking system for $\POmega^{-}_{2n}(q)$ with $n\geq3$ has a noninner rigid automorphism induced by a reflection in an anisotropic vector.
	\end{theo}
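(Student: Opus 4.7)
The plan is to exhibit an anisotropic reflection $r$ whose conjugation induces an automorphism $\alpha$ of $G/Z(G)=\POmega^{-}_{2n}(q)$ which is noninner but restricts to the identity on $S/Z(\f)$. Once such an $\alpha$ is produced, the argument at the end of the proof of Theorem~\ref{theo: A1 has a rigid auto} applies: $\ker(\kappa_{G/Z(G)})$ is a $2'$-group by Theorem~\ref{theo: ker kappaG is a p prime group} and $[\alpha]$ has order $2$, so $\kappa_{G/Z(G)}([\alpha])$ is a nontrivial element of $\ker(\mu_{\cl/Z(\f)})$.

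First I would realize $G/Z(G)$ geometrically on a $2n$-dimensional $\F_q$-vector space $V$ carrying a nondegenerate quadratic form of Witt index $n-1$. Using Witt's theorem, decompose $V = H_1 \perp \cdots \perp H_{n-1} \perp V_-$ with each $H_i$ a hyperbolic plane and $V_-$ a $2$-dimensional totally anisotropic subspace. The stabilizer of this decomposition in $\SO^{-}_{2n}(q)$ contains a maximal torus $T_0 \cong (\F_q^\times)^{n-1} \times Z_{q+1}$ acting by rotations on each $H_i$ and on $V_-$. After passing to $\POmega^{-}_{2n}(q)$, the image of $T_0$ matches, up to the identifications in Notation~\ref{not: gp of lie type}, the torus $T/Z(G)$ of the setup, and its $2$-part $A/Z(\f)$ coincides with the group described in Lemma~\ref{lem: gen27 for 2dn}. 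I would choose $S$ as a Sylow $2$-subgroup of $N_{G/Z(G)}(T_0/Z(G))$ containing $A/Z(\f)$, in the form $(A/Z(\f)) \cdot W_S$ where $W_S$ is a Sylow $2$-subgroup of the relative Weyl group $W_0 \cong W(B_{n-1})$ whose representatives may be chosen to act by signed permutations of the $H_i$'s and trivially on $V_-$.

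Now fix an anisotropic $v \in V_-$ and let $r = r_v$ be the reflection $x \mapsto x - (2\beta(x,v)/\beta(v,v))\,v$. Because $\det(r)=-1$, the element $r$ lies in $O^{-}_{2n}(q) \setminus \SO^{-}_{2n}(q)$; conjugation by $r$ preserves $\Omega^{-}_{2n}(q)$ and descends to an automorphism $\alpha$ of $G/Z(G)$. The key computation is $[r, S] \leq Z(\f)$. Since $r$ fixes $H_1 \perp \cdots \perp H_{n-1}$ pointwise, it centralizes the rotation group on each $H_i$ and every representative of $W_S$. On $V_-$, Theorem~\ref{theo: O2q is dihedral} gives that $O^{-}_2(q)$ is dihedral of order $2(q+1)$, so $r$ inverts the rotation subgroup $\SO^{-}_2(q) \cong Z_{q+1}$; as $q\equiv 1\pmod{4}$ forces $(q+1)_2=2$, inversion is trivial on the $2$-part, and $r$ centralizes the $V_-$-component of $A$ as well. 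Thus $\alpha|_{S/Z(\f)}=\id$, placing $[\alpha]$ in $\ker(\overline{\kappa}_{G/Z(G)})$.

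It remains to check that $[\alpha]$ is nontrivial of order $2$. Since $r^2=1$, order is immediate. For noninnerness, the class of $r$ in $O^{-}_{2n}(q)/\SO^{-}_{2n}(q)=Z_2$ is nontrivial and represents the standard ``reflection'' factor in $\out(\POmega^{-}_{2n}(q))$ for $n\geq 3$, so $[\alpha]\neq 1$ in $\out(G/Z(G))$. Combining with $\ker(\kappa_{G/Z(G)})$ being a $2'$-group yields a nontrivial element of $\ker(\mu_{\cl/Z(\f)})$, which is the desired noninner rigid automorphism. The main technical obstacle is the bookkeeping needed to match the Witt-basis description of $T_0$, $A$, and $S$ with the Chevalley-generator description used in Notation~\ref{not: gp of lie type} and Lemma~\ref{lem: gen27 for 2dn}; once this identification is in place, the geometry of the reflection $r$ makes the centralizing computation and the noninnerness both routine.
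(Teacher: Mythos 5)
Your proposal is correct and follows essentially the same route as the paper: the same orthogonal decomposition $V = (\text{hyperbolic part})\perp(\text{anisotropic plane})$, the same reflection $r_v$ in an anisotropic vector, the same observation that conjugation by $r_v$ acts trivially on a suitably chosen Sylow $2$-subgroup, and the same endgame via Theorem~\ref{theo: ker kappaG is a p prime group} (that $\ker(\kappa_{G/Z(G)})$ is a $2'$-group).

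Two points worth flagging. First, the identification you flag as ``the main technical obstacle'' --- matching $T_0$, $A$, and $S$ with the Chevalley-generator description of Notation~\ref{not: gp of lie type} and Lemma~\ref{lem: gen27 for 2dn} --- is avoided entirely by the paper. The paper works purely geometrically: it takes a Sylow $2$-subgroup of $O(V)$ of the form $S_U\times S_W\leq O(U)\times O(W)$ (this is a Sylow $2$-subgroup of $O(V)$ by the Carter--Fong counting argument cited from \cite{carter1964sylow}), shows $c_{r_w}$ is the identity on $S_U$ (as $r_w$ fixes $U$ pointwise) and on $S_W$ (as $S_W=\langle -\id_W\rangle\times\langle r_w\rangle$ is an abelian group containing $r_w$), and therefore on $S\cap\Omega(V)$. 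Your computation on the anisotropic plane --- that $r$ inverts $\SO_2^-(q)\cong Z_{q+1}$ and that $(q+1)_2=2$ makes this inversion trivial on the $2$-part --- proves the same thing with a little more effort. Second, your noninnerness argument cites the known structure of $\out(\POmega_{2n}^-(q))$ (``the standard reflection factor''), whereas the paper gives a self-contained determinant argument: if $[c_{r_w}]$ were trivial in $\out(\Omega(V))$ then $r_wt^{-1}\in C_{O(V)}(\Omega(V))$ for some $t\in\Omega(V)$, but $C_{O(V)}(\Omega(V))\leq\SO(V)$ (by normality and a nilpotency-class argument), contradicting $\det(r_wt^{-1})=-1$. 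Your appeal is legitimate but leans on the classification of outer automorphisms of orthogonal groups, which the paper avoids.
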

	\begin{proof} 
		Let $V$ be a vector space over $\F_q$ of dimension $2n$ with quadratic form $Q$ with Witt index $m=n-1$ and associated bilinear form $\b$. We can decompose $V$ as follows:
		\begin{align*}
			V = V_1\perp V_2\perp\cdots\perp V_m\perp W
		\end{align*}
		where each $V_i$ is a hyperbolic plane and $W$ is totally anisotropic. Set $U=V_1\perp\cdots\perp V_m$. \par 
		
		Using Witt's Theorem and a counting argument (see Theorem 3 in \cite{carter1964sylow}), a Sylow 2-subgroup of $O(U)\times O(W)$ is a Sylow 2-subgroup of $O(V)$. Let $S=S_U\times S_W$ be such a Sylow 2-subgroup. By Theorem \ref{theo: O2q is dihedral}, $O(W)=O^-_2(q)\cong D_{2(q+1)}$. Therefore, since $q\equiv1\pmod{4}$, $S_W$ is a Klein 4 group.  \par 
		
		Fix an anisotropic vector $w\in W$. This gives us that $\langle-\id_W\rangle\times\langle r_w\rangle$ is a Sylow 2-subgroup of $O(W)$. Since $W$ is orthogonal to $U$, $c_{r_w}$ is the identity on $U$, and in particular, $S_U$. It is clear that $c_{r_w}$ restricts to the identity on $S_W$. Therefore, since $\Omega(V)$ is a characteristic subgroup of $O(V)$, $c_{r_w}|_{\Omega(V)}$ is an automorphism of $\Omega(V)$ that restricts to the identity on $S\cap\Omega(V)$. \par 
		
		We now check that $[c_{r_w}]\neq1$ in $\out(\Omega(V))$. If $[c_{r_w}]=1$, then there would be some $t\in\Omega(V)$ such that $r_wt^{-1}\in C_{O(V)}(\Omega(V))$. Note that $C_{O(V)}(\Omega(V))$ is normal in $O(V)$ and is of nilpotency class at most 2. Since $C_{O(V)}(\Omega(V))$ is normal in $O(V)$, either $C_{O(V)}(\Omega(V))$ is contained in $SO(V)$ or $C_{O(V)}(\Omega(V))=O(V)$. If $C_{O(V)}(\Omega(V))=O(V)$, then $O(V)$ is of nilpotency class at most 2, but this implies that $\Omega(V)\leq Z(O(V))$ which is not the case. Hence, $C_{O(V)}(\Omega(V))\leq SO(V)$. Therefore, $r_wt^{-1}\in SO(V)$, but $\det(r_wt^{-1})=-1$ so this is impossible. Therefore, $[c_{r_w}]\neq1$ in $\out(\Omega(V))$.  \par 
		
		Note that $O_{2'}(\POmega_{2n}^-(q))=1$, and as a result, $\ker(\kappa_G)$ is a $2'$-group by Theorem \ref{theo: ker kappaG is a p prime group}. Since $[c_{r_w}|_{\POmega_{2n}^-(q)}]$ is of order 2 and $\bar{\kappa}_G([c_{r_w}|_{\POmega_{2n}^-(q)}])=1$, we must have that $\kappa_{G}([c_{r_w}|_{\POmega_{2n}^-(q)}])\in\ker(\mu_{\cl})$. 
	\end{proof}
	
	We summarize the theorems \ref{theo: extended 516 for most adjoint groups of lie type}, \ref{theo: A1 has a rigid auto}, and \ref{theo: twisted Dn has a rigid auto} in the following corollary:
	\begin{cor}\label{cor: extension of 516 to adjoint gps of Lie type}
		Assume Hypotheses \ref{hyp: gp of lie type}. One and only one of the following holds:
		\begin{enumerate}
			\item The map $\kapb_G$ is injective. 
			\item $G = A_1^{\ad}(q)$ and $\cl$ has a noninner rigid automorphism. 
			\item $G = {}^2D_n^{\ad}(q)$ and $\cl$ has a noninner rigid automorphism. 
		\end{enumerate}
	\end{cor}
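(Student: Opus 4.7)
The corollary is a case analysis on the isomorphism type of $G/Z(G)$ that assembles the three preceding results. First I would observe that Hypotheses~\ref{hyp: gp of lie type} together with the classification of the relevant simple algebraic groups partition the possibilities into three disjoint cases: (a) $G/Z(G)=A_1^{\ad}(q)$; (b) $G/Z(G)={}^2D_n^{\ad}(q)$ with $n\geq 3$; and (c) neither of these. In case (c), Theorem~\ref{theo: extended 516 for most adjoint groups of lie type} applies directly to yield injectivity of $\kapb_G$, establishing (1). In cases (a) and (b), Theorems~\ref{theo: A1 has a rigid auto} and~\ref{theo: twisted Dn has a rigid auto} respectively produce the required noninner rigid automorphism of $\cl$, establishing (2) and (3).

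To obtain the ``one and only one'' form, I would then verify mutual exclusivity. Conditions (2) and (3) cannot simultaneously hold because they impose incompatible isomorphism types on $G$. For the incompatibility of (1) with (2) or (3), I would extract from the proofs of Theorems~\ref{theo: A1 has a rigid auto} and~\ref{theo: twisted Dn has a rigid auto} an explicit witness $[\alpha]\in\out(G)$ --- namely the field automorphism $\psi_{q_0^{2^{l-1}}}$ in case (a), or conjugation by the anisotropic reflection $r_w$ in case (b) --- with two features: $[\alpha]$ has order a power of $2$ in $\out(G)$, and $\pi_\f\bar{\kappa}_G([\alpha])=1$. Since $O_{2'}(\od(G))$ has odd order, the class $[\alpha]$ does not lie in it, and therefore descends to a nontrivial element of $\ker(\kapb_G)$; this forces $\kapb_G$ to fail injectivity in cases (a) and (b), separating (1) from (2) and (3).

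The only step beyond invoking the three preceding theorems is the bookkeeping just noted: verifying that the $2$-power order witnesses constructed in Theorems~\ref{theo: A1 has a rigid auto} and~\ref{theo: twisted Dn has a rigid auto} survive the quotient by the odd-order subgroup $O_{2'}(\od(G))$. I expect no real obstacle here, since the computation is purely structural once the preceding theorems are in hand; the substantive content has already been deposited in Theorem~\ref{theo: extended 516 for most adjoint groups of lie type} (via Theorem~\ref{lem: generalized 27} and Theorem~\ref{theo: generalized 59 from BMO19}) and in the explicit constructions of the rigid automorphisms in the two exceptional families.
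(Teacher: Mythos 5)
Your proposal is correct and matches the paper's intent exactly: the paper offers no separate argument, presenting the corollary as a summary of Theorem~\ref{theo: extended 516 for most adjoint groups of lie type} for the generic case and Theorems~\ref{theo: A1 has a rigid auto} and~\ref{theo: twisted Dn has a rigid auto} for the two exceptional families, which is precisely your case analysis. Your additional check of mutual exclusivity (that the order-$2$ witnesses $[\psi]$ and $[c_{r_w}]$ survive the quotient by the odd-order group $O_{2'}(\od(G))$ and so kill injectivity of $\kapb_G$ in the exceptional cases) is a correct and welcome elaboration of a point the paper leaves implicit.
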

	
	In order to show that the simple groups of Lie type that satisfy Hypotheses \ref{hyp: gp of lie type} and are not in the families of exceptions have centric linking systems that do not have noninner rigid automorphisms, we review the theory of universal central extensions of finite groups.
	\begin{defn}[\cite{aschbacher2000finite}]
		A \emph{central extension} of a group $G$ is a pair $(H,\pi)$, where $H$ is a group and $\pi:H\to G$ is a surjective group homomorphism with $\ker(\pi)\leq Z(H)$. 
	\end{defn}
	\begin{defn}[\cite{aschbacher2000finite}]
		Let $G$ be a group with central extensions $(G_1,\pi_1)$ and $(G_2,\pi_2)$. A \emph{morphism of central extensions} $\a:(G_1,\pi_1)\to(G_2,\pi_2)$ is a group homomorphism $\a:G_1\to G_2$ such that $\pi_1=\pi_2\a$.  
	\end{defn}
	\begin{defn}[\cite{aschbacher2000finite}]
		A central extension $(\hat{G},\pi)$ of a group $G$ is \emph{universal} if for every central extension $(H,\sigma)$, there is a unique central extension morphism $\a:(\hat{G},\pi)\to(H,\s)$. 
	\end{defn}
	Note that a group has a universal central extension if and only if the group is perfect and in that case, the universal central extension is also perfect (see 33.2 in \cite{aschbacher2000finite}). First, we show that for any characteristic subgroup $Z$ of $Z(G)$, we have an isomorphism of groups $\out(G)\cong\out(G/Z)$. 
	\begin{lem}\label{lem: outG cong outZG}
		If $ G $ is the universal central extension of $ G/Z $ for a characteristic subgroup $Z\leq Z(G)$, then $ \pi_G:\out(G)\to\out(G/Z) $ is an isomorphism. 
	\end{lem}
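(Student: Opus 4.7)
The plan is to prove that $\pi_G$ is both injective and surjective on outer automorphism groups, exploiting two properties of a universal central extension: that $G$ must be perfect, and that every central extension of $G/Z$ receives a unique morphism from $G$. First I would check that $\pi_G$ is well-defined: since $Z(G)$ is characteristic in $G$ and $Z$ is characteristic in $Z(G)$, the subgroup $Z$ is characteristic in $G$, so every $\alpha\in\aut(G)$ preserves $Z$ and descends to an automorphism of $G/Z$. Inner automorphisms clearly descend to inner automorphisms, so the induced map $\pi_G:\out(G)\to\out(G/Z)$ makes sense.

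For injectivity I would argue as follows. Suppose $\alpha\in\aut(G)$ satisfies $\pi_G([\alpha])=1$, so that the induced $\bar\alpha\in\aut(G/Z)$ equals $c_{gZ}$ for some $g\in G$. Replacing $\alpha$ by $c_g^{-1}\alpha$, I may assume $\bar\alpha=\id_{G/Z}$. Then for each $x\in G$ the element $\phi(x):=\alpha(x)x^{-1}$ lies in $Z\leq Z(G)$. A short calculation (using that $\phi(y)$ is central in $G$) shows $\phi:G\to Z$ is a group homomorphism. Since $G$ is the universal central extension of a perfect group, $G$ itself is perfect, and hence any homomorphism from $G$ to an abelian group is trivial. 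Therefore $\phi\equiv 1$, so $\alpha=\id_G$, which shows $[\alpha]=1$ in $\out(G)$.

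For surjectivity I would use the defining universal property directly. Given $\bar\alpha\in\aut(G/Z)$ and the canonical projection $\pi:G\to G/Z$, the pair $(G,\bar\alpha\circ\pi)$ is a central extension of $G/Z$, so the universal property produces a unique morphism of central extensions $\alpha:(G,\pi)\to(G,\bar\alpha\circ\pi)$, which means $\pi\circ\alpha=\bar\alpha\circ\pi$. Applying the same construction to $\bar\alpha^{-1}$ yields a lift $\alpha'$, and the compositions $\alpha\alpha'$ and $\alpha'\alpha$ both lift the identity of $G/Z$; by the uniqueness clause of the universal property applied to the central extension $(G,\pi)$ itself, each of these compositions equals $\id_G$. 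Hence $\alpha\in\aut(G)$ lifts $\bar\alpha$, proving that $\aut(G)\to\aut(G/Z)$ is surjective and therefore so is $\pi_G$ on outer automorphism groups.

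I do not expect a serious obstacle here; the main subtlety is simply to remember that universality of the central extension forces $G$ to be perfect (needed for injectivity) and that the uniqueness clause of the universal property — not just existence — is what promotes the two lifts $\alpha,\alpha'$ to mutual inverses (needed for surjectivity).
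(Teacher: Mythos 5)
Your proof is correct and close in spirit to the paper's, but the two halves are handled somewhat differently. For surjectivity, both you and the paper lift $\bar\alpha\in\aut(G/Z)$ via the universal property; the paper then verifies directly that the resulting $\alpha$ is bijective (surjectivity from $G=\alpha(G)Z$ plus perfectness, injectivity by a short argument about $\ker\alpha$), whereas you invoke the uniqueness clause twice — once for $\bar\alpha$, once for $\bar\alpha^{-1}$ — and let uniqueness of the morphism $(G,\pi)\to(G,\pi)$ force the two lifts to be mutual inverses. Your version is cleaner and avoids the paper's slightly murky perfectness argument for $\ker\alpha=1$ (which, for finite $G$, is in any case subsumed by surjectivity). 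For injectivity of $\pi_G$ on outer automorphism groups, you use a genuinely different device: the crossed-homomorphism argument, showing that an $\alpha$ inducing the identity on $G/Z$ gives a homomorphism $G\to Z$, which must vanish since $G$ is perfect. The paper instead gets injectivity as a byproduct of the $\aut$-level bijection and the isomorphism $\inn(G)\cong\inn(G/Z)$. Both are standard; yours has the minor advantage of not needing the universal property at all for that half. One small slip to flag: by the paper's Definition of a morphism of central extensions, a morphism $\alpha:(G,\pi)\to(G,\bar\alpha\circ\pi)$ satisfies $\pi=\bar\alpha\pi\alpha$, i.e.\ $\pi\alpha=\bar\alpha^{-1}\pi$, so the $\alpha$ you construct lifts $\bar\alpha^{-1}$ rather than $\bar\alpha$; swapping the roles of $\alpha$ and $\alpha'$ (or applying the construction to $(G,\bar\alpha^{-1}\pi)$) fixes this, and it does not affect the rest of the argument.
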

	\begin{proof}
		Given any $ \b\in\aut(G/Z) $, we get the short exact sequence 
		\begin{align*}
			1\to Z\xrightarrow{i} G\xrightarrow{\b^{-1}\pi}G/Z\to 1
		\end{align*} 
		and since $ G $ is the universal central extension of $ G/Z $, there exists a unique group homomorphism $ \a:G\to G $ such that the following diagram commutes:
		\[\begin{tikzcd}
			1 && Z && G \\
			&&&&& \circlearrowleft & {G/Z} && 1 \\
			1 && Z && G
			\arrow[from=1-1, to=1-3]
			\arrow[from=1-3, to=1-5]
			\arrow["\pi", from=1-5, to=2-7]
			\arrow["\alpha"', from=1-5, to=3-5]
			\arrow[from=2-7, to=2-9]
			\arrow[from=3-1, to=3-3]
			\arrow[from=3-3, to=3-5]
			\arrow["{\beta^{-1}\pi}"', from=3-5, to=2-7]
		\end{tikzcd}\]
		We show that $\a$ is an automorphism of $G$. \par 
		Since $\pi$ is surjective, there is some $h\in G$ such that $\pi(h)=\b^{-1}\pi(g)$. By commutativity of the right hand triangle, $\b^{-1}\pi(g)=\pi(h)=\b^{-1}\pi\a(h)$. Therefore, $g=\a(h)z$ for some $z\in Z$. That is, $G=\a(G)Z$. Since $G$ is perfect, 
		\begin{align*}
			G=[G,G]=[\a(G)Z,\a(G)Z]=[\a(G),\a(G)]\leq\a(G)
		\end{align*}  
		This implies that $\a$ is surjective. \par 	
		Note that $\ker(\a)\leq Z$ as $\ker(\a)\leq\ker(\b^{-1}\pi\a)=\ker(\pi)=Z$, and since $G$ is a perfect group, $G=\ker(\a)K$ where $K$ is a perfect subgroup of $G$. Hence,
		\begin{align*}
			G=[G,G]=[\ker(\a)K,\ker(\a)K]=[K,K]=K
		\end{align*}	
		which implies that $\ker(\a)=1$. Ergo, $\a\in\aut(G)$ as desired. \par
		Since $Z\leq Z(G)$ is characteristic in $G$, $\a(Z)=Z$ and that $\a|_{G/Z}$ induces an automorphism on $G/Z$. The commutative right triangle gives us that $\a|_{G/Z}=\b$. Hence, $\aut(G)\cong\aut(G/Z)$ since $G$ is the universal central extension of $G/Z$. It is clear that under this map, $\inn(G)$ maps onto $\inn(G/Z)$ which gives us that $\inn(G)$ maps isomorphically onto $\inn(G/Z)$. Hence, $\out(G)\cong\out(G/Z)$. 
	\end{proof}
	Note that combining the above lemma (applied to $Z=Z(G)$) with Lemma \ref{lem: zf is 2 part of zg}, we get the following commutative diagram where $G$ is a universal group of Lie type and $G/Z(G)$ is an adjoint group of Lie type: 
	\begin{center}
		$\begin{tikzcd}
			{\out(G)} && {\out(\cl_S^c(G))} && {\out(\f_S(G))} \\
			\\
			{\out(G/Z(G))} && {\out(\cl/Z(\f))} && {\out(\f/Z(\f))}
			\arrow["{\kappa_G}",  from=1-1, to=1-3]
			\arrow["{\mu_\cl}",  from=1-3, to=1-5]
			\arrow["{\cong}"', from=1-1, to=3-1]
			\arrow["{\kappa_{G/Z(G)}}", from=3-1, to=3-3]
			\arrow["{\mu_{\cl/Z(\f)}}", from=3-3, to=3-5]
			\arrow["{\pi_\f}", from=1-5, to=3-5]
		\end{tikzcd}$
	\end{center}
	\noindent We will call this diagram ``Diagram A'' and we will state some properties of the various maps in Diagram A. \par 
	
	The first key feature of the diagram is that both $\mu_\cl$ and $\mu_{\cl/Z(\f)}$ are surjective maps due to Proposition \ref{prop: lim1 is there but lim2 gone}. The second key feature of Diagram A is that the maps $\kappa_G$ and  $\kappa_{G/Z(G)}$ are surjective as, under Hypotheses \ref{hyp: gp of lie type}, $G/Z(G)$ tamely realizes its fusion system if $G$ is not $G_2(q)$ with $q_0\neq3$. This is due to the following theorem of \cite{broto2019automorphisms}:
	\begin{theo}[Theorem B in \cite{broto2019automorphisms}]\label{theo: tameness and universals have no rigid autos}
		Fix a pair of distinct primes $p$ and $q_0$, and a finite group of Lie type over a field of characteristic $q_0$ of universal or adjoint type. Assume that the Sylow $p$-subgroups of $G$ are nonabelian. Then there is a prime $q_0^*\neq p$ and a finite group of Lie type $G^*$ over a field of characteristic $q_0^*$ of universal or adjoint type, respectively, as described in Theorem \ref{prop: justification} such that $G^*\sim_p G$ and $\kappa_{G^*}$ is split surjective. If, furthermore $p$ is odd or $G^*$ has universal type, then $\mu_{G^*}$ is an isomorphism and $\bar{\kappa}_{G^*}$ is also split surjective.
	\end{theo}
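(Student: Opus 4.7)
My plan is to break the theorem into its two halves: first, the existence of $G^*$ with good $p$-fusion and split-surjective $\kappa_{G^*}$; second, the upgrade when $p$ is odd or $G^*$ is universal to injectivity of $\mu_{\cl^*}$ and split surjectivity of $\bar\kappa_{G^*}$. For the first half, I would begin by invoking the cross-characteristic equivalences of $p$-fusion of finite groups of Lie type from \cite{broto2012equivalences}, which let me replace the ambient characteristic $q_0$ by a well-chosen $q_0^*\neq p$, pass to the refined choice guaranteed by Proposition \ref{prop: justification}, and arrange Hypotheses \ref{hyp: gp of lie type} (including the awkward $G_2$ side condition $q^*=5$ or $q_0^*=3$). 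After this reduction, we may assume $G=G^*$ satisfies Hypotheses \ref{hyp: gp of lie type}, with a fixed $\sigma$-setup $(\G,\sigma=\varphi_q\gamma)$ and a Sylow $p$-subgroup $S\leq N_G(T)$.

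To prove that $\kappa_G$ is split surjective, I would construct the splitting using Steinberg's decomposition $\aut(G)=\innd(G)\Phi_G\Gamma_G$ from Theorem \ref{thm: autG=ind(G)P(G)G(G)}. Each element of $\innd(G)$ is induced by conjugation by an element of $N_{\T}(G)$, so, using that $\T$ is centric in $\G$ (Lemma 2.4 of \cite{broto2019automorphisms}) and that $\sigma$-invariant elements of $\T$ act on the linking system through well-defined morphisms, I can lift the $\od(G)$-part directly. Elements of $\Phi_G$ and $\Gamma_G$ normalize $S$ by choice of $\sigma$-setup; they act on root groups and on $T$ in a prescribed way, so the main task is to verify that the induced functor is isotypical and preserves inclusions, hence extends to an element of $\aut(\cl_S^c(G))$. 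One then checks that the resulting map $\aut(G)\to\aut(\cl_S^c(G))$ descends to a homomorphism $\out(G)\to\out(\cl)$ that is a section of $\kappa_G$. The main work is to show surjectivity: given $[\alpha]\in\out(\cl)$, restrict $\alpha_S$ to $S$ and then, after adjusting by conjugation inside $\aut_\cl(S)$, recognize the result as a fusion-preserving automorphism that has an external realization in $\aut(G)$; for this one uses the characteristic subgroup $A\leq S$ from Proposition \ref{prop: A is char in S} together with the faithfulness statements of Theorem \ref{lem: generalized 27} to force the restriction to $A$ to be compatible with $\Gamma\Phi_G W_0$.

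For the second half, suppose first that $p$ is odd. Then Theorem \ref{theo: kermu is a 2 group} (Lemma 1.16 of \cite{andersen2012reduced}) says $\ker(\mu_{\cl})=1$, so $\mu_{\cl}$ is already surjective by Proposition \ref{prop: lim1 is there but lim2 gone} and now also injective, hence an isomorphism; composing with a splitting of $\kappa_G$ gives split surjectivity of $\bar\kappa_G$. When $p=2$ and $G$ has universal type, $\ker(\mu_{\cl})$ is an elementary abelian $2$-group indexing the noninner rigid automorphisms of $\cl$; the claim is that this kernel is trivial. Here I would appeal to the appendix of \cite{broto2019automorphisms}, in which Oliver verifies directly for each Dynkin type that the centric linking system of the universal cover has no nontrivial rigid automorphism at the prime $2$, the key input being that on $A=O_2(T)$ every candidate rigid automorphism is realized by an element of $\T$, and the universal type forces such an element to be trivial modulo $\ker(\Phi_\mu)$ via Lemma \ref{lem: phi_lambda exists}.

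The hardest step, and the one I would allocate the most effort to, is the surjectivity of $\kappa_G$: recognizing an arbitrary isotypical inclusion-preserving self-equivalence of $\cl_S^c(G)$ as coming from an honest automorphism of $G$. The natural strategy is to pin down $[\alpha]$ by its action on $\aut_\cl(A)$, use Proposition \ref{prop: A is char in S} and Lemma \ref{lem: 53 in BMO19} to transfer this to an action on $A$ and on $W_0=\aut_G(A)/\inn(A)$, and then appeal to Lemma \ref{lem: 27} and Theorem \ref{lem: generalized 27} to pair the combinatorial data of $(\rho,\tau)$ on $\Sigma$ with an actual element of $\Gamma_G\Phi_G\innd(G)$. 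The case-by-case exceptions (notably $A_1$, ${}^2D_n$, and $G_2$ in characteristic $3$) are the places where this matching is delicate and where one expects the construction to require bespoke arguments, precisely the arguments that the present paper then leverages to identify the extra rigid automorphisms in the adjoint setting.
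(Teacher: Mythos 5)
This is a cited theorem (Theorem~B of \cite{broto2019automorphisms}); the present paper does not prove it and has no proof to compare against. What you have written is a plausible sketch of how BMO themselves argue, but there are three inaccuracies worth flagging.

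First, you invoke Theorem~\ref{lem: generalized 27} (the present paper's faithfulness result for the action on $A/Z(\f)$) when recognizing an automorphism of $\cl_S^c(G)$ as coming from $\aut(G)$. For Theorem~B one only needs the faithfulness of $W_0\Gamma_G$ on $A$ itself, i.e.\ Lemma~\ref{lem: 27}; the $A/Z(\f)$ version is a new contribution of this paper designed precisely to handle the \emph{adjoint} quotients that Theorem~B leaves open at $p=2$. Including Theorem~\ref{lem: generalized 27} is not circular, since it does not depend on Theorem~B, but it is anachronistic and obscures what the theorem actually requires.

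Second, your account of the $p=2$, universal-type case misattributes the argument. The Appendix of \cite{broto2019automorphisms} does not run a Dynkin-type-by-type verification that universal covers have no rigid automorphisms at $2$; it treats only the exceptional case $G_2(q)$ with $q_0=3$, where the main-body argument that $\kapc_G:\out(G)/O_{2'}(\od(G))\to\out(\f)$ is an isomorphism breaks down. The remaining universal types are handled in the body of \cite{broto2019automorphisms} via the $\Z[W_0]$-linear map $\Phi_\mu$ (Lemma~\ref{lem: phi_lambda exists}) and Lemma~\ref{lem: 27}, together with the analysis of $O_2(\od(G))$ inside $\out_{\text{diag}}(\f)$.

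Third, the description of split surjectivity is backwards: you describe extending each automorphism of $G$ to an automorphism of $\cl_S^c(G)$, but that is the construction of $\kappa_G$ itself, not a section of it. What is needed is a map $\out(\cl)\to\out(G)$; in BMO this is obtained by first showing $\kapc_G$ is an isomorphism onto $\out(\f)$, pulling an arbitrary class of $\out(\cl)$ back along $\mu_\cl$, and then lifting through the isomorphism to $\out(G)/O_{p'}(\od(G))$, using that $O_{p'}(\od(G))$ has a complement. Without that direction, the claim of split surjectivity is not justified by what you wrote.
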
 
	\noindent By Theorem \ref{theo: tameness and universals have no rigid autos}, the centric linking system of $G_2(q)$ has no noninner rigid automorphisms since it is of universal type and since it has trivial center, $G_2(q)/Z(G_2(q))=G_2(q)$. We also have that, under Hypotheses \ref{hyp: gp of lie type}, $\kappa_G$ and $\kappa_{G/Z(G)}$ are surjective. Therefore, via commutativity of Diagram A, the map $\pi_\f$ is also surjective. This ultimately gives us that the map $\kapb_G$ is an isomorphism (with the two families of exceptions), but we only need that $\kappa_{G/Z(G)}$ is surjective to prove that the centric linking system for $G/Z(G)$ has no noninner rigid automorphisms when $G/Z(G)$ is not $A_1^{\ad}(q)$ or ${}^2D_n^{\ad}(q)$ with $n\geq3$. \par 
	
	To prove that the centric linking system for most of the simple groups of Lie type do not have noninner rigid automorphisms, we combine Theorem \ref{theo: ker kappaG is a p prime group}, Lemma \ref{lem: outG cong outZG}, and Theorem \ref{theo: tameness and universals have no rigid autos}, we get the following result: 
	\begin{theo}\label{theo: rigid autos live in kermu}
		Assume Hypotheses \ref{hyp: gp of lie type}. If the centric linking system for $G/Z(G)$ has a noninner rigid automorphism $[\a]$, then there is an element $[\b]\in\out(G)$ of order 2 such that $\kappa_{G/Z(G)}(\pi_G([\b]))$. Furthermore, if $G/Z(G)$ is not one of $A_1^{\ad}(q)$ or ${}^2D_n^{\ad}(q)$ with $n\geq3$, then $\ker(\mu_{\cl/Z(\f)})=1$. 
	\end{theo}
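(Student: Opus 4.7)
My plan is to combine the commutativity of Diagram A with three ingredients: (a) $\ker(\mu_{\cl/Z(\f)})$ is an elementary abelian $2$-group (Theorem \ref{theo: kermu is a 2 group}); (b) $\kappa_{G/Z(G)}$ is split surjective in our setting, coming from Theorem \ref{theo: tameness and universals have no rigid autos} together with Proposition \ref{prop: justification} (the exceptional case $G/Z(G)\cong G_2(q)$ has trivial center, so it already falls under the universal-type conclusion of Theorem \ref{theo: tameness and universals have no rigid autos}, giving $\mu_{\cl/Z(\f)}$ an isomorphism outright); and (c) $\pi_G$ is an isomorphism (Lemma \ref{lem: outG cong outZG}). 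Injectivity of $\kapb_G$ outside the two exceptional families is Theorem \ref{theo: extended 516 for most adjoint groups of lie type}, which the second assertion will rely on.

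For the first assertion, a noninner rigid automorphism $[\alpha]$ of $\cl/Z(\f)$ lies in $\ker(\mu_{\cl/Z(\f)})$, so by (a) it has order exactly $2$. Using (b), fix a section $s$ of $\kappa_{G/Z(G)}$; then $s([\alpha])\in\out(G/Z(G))$ is of order $2$. Set $[\beta]:=\pi_G^{-1}(s([\alpha]))$. By (c), $[\beta]\in\out(G)$ has order $2$, and by construction
\begin{align*}
\kappa_{G/Z(G)}(\pi_G([\beta]))=\kappa_{G/Z(G)}(s([\alpha]))=[\alpha],
\end{align*}
as required.

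For the second assertion, suppose $G/Z(G)$ is not $A_1^{\ad}(q)$ or ${}^2D^{\ad}_n(q)$ with $n\geq 3$, and suppose toward a contradiction that $\ker(\mu_{\cl/Z(\f)})$ contains a nontrivial class $[\alpha]$. After adjusting the representative within its $\aut_{\cl/Z(\f)}(S/Z(\f))$-conjugacy class we may take $\alpha$ rigid, and because $[\alpha]\neq 1$ it is then noninner. The first part produces $[\beta]\in\out(G)$ of order $2$ with $\kappa_{G/Z(G)}(\pi_G([\beta]))=[\alpha]$. Commutativity of Diagram A gives
\begin{align*}
\kapb_G([\beta])=\pi_\f\mu_\cl\kappa_G([\beta])=\mu_{\cl/Z(\f)}\kappa_{G/Z(G)}\pi_G([\beta])=\mu_{\cl/Z(\f)}([\alpha])=1.
\end{align*}
By Theorem \ref{theo: extended 516 for most adjoint groups of lie type}, $\kapb_G$ is injective on $\out(G)/O_{2'}(\od(G))$, so $[\beta]$ lies in $O_{2'}(\od(G))$. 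Since $O_{2'}(\od(G))$ has odd order while $|[\beta]|=2$, we conclude $[\beta]=1$, forcing $[\alpha]=1$ and contradicting the choice of $[\alpha]$.

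The main obstacle is securing split surjectivity of $\kappa_{G/Z(G)}$ for the specific $G$ fixed by Hypotheses \ref{hyp: gp of lie type}: tameness is a fusion-theoretic invariant, and Theorem \ref{theo: tameness and universals have no rigid autos} only hands us a fusion-equivalent $G^*$ with $\kappa_{G^*}$ split surjective. One must transport the section back along the fusion-preserving isomorphism (which induces an isomorphism on linking systems and hence on $\out(\cl)$), and then verify that the entire commutative diagram and the injectivity of $\kapb_G$ survive the replacement. The remaining subtlety, namely reducing an arbitrary class in $\ker(\mu_{\cl/Z(\f)})$ to a rigid representative, is built into the characterization of this kernel via the exact sequence of Proposition \ref{prop: lim1 is there but lim2 gone}.
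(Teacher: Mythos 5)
Your argument is correct and follows essentially the same route as the paper: produce an order-2 preimage $[\beta]\in\out(G)$ of $[\alpha]$ using surjectivity of $\kappa_{G/Z(G)}$ (the paper gets the order-2 lift from $\ker(\kappa_{G/Z(G)})$ being a $2'$-group rather than from a section, a trivial variation) together with the isomorphism $\pi_G$, then use commutativity of Diagram A and the injectivity of $\kapb_G$ from Theorem \ref{theo: extended 516 for most adjoint groups of lie type} to force $[\beta]\in O_{2'}(\od(G))$, hence $[\beta]=1$ and $[\alpha]=1$. The points you flag as potential obstacles (tameness of $G/Z(G)$ under Hypotheses \ref{hyp: gp of lie type}, the $G_2$ case, and representing nontrivial classes in $\ker(\mu_{\cl/Z(\f)})$ by rigid automorphisms) are handled the same way, and only implicitly, in the paper.
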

	\begin{proof}
		Since $\kappa_{G/Z(G)}$ is surjective, there is an element $[\a']\in\out(G/Z(G))$ such that $\kappa_{G/Z(G)}([\a'])=[\a]$. We can choose $[\a']$ of order 2 since, by Theorem \ref{theo: ker kappaG is a p prime group}, $\ker(\kappa_G)$ is a $2'$-group and $\ker(\mu_{\cl/Z(\f)})$ is an elementary 2-group by Theorem \ref{theo: kermu is a 2 group}. Since $\pi_G$ is an isomorphism, there is an element of order 2 $[\b]\in\out(G)$ such that $\pi_G([\b])=[\a']$. \par 
		
		Note that $[\b]$, when restricted to $S/Z(\f)$, will be in the kernel of $\pi_\f\bar{\kappa}_G$. By Theorem \ref{theo: extended 516 for most adjoint groups of lie type}, if $G/Z(G)$ is not one of $A_1^{\ad}(q)$ or ${}^2D_n^{\ad}(q)$ with $n\geq3$, then $\ker(\pi_\f\bar{\kappa}_G)\leq O_{2'}(\od(G))$. Hence, $[\b]=1$ which gives us that $[\a]=1$. 
	\end{proof}
	
	We now want to show that the centric linking system associated to a quasisimple cover of a simple group of Lie type whose centric linking system does not have noninner rigid automorphism does not have a noninner rigid automorphism itself. To accomplish this, we need the following two results of Broto, M{\o}ller, Oliver, and Ruiz. We will use the following results to prove a general result that, if $G$ is a quasisimple group such that $G/Z(G)$ has a centric linking system with no noninner rigid automorphisms, then $G$ has a centric linking system with no noninner rigid automorphisms. We will use this result to complete the characterization of the groups of Lie type that satisfy $\ker(\mu_\cl)=1$. 
	\begin{prop}[Proposition 5.2 in \cite{broto2023realizabilitytamenessfusionsystems}]\label{prop: can switch out G/Z(G) for tameness}
		Fix a known simple group $G$, choose $S\in\syl_p(G)$, and assume that $S$ is not normal in $\f_S(G)$. Then $\f_S(G)$ is tamely realized by some known simple group $G^*$. 
	\end{prop}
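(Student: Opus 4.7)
The plan is a case analysis over the classification of finite simple groups, producing in each case a known simple $G^*$ in the same $p$-fusion class as $G$ that tamely realizes $\f_S(G)$. The hypothesis that $S$ is not normal in $\f_S(G)$ excludes the $p$-Goldschmidt cases (compare Theorem \ref{theo: what groups are 2gold} at $p=2$), which is precisely the regime in which the prior tameness results in the literature apply nontrivially; outside that regime one still needs to exhibit a simple witness, so a uniform strategy is wanted.

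For $G$ of Lie type in characteristic $q_0 \neq p$, I would invoke Theorem \ref{theo: tameness and universals have no rigid autos}, which directly supplies a Lie type $G^*$ of universal or adjoint type in some characteristic $q_0^* \neq p$ satisfying $G^* \sim_p G$ with $\kappa_{G^*}$ split surjective. To arrange that $G^*$ is simple, pass to the adjoint quotient and then mod out by the center; via Lemma \ref{lem: outG cong outZG} and the commutative diagram relating $\kappa_{G^*}$ to $\kappa_{G^*/Z(G^*)}$, split surjectivity of $\kappa$ descends to the simple quotient, and the $p$-fusion is unchanged whenever the center is $p'$, which is the generic situation, with the small torsion exceptions handled directly. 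For $G$ of Lie type in characteristic $p$, the hypothesis removes only the rank-one twisted groups listed by Goldschmidt; in the remaining cases a cross-characteristic Lie type group with the same $p$-fusion exists by a variant of Proposition \ref{prop: justification}, reducing to the previous paragraph.

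For $G$ alternating, I would appeal to the tameness of $\alter(n)$ established in \cite{andersen2012reduced}, using exceptional isomorphisms such as $\alter(6) \cong \PSL_2(9)$ and $\alter(8) \cong \PSL_4(2)$ to place low-$n$ fusion systems inside the Lie type catalog when the alternating group itself is not the most convenient witness. For $G$ sporadic, I would cite the group-by-group tameness verifications of Oliver (the appendix of \cite{broto2019automorphisms} at the prime $2$, and the analogous computations at odd primes), together with any coincidences between small sporadic fusion systems and the fusion systems of alternating or Lie type groups.

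The principal obstacle is bookkeeping across the exceptional isomorphisms and the small-rank or twisted Lie type groups: one has to check that every witness $G^*$ produced lies in the class of \emph{known simple} groups and that the passage from a quasisimple realization to a simple one preserves both the $p$-fusion and the split surjectivity of $\kappa$. The most delicate points are the Schur multiplier and center structures for the small-rank twisted Lie type groups at the given prime, and ensuring that the Benson--Solomon exotic systems (which are not realizable by any finite group) never need to be invoked, a fact guaranteed by the assumption that $\f_S(G)$ is the fusion system of an actual known simple group.
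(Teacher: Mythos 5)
You should first note that the paper does not prove this statement at all: it is imported verbatim as Proposition 5.2 of \cite{broto2023realizabilitytamenessfusionsystems}, so there is no internal proof to compare against. Judged on its own merits, your outline does follow the same general strategy as that source (a CFSG case analysis assembling the existing tameness theorems for alternating, sporadic, and Lie type groups), but two of your steps are genuinely wrong or unsupported.

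The defining characteristic case is the clearest gap. You propose to replace $G\in\mathfrak{Lie}(p)$ by a cross-characteristic group of Lie type with the same $p$-fusion ``by a variant of Proposition \ref{prop: justification}''. That proposition does nothing of the sort (it replaces one odd-characteristic universal group by another, at $p=2$), and no such cross-characteristic replacement exists in general: for $G\in\mathfrak{Lie}(p)$ of rank at least $2$ the $p$-fusion system is typically not isomorphic to that of any group in cross characteristic, so this reduction simply fails. The available tool is Theorem \ref{theo: mu is an iso for equichar} (Theorem A of \cite{broto2019automorphisms}), applied directly to the adjoint-type group, which is simple outside a short list of small exceptions that must be handled by hand. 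Relatedly, in the cross-characteristic case your descent of split surjectivity of $\kappa$ from a quasisimple realization to its simple quotient is not a consequence of Lemma \ref{lem: outG cong outZG}: that lemma only identifies $\out(G)$ with $\out(G/Z)$, whereas the nontrivial content is the comparison of $\out(\cl)$ with $\out(\cl/Z(\f))$ (compare Proposition \ref{prop: can switch out G for tameness}, which goes in the opposite direction and is itself a quoted result). Fortunately this descent is unnecessary, since Theorem \ref{theo: tameness and universals have no rigid autos} already covers adjoint type. Finally, ``cite Oliver's sporadic computations'' is not quite enough: Theorem \ref{theo: sporadic groups have kermu=1} shows that $\kappa_{M_{11}}$ is \emph{not} surjective at $p=2$, so $M_{11}$ does not tamely realize its own $2$-fusion system and the witness $G^*$ must be a different known simple group (here $\PSL_3(3)$); the same phenomenon occurs for $\PSL_3(2)$ in Theorem \ref{theo: mu is an iso for equichar}. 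These exceptional substitutions are exactly the content the proposition is meant to absorb, and they cannot be dismissed as ``any coincidences'' without being listed and checked.
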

	
	The following result of \cite{broto2023realizabilitytamenessfusionsystems} gives a result regarding tameness for $p'$-reduced $\mathcal{K}\mathcal{C}$-groups. A group $G$ is a $p'$-reduced if $O_{p'}(G)=1$ and is a $\mathcal{K}\mathcal{C}$-group if all of its components are known quasisimple groups. For our use, its important to note that simple groups are $p'$-reduced $\mathcal{K}\mathcal{C}$-groups and that quasisimple groups are $\mathcal{K}\mathcal{C}$-groups.  
	\begin{prop}[Proposition 5.3(a) in \cite{broto2023realizabilitytamenessfusionsystems}]\label{prop: can switch out G for tameness}
		Let $\f$ be a saturated fusion system over a finite $p$-group $S$. If $\f/Z(\f)$ is tamely realized by the finite $p'$-reduced $\mathcal{K}\mathcal{C}$-group $G^*$, then $\f$ is tamely realized by a finite $p'$-reduced $\mathcal{K}\mathcal{C}$-group $G$ such that $G^*\cong G/Z(G)$. 
	\end{prop}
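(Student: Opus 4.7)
The plan is to construct $G$ as an appropriate central extension of $G^*$ by a subgroup isomorphic to $Z(\f)$. Set $Z=Z(\f)$. The short exact sequence $1\to Z\to S\to S/Z\to 1$, together with $\f$, is a central extension of the saturated fusion system $\f/Z$ by the $p$-group $Z$. By the general theory of central extensions of saturated fusion systems (in the spirit of Broto--Castellana--Grodal--Levi--Oliver and Aschbacher--Kessar--Oliver), such extensions are classified by a cohomology class in a suitable $H^2$ group. Since $\f/Z=\f_{S^*}(G^*)$ for the realization $G^*$, this class can be pulled back to a class in $H^2(G^*;Z)$, which classifies a central extension $1\to Z\to G\to G^*\to 1$. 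The group $G$ is the candidate.

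Next I would verify the easy structural properties of $G$. A Sylow $p$-subgroup of $G$ is a central extension of $S^*$ by $Z$, which is identified with $S$ by construction, so up to this identification $\f_S(G)=\f$ because the fusion in $G$ is determined by the fusion in $G^*$ together with the chosen extension class. The condition $O_{p'}(G)=1$ holds because any normal $p'$-subgroup of $G$ meets the central $p$-group $Z$ trivially and so projects injectively to a normal $p'$-subgroup of $G^*$, which is trivial by hypothesis. Finally $G$ is a $\mathcal{K}\mathcal{C}$-group: its components are central extensions of the components of $G^*$ by subgroups of $Z$, and (invoking the classification of finite simple groups and the known Schur multipliers) central extensions of known quasisimple groups are again known quasisimple.

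The main content is tameness, i.e., that $\kappa_G\colon\out(G)\to\out(\cl_S^c(G))$ is split surjective. Because $Z=O_p(Z(G))$ is characteristic in $G$, there is a natural restriction homomorphism $\pi_G\colon\out(G)\to\out(G^*)$; provided the extension was chosen to be maximal with respect to the given class (so that $G$ is, in the sense of Lemma \ref{lem: outG cong outZG}, the universal cover of $G^*$ relative to $Z$), $\pi_G$ is an isomorphism. Dually, Lemma \ref{lem: pif is well defined} supplies $\pi_\f\colon\out(\f)\to\out(\f/Z)$, and a parallel map $\pi_\cl\colon\out(\cl)\to\out(\cl/Z)$ fits with $\pi_G$ and $\kappa_G$ into a commutative diagram analogous to ``Diagram A'' of the paper. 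A splitting $s^*\colon\out(\f/Z)\to\out(G^*)$ for $\bar\kappa_{G^*}$ then transfers through $\pi_G^{-1}$: given $[\vp]\in\out(\f)$, set $[\tilde\vp]=\pi_G^{-1}(s^*(\pi_\f([\vp])))\in\out(G)$, and check using the commutativity of the diagram that $\bar\kappa_G([\tilde\vp])=[\vp]$, up to an element of $\ker(\pi_\f)$. The remaining discrepancy lies in $\ker(\pi_\f)\cap\out_{\text{diag}}(\f)$, which is controlled by elements of $\od(G)$ coming from $Z$ itself and can be absorbed by adjusting $\tilde\vp$ by conjugation by a preimage in $\A$.

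The main obstacle will be guaranteeing that automorphisms of $G^*$ actually lift to automorphisms of $G$, equivalently that the chosen cohomology class in $H^2(G^*;Z)$ is $\aut(G^*)$-invariant. This invariance is what makes $\pi_G$ an isomorphism and lets the splitting for $\kappa_{G^*}$ transfer to one for $\kappa_G$. It should follow from the fact that the class arose as the invariant of a central extension of fusion systems by a \emph{characteristic} subgroup of $\f$, hence is functorial under fusion-preserving automorphisms; and these include the restriction to $S^*$ of every automorphism of $G^*$ that normalizes $S^*$, which suffices to control $\out(G^*)$ since any outer class has such a representative.
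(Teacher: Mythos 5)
Your construction of $G$ as a central extension of $G^*$ by $Z(\f)$ is a reasonable starting point, and the verification that $O_{p'}(G)=1$ and that $G$ is a $\mathcal{K}\mathcal{C}$-group are basically fine (though the components of $G$ are the derived subgroups of the preimages of the components of $G^*$, which are perfect by a standard commutator argument, not the preimages themselves). The gap is in the tameness argument, which as written does not go through in the generality claimed.

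First, you invoke Lemma~\ref{lem: outG cong outZG} to assert that $\pi_G\colon\out(G)\to\out(G^*)$ is an isomorphism. That lemma is stated only for a perfect group $G$ that is the universal central extension of $G/Z$. In the present setting $G^*$ is merely a $p'$-reduced $\mathcal{K}\mathcal{C}$-group and need not be perfect or simple, so neither is $G$, and there is no ``universal cover of $G^*$ relative to $Z$'' in general. Already the degenerate example $G=Z_4$, $G^*=1$, $Z=Z_4$ shows $\pi_G$ can fail to be injective (here $\out(G)\cong Z_2 \to \out(G^*)=1$), so $\pi_G^{-1}$ is not defined and the transfer of the splitting $s^*$ breaks down at the outset. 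Some replacement argument is needed that does not assume perfectness.

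Second, the final ``absorption'' step, in which you correct the remaining discrepancy in $\ker(\pi_\f)\cap\out_{\text{diag}}(\f)$ by conjugating by a preimage in $\A=O_2(\T)$, imports machinery ($\od(G)$, $\out_{\text{diag}}$, $\A$, the maximal torus $\T$) that exists only for groups of Lie type under Hypotheses~\ref{hyp: gp of lie type}. Proposition~\ref{prop: can switch out G for tameness} is a statement about arbitrary $p'$-reduced $\mathcal{K}\mathcal{C}$-groups, and these notions are simply not available. Related to this, the key point you yourself flag --- that the group-extension class in $H^2(G^*;Z)$ must be invariant under $\aut(G^*)$, or at least under a transversal realizing $\out(G^*)$ --- is not established; the functoriality-under-fusion-preserving-automorphisms heuristic is plausible but requires an actual argument identifying the cohomological invariant of the fusion-system central extension with a class in $H^2(G^*;Z)$ that transforms correctly. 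In fact what must be produced is a splitting of $\kappa_G\colon\out(G)\to\out(\cl_S^c(G))$, not merely of $\bar\kappa_G$, so one also has to account for $\ker(\mu_\cl)$, which your diagram chase does not touch. Until these three points --- a substitute for the injectivity/surjectivity of $\pi_G$, an argument that works outside the Lie-type setting, and control of $\ker(\mu_\cl)$ --- are supplied, the tameness conclusion is not proved.
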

	
	We can now prove the following lemma regarding quasisimple groups.
	\begin{lem}\label{lem: if a simple group has no noninner rigids, then any quasisimple cover has no noninner rigids}
		Let $G$ be a quasisimple group that is not 2-Goldschmidt and $S\in\syl_2(G)$. Set $\f=\f_S(G)$ and $\cl=\cl_S^c(G)$.  If the centric linking system for $G/Z(G)$ has no noninner rigid automorphisms, then $\cl$ has no noninner rigid automorphisms. 
	\end{lem}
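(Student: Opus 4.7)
The plan is to execute two successive tameness reductions following Broto, M{\o}ller, Oliver, and Ruiz, then finish with a diagram chase relative to the projection $G' \to G'/Z(G')$. Since $G$ is quasisimple and not $2$-Goldschmidt, $G/Z(G)$ is a known simple group for which $S/Z(\f)$ is not normal in $\f/Z(\f)$. Proposition \ref{prop: can switch out G/Z(G) for tameness} then produces a known simple group $G^{*}$ tamely realizing $\f/Z(\f)$. Under the resulting isomorphism $\f/Z(\f)\cong\f_{S^{*}}(G^{*})$ the associated centric linking systems correspond by the uniqueness statement in Proposition \ref{prop: lim1 is there but lim2 gone}, so the hypothesis $\ker(\mu_{\cl/Z(\f)})=1$ translates to $\mu_{\cl^{*}}$ being an isomorphism. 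Combining with $\kappa_{G^{*}}$ split surjective gives that $\bar{\kappa}_{G^{*}}$ is split surjective.

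I would next invoke Proposition \ref{prop: can switch out G for tameness} on $\f$ with this $G^{*}$ to obtain a finite $p'$-reduced $\mathcal{K}\mathcal{C}$-group $G'$ tamely realizing $\f$, with $G'/Z(G')\cong G^{*}$. The crucial additional fact I would need to extract from this construction is that $G'$ can be taken to be quasisimple, and in particular perfect. Granted perfectness, the induced map $\pi_{G'}:\out(G')\to\out(G^{*})$ is injective, since any $\alpha\in\aut(G')$ inducing the identity on $G'/Z(G')$ has the form $\alpha(g)=g\cdot z(g)$ for a homomorphism $z:G'\to Z(G')$, and $G'$ perfect with $Z(G')$ abelian forces $z$ to be trivial.

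The main calculation is then a diagram chase. Fix $[\gamma]\in\ker(\mu_\cl)$, which has order dividing $2$ by Theorem \ref{theo: kermu is a 2 group}. Using a splitting $\sigma$ of $\kappa_{G'}$, set $[\delta]=\sigma([\gamma])\in\out(G')$, so $[\delta]$ has $2$-power order and $\kappa_{G'}([\delta])=[\gamma]$. The diagram
\[\begin{tikzcd}
{\out(G')} && {\out(\f)} \\
{\out(G^{*})} && {\out(\f/Z(\f))}
\arrow["{\bar{\kappa}_{G'}}", from=1-1, to=1-3]
\arrow["{\pi_{G'}}"', from=1-1, to=2-1]
\arrow["{\bar{\kappa}_{G^{*}}}", from=2-1, to=2-3]
\arrow["{\pi_{\f}}", from=1-3, to=2-3]
\end{tikzcd}\]
commutes by naturality under passage to the quotient by the center. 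Since $\bar{\kappa}_{G'}([\delta])=\mu_\cl([\gamma])=1$, commutativity yields $\pi_{G'}([\delta])\in\ker(\bar{\kappa}_{G^{*}})=\ker(\kappa_{G^{*}})$, the last equality using that $\mu_{\cl^{*}}$ is injective. Because $G^{*}$ is simple, $O_{2'}(G^{*})=1$, and Theorem \ref{theo: ker kappaG is a p prime group} shows $\ker(\kappa_{G^{*}})$ is a $2'$-group. A $2$-power-order element of a $2'$-group is trivial, so $\pi_{G'}([\delta])=1$; injectivity of $\pi_{G'}$ then forces $[\delta]=1$, whence $[\gamma]=\kappa_{G'}([\delta])=1$.

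The main obstacle I anticipate is verifying that Proposition \ref{prop: can switch out G for tameness} can be arranged to yield a quasisimple $G'$, as opposed to a $p'$-reduced $\mathcal{K}\mathcal{C}$-group that is merely a central extension of $G^{*}$ without being perfect; perfectness of $G'$ is essential to the injectivity of $\pi_{G'}$. Since the original $G$ already provides a quasisimple realization of $\f$, I expect this can be handled either by inspecting the construction in the proof of Proposition \ref{prop: can switch out G for tameness} or by realizing $G'$ as a suitable quotient of the universal central extension of $G^{*}$ tailored to match the fusion system $\f$, but it is the step that warrants the most care.
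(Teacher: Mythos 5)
Your overall route is the paper's: the same two tameness reductions (Proposition \ref{prop: can switch out G/Z(G) for tameness} followed by Proposition \ref{prop: can switch out G for tameness}), injectivity of the map on outer automorphism groups induced by $G'\to G'/Z(G')$, and then a diagram chase combining Theorem \ref{theo: kermu is a 2 group} with Theorem \ref{theo: ker kappaG is a p prime group}. The chase itself is sound, and two local choices you make are perfectly good substitutes for the paper's: using the splitting of $\kappa_{G'}$ to produce a preimage of $2$-power order is, if anything, tidier than the paper's appeal to bare surjectivity, and your argument that an automorphism of a perfect group acting trivially on the central quotient is trivial (via the induced homomorphism into $Z(G')$, which must kill $[G',G']$) is a correct alternative to the paper's three subgroups lemma computation.

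The genuine gap is exactly the step you flag and leave open: perfectness of $G'$. Proposition \ref{prop: can switch out G for tameness} only returns a $2'$-reduced $\mathcal{K}\mathcal{C}$-group $G'$ with $G'/Z(G')\cong G^*$, and neither of your proposed remedies (inspecting its proof, or building $G'$ from the universal central extension of $G^*$) is carried out; without perfectness the injectivity of $\pi_{G'}$, and hence the whole chase, collapses. The paper closes this not by reworking the construction but by showing perfectness is forced by the fusion system together with the constraints already in hand: since $G$ is perfect, the focal subgroup theorem gives $\mathfrak{foc}(\f)=[G,G]\cap S=S$, and since $G'$ realizes $\f$ this yields $S'\leq[G',G']$, so $G'/[G',G']$ is a $2'$-group; on the other hand $O_{2'}(G')=1$ forces $Z(G')$ to be a $2$-group (compare Lemma \ref{lem: zf is 2 part of zg}), so $Z(G')\leq[G',G']$, and since $G'/Z(G')$ is nonabelian simple one gets $[G',G']Z(G')=G'$ and hence $G'=[G',G']$. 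Supplying this (or an equivalent) argument is necessary to complete your proof; as written it is incomplete at precisely the point on which the key injectivity rests.
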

	\begin{proof}
		Note by Lemma \ref{lem: zf is 2 part of zg}, $Z(\f)=O_{2}(Z(G))$ and, as a result, $\f/Z(\f)$ is the fusion system for $G/Z(G)$ over $S/Z(\f)$ and $\cl/Z(\f)$ is the centric linking system for $G/Z(G)$ over $S/Z(\f)$. \par 
		
		Using Proposition \ref{prop: can switch out G/Z(G) for tameness}, since $G/Z(G)$ is a simple group, we can replace it by another simple group $\overline{H}$ so that $\overline{H}$ tamely realizes $\f/Z(\f)$. Since $\overline{H}$ is a finite $2'$-reduced $\mathcal{K}\mathcal{C}$-group that tamely realizes $\f/Z(\f)$, we can apply Proposition \ref{prop: can switch out G for tameness} to replace $G$ by a $2'$-reduced $\mathcal{K}\mathcal{C}$-group $H$ so that $H$ tamely realizes $\f$ and $H/Z(H)\cong\overline{H}$. \par
		
		We then obtain the following commutative diagram:
		\begin{center}
			$\begin{tikzcd}
				{\out(H)} && {\out(\cl)} && {\out(\f)} \\
				\\
				{\out(\overline{H})} && {\out(\cl/Z(\f))} && {\out(\f/Z(\f))}
				\arrow["{\kappa_H}"', two heads, from=1-1, to=1-3]
				\arrow["{\pi_H}", from=1-1, to=3-1]
				\arrow["{\mu_{\cl}}"', two heads, from=1-3, to=1-5]
				\arrow["{\pi_\f}", from=1-5, to=3-5]
				\arrow["{\kappa_{\overline{H}}}", two heads, from=3-1, to=3-3]
				\arrow["{\mu_{\cl/Z(f)}}", two heads, from=3-3, to=3-5]
			\end{tikzcd}$
		\end{center}
		We want show that the map $\pi_H$ is injective, and in order to do so, we show that $H$ is perfect (i.e, quasisimple). We have the following: 
		\begin{enumerate}
			\item[(A)] $Z(H)$ is a $2$-group since $O_{2'}(H)=1$ ($H$ is $2'$-reduced). 
			\item[(B)] $\overline{H}$ is a nonabelian simple group since $G$ is not a 2-Goldschmidt group.
			\item[(C)] $\f$ is perfect since $G$ is perfect. By the focal subgroup theorem, $\mathfrak{foc}(\f)=[G,G]\cap S=S$.   
		\end{enumerate}
		By (C), $S\leq [H,H]$ which gives us that the normal closure of $S$ ($O^{2'}(H)$) is contained in $[H,H]$. Therefore, $H/[H,H]$ is a $2'$-group, which when coupled with (A), implies that $Z(H)\leq[H,H]$. Hence, 
		\begin{align*}
			[H,H]/Z(H)=[\overline{H},\overline{H}]=\overline{H}
		\end{align*}
		Therefore, $H$ is perfect as desired.\par
		
		We now show that $\pi_H$ is injective. Let $[\a]\in\ker(\pi_H)$. There is some $g\in H$ such that $\a c_{g^{-1}}|_{\overline{H}}=\id_{\overline{H}}$. In particular, $[H,\a c_{g^{-1}}]\leq Z(H)$. This gives us the following:
		\begin{align*}
			[H,H,\a c_{g^{-1}}]&=[H,\a c_{g^{-1}}] \\
			[H,\a c_{g^{-1}},H]&\leq [Z(H),H] = 1 \\
			[\a c_{g^{-1}},H,H]&\leq [Z(H),H] = 1
		\end{align*}
		and by the three subgroups lemma, $[H,\a c_{g^{-1}}]=1$ which gives us that $[\a]=1$. \par
		
		Now, let $[\a]\in\ker(\mu_\cl)$. Because $\kappa_H$ is surjective, we can choose an element $[\b]\in\out(H)$ of order 2 such that $\kappa_H([\b])=[\a]$. We then have that 
		\begin{align*}
			1&= \pi_\f(\mu_\cl(\kappa_H([\b]))) \\
			&=\mu_{\cl/Z(\f)}(\kappa_{\overline{H}}(\pi_H([\b])))
		\end{align*}
		Therefore, since $\ker(\mu_{\cl/Z(\f)})=1$, $[\b]\in\ker(\kappa_{\overline{H}}\pi_H)$. Since $\ker(\kappa_{\overline{H}})$ is a $2'$-group by Theorem \ref{theo: ker kappaG is a p prime group}, $[\b]\in\ker(\pi_H)=1$. Therefore, $[\a]=1$ and $\mu_{\cl}$ is injective.
	\end{proof}
	
	We can now prove the following corollary that classifies the finite groups of Lie type in odd characteristic that have centric linking systems that have noninner rigid automorphisms up to equivalent 2-fusion. 
	\begin{cor}\label{cor: classification of noninner rigids for gps of lie type}
		Let $G$ be a group of Lie type in odd characteristic. One and only one of the following holds:
		\begin{enumerate}
			\item The centric linking system of $G$ has no noninner rigid automorphisms. 
			\item $G$ has an equivalent fusion system to $A_1^{\ad}(q)$ with $q\equiv1\pmod{8}$. 
			\item $G$ has an equivalent fusion system to ${}^2D_n^{\ad}(q)$ with $q\equiv1\pmod{4}$. 
		\end{enumerate}
	\end{cor}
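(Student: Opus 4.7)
The plan is to assemble the classification by splicing the preceding results together, treating three regimes separately: abelian Sylow $2$-subgroups, nonabelian Sylow $2$-subgroups with no exceptional isogeny behavior, and the two exceptional families.

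First I would dispose of the case of abelian Sylow $2$-subgroups. If $S\in\syl_2(G)$ is abelian, then $\f_S(G)$ is constrained (the unique maximal subgroup of $S$ is centric and normal in the fusion system), so by Proposition~\ref{prop: constrained fs have no noninner rigid autos} the centric linking system $\cl$ has no noninner rigid automorphism, placing $G$ in case (1). Thereafter I may assume $S$ is nonabelian, and Proposition~\ref{prop: justification} lets me replace $G$ by a group $G^*$ of universal type whose $\s$-setup satisfies Hypotheses~\ref{hyp: gp of lie type} with $\f_S(G)\cong\f_{S^*}(G^*)$; since the conclusion only concerns the equivalence class of the $2$-fusion system, it suffices to prove the corollary for $G^*$ and for its adjoint quotient $G^*/Z(G^*)$, and then transfer back.

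Next I would analyze $G^*/Z(G^*)$. If it is neither $A_1^{\ad}(q)$ nor ${}^2D_n^{\ad}(q)$ with $n\geq 3$, then Theorem~\ref{theo: rigid autos live in kermu} gives $\ker(\mu_{\cl/Z(\f)})=1$, so the centric linking system of $G^*/Z(G^*)$ has no noninner rigid automorphism. For $G^*$ itself of universal type, Theorem~\ref{theo: tameness and universals have no rigid autos} (the universal-type conclusion of Broto--M{\o}ller--Oliver, together with Oliver's handling of $G_2$) gives the same conclusion. To propagate this to intermediate isogeny covers and to $G^*/Z(G^*)$ itself when accessed via a cover, I would invoke Lemma~\ref{lem: if a simple group has no noninner rigids, then any quasisimple cover has no noninner rigids}: starting from the fact that the centric linking system of the adjoint simple group has no noninner rigid automorphism, any quasisimple cover inherits the property, which places all such $G$ in case (1).

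Finally, for the exceptional types, Theorems~\ref{theo: A1 has a rigid auto} and \ref{theo: twisted Dn has a rigid auto} supply explicit noninner rigid automorphisms of the centric linking systems of $A_1^{\ad}(q)$ (with $q\equiv 1\pmod 8$, as forced by nonabelian $S$) and of ${}^2D_n^{\ad}(q)$ with $q\equiv 1\pmod 4$, placing such $G$ in case (2) or (3) respectively. Mutual exclusivity of the three alternatives is immediate from these existence statements, since a centric linking system cannot both have and not have a noninner rigid automorphism. The main obstacle is really a bookkeeping one: ensuring that after the reduction of Proposition~\ref{prop: justification} and the passage between isogeny types through Lemma~\ref{lem: if a simple group has no noninner rigids, then any quasisimple cover has no noninner rigids}, the two exceptional families are precisely those whose equivalence classes of $2$-fusion systems survive to case (2) or (3), and that no spurious Lie-type group with abelian Sylow $2$-subgroups slips into the exceptional list through a coincidental fusion system isomorphism; this is handled by the observation that the exceptional fusion systems are nonconstrained, whereas the abelian-Sylow families of Theorem~\ref{theo: what groups are 2gold} produce constrained ones.
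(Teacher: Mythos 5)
Your proposal follows essentially the same route as the paper: dispose of abelian Sylow $2$-subgroups via Proposition~\ref{prop: constrained fs have no noninner rigid autos}, reduce through Proposition~\ref{prop: justification} to a representative satisfying Hypotheses~\ref{hyp: gp of lie type}, handle the universal type with Theorem~\ref{theo: tameness and universals have no rigid autos}, the non-exceptional adjoint type with Theorem~\ref{theo: rigid autos live in kermu}, the two exceptional families with Theorems~\ref{theo: A1 has a rigid auto} and \ref{theo: twisted Dn has a rigid auto}, and the remaining isogeny types with Lemma~\ref{lem: if a simple group has no noninner rigids, then any quasisimple cover has no noninner rigids}. One small imprecision: Proposition~\ref{prop: justification} applies to $G$ of universal type, so when $G$ is not universal the replacement should be applied to its universal cover $G^{\un}$, and the conclusion one actually gets is $\f_S(G)\cong\f_{S^*/Z^*}(G^*/Z^*)$ for an appropriate central $Z^*\leq G^*$, not $\f_S(G)\cong\f_{S^*}(G^*)$; you clearly intend this reduction, and the paper's own phrasing is similarly terse, but as written the isomorphism is misstated. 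Your explicit observation that mutual exclusivity is forced because the exceptional fusion systems are nonconstrained (ruling out coincidental identification with an abelian-Sylow family) is a useful clarification that the paper leaves implicit.
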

	\begin{proof}
		If $G$ has abelian Sylow $2$-subgroups, then its fusion system is constrained. Therefore, $\mu_{\cl_S^c(G)}$ is injective by Proposition \ref{prop: constrained fs have no noninner rigid autos}. \par 
		
		If $G$ is of universal type, we apply Theorem \ref{theo: tameness and universals have no rigid autos}. 
		
		\par If $G$ is of adjoint type and does not satisfy 2 or 3, we can find a universal group of Lie type $G^*$ such that $G^*$ satisfies Hypotheses \ref{hyp: gp of lie type} and $G^*/Z(G^*)\sim_2 G$ and then apply Theorem \ref{theo: rigid autos live in kermu}. 
		
		\par If $G$ satisfies 2 or 3, then the centric linking system of $G$ has a noninner rigid automorphism by Corollary \ref{cor: extension of 516 to adjoint gps of Lie type}. 
		
		\par If $G$ is a quasisimple extension of simple group of Lie type that is not universal, then it is a quasisimple extension of a simple group of Lie type whose centric linking system does not have a noninner rigid automorphism since $Z(A_1^{\un}(q))=Z_2$ and for $n\geq3$, $Z({}^2D_n^{\un}(q))=Z_2$. Therefore, the centric linking system of $G$ is has no noninner rigid automorphism by Lemma \ref{lem: if a simple group has no noninner rigids, then any quasisimple cover has no noninner rigids}.  
	\end{proof}

	
	\section{Oliver's question for quasisimple saturated fusion systems}

	In Oliver's paper with Aschbacher (\cite{aschbacher2016fusion}), Oliver posed the following question:
	\begin{ques}[Oliver's Question 7.9 in \cite{aschbacher2016fusion}]
		Let $G$ be a finite 2-perfect group whose Schur multiplier has odd order. If $\f=\f_S(G)$ and $\cl=\cl_S^c(G)$ for some $S\in\syl_2(G)$, then is the homomorphism $\mu_\cl:\out(\cl)\to\out(\f)$ is injective?
	\end{ques}
    The following corollary shows that the answer to Oliver's question for the known quasisimple saturated fusion systems is ``yes''.
	\begin{repeatcor}{cor: quasisimple sat fus with noninner rigid autos}
		\textit{Let $\f$ be a known quasisimple saturated fusion system at the prime 2. The associated linking system of $\f$ has a noninner rigid automorphism if and only if there is a finite group $G$ and $S\in\syl_2(G)$ such that $\f\cong\f_S(G)$ and one of the following holds:
			\begin{enumerate}
				\item $G=A_1^{\ad}(q)=\PSL_2(q)$ with $q\equiv\pm1\pmod{8}$ 
				\item $G={}^2D_n^{\ad}(q)=\POmega^{-}_{2n}(q)$ with $n\geq3$ and $q$ odd
				\item $G=\alter(n)$ with $n\equiv2,3\pmod{4}$
		\end{enumerate}}
		\noindent \textit{Moreover, in each of these cases, $\ker(\mu_\cl)$ is cyclic of order 2.}
	\end{repeatcor} 
	\noindent Recall that the collection of the known quasisimple saturated fusion systems is defined to be the collection that contains the Benson-Solomon fusion systems and the fusion systems that are of the form $\f_S(G)$ where $G\in\mathfrak{Alt}\cup\mathfrak{Spor}^*\cup\mathfrak{Lie}^*$. The proof of Corollary \ref{cor: quasisimple sat fus with noninner rigid autos} is an application of Corollary \ref{cor: classification of noninner rigids for gps of lie type} in conjunction with the following results. \par 

	Levi and Oliver show that the morphism $\mu_\cl$ is injective for the Benson-Solomon fusion systems in the following lemma:
	\begin{lem}[Lemma 3.2 in \cite{levi2002construction}]\label{lem: bs systems taken care of}
		Fix a prime power $q$, and let 
		\begin{align*}
			&\mathcal{Z}_{\text{Sol}}(q):\mathcal{O}^c_{\text{Sol}}(q)\to\mathcal{A} & &\text{ and } & &Z_{\text{Spin}}(q):\mathcal{O}^c_{\text{Spin}}(q)\to\mathcal{A}
		\end{align*}
		be the functors $\mathcal{Z}(P)=Z(P)$ where $\mathcal{A}$ is the category of abelian groups. Then for all $i\geq0$, $\varprojlim_{\mathcal{O}^c_{\text{Sol}}(q)}^i(\mathcal{Z}_{\text{Sol}}(q))=0=\varprojlim_{\mathcal{O}^c_{\text{Spin}}(q)}^i(\mathcal{Z}_{\text{Spin}}(q))$.
	\end{lem}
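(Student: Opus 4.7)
The plan is to reduce the vanishing of the higher limits to a short list of $\Lambda^\ast$-functor computations, and then dispose of each by either exhibiting a nontrivial $O_2$ in the relevant outer automorphism group or by an explicit cohomology calculation.

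First, I would invoke the Jackowski--McClure--Oliver framework: for a saturated fusion system $\mathcal{E}$ over a finite $2$-group $R$ there is an identification
\begin{align*}
{\varprojlim_{\mathcal{O}(\mathcal{E}^c)}}^i(\mathcal{Z}_\mathcal{E}) \;\cong\; \bigoplus_{[P]}\Lambda^i\bigl(\out_\mathcal{E}(P);\,Z(P)\bigr),
\end{align*}
the sum running over $\mathcal{E}$-conjugacy classes of $\mathcal{E}$-centric $\mathcal{E}$-radical subgroups $P\leq R$, where $\Lambda^i(\Gamma;M)=0$ as soon as $O_2(\Gamma)\neq 1$. This decomposition cuts the problem down to finitely many explicit group-cohomological computations, one for each radical-centric class whose outer automorphism group is $2$-reduced.

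Second, I would use the explicit construction of $\f_{\text{Spin}}(q)$ as the $2$-fusion system of $\Spin_7(q)$, and of $\f_{\text{Sol}}(q)$ obtained from it by adjoining a single ``exotic'' automorphism of a distinguished elementary abelian subgroup, as in the construction of Levi--Oliver. From this explicit data one can enumerate the $\f$-centric $\f$-radical subgroups in either system. After collapsing conjugacy classes under the new automorphism, the list consists of the Sylow $2$-subgroup $R$ itself, a distinguished elementary abelian subgroup $E$ of rank $4$ (the ``root'' subgroup used to glue the Solomon system together), and a short list of intermediate subgroups dictated by the $2$-local parabolic structure of $\Spin_7(q)$.

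Third, I would check case by case that every conjugacy class other than that of $E$ has a nontrivial $O_2$ in its $\f$-outer automorphism group: for $P=R$ this is immediate because $\out_\f(R)$ is itself a $2$-group in the Solomon setup; for the intermediate classes one exhibits the unipotent radical of the relevant $2$-local subgroup of $\Spin_7(q)$ as a normal $2$-subgroup of $\out_\f(P)$. This disposes of all the summands in the decomposition except the one coming from $[E]$. For that last summand, $\out_\f(E)$ acts on $E$ essentially as $\operatorname{GL}(E)$ on its natural $\F_2$-module, and the vanishing of $\Lambda^\ast$ here is the classical computation that all higher $\Lambda^i$ of $\operatorname{GL}_n(\F_2)$ with coefficients in its natural module are zero; this closes the Solomon case, and the Spin case is strictly parallel (in fact, the list of radical-centric classes is essentially the same).

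The main obstacle is verifying that passing from $\f_{\text{Spin}}(q)$ to $\f_{\text{Sol}}(q)$ does not introduce any new radical-centric conjugacy class whose outer automorphism group is $2$-reduced; equivalently, that the fusion produced by the adjoined automorphism respects the stratification of radical-centric classes into those with nontrivial $O_2$ and the single exceptional class $[E]$. This is where the specifics of the Solomon construction, and the embedding $\Spin_7<\Spin_9$ through which the extra automorphism is realized, enter in an essential way, and it is this combinatorial bookkeeping, rather than any deep cohomological input, that is the delicate part of the argument.
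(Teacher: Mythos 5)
The paper does not prove this lemma at all: it is cited verbatim from Levi--Oliver \cite{levi2002construction}, so there is no internal proof to compare against. That said, your overall framework---identifying higher limits with $\Lambda^\ast$-functors over radical--centric conjugacy classes and killing them via $O_2\neq 1$ or explicit computation---is the standard machinery for this kind of statement and is in the same spirit as the actual Levi--Oliver argument. But there is a concrete error and a substantive gap.

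The concrete error: you assert that ``for $P=R$ this is immediate because $\out_\f(R)$ is itself a $2$-group in the Solomon setup.'' This cannot happen. The Sylow axiom for a saturated fusion system over a $2$-group $R$ states precisely that $\inn(R)$ is a Sylow $2$-subgroup of $\aut_\f(R)$, so $\out_\f(R)=\aut_\f(R)/\inn(R)$ always has odd order. In particular $O_2(\out_\f(R))=1$, and the class $[R]$ is always $\f$-radical with $\Lambda^\ast(\out_\f(R);Z(R))$ not automatically zero: indeed $\Lambda^0(\out_\f(R);Z(R))=Z(R)^{\out_\f(R)}$, which must be shown to vanish by an explicit computation, not by the $O_2\neq 1$ criterion. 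As stated your argument leaves the summand from $[R]$ (and in fact every summand attached to a genuinely $\f$-radical subgroup, which is exactly where $\Lambda^\ast$ can be nonzero) completely unaddressed, and these are the only summands one actually needs to compute.

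Beyond that, you openly defer ``the delicate part of the argument''---the enumeration of $\f$-centric $\f$-radical subgroups in $\f_{\text{Sol}}(q)$ and $\f_{\text{Spin}}(q)$, the identification of their $\f$-outer automorphism groups, and the verification that the adjoined Solomon automorphism does not create a new radical class whose $\Lambda^\ast$ is nonzero. This bookkeeping is precisely the content of Levi--Oliver's proof and of their surrounding structural propositions; without it, what you have is a plan of attack rather than a proof. The assertion that all higher $\Lambda^i$ of $\operatorname{GL}_n(\F_2)$ on its natural module vanish also needs a citation or a short Steinberg-module argument (one needs that the Steinberg module and the natural module are nonisomorphic irreducibles and then the degree-shift from the Tits building dimension), and the claim that the Spin case is ``strictly parallel'' glosses over the fact that $\out_{\f_{\text{Spin}}}(E)$ is a proper subgroup of $\operatorname{GL}_4(\F_2)$, so the relevant $\Lambda^\ast$ computation is not literally the same group.
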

    
	Anderson, Oliver, and Ventura show that $\mathfrak{Alt}$ satisfies Corollary \ref{cor: quasisimple sat fus with noninner rigid autos} in the following two lemmas:
	\begin{lem}[Lemma 4.6 in \cite{andersen2012reduced}]\label{lem: outf is Z2}
		Assume that $n\geq8$. Set $G=\alter(n)$ and fix $S\in\syl_2(G)$. Then
		\begin{align*}
			\out(\f)\cong\begin{cases}
				Z_2 &\text{if }n\equiv0,1\pmod{4} \\
				1   &\text{if }n\equiv2,3\pmod{4}
			\end{cases}
		\end{align*}
		In all cases, $\mu_\cl\circ\k_G$ sends $\out(G)=\out_{\sym(n)}(G)\cong Z_2$ onto $\out(\f)$. 
	\end{lem}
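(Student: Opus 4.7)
The plan is to compute $\out(\f)$ by analyzing the image of the generator of $\out(G) \cong Z_2$ under $\mu_\cl \circ \k_G$. For $n \geq 8$, standard results give $\out(\alter(n)) \cong Z_2$, generated by conjugation by any transposition $t \in \sym(n) \setminus \alter(n)$. Since $\mu_\cl$ is surjective by Proposition \ref{prop: lim1 is there but lim2 gone} and $\k_G$ is surjective (as $\alter(n)$ is known to tamely realize its $2$-fusion system), the composition $\mu_\cl \circ \k_G$ is surjective, so $\out(\f)$ is cyclic of order at most $2$, generated by the image of $[c_t]$. It therefore suffices to decide when $[c_t|_S]$ is trivial in $\out(\f)$, or equivalently, when $C_{\sym(n)}(S)$ contains an odd permutation (so that multiplying $t$ by it produces an element of $N_{\alter(n)}(S)$ acting on $S$ exactly as $t$ does).

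To make the analysis concrete, write the binary expansion $n = 2^{a_1} + \cdots + 2^{a_r}$ with $a_1 > \cdots > a_r \geq 0$, partition $\{1, \ldots, n\}$ into disjoint blocks $B_i$ of size $2^{a_i}$, and take $T = T_1 \times \cdots \times T_r \in \syl_2(\sym(n))$ with $T_i \in \syl_2(\sym(B_i))$, so $S = T \cap \alter(n) \in \syl_2(\alter(n))$. The residue $n \pmod 4$ is governed by whether some $a_i$ equals $1$: this happens exactly when $n \equiv 2, 3 \pmod 4$.

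If $n \equiv 2, 3 \pmod 4$, then for the index $i$ with $a_i = 1$ we have $T_i = \langle t \rangle$ with $t$ a transposition between the two points of $B_i$. Since the blocks are pairwise disjoint, $t$ commutes with every $T_j$ for $j \neq i$ and with $T_i$ itself, so $t \in C_{\sym(n)}(T) \leq C_{\sym(n)}(S)$. As $t$ is odd, $[c_t|_S] = 1$ in $\out(\f)$ and thus $\out(\f) = 1$.

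If $n \equiv 0, 1 \pmod 4$, then no $a_i$ equals $1$ (the only possible exception is $a_r = 0$, which gives a fixed-point block contributing nothing). The main obstacle is to prove $C_{\sym(n)}(S) \leq \alter(n)$ in this situation. By the block decomposition, this reduces to the statement: for every $a \geq 2$, the centralizer in $\sym(2^a)$ of a Sylow $2$-subgroup $S_a \in \syl_2(\alter(2^a))$ is contained in $\alter(2^a)$. This can be established inductively on $a$ using the iterated wreath-product structure of $T_i$, with base case $a = 2$ where $S_2 = V_4$ is self-centralizing in $\sym(4)$ so that $C_{\sym(4)}(V_4) = V_4 \leq \alter(4)$. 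Granted this, $C_{\sym(n)}(S)$ contains no odd permutation, hence $[c_t|_S] \neq 1$ in $\out(\f)$, giving $\out(\f) \cong Z_2$ and the surjectivity statement.
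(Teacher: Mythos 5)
The central gap is your justification of surjectivity of $\mu_\cl\circ\kappa_G$. This paper does not prove the lemma (it quotes it from \cite{andersen2012reduced}), and in that source the tameness statement you invoke is Proposition 4.8 (quoted here as Proposition \ref{prop: outg cong outcl}), which comes after and depends on the computation of $\out(\f)$ in Lemma 4.6 --- the very statement you are proving. Surjectivity of $\mu_\cl\circ\kappa_G$ is precisely the assertion that every fusion-preserving automorphism of $S$ agrees, modulo $\aut_\f(S)$, with conjugation by some element of $N_{\sym(n)}(S)$; that is the hard content of the lemma, and \cite{andersen2012reduced} establish it by a direct analysis of $\aut(S)$ and of the fusion in the wreath-product Sylow subgroup. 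Assuming it via tameness is therefore circular. (Even granting the literature wholesale, tameness in the sense of Definition \ref{def: tameness} only guarantees split surjectivity of $\kappa_{G^*}$ for \emph{some} group $G^*$ realizing the fusion system, and the explicit statement that $\kappa_{\alter(n)}$ itself is onto is only recorded for $n\equiv0,1\pmod 4$.) Without an independent argument here, your proof only computes the image of $\out(G)$ in $\out(\f)$ --- a lower bound --- and never rules out additional, ``exotic'' classes in $\out(\f)$, which is exactly what distinguishes this lemma from a routine parity check.

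The part you do carry out is essentially correct, with two repairable issues. First, to restrict $c_t$ to $S$ you must pick the odd representative $t$ inside $N_{\sym(n)}(S)$ (e.g.\ $t\in T\setminus S$); an arbitrary transposition need not normalize $S$, though the block transposition you use in the $n\equiv2,3\pmod 4$ case does centralize $S$ and settles that case. Second, the key claim $C_{\sym(2^a)}(S_a)\leq\alter(2^a)$ for $a\geq2$ is only asserted; it is true, and a clean argument avoids the wreath-product induction: since the blocks have pairwise distinct sizes the reduction to single blocks is valid, $S_a$ is transitive on its $2^a$ points, so $C_{\sym(2^a)}(S_a)$ is semiregular; an odd semiregular element of $2$-power order on $2^a$ points must be a single $2^a$-cycle, which would force the centralizer to be regular and hence force the point stabilizer of $S_a$ to be normal, hence trivial --- impossible for $a\geq3$ (and for $a=2$ one checks $C_{\sym(4)}(V_4)=V_4$ directly). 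Supplying that argument fixes the secondary gap, but the surjectivity step above still needs a genuinely independent proof before your outline becomes a proof of the lemma.
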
 
	\begin{prop}[Proposition 4.8 in \cite{andersen2012reduced}]\label{prop: outg cong outcl}
		Set $G=\alter(n)$ and choose $S\in\syl_2(G)$. Then $\f_S(G)$ is tame. If $n\geq8$ and $n\equiv0,1\pmod{4}$, then 
		\begin{align*}
			\k_G:\out(G)\to\out(\cl_S^c(G))\cong Z_2
		\end{align*}
		is an isomorphism.
	\end{prop}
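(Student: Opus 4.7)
The plan is to prove the two assertions --- tameness of $\f_S(G)$ and the isomorphism statement --- by separate arguments tied together by Lemma~\ref{lem: outf is Z2}. For tameness, I would exhibit an explicit splitting of $\k_G$ coming from the outer action of $\sym(n)$ on $G = \alter(n)$. Fix a transposition $\tau \in \sym(n)$; after replacing $S$ by a $G$-conjugate we may assume $\tau$ normalizes $S$. The conjugation $c_\tau\colon G \to G$ represents the generator of $\out_{\sym(n)}(G)$ and, because $\tau$ normalizes $S$, naturally induces a functor $\cl_S^c(G) \to \cl_S^c(G)$ that sends $P \mapsto \tau P \tau^{-1}$ on objects and $g O_{2'}(C_G(P)) \mapsto \tau g \tau^{-1} O_{2'}(C_G(\tau P \tau^{-1}))$ on morphisms. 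This functor is manifestly isotypical and sends inclusions to inclusions, so it represents a class in $\out(\cl_S^c(G))$, and by construction the assignment $[c_\tau] \mapsto [\text{this functor}]$ is a section of $\k_G$ on $\out_{\sym(n)}(G)$. Since $\out(G) = \out_{\sym(n)}(G)$ for $n \neq 6$ (with the case $n = 6$ handled by the explicit construction in Proposition~10.1 of~\cite{oliver2006equivalences}), this establishes tameness.

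For the isomorphism claim with $n \geq 8$ and $n \equiv 0, 1 \pmod{4}$, I would argue as follows. Since $O_{2'}(G) = 1$, Theorem~\ref{theo: ker kappaG is a p prime group} says $\ker(\k_G)$ is a $2'$-group; as $\out(G) \cong Z_2$ by Lemma~\ref{lem: outf is Z2}, this forces $\k_G$ to be injective. The same lemma asserts that $\mu_\cl \circ \k_G$ surjects onto $\out(\f) \cong Z_2$, so the image $\k_G(\out(G))$ is already a subgroup of order $2$ in $\out(\cl)$. Combined with the surjectivity of $\mu_\cl$ from Proposition~\ref{prop: lim1 is there but lim2 gone}, the proof reduces to showing $\ker(\mu_\cl) = 1$, equivalently $\varprojlim^1_{\mathcal{O}(\f^c)}(\mathcal{Z}_\f) = 0$.

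The main obstacle is this final vanishing, which is where the hypothesis $n \equiv 0, 1 \pmod{4}$ enters essentially. My strategy would be to restrict the computation of $\varprojlim^1$ to a small full subcategory of $\mathcal{O}(\f^c)$ consisting only of the $\f$-essential subgroups together with $S$; these suffice by a standard cofinality argument for higher limits of the center functor. I would then analyze normalized $1$-cocycles directly: each such cocycle assigns to every $\f$-essential subgroup $P$ an element of $Z(P)$, subject to compatibility with the action of $\aut_\f(P)$. The task is to show that when $n \equiv 0, 1 \pmod{4}$ the extra outer $Z_2$ in $\out(\f)$ --- arising from the transposition-induced symmetry --- imposes enough rigidity on these centers that every such cocycle must be a coboundary, whereas in the $n \equiv 2, 3 \pmod{4}$ case where this symmetry is absent a nontrivial class survives. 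Carrying out this cocycle analysis requires explicit knowledge of the centers of the $\f$-essential $2$-subgroups of $\alter(n)$ and of how $\aut_\f$ acts on them, and is the most delicate part of the argument.
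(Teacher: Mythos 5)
The paper does not prove this statement at all: it is quoted verbatim as Proposition~4.8 of \cite{andersen2012reduced}, so the only thing to compare your attempt with is the argument in that source. Measured against what is actually needed, your proposal has two genuine gaps. First, the tameness argument does not establish tameness. What you construct --- the functor on $\cl_S^c(G)$ induced by conjugation by a transposition $\tau$ normalizing $S$ --- is nothing other than $\kappa_G([c_\tau])$ itself; calling it ``a section of $\kappa_G$'' reverses the direction of the required map. Tameness means $\kappa_{G}$ (or $\kappa_{G^*}$ for some $G^*$ realizing the same fusion system) is \emph{split surjective}, and the entire content is an upper bound on $\out(\cl_S^c(G))$: exhibiting explicit elements in the image of $\kappa_G$ can never show that the image is all of $\out(\cl)$. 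This is exactly where the real work lies in \cite{andersen2012reduced}, where $\out(\f)$ is computed (their Lemma~4.6) and $\ker(\mu_\cl)\cong\varprojlim^1(\mathcal{Z}_\f)$ is controlled, from which the size of $\out(\cl)$ and hence surjectivity of $\kappa_G$ follow. Note also that tameness is asserted for all $n$, including $n\equiv2,3\pmod4$ where $\ker(\mu_\cl)\neq1$, so even your second part would not close this gap in those cases.

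Second, your reduction of the isomorphism statement to $\ker(\mu_\cl)=1$ is correct and clean (injectivity of $\kappa_G$ from Theorem~\ref{theo: ker kappaG is a p prime group} plus $\out(G)\cong Z_2$, surjectivity of $\bar\kappa_G$ onto $\out(\f)\cong Z_2$ from Lemma~\ref{lem: outf is Z2}, and the exact sequence of Proposition~\ref{prop: lim1 is there but lim2 gone}), but the proposed proof of the vanishing $\varprojlim^1_{\mathcal{O}(\f^c)}(\mathcal{Z}_\f)=0$ is both incomplete and guided by a false mechanism. The group $\varprojlim^1(\mathcal{Z}_\f)$ is intrinsic to $\f$; the existence of an extra outer automorphism of $\f$ when $n\equiv0,1\pmod4$ acts on this group but cannot ``force every cocycle to be a coboundary,'' and there is no logical route from $\out(\f)\cong Z_2$ to $\varprojlim^1=0$ (indeed, for the groups of Lie type treated in Section~5 of this paper one has cases with large $\out(\f)$ and $\ker(\mu_\cl)\neq1$). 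Likewise, the claim that higher limits of the center functor can be computed ``by a standard cofinality argument'' on the full subcategory of essential subgroups together with $S$ is not a standard fact and is not justified; the actual computations in \cite{andersen2012reduced} and in Oliver's work proceed by filtering $\mathcal{Z}_\f$ by atomic functors and evaluating the graded pieces via $\Lambda^*(\out_\f(P);Z(P))$, with the congruence on $n$ entering through the explicit $2$-local structure of $\alter(n)$ (centers of the relevant subgroups and the action of $\aut_\f$ on them), not through $\out(\f)$. As it stands, the heart of both assertions --- bounding $\out(\cl)$, equivalently computing $\varprojlim^1(\mathcal{Z}_\f)$ --- remains unproved in your proposal.
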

	\noindent In particular, Lemma \ref{lem: outf is Z2} and Proposition \ref{prop: outg cong outcl} together implies that $\mu_\cl$ is injective if and only if $n\equiv0,1\pmod{4}$.  \par 
	
	In \cite{broto2019automorphisms}, Oliver proves that for any group in $\mathfrak{Spor}^*$, the homomorphism $\mu_\cl$ is an isomorphism. 
	\begin{theo}[Theorem A in Oliver's section of \cite{broto2019automorphisms}]\label{theo: sporadic groups have kermu=1}
		Fix a sporadic simple group $G$, a prime $p$ which divides $|G|$, and $S\in\syl_p(G)$. Set $\f=\f_S(G)$ and $\cl=\cl_S^c(G)$. Then $\f$ is tame. Furthermore, $\kappa_G$ and $\mu_\cl$ are isomorphisms (hence $\f$ is tamely realized by $G$) if $p=2$ or if $p$ is odd and $S$ is nonabelian, with the following two exceptions:
		\begin{itemize}
			\item $G\cong M_{11}$ and $p=2$, in which case $\out(G)=1$ and $|\out(\f)|=|\out(\cl)|=2$; and 
			\item $G\cong He$ and $p=3$, in which case $|\out(G)|=2$ and $\out(\f)=\out(\cl)=1$
		\end{itemize}
	\end{theo}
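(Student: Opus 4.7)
The proof is a case-by-case analysis across the 26 sporadic simple groups and all primes $p$ dividing their orders. The plan rests on three tools: Proposition \ref{prop: lim1 is there but lim2 gone} identifies $\ker(\mu_\cl)$ with $\varprojlim^1_{\mathcal{O}(\f^c)}\mathcal{Z}_\f$, gives surjectivity of $\mu_\cl$, and forces $\ker(\mu_\cl) = 1$ automatically when $p$ is odd (Theorem \ref{theo: kermu is a 2 group}); Theorem \ref{theo: ker kappaG is a p prime group} of Glauberman-Lynd shows $\ker(\kappa_G)$ is a $p'$-group since $O_{p'}(G) = 1$ for $G$ simple; and the ATLAS plus standard $p$-local tables supply $\out(G)$, the $\f$-centric radical subgroups, and generators for $\aut(\f)$ via Alperin's fusion theorem.

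For the non-exceptional cases the work splits in two. When $p$ is odd and $S$ is nonabelian, injectivity of $\mu_\cl$ is automatic, so I would verify surjectivity of $\overline{\kappa}_G = \mu_\cl\circ\kappa_G$ by lifting each generator of $\aut(\f)$ (coming from essential automizers) to an element of $\out(G)$; since $\out(G)$ is at most $Z_2$ for every sporadic, each lift is a concrete finite computation, and combined with the $p'$-kernel statement this promotes $\overline{\kappa}_G$ surjective to $\kappa_G$ an isomorphism. For $p=2$ the same lifting handles $\kappa_G$, but one must additionally show $\varprojlim^1\mathcal{Z}_\f = 1$; I would do this by the normalizer-decomposition spectral sequence over the orbit category and verify vanishing of the $H^1(\out_\f(P); Z(P))$ obstruction at each $\f$-essential $P$. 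Tameness of $\f$ then follows from $\kappa_G$ being split surjective, which the Glauberman-Lynd theorem grants from surjectivity of $\overline{\kappa}_G$.

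The two named exceptions demand distinct explicit analyses. For $(M_{11},p=2)$, I would show that $\aut(\f)$ contains a fusion-preserving automorphism of $S$ of order $2$ not induced by any element of $\aut(M_{11})$, yielding $|\out(\f)| = |\out(\cl)| = 2$ while $\out(M_{11}) = 1$; tameness here is witnessed not by $M_{11}$ itself but by a suitable larger group realizing this fusion system. For $(He,p=3)$, I would show instead that the nontrivial outer automorphism of $He$ maps into $\aut_\f(S)$, so $\kappa_G$ has a kernel of order $2$, while $\out(\f) = \out(\cl) = 1$ makes $\mu_\cl$ trivially an isomorphism. The main obstacle lies not in these two but in ruling out further exceptions among the large sporadics at $p=2$ -- particularly $Co_1$, $Fi_{23}$, $Fi_{24}'$, the Baby Monster, and the Monster -- where the poset of $\f$-centric radicals is intricate and the verification of $\varprojlim^1\mathcal{Z}_\f = 1$ together with surjectivity of $\overline{\kappa}_G$ requires sustained analysis of the $2$-local geometry via the Aschbacher-Smith structure theory, potentially with computer assistance.
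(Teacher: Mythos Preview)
The paper does not contain a proof of this statement. Theorem~\ref{theo: sporadic groups have kermu=1} is a quotation of Theorem~A from Oliver's appendix in \cite{broto2019automorphisms}; the present paper only invokes it as a black box in Section~6 to assemble Corollary~\ref{cor: quasisimple sat fus with noninner rigid autos}. There is therefore no proof here against which to compare your proposal.

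That said, your outline is a reasonable description of the shape such a proof must take, and it is broadly in the spirit of how Oliver actually proceeds in \cite{broto2019automorphisms}. A few remarks on where your sketch is loose. First, for the $(M_{11},2)$ exception you claim tameness is witnessed by ``a suitable larger group,'' but you should name it: $\f_S(M_{11})\cong\f_{S^*}(\PSL_3(3))$, and $\PSL_3(3)$ tamely realizes the system. Second, for $(He,3)$ your description is slightly off: the nontrivial outer automorphism of $He$ restricts to an element of $\aut_\f(S)$, so $\overline{\kappa}_G$ has kernel of order $2$; but $\ker(\kappa_G)$ is a $3'$-group by Theorem~\ref{theo: ker kappaG is a p prime group}, so in fact $\kappa_G$ is not injective here either, and $\f$ is still tamely realized by $He$ since $\out(\cl)=1$ makes surjectivity trivial. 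Third, your plan for $p=2$ to compute $\varprojlim^1\mathcal{Z}_\f$ via obstructions $H^1(\out_\f(P);Z(P))$ at essential $P$ is not quite the right spectral sequence input; Oliver's actual approach uses the subgroup decomposition of \cite{broto2003homotopy} and a filtration by centric radicals, not just essentials, and the relevant obstruction groups are $\Lambda^*$-functors rather than plain group cohomology. Your acknowledgment that the large sporadics at $p=2$ are the hard cases is correct.
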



	\section*{Acknowledgments}
	The author would like to thank Dr. Justin Lynd for his thoughtful supervision and guidance throughout the dissertation process. This paper, derived from that work, owes much to his expertise and support. The author would also like to thank an anonymous reviewer for their helpful notes. 

	\printbibliography\clearpage

    \appendix
    \section{Tables and diagrams related to the finite groups of Lie type}
    The following tables and diagrams can be found in \cite{gorenstein2005classification}, but are helpful to reference when reading section 4. 
    \begin{table}[h!]
	\begin{align*}
		A_n: \Sigma&=\{\pm(a_i-a_j)\mid1\leq i<j\leq n+1\} \\
		\Pi&=\{a_1-a_2,a_2-a_3,\ldots,a_n-a_{n+1}\} \\
		B_n: \Sigma&=\{\pm a_i\pm a_j\mid1\leq i<j\leq n\}\cup\{\pm a_i\mid1\leq i\leq n\} \\
		\Pi&=\{a_1-a_2,a_2-a_3,\ldots,a_{n-1}-a_n,a_n\} \\
		C_n: \Sigma&=\{\pm a_i\pm a_j\mid1\leq i<j\leq n\}\cup\{\pm2 a_i\mid1\leq i\leq n\} \\
		\Pi&=\{a_1-a_2,a_2-a_3,\ldots,a_{n-1}-a_n,2a_n\} \\
		D_n: \Sigma&=\{\pm a_i\pm a_j\mid1\leq i<j\leq n\} \\
		\Pi&=\{a_1-a_2,a_2-a_3,\ldots,a_{n-1}-a_n,a_{n-1}+a_n\} \\
		G_2: \Sigma&=\{\pm\a_1,\pm\a_2,\pm(\a_1+\a_2),\pm(\a_1+2\a_2),\pm(\a_1+3\a_2),\pm(2\a_1+3\a_2)\} \\
		\Pi&=\{\a_1,\a_2\} \\
		F_4: \Sigma&=\{\pm a_i\pm a_j\mid1\leq i<j\leq4\}\cup\{\pm a_i\mid1\leq i\leq4\}\cup\{\a_\ep\mid\ep\in S^4\} \\
		\Pi&=\{a_2-a_3,a_3-a_4,a_4,a_{+---}\} \\
		E_8: \Sigma&=\{\pm a_i\pm a_j\mid 1\leq i<j\leq8\}\cup\left\{a_\ep\mid\ep\in S^8,\text{ }\prod_{i=1}^{8}\ep_i=1\right\} \\
		\Pi&=\{a_{+------+},a_7-a_8,a_6-a_7,a_7+a_8,a_5-a_6,a_4-a_5,a_3-a_4,a_2-a_3\} \\
		E_7: \Sigma&=\{\g\in\Sigma_{E_8}\mid\g\perp a_1+a_2\} \\
		\Pi&=\Pi_{E_8}-\{a_2-a_3\} \\
		E_6: \Sigma&=\{\g\in\Sigma_{E_7}\mid\g\perp a_2-a_3\} \\
		\Pi&=\Pi_{E_8}-\{a_3-a_4,a_2-a_3\} 
	\end{align*}
	\caption{The irreducible crystallographic root systems}
\end{table}
\begin{table}[h!]
	\centering
	\begin{tabular}{||c c||} 
		\hline
		System & Dynkin Diagram   \\ 
		\hline\hline
		$ A_1 $ & $ \dynkin[labels*={\a_1},scale=2.5] A{o} $ \\ 
		$ A_n, n\geq2 $ & $ \dynkin[labels*={\a_1,\a_2,\a_{n-1},\a_n},scale=2.5] A{oo.oo} $ \\
		$ B_n,n\geq3 $ & $ \dynkin[labels*={\a_1,\a_2,\a_{n-2},\a_{n-1},\a_n},scale=2.5] B{oo.ooo} $ \\
		$ C_n,n\geq2 $ & $ \dynkin[labels*={\a_1,\a_2,\a_{n-2},\a_{n-1},\a_n},scale=2.5] C{oo.ooo} $ \\
		$ D_n, n\geq5 $ & $ \dynkin[labels*={\a_1,\a_2, ,\a_{n-2},\a_{n-1},\a_n},scale=2.5] D{oo.oooo} $ \\ 
		$ D_4 $ & $ \dynkin[labels*={\a_1,\a_2,\a_{3},\a_{4}},scale=2.5] D{oooo} $ \\ 
		$ G_2 $ & $ \dynkin[labels*={\a_1,\a_2},scale=2.5] G{oo} $ \\ 
		$ F_4 $ & $ \dynkin[labels*={\a_1,\a_2,\a_{3},\a_{4}},scale=2.5] F{oooo} $ \\ 
		$ E_8 $ & $ \dynkin[labels*={\a_1,\a_4,\a_2,\a_3,\a_5,\a_6,\a_7,\a_8},scale=2.5] E{oooooooo} $ \\ 
		$ E_7 $ & $ \dynkin[labels*={\a_1,\a_4,\a_2,\a_3,\a_5,\a_6,\a_7},scale=2.5] E{ooooooo} $ \\
		$ E_6 $ & $ \dynkin[labels*={\a_1,\a_4,\a_2,\a_3,\a_5,\a_6},scale=2.5] E{oooooo} $ \\ 
		\hline
	\end{tabular}
	\caption{Dynkin diagrams for simple algebraic groups}
\end{table}
\clearpage
    \begin{table}[h!]
		\begin{center}
			\begin{tabular}{||c||c|c|c|c|c|c|c|c|c|c||}
				\hline 
				$ \Sigma $ & $ A_n $ & $ B_n $ & $ C_n $ & $ D_{2m} $ & $ D_{2m+1} $ & $ E_6 $ & $ E_7 $ & $ E_8 $ & $ F_4 $ & $ G_2 $ \\ 
				\hline
				$ Z(\G_u) $ & $ Z_{n+1} $ & $ Z_2 $ & $ Z_2 $ & $ Z_2\times Z_2 $ & $ Z_4 $ & $ Z_3 $ & $ Z_2 $ & $ 1 $ & $ 1 $ & $ 1 $ \\
				\hline
			\end{tabular}
		\end{center}
		\caption{The centers for the simple, simply connected algebraic groups}
	\end{table}
    \begin{table}[h!]
		\begin{center}
			\begin{tabular}{||c|l||}
				\hline
				$ \Sigma $ & Generators of $ Z(\G_u) $ \\ 
				\hline
				\hline
				$ A_n $ & $ z=h_{\a_1}(\w)h_{\a_2}(\w^2)\cdots h_{\a_n}(\w^n) $, $ \w $ a primitive $(n+1)^{\text{st}}$-root of 1 \\ 
				\hline
				$ B_n $ & $ z=h_{\a_n}(-1) $ \\ 
				\hline
				$ C_n $ & $ z=h_{\a_1}(-1)h_{\a_3}(-1)\cdots h_{\a_k}(-1) $, $ k=2\left\lfloor\frac{n-1}{2}\right\rfloor+1 $ \\ 
				\hline
				$ D_{2m} $ & $ z_1=h_{\a_1}(-1)h_{\a_3}(-1)\cdots h_{\a_{2m-1}}(-1) $ and $ z_2=h_{\a_{2m-1}}(-1)h_{\a_{2m}}(-1) $ \\ 
				\hline
				$ D_{2m+1} $ & $ z=h_{\a_1}(-1)h_{\a_3}(-1)\cdots h_{\a_{2m-1}}(-1)h_{\a_{2m}}(\w)h_{\a_{2m+1}}(-\w) $, $ \w^2=-1 $ \\ 
				\hline
				$ E_6 $ & $ z=h_{\a_1}(\w)h_{\a_2}(\w^2)h_{\a_5}(\w)h_{\a_2}(\w^2) $, $ \w^3=1 $ \\ 
				\hline
				$ E_7 $ & $ z=h_{\a_4}(-1)h_{\a_5}(-1)h_{\a_7}(-1) $ \\ 
				\hline
				Else & $Z(\G_u)=1$ \\
				\hline
			\end{tabular}
		\end{center}
		\caption{Generators for the centers of simple, simply connected algebraic groups}
	\end{table}
    \begin{table}[h!]
		\begin{center}
			\begin{tabular}{||c||c|c|c|c||}
				\hline 
				$ G $ & $ {}^2A^{\un}_n(q) $ &  $ {}^2D^{\un}_{n}(q) $ &  $ {}^2E_6^{\un}(q) $ & ${}^2B^{\un}_2(q)$, ${}^2F^{\un}_4(q)$, ${}^2G^{\un}_2(q)$  \\ 
				\hline
				$ Z(G) $ & $ Z_{(n+1,q+1)} $ & $ Z_{(4,q^n+1)} $ & $ Z_{(3,q+1)} $ & 1  \\
				\hline
			\end{tabular}
		\end{center}
		\caption{The centers for the Steinberg groups}
	\end{table} 
    
\end{document}